\documentclass[a4paper,11pt]{amsart}
\usepackage{mathptmx}
\usepackage{amsmath}
\usepackage{amscd}
\usepackage{amssymb, bm}
\usepackage{amsthm}
\usepackage{xspace}
\usepackage[all,tips]{xy}
\usepackage[dvips]{graphicx}
\usepackage{verbatim}
\usepackage{syntonly}
\usepackage{hyperref}
\usepackage{amsmath, amsthm, graphics, amssymb,fullpage,color, epsfig,url}
\usepackage{indentfirst}
\usepackage{esint}
\usepackage{enumitem}
\usepackage[dvipsnames]{xcolor}
\usepackage{soul}

\providecommand{\MR}{\relax\ifhmode\unskip\space\fi MR }

\providecommand{\href}[2]{#2}

\marginparwidth=0.7in



\theoremstyle{plain}
\newtheorem{thm}{Theorem}[section]
\newtheorem{lem}[thm]{Lemma}
\newtheorem{prop}[thm]{Proposition}
\newtheorem{defn}[thm]{Definition}
\newtheorem{cor}[thm]{Corollary}

\theoremstyle{remark}
\newtheorem{rem}[thm]{Remark}

\newenvironment{pf}
{\begin{proof}} {\end{proof}}

\newcommand{\disp}{\displaystyle}

\DeclareMathOperator{\dist}{dist}

\DeclareMathOperator{\osc}{osc}

\DeclareMathOperator{\supp}{supp}
\DeclareMathOperator{\di}{div}


\newcommand{\eps}{\varepsilon}
\newcommand{\vp}{\varphi}


\newcommand{\al}{\alpha}
\newcommand{\be}{\beta}
\newcommand{\ga}{\gamma}
\newcommand{\de}{\delta}

\newcommand{\Ga}{\Gamma}
\newcommand{\te}{\theta}
\newcommand{\la}{\lambda}
\newcommand{\La}{\Lambda}
\newcommand{\om}{\omega}
\newcommand{\Om}{\Omega}
\newcommand{\si}{\sigma}



\newcommand{\iny}{\infty}
\newcommand{\del}{ \partial}
\newcommand{\su}{\subset}

\newcommand{\bb}{{\bf b}}
\newcommand{\bd}{{\bf d}}
\newcommand{\bu}{{\bf u}}
\newcommand{\bv}{{\bf v}}
\newcommand{\bw}{{\bf w}}

\newcommand{\bV}{{\bf V}}
\newcommand{\bA}{{\bf A}}
\newcommand{\bG}{\pmb{ \Ga}}
\newcommand{\bff}{{\bf f}}
\newcommand{\bg}{{\bf g}}
\newcommand{\bF}{{\bf F}}

\newcommand{\bz}{{\bf 0}}


\newcommand{\norm}[1]{\left\vert \left\vert #1\right\vert\right\vert}
\newcommand{\innp}[1]{\left< #1 \right>}
\newcommand{\abs}[1]{\left\vert#1\right\vert}

\newcommand{\set}[1]{\left\{#1\right\}}
\newcommand{\brac}[1]{\left[#1\right]}
\newcommand{\pr}[1]{\left( #1 \right) }
\newcommand{\pb}[1]{\left( #1 \right] }
\newcommand{\brp}[1]{\left[ #1 \right) }



\newcommand{\N}{\ensuremath{\mathbb{N}}}

\newcommand{\R}{\ensuremath{\mathbb{R}}}


\newcommand{\bGr}{{\bf G}}
\newcommand{\cd}{\cdot}

\newcommand{\iom}{\int_{\Omega}}
\newcommand{\eq}[1]{\begin{equation}#1\end{equation}}
\newcommand{\eqs}[1]{\begin{equation*}#1\end{equation*}}
\newcommand{\aln}[1]{\begin{align}#1\end{align}}
\newcommand{\alns}[1]{\begin{align*}#1\end{align*}}

\newcommand{\rn}{\mathbb{R}^n}

\definecolor{LightGreen}{RGB}{150,200,30}


\usepackage{pgf}
\usepackage{color}

\calclayout
\allowdisplaybreaks

\newcommand{\loc}{\operatorname{loc}}

\newcommand{\RR}{{\mathbb{R}}}
\newcommand{\NN}{{\mathbb{N}}}

\newcommand{\po}{{\partial\Omega}}


\setstcolor{violet}

\numberwithin{equation}{section}
\date{}
\begin{document}

\title{Fundamental Matrices and Green Matrices for non-homogeneous elliptic systems}

\author[Davey]{Blair Davey}
\address{Department of Mathematics, City College of New York CUNY, New York, NY 10031, USA}
\email{bdavey@ccny.cuny.edu}
\author[Hill]{Jonathan Hill}
\address{School of Mathematics, University of Minnesota, Minneapolis, MN 55455, USA}
\email{hill0631@umn.edu}
\author[Mayboroda]{Svitlana Mayboroda}
\address{School of Mathematics, University of Minnesota, Minneapolis, MN 55455, USA}
\email{svitlana@math.umn.edu}
\thanks{Davey is supported in part by the Simons Foundation Award 430198. \\
Mayboroda is supported in part by the Alfred P. Sloan Fellowship, the NSF INSPIRE Award DMS 1344235, NSF CAREER Award DMS 1220089 and  NSF UMN MRSEC Seed grant DMR 0212302.}

\begin{abstract} 
In this paper, we establish existence, uniqueness, and scale-invariant estimates for fundamental solutions of non-homogeneous second order elliptic systems with bounded measurable coefficients in $\RR^n$ and for the corresponding Green functions in arbitrary open sets. 
We impose certain non-homogeneous versions of de Giorgi-Nash-Moser bounds on the weak solutions and investigate in detail the assumptions on the lower order terms sufficient to guarantee such conditions. 
\end{abstract}

\maketitle

\tableofcontents

\section{Introduction}

In this paper, we consider non-homogeneous second order uniformly elliptic systems, formally given by ${\mathcal L} \bu =-D_\al\pr{\bA^{\al \be} D_\be \bu + \bb^\al \bu} + \bd^\be D_\be \bu + \bV \bu $. 
The principal term, $L := - D_\al \bA^{\al \be} D_\be$, satisfies the following uniform ellipticity and boundedness conditions
\begin{align*}
&A^{\al \be}_{ij}\pr{x} \xi_{\be}^j \xi_{\al}^i \ge \la \abs{\bf \xi}^2 
:= \la \sum_{i = 1}^N \sum_{\al = 1}^n \abs{\xi_{\al}^i}^2 \\
& \sum_{i, j = 1}^N \sum_{\al, \be = 1}^n \abs{A_{ij}^{\al \be}\pr{x}}^2 \le \La^2,
\end{align*}
for some $0 < \la, \La < \iny$ and for all $x$ in the domain.
Note that, in particular, equations with complex bounded measurable coefficients fit into this scheme.
We establish existence, uniqueness, as well as global scale-invariant estimates for the fundamental solution in $\RR^n$ and for the Dirichlet Green function in any connected, open set $\Omega\subset \RR^n$, where $n\geq 3$. 
The key difficulty in our work is the lack of homogeneity of the system since this typically results in a lack of scale-invariant bounds.
Here, the existence of solutions relies on a coercivity assumption, which controls the lower-order terms, and the validity of the Caccioppoli inequality. 
Furthermore, following many predecessors (see, e.g., \cite{HK07}, \cite{KK10}), we require certain quantitative versions of the local boundedness of solutions. 
This turns out to be a delicate game, however, to impose local conditions which are sufficient for the construction of fundamental solutions and necessary for most prominent examples. 
Indeed, they have not been completely well-understood even in the case of real {\it equations}, due to the same type of difficulties: 
Solutions to non-homogeneous equations can grow exponentially with the growth of the domain in the absence of a suitable control on the potential $\bV$, even if $\bb=\bd= \bf 0$.
This affects the construction of the fundamental solution. 
Let us discuss the details. 

The fundamental solutions and Green functions for {\it homogeneous} second order elliptic systems are fairly well-understood by now. 
We do not aim to review the vast literature addressing various situations with additional smoothness assumptions on the coefficients of the operator and/or the domain, and will rather comment on those works that are most closely related to ours.  
The analysis of Green functions for operators with bounded measurable coefficients goes back to the early 80's, \cite{GW82} (see also \cite{LSW63} for symmetric operators), in the case of  homogeneous equations with real  coefficients ($N=1$). The case of homogeneous {\it systems}, and, respectively, equations with complex coefficients, has been treated much more recently in \cite{HK07} and \cite{KK10}  under the assumptions of local boundedness and H\"older continuity of solutions, the so-called de Giorgi-Nash-Moser estimates. 
Later on, in \cite{Ros13}, the fundamental solution in $\RR^n$ was constructed using only the assumption of local boundedness, that is, without the requirement of H\"older continuity.
In \cite{Bar14}, Barton constructed fundamental solutions, also in $\RR^n$ only, in the full generality of homogeneous elliptic systems without assuming any de Giorgi-Nash-Moser estimates.
The techniques in \cite{Bar14} are based on descent from the higher order case. 

The present paper can be split into two big portions. 
In the first part, we prove that one can define the fundamental solution and the Green function, and establish global estimates on par with the aforementioned works for homogeneous equations, roughly speaking, if:
\begin{enumerate}
\item The bilinear form associated to ${\mathcal L}$ is coercive and bounded in a suitable Hilbert space.
\item The Caccioppoli inequality holds:

If $\bu$ is a weak solution to $\mathcal{L} \bu = \bf 0$ in $U \su \Om$ and $\zeta$ is a smooth cutoff  function, then 
$$\int \abs{D \bu}^2 \zeta^2 \le C \int \abs{\bu}^2 \abs{D \zeta}^2,$$
where $C$ is independent of the subdomain $U$.

\item The interior scale-invariant Moser bounds hold:

If $\bu$ is a weak solution to $\mathcal{L} \bu = \bff$ in $B_R\subset \Omega$, for some $R>0$, where $\bff \in L^\ell\pr{B_R}^N$ for some $\ell \in \pb{\frac{n}{2}, \iny}$, then for any $q > 0$,
$$ \sup_{B_{R/2}} \abs{\bu} \le C \brac{ \pr{\fint_{B_R} \abs{\bu}^q }^{1/q} + R^{2 - \frac{n}\ell} \norm{\bff}_{L^\ell\pr{B_R}}},$$
where $C$ is independent of $R$.

\item The solutions are H\"older continuous: 

If $\bu$ is a weak solution to $\mathcal{L} \bu = \bf 0$ in $B_{R_0} \subset \Omega$, for some $R_0>0$, then there exists $\eta \in \pr{0, 1}$, depending on $R_0$, and $C_{R_0}>0$ so that whenever $0 < R \le R_0$,
$$\sup_{x, y \in B_{R/2}, x \ne y} \frac{\abs{\bu\pr{x} - \bu\pr{y}}}{\abs{x - y}^\eta}
\le C_{R_0} R^{-\eta} \pr{\fint_{B_{R}} \abs{\bu}^{2^*} }^{1/{2^*}} .$$
\end{enumerate} 

If, in addition, the boundary scale-invariant Moser bounds hold (that is, the Moser estimate holds for solutions with trace zero on balls possibly intersecting the boundary), then the Green functions exhibit respectively stronger boundary estimates. 
This part of the paper is modeled upon the work in \cite{HK07} and \cite{KK10}.
However, the scaling issues and identifying the exact form of necessary conditions that are compatible with the principal non-homogeneous examples make our arguments considerably more delicate. 
Note, in particular, the local nature of H\"older estimates versus the global nature of Moser-type bounds. 
The Moser-type bounds are independent of the domain, whereas the H\"older estimates may depend on the size of the ball. 


In the second portion of the paper, we motivate the assumptions from above by showing that conditions (1)--(4) above are valid in the following three situations. 
To be precise, we show that in each case listed below, (1)--(2) from above hold for the general systems, while (3)--(4) holds for {\it equations} and, hence, the resulting estimates on fundamental solutions and Green functions are valid for the {\it equations} with real coefficients in each of the three cases below.

\begin{enumerate}
\item[Case 1.] {\em Homogeneous operators}: $\bb, \bd, \bV \equiv \bf 0$ and the function space for solutions is $\bF\pr{\Om} = Y^{1,2}\pr{\Om}^N$. 
Here, $Y^{1,2}\pr{\Om}$ is the family of all weakly differentiable functions $u \in L^{2^*}\pr{\Om}$, with $2^* = \frac{2n}{n-2}$, whose weak derivatives are functions in $L^2\pr{\Om}$. 
\item[Case 2.] {\em  Lower order coefficients in $L^p$}: 
There exist $p \in \pb{ \frac n 2, \iny}$, $s, t \in \pb{ n, \iny}$ so that $\bV  \in L^p\pr{\Om}^{N \times N}$, $\bb \in L^s\pr{\Om}^{n \times N \times N}$, $\bd \in L^t\pr{\Om}^{n \times N \times N}$ and we take the function space for solutions to be $\bF\pr{\Om} = W^{1,2}\pr{\Om}^N$. As usual, 
$W^{1,2}\pr{\Om}$ is the family of all weakly differentiable functions $u \in L^{2}\pr{\Om}$ whose weak derivatives are functions in $L^2\pr{\Om}$.
The lower-order terms are chosen so that the bilinear form associated to $\mathcal{L}$ is coercive.
For conditions (3)-(4), we assume further that  $\bV - \di \bb \ge 0$ and $\bV - \di \bd \ge 0$ in the sense of distributions.
\item[Case 3:] {\em Reverse H\"older potentials}: $\bV \in B_p$, the reverse H\"older class, for some $p \in \brp{ \frac n 2, \iny}$, $\bb, \bd \equiv \bf 0$, and $\bF\pr{\Om} = W^{1,2}_{ V}\pr{\Om}^N$, a weighted Sobolev space (with the weight given by a certain maximal function associated to $\bV$ -- see \eqref{eq1.1} and definitions in the body of the paper).
\end{enumerate}

We would like to point out that in Theorem 18 from \cite{AT98}, P.\, Auscher and Ph.\, Tchamitchian state the global Gaussian bounds on the heat semigroup under the assumption of $W^{1,2}(\RR^n)$ coercivity of the corresponding form, for $b,d,V\in L^\infty(\RR^n)$, without any additional non-degeneracy condition. 
This is a version of our Case 2. 
Such estimates should, in principle, imply a global pointwise estimate on the fundamental solution in $\RR^n$ of the form $|\Gamma(x, y)|\leq C |x-y|^{2-n}$, for all $x, y\in \RR^n$, $x \ne y$. 
A similar result could be obtained in Case 3 by the maximum principle. 
It is not immediately clear, however, if in either case one can obtain a complete package of results that we have targeted (see Theorems \ref{t3.6} and \ref{t3.10}), particularly for the Green functions on domains. 
For those reasons, we did not pursue this route in the present work. 
More generally, one can sometimes establish bounds on the fundamental solutions and Green functions for elliptic boundary problems by an integration of the estimates of the corresponding heat kernels. 
However, the latter requires a suitable form of uniform exponential decay of the heat kernel in $t>0$, while the non-homogeneous equations typically give rise to bounds for a finite time, $0<t<T$, with a constant depending on $T$ (cf., e.g., \cite{Dav95}, \cite{Aro68}, \cite{AQ00}, \cite{Ouh05}). 
There are notable exceptions to this rule, including \cite{AT98}, but they do not provide a basis for a unified theory, particularly on general domains. 

The verification of local bounds and H\"older continuity in our arguments follows a traditional route (see \cite{GT01}, \cite{HL11}, \cite{Sta65}).
However, we have to carefully adjust the arguments so that the dependence on constants coincides with our constructions of fundamental solutions.

Going further, let us say a few words about Case 3. 
This is the version of the Schr\"odinger equation that initially interested us. 
With pointwise bounds on the fundamental solution and the Green function (Theorems \ref{t3.6} and \ref{t3.10}, respectively), as well as basic Moser, H\"older, Harnack estimates established in our present work, one can now move on to derive the sharp exponential decay of the fundamental solutions in terms of the Agmon distance associated to the maximal function
\begin{equation}\label{eq1.1} 
m\pr{x, V} = \left(\sup_{r > 0} \set{ r : \psi\pr{x,r; V} \le 1}\right)^{-1}.
\end{equation}
For instance, it is natural to expect that in Case 3
$$\Gamma(x,y)\leq C\,\frac{e^{-\eps\, d(x,y,V)}}{|x-y|^{n-2}}$$
for some $C,\eps>0$, with the distance function
$$
d(x,y,V)=\inf_{\gamma}\int\limits_0^1m(\gamma(t),V)|\gamma'(t)|\,dt,
$$
where $\gamma:[0,1]\to\RR^n$ is absolutely continuous, $\gamma(0)=x, \gamma(1)=y$, and $m$ is the Fefferman-Phong maximal function. 
This question will be addressed in the upcoming work \cite{MP16}, along with the corresponding estimates from below.
See \cite{She99} for the case of $-\Delta+V$. 

\section{Basic assumptions and notation}
\label{s2}

Throughout this article, the summation convention will be used.
Let $n \ge 3$ denote the dimension of the space, and let $N \ge 1$ denote the number of components in each vector function.
Let $\Om \su \R^n$ be an open, connected set.
We use the notation $B_r\pr{x}$ to denote a ball of radius $r > 0$ centered at $x \in \R^n$, and the abbreviated notation $B_r$ when $x$ is clear from the context.
For any $x \in \Om$, $r > 0$, we define $\Om_r(x):=\Om \cap B_r(x)$.
Let $C^\iny_c\pr{\Om}$ denote the set of all infinitely differentiable functions with compact support in $\Om$.
We set $2^*=\frac{2n}{n-2}$.

For any open set $\Om \su \R^n$, define the space $Y^{1,2}\pr{\Om}$ as the family of all weakly differentiable functions $u \in L^{2^*}\pr{\Om}$ whose weak derivatives are functions in $L^2\pr{\Om}$.
The space $Y^{1,2}\pr{\Om}$ is endowed with the norm
\begin{align}
\norm{u}^2_{Y^{1,2}\pr{\Om}} :=  \norm{u}^2_{L^{2^*}\pr{\Om}} + \norm{D u}^2_{L^2\pr{\Om}}.
\label{eq2.1}
\end{align}
Define $Y^{1,2}_0\pr{\Om}$ as the closure of $C^\iny_c\pr{\Om}$ in $Y^{1,2}\pr{\Om}$. 
When $\Om = \R^n$, $Y^{1,2}\pr{\R^n} = Y^{1,2}_0\pr{\R^n}$ (see, e.g.,  Appendix \ref{AppA}).
By the Sobolev inequality,
\begin{equation}\label{eq2.2}
\norm{u}_{L^{2^*}\pr{\Om}} 
\le c_n \norm{D u}_{L^2\pr{\Om}} \quad \text{for all $u \in Y^{1,2}_0\pr{\Om}$.} 
\end{equation}
It follows that $W^{1,2}_0\pr{\Om} \su Y^{1,2}_0\pr{\Om}$ with set equality when $\Om$ has finite measure. 
Here, $W^{1,2}\pr{\Om}$ is the family of all weakly differentiable functions $u \in L^{2}\pr{\Om}$ whose weak derivatives are functions in $L^2\pr{\Om}$.
The norm on $W^{1,2}\pr{\Om}$ is given by
\eqs{
\norm{u}_{W^{1,2}\pr{\Om}} = \norm{u}_{L^{2}\pr{\Om}} + \norm{D u}_{L^2\pr{\Om}},
}
and $W^{1,2}_0\pr{\Om}$ is the closure of $C^\iny_c\pr{\Om}$ in $W^{1,2}\pr{\Om}$. 
We shall mostly be talking about the spaces of vector-valued functions in $Y_0^{1,2}\pr{\Om}^N$. 
The bilinear form
\begin{align}
\innp{\bu, \bv}_{Y_0^{1,2}\pr{\Om}^N} := \int_{\Om} D_\al u^i D_\al v^i
\label{eq2.3}
\end{align}
defines an inner product on $Y_0^{1,2}\pr{\Om}^N$.
With this inner product, $Y_0^{1,2}\pr{\Om}^N$ is a Hilbert space with norm 
\begin{align*}
\norm{\bu}_{Y_0^{1,2}\pr{\Om}^N} 
:= \innp{\bu, \bu}_{Y_0^{1,2}\pr{\Om}^N}^{1/2} 
= \norm{D \bu}_{L^2\pr{\Om}^N}.
\end{align*}
For the sake of brevity, we sometimes drop the superscript of dimension from the norm notation when it is understood from the context.
For further properties of $Y^{1,2}\pr{\Om}$, and some relationships between $Y^{1,2}\pr{\Om}$ and $W^{1,2}\pr{\Om}$, we refer the reader to Appendix \ref{AppA}.

Hofmann and Kim used the space $Y^{1,2}\pr{\Om}^N$ in their constructions of fundamental matrices and Green matrices for homogeneous operators \cite{HK07}.  
Since we are concerned with non-homogeneous operators, this function space will not always be appropriate, but we intend to mimic some of its properties. 
To this end, we will define the pair consisting of a non-homogeneous elliptic operator and a suitably accompanying Banach space, and then show that standard cases of interest fit in this framework. 

We assume that for any $\Omega\subset \RR^n$ open and connected, there exists a Banach space $\bF\pr{\Om}$ consisting of weakly differentiable, vector-valued  $L^1_{\loc}\pr{\Om}$ functions that satisfy the following properties:

\begin{enumerate}[label=A\arabic*)]
\item\label{A1}
Whenever $U\subset \Omega$, 
\eq{\label{eq2.4}
\bu \in \bF\pr{\Om} \,\to \,  \bu|_U \in \bF(U), \quad \text{with $\left\|\bu|_U\right\|_{\bF(U)} \le \left\|\bu\right\|_{\bF\pr{\Om}}.$}
}

\item\label{A2}  $C_c^\infty\pr{\Om}^N$ functions belong to $\bF\pr{\Om}$.
The space $\bF_0\pr{\Om}$, defined as the closure of $C_c^\infty\pr{\Om}^N$ with respect to the $\bF\pr{\Om}$-norm, is a Hilbert space with respect to some $\|\cdot\|_{\bF_0(\Omega)}$ such that 
$$\|u\|_{\bF_0(\Omega)} \approx \|u\|_{\bF(\Omega)} \quad \mbox{ for all $u\in \bF_0(\Omega)$}.$$

\item\label{A3}  The space ${\bF}_0\pr{\Om}$ is continuously embedded into 
$Y^{1,2}_0\pr{\Om}^N$ and respectively, there exists $c_0>0$ such that for any $\bu \in \bF_0\pr{\Om}$ 
\begin{align}
\norm{\bu}_{Y^{1,2}_0{\pr{\Om}^N}} \le c_0 \norm{\bu}_{\bF\pr{\Om}}.
\label{eq2.5}
\end{align}
Note that this embedding and \eqref{eq2.2} imply a homogeneous Sobolev inequality in $\bF_0\pr{\Om}$:
\begin{equation}
\norm{\bu}_{L^{2^*}\pr{U}} \lesssim \norm{D\bu}_{L^2\pr{U}} \mbox{ for any }\bu \in \bF_0\pr{\Omega},
\label{eq2.6}
\end{equation}
which will be used repeatedly throughout.

\item\label{A4}
For any $U\subset \rn$ open and connected
\begin{equation}
\label{eq2.7}
\begin{array}{c}
\mbox{ $\bu\in \bF\pr{\Om}$ and $\xi\in C_c^\infty(U) \quad \Longrightarrow \quad \bu \xi\in \bF(\Om \cap U)$,} \\
\mbox{ $\bu\in \bF\pr{\Om}$ and $\xi\in C_c^\infty(\Om \cap U) \quad \Longrightarrow \quad \bu \xi\in \bF_0(\Om \cap U)$,}
\end{array}
\end{equation}
with $\|\bu \xi\|_{\bF(\Om\cap U)}\leq C_\xi \, \|\bu\|_{\bF(\Om)}.$

It follows, in particular, that
\eq{\label{eq2.8}
\bF\pr{\Omega} \su Y^{1,2}_{\loc}\pr{\Omega}^N.}
Indeed, for any $x\in \Omega$ there exists a ball $B_r(x)\subset \Omega$. If $\bu\in \bF(\Omega)$ and $\xi\in C_c^\infty (B_r)$, we have $\bu \xi\in {\bF}_0\pr{\Om} \hookrightarrow Y^{1,2}_0{\pr{\Om}^N}$. 
Hence, taking $\xi \equiv 1$ on $B_{r/2}(x),$ we conclude that $\bu\in Y^{1,2}\pr{B_{r/2}\pr{x}}^N$.

Another consequence of \eqref{eq2.7} is that for any $U\subset \rn$ open and connected
\begin{equation}
\label{eq2.9}
\mbox{ if $\bu\in \bF_0\pr{\Om}$ and $\xi\in C_c^\infty(U) \quad \Longrightarrow \quad \bu \xi\in \bF_0(\Om \cap U)$.}
 \end{equation}
Indeed, if $\bu\in \bF_0\pr{\Om}$ then there exists a sequence  $\set{\bu_n}_{n \in \N} \su C_c^\infty(\Omega)$ which converges to $\bu$ in $\bF(\Omega)$. 
But then $\set{\xi \bu_n}_{n\in\N} \su C_c^\infty(\Omega\cap U)$ is Cauchy in $\bF(\Omega\cap U)$ and in $Y^{1,2}(\Omega\cap U)^N$ by \eqref{eq2.7} and \eqref{eq2.5}. 
Therefore, it converges in $\bF(\Omega\cap U)$  and in $Y^{1,2}(\Omega\cap U)^N$ to some element of $\bF_0(\Omega\cap U)\hookrightarrow Y^{1,2}_0(\Omega\cap U)^N$, call it $\bv$. 
And it follows that $\bv=\bu\xi$ as elements of $Y^{1,2}_0(\Omega\cap U)^N$.

\end{enumerate}

For future reference, we mention that for $\Omega, U\subset \rn$ open and connected, the assumption 
\begin{equation}\label{eq2.10}
\bu \in \bF(\Omega), \quad \bu={\bf 0} \mbox{ on } U\cap \po, 
\end{equation}
is always meant in the weak sense of 
\begin{equation}\label{eq2.11}
\bu \in \bF(\Omega) \mbox{ and } \bu \xi \in \bF_0(\Omega) \mbox{ for any } \xi \in C_c^\infty(U).
\end{equation}
This definition of (weakly) vanishing on the boundary is independent of the choice of $U$.  Indeed, suppose $V$ is another open and connected subset of $\R^n$ such that $V \cap \partial \Om = U \cap \partial \Om$ and let $\xi \in C_c^\infty\pr{V}$.
Choose $\psi \in C_c^\infty\pr{U \cap V}$ such that $0\le \psi \le 1$ and $\psi \equiv 1$ on the support of $\xi$ in some neighborhood of the boundary.
Then $\xi \pr{1-\psi}|_{\Om}\in C_c^\infty\pr{\Om}$, so by \eqref{eq2.7} we have $\bu \xi \pr{1-\psi} \in \bF_0\pr{\Om}$.
Additionally, $\xi \psi \in C_c^\infty\pr{U}$, so by \eqref{eq2.11}, $\bu \xi \psi \in \bF_0 \pr{\Om}$.
Therefore, $\bu \xi = \bu \xi \psi + \bu \xi \pr{1-\psi} \in \bF_0\pr{\Om}$, as desired.

Before stating the remaining properties of $\bF\pr{\Om}$, we define the elliptic operator. 
Let $\bA^{\al \be} = \bA^{\al \be}\pr{x}$, $\al, \be \in \set{ 1, \dots, n}$, be an $N \times N$ matrix with bounded measurable coefficients defined on $\Om$.
We assume that $\bA^{\al \be}$ satisfies uniform ellipticity and boundedness conditions: 
\begin{align}
&A^{\al \be}_{ij}\pr{x} \xi_{\be}^j \xi_{\al}^i \ge \la \abs{\bf \xi}^2 
:= \la \sum_{i = 1}^N \sum_{\al = 1}^n \abs{\xi_{\al}^i}^2
\label{eq2.12}\\
& \sum_{i, j = 1}^N \sum_{\al, \be = 1}^n \abs{A_{ij}^{\al \be}\pr{x}}^2 \le \La^2,
\label{eq2.13}
\end{align}
for some $0 < \la, \La < \iny$ and for all $x \in \Om$. 
Let $\bV$ denote the zeroth order term, an $N \times N$ matrix defined on $\Om$.
The first order terms, denoted by $\bb^\al$ and $\bd^\be$, for each $\al, \be \in \set{1, \ldots, n}$, are $N \times N$ matrices defined on $\Om$.
We assume that there exist $p \in \pb{\frac{n}{2}, \iny}$ and $s, \, t \in \pb{n, \iny}$ such that
\eq{\label{eq2.14} \bV\in L^p_{\loc}\pr{\Om}^{N \times N}, \,\bb\in L^s_{\loc}\pr{\Om}^{n \times N \times N}, \, \bd\in L^t_{\loc}\pr{\Om}^{n \times N \times N}.}

We now formally fix the notation and then we will discuss the proper meaning of the operators at hand. For every $\bu = \pr{u^1,\ldots, u^N }^T$ in $\bF_{\loc}\pr{\Om}$ (and hence, in $Y^{1,2}_{\loc}\pr{\Om}^N$) we define\begin{equation}
L \bu = - D_\al \pr{\bA^{\al \be} D_\be \bu}.
\label{eq2.15}
\end{equation}
If we write out \eqref{eq2.15} component-wise, we have
\begin{align*}
\pr{L \bu}^i = - D_\al \pr{A_{ij}^{\al \be} D_\be u^j}, \;\;  \text{ for each } i = 1, \ldots, N.
\end{align*}

The non-homogeneous second-order operator is written as
\begin{align}
\mathcal{L} \bu &:= L \bu - D_\al \pr{\bb^\al \bu} + \bd^\be D_\be \bu + \bV \bu \nonumber \\
&= -D_\al\pr{\bA^{\al \be} D_\be \bu + \bb^\al \bu} + \bd^\be D_\be \bu + \bV \bu,
\label{eq2.16}
\end{align}
or, component-wise, 
\begin{align*}
\pr{ \mathcal{L} \bu}^i
&= -D_\al\pr{A_{ij}^{\al \be} D_\be u^j + b_{ij}^\al u^j} + d_{ij}^\be D_\be u^j + V_{ij} u^j, \;\; \text{ for each } i = 1, \ldots, N.
\end{align*}

The transpose operator of $L$, denoted by $L^*$, is defined by
\begin{align*}
L^* \bu = - D_\al \brac{\pr{\bA^{\al\be}}^* D_\be \bu},
\end{align*}
where $\pr{\bA^{\al \be}}^* = \pr{\bA^{\be\al}}^T$, or rather $\pr{A_{ij}^{\al\be}}^* = A_{ji}^{\be\al}$.
Note that the adjoint coefficients, $\pr{A_{ij}^{\al\be}}^*$ satisfy the same ellipticity assumptions as $A_{ij}^{\al\be}$ given by \eqref{eq2.12} and \eqref{eq2.13}.
Take $\pr{\bb^{\al}}^* = \pr{\bd^{\al}}^T$, $\pr{\bd^{\be}}^* = \pr{\bb^{\be}}^T$, and $\bV^* = \bV^T$.
The adjoint operator to $\mathcal{L}$ is given by
\begin{align}
\mathcal{L}^* \bu 
&:=\,  L^* \bu - D_\al \brac{\pr{\bb^\al}^* \bu} + \pr{\bd^\be}^* D_\be \bu + \bV^* \bu \nonumber \\
&= -D_\al\brac{\pr{\bA^{\be\al}}^T D_\be \bu + \pr{\bd^\al}^T \bu} + \pr{\bb^\be}^T D_\be \bu + \bV^T \bu,
\label{eq2.17}
\end{align}
or
\begin{align*}
\pr{\mathcal{L}^* \bu}^i 
&= -D_\al\pr{A_{ji}^{\be \al} D_\be u^j + d_{ji}^\al u^j} + b_{ji}^\be D_\be u^j + V_{ji} u^j, \;\; \text{ for each } i = 1, \ldots, N.
\end{align*}

All operators, $L, L^*, \mathcal{L}, \mathcal{L}^*$ are understood in the sense of distributions on $\Omega$. 
Specifically, for every $\bu \in Y^{1,2}_{\loc}\pr{\Om}^N$ and $\bv\in C_c^\infty\pr{\Om}^N$, we use the naturally associated bilinear form and write the action of the functional ${\mathcal L}\bu$ on $\bv$ as
\begin{align}
({\mathcal L}\bu, \bv)=\mathcal{B}\brac{\bu, \bv} 
&= \int_\Om \bA^{\al \be} D_\be \bu \cdot D_\al \bv + \bb^\al \, \bu \cdot D_\al \bv + \bd^\be D_\be \bu \cdot \bv + \bV \, \bu \cdot \bv
\nonumber \\
&= \int_\Om A_{ij}^{\al \be} D_\be u^j D_\al v^i + b_{ij}^\al u^j D_\al v^i + d_{ij}^\be D_\be u^j v^i + V_{ij} u^j v^i.
\label{eq2.18}
\end{align}
It is not hard to check that for such $\bv, \bu$ and for the coefficients satisfying \eqref{eq2.13}, \eqref{eq2.14}, the bilinear form above is well-defined and finite.  
We often drop the $\Om$ from the subscript on the integral when it is understood.
Similarly, $\mathcal{B}^*\brac{\cd, \cd}$ denotes the bilinear operator associated to $\mathcal{L}^*$, given by
\begin{align}
({\mathcal L}^*\bu, \bv)=\mathcal{B}^*\brac{\bu, \bv} 
&= \int  \pr{\bA^{\be \al}}^T D_\be \bu \cdot D_\al \bv + \pr{\bd^\al}^T \bu \cdot D_\al \bv + \pr{\bb^\be}^T D_\be \bu \cdot \bv + \bV^T \, \bu \cdot \bv 
\nonumber \\
&= \int A_{ji}^{\be \al} D_\be u^j D_\al v^i + d_{ji}^\al u^j D_\al v^i + b_{ji}^\be D_\be u^j v^i + V_{ji} u^j v^i .
\label{eq2.19}
\end{align}
Clearly,
\begin{equation}\label{eq2.20}
\mathcal{B}\brac{\bv,\bu}=\, \mathcal{B}^*\brac{\bu,\bv}.
\end{equation}
For any vector distribution $\bff$ on $\Omega$ and $\bu$ as above we always understand ${\mathcal L}\bu= \bff $ on $\Omega$ in the weak sense, that is, as $\mathcal{B}\brac{\bu,\bv}= \bff(\bv)$ for all $\bv\in C_c^\infty\pr{\Om}^N$. 
Typically $\bff$ will be an element of some $L^\ell(\Omega)^N$ space and so the action of $\bff$ on $\bv$ is then simply $\disp \int \bff\cdot \bv.$ 
The identity ${\mathcal L}^*\bu= \bff $ is interpreted similarly.

Returning to the properties of the Banach space $\bF\pr{\Om}$ and the associated Hilbert space $\bF_0\pr{\Om}$, we require that $\mathcal{B}$ and $\mathcal{B}^*$ can be extended to bounded and accretive bilinear forms on $\bF_0\pr{\Om} \times \bF_0\pr{\Om}$ so that the Lax-Milgram theorem may be applied in $\bF_0\pr{\Om}$.
\begin{enumerate}[label=A\arabic*), resume]
\item\label{A5} {\it Boundedness hypotheses:}\\
There exists a constant $\Ga > 0$ so that for any $\bu, \bv \in \bF_0\pr{\Om}$,
\begin{align}
\mathcal{B}\brac{\bu, \bv} \le \Ga \norm{\bu}_{\bF} \norm{\bv}_{\bF}. 
\label{eq2.21}
\end{align}
\item\label{A6} {\it Coercivity hypotheses:}\\
There exists a constant $\ga > 0$ so that for any $\bu \in \bF_0\pr{\Om}$,
\begin{align}
 \ga \norm{\bu}_{\bF}^2 \le \mathcal{B}\brac{\bu, \bu} 
\label{eq2.22}
\end{align}
\end{enumerate}
Finally, we assume
\begin{enumerate}[label=A\arabic*), resume]
\item\label{A7} {\it The Caccioppoli inequality:} 
If $\bu \in \bF\pr{\Om}$ is a weak solution to $\mathcal{L} \bu = \bf 0$ in $\Omega$ and $\zeta \in C^\infty(\rn)$ is such that $D\zeta \in C_c^\infty (\Omega)$ and $\zeta \bu  \in \bF_0\pr{ \Omega}$, $ \partial^i\zeta \,\bu\in L^2(\Omega)^N$, $i=1, ..., n$, then
\begin{align}
\int \abs{D \bu}^2 \zeta^2 \le C \int \abs{\bu}^2 \abs{D \zeta}^2,
\label{eq2.23}
\end{align}
where $C$ is a constant that depends on $n, s, t, \ga, \Ga, \norm{\bb}_{L^s\pr{\Om}}$, and $\norm{\bd}_{L^t\pr{\Om}}$.  
However, $C$ is independent of the set on which $\zeta$ and $D\zeta$ are supported.

We remark that the assumption $D\zeta \in C_c^\infty (\Omega)$ implies that $\zeta$ is a constant in the exterior of some large ball and, in particular, one can show that under the assumptions of {\rm\ref{A7}} we have also $\zeta^2 \bu  \in \bF_0\pr{ \Omega}$ (using {\rm\ref{A4}}). 
This will be useful later on. 
We also remark that the right-hand side of \eqref{eq2.23} is finite by our assumptions. 

Finally, let us point out that normally the Caccioppoli inequality will be used either in a ball or in the complement of the ball, that is, $\zeta=\eta$ or $\zeta=1-\eta$ for $\eta\in C_c^\infty (B_{2R})$ with $\eta=1$ on $B_R$, where $B_R$ is some ball in $\rn$ possibly intersecting $\po$. 
It is, in fact,  only the second case (the complement of the ball) which is needed for construction of the fundamental solution. 
\end{enumerate}

Throughout the paper, whenever we assume that \rm{\ref{A1} -- \ref{A7}} hold, we mean that the assumptions described by \rm{\ref{A1} -- \ref{A7}} hold for the collections of spaces $\bF\pr{\Om}$ and $\bF_0\pr{\Om}$ and  the elliptic operators $\mathcal{L}$ and $\mathcal{L}^*$ with bilinear forms $\mathcal{B}$ and $\mathcal{B}^*$, respectively.

We shall discuss extensively in Section~\ref{s7} and below how the common examples (notably, homogeneous elliptic systems and non-homogeneous elliptic systems with lower order terms in suitable $L^p$ or $B_p$ classes) fit into this framework.

To avoid confusion, we finally point out that  $\bF\pr{\Om}$ is of course a {\it collection} of Banach spaces, indexed by the domain $\Omega$, and the connection between $\bF(\Omega_1)$ and $\bF(\Omega_2)$ for $\Omega_1\cap \Omega_2\neq \emptyset$ is seen through the property \rm{\ref{A1}}.
That is, $\bF(U)$ contains all restrictions of elements of $\bF\pr{\Om}$, when $U\subset\Omega$. 
We do not assume that any element of  $\bF(U)$ can be extended to $\bF\pr{\Om}$. 
This is typical, e.g., for Sobolev spaces $W^{1,2}\pr{\Om}$, because the extension property might fail on bad domains. 

\section{Fundamental matrices and Green matrices}

This section resembles the work done in \cite{HK07}, but we deal here with operators that have lower order terms. 
In addition to the assumptions regarding $\bF\pr{\Om}$, $\bF_0\pr{\Om}$, $\mathcal{L}$ and $\mathcal{B}$ that are described in the previous section, we assume that all solutions satisfy certain de Giorgi-Nash-Moser estimates. 
Recall that in \cite{HK07} the authors imposed that all solutions to $L\bu = \bf 0$ satisfy bounds on Dirichlet integrals (their results applied only to homogeneous operators). 
Here, instead, we assume that weak solutions to non-homogeneous equations, $\mathcal{L} \bu = \bff$, for suitable $\bff$, satisfy certain scale-invariant Moser-type estimates and that solutions to homogeneous equations, $\mathcal{L} \bu = \bf 0$, are H\"older continuous. 
We shall make it precise below. 
To start though, let us introduce a slightly weaker hypothesis (a Moser-type local bound):
\begin{itemize}
\item For any $y\in \Om$, there exists an $R_y\in (0,\infty]$ such that whenever $0 < 2 r < R_y$, $\bff\in L^\ell\pr{\Om_r\pr{y}}^N$ for some $\ell \in \pb{\frac n 2, \iny}$, $\bu\in \bF\pr{\Om_{2r}\pr{y}}$ satisfies $\bu = \bz$ on $\partial \Om \cap B_{2r}(y)$ in the weak sense of \eqref{eq2.11}, and either $\mathcal{L} \bu = \bff$ or $\mathcal{L}^* \bu = \bff$ in $\Om_{r}\pr{y}$ in the weak sense, then for any $q>0$ there is a $C > 0$ so that
\eq{\label{eq3.1}
\norm{\bu}_{L^\infty\pr{\Om_{r/2}\pr{y}}} 
\le C \brac{ r^{- \frac n {q}} \norm{\bu}_{L^{q}\pr{\Om_r\pr{y}}} 
+ r^{2-\frac{n}{\ell}}\norm{\bff}_{L^\ell\pr{\Om_r\pr{y}}} }.
}
\end{itemize}

Without loss of generality, we can assume that the righthand side of \eqref{eq3.1} is finite. 
Indeed, if we take $\zeta \in C^\iny_c\pr{B_{2r}\pr{y}}$ such that $\zeta \equiv 1$ on $B_{r}\pr{y}$ then $\bu\zeta \in \bF_0(\Omega_{2r})$ assures that $\bu \in L^{2^*}\pr{\Omega_r}^N$ by the homogenous Sobolev inequality, \eqref{eq2.6}. 
Then \eqref{eq3.1} shows that $\bu \in L^{q}(\Omega_{r/2})^N$ for any $q<\infty$. 
Strictly speaking, it would be more coherent then to write \eqref{eq3.1} for $r<R_y/4$ but we ignore this minor inconsistency as clearly in practice one can always adjust the constants when proving \eqref{eq3.1}.
If $\ell = \iny$, then we interpret $\frac 1 \ell$ to equal $0$.
This convention will be used throughout.

Note that the constant $C$ in the estimate above is allowed to depend on the choice of $\mathcal{L}$, but it should be independent of $r$ and $R_y$.
In other words, we assume that all solutions satisfy a local scale-invariant Moser boundedness condition.

In this respect, we would like to make the following remark. 
All boundedness and H\"older continuity conditions on solutions that we impose are local in nature. 
However, slightly abusing the terminology, we refer to a given condition as local if it only holds for balls of the radius smaller than $R_0$, for some fixed $R_0>0$, depending or not depending on the center of the ball. 
As such, \eqref{eq3.1} is local. 
Later on, we will also talk about interior estimates which hold for balls inside $\Omega$ and boundary estimates in which balls are allowed to intersect the boundary.  
Either can be local or global depending on whether the size of the balls is restricted, and the interior estimates are of course always local if $\Omega\neq \RR^n$. 
In any case, we are always careful to specify the exact condition. 

\begin{rem} If $R_y=\dist\pr{y, \partial \Om}$ then $\partial \Om \cap B_r=\emptyset$, hence, in that case, \eqref{eq3.1} is merely an interior (rather than a boundary) condition.
\end{rem}

\subsection{A general construction method}

First, we establish a supporting lemma that will make the proofs in the following sections more concise.  We follow closely the argument in \cite{HK07}.

\begin{lem}\label{l3.2}
Let $\Om$ be an open connected subset of $\R^n$.  
Assume that {\rm\ref{A1} -- \ref{A7}} hold.
Then for all $y\in \Om$, $0 < \rho <d_y := \dist\pr{y, \del \Om}$, $k\in \set{1,\dots, N}$, there exists $\bv_\rho=\bv_{\rho; y, k}\in \bF_0\pr{\Om}$ such that
\eq{\label{eq3.2}
\mathcal{B}[\bv_\rho, \bu] = \fint_{B_\rho(y)} u^k = \frac{1}{\abs{B_\rho(y)}} \int_{B_\rho(y)} u^k, \quad \forall \, \bu \in \bF_0\pr{\Om}.
}
If, in addition, \eqref{eq3.1} holds, then there exists a function $\bv=\bv_{y, k}$ and a 
subsequence $\{\rho_\mu\}_{\mu=1}^\infty$, $\rho_{\mu}\to 0$, such that 
\aln{
&\bv_{\rho_\mu} \rightharpoonup \bv \quad \text{in } W^{1,q}\pr{\Om_r\pr{y}}^N
\quad \forall r< \tfrac 1 2 R_y, \quad \forall  q\in \pr{1,\frac{n}{n-1}}, 
\label{eq3.3}\\
&\bv_{\rho_\mu} \rightharpoonup \bv \quad \text{in } L^q\pr{\Om_r\pr{y}}^N
\quad \forall r< \tfrac 1 2 R_y, \quad \forall q \in \pr{1,\frac{n}{n-2}},
\label{eq3.4}\\
&\bv_{\rho_\mu} \rightharpoonup \bv \quad \text{in } Y^{1,2}\pr{\Om \setminus 
\Om_r\pr{y}}^N, \quad \forall r>0\label{eq3.5}.
}
For any $\phi \in C_c^\infty\pr{\Om}^N$, 
\eq{\label{eq3.6}
\mathcal{B}\brac{\bv, \phi}= \phi^k\pr{y}.
}
If $\bff \in L_c^\infty\pr{\Om}^N$ and $\bu\in \bF_0\pr{\Om}$ is the unique weak solution to $\mathcal{L}^* \bu = \bff$, then for a.e. $y\in \Om$,
\eq{\label{eq3.7}
u^k(y)=\int_{\Om} \bv \cdot \bff.
}
Furthermore, $\bv$ satisfies the following estimates:
\begin{align}
& \norm{\bv}_{L^{2^*}\pr{\Om \setminus \Om_r\pr{y}}} 
\le  C r^{1-\frac{n}{2}}, \quad \forall r< \tfrac 1 2 R_y,
\label{eq3.8} \\
& \norm{D\bv}_{L^2\pr{\Om \setminus \Om_r(y)}} 
\le C r^{1-\frac{n}{2}}, \quad \forall r<  \tfrac 1 2 R_y, 
\label{eq3.9} \\
& \norm{\bv}_{L^{q}\pr{\Om_r\pr{y}}} 
\le C_{q} r^{2-n+\frac{n}{q}}, \quad \forall r<  \tfrac 1 2 R_y, \quad \forall q\in \left[1,\tfrac{n}{n-2}\right),
\label{eq3.10} \\
& \norm{D \bv}_{L^{q}\pr{\Om_r\pr{y}}} 
\le C_{q} r^{1-n+\frac{n}{q}}, \quad \forall r< \tfrac 1 2 R_y, \quad \forall q\in \left[1,\tfrac{n}{n-1}\right),
\label{eq3.11} \\
& \abs{\set{x\in \Om : \abs{\bv(x)}> \tau }}
\le C\tau^{-\frac{n}{n-2}}, \quad \forall \tau >\pr{\tfrac 1 2 R_y}^{2-n}, 
\label{eq3.12} \\
& \abs{\set{x\in \Om : \abs{D\bv(x)}> \tau }}
\le C \tau^{-\frac{n}{n-1}}, \quad \forall \tau >\pr{\tfrac 1 2 R_y}^{1-n},
\label{eq3.13} \\  
& \abs{\bv(x)}
\le C R_{x,y}^{2-n} \quad \text{for a.e. $x\in \Om$, where } R_{x,y}:=\min \set{R_x, R_y, \abs{x-y}}, 
\label{eq3.14}
\end{align}
where each constant depends on $n$, $N$, $c_0$, $\Gamma$, $\gamma$, and the 
constants from \eqref{eq2.23} and \eqref{eq3.1}, and each $C_q$ depends additionally on $q$.
\end{lem}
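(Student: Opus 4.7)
First, I construct $\bv_\rho$ via Lax--Milgram. The functional $F_\rho(\bu):=\fint_{B_\rho(y)} u^k$ is bounded on $\bF_0(\Omega)$: by H\"older and the homogeneous Sobolev inequality \eqref{eq2.6}, $|F_\rho(\bu)|\le |B_\rho|^{-1/2^*}\|u^k\|_{L^{2^*}(B_\rho)}\lesssim \rho^{1-n/2}\|\bu\|_{\bF}$. The boundedness \ref{A5} and coercivity \ref{A6} of $\mathcal{B}$ on the Hilbert space $\bF_0(\Omega)$ then produce, via Lax--Milgram, a unique $\bv_\rho\in\bF_0(\Omega)$ satisfying \eqref{eq3.2}, together with the preliminary (but $\rho$-dependent) bound $\|\bv_\rho\|_{\bF}\lesssim \rho^{1-n/2}$.

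The heart of the argument is to upgrade this to $\rho$-uniform decay estimates away from $y$; this is also the chief obstacle, as the Lax--Milgram bound diverges as $\rho\to 0$. Equation \eqref{eq3.2} says that $\mathcal{L}^*\bv_\rho$ is a distribution supported in $\overline{B_\rho(y)}$, so $\mathcal{L}^*\bv_\rho=\bz$ weakly on $\Omega\setminus\overline{B_\rho(y)}$. For $r\in(2\rho,\tfrac12 R_y)$, I choose a smooth cutoff $\eta$ with $\eta\equiv 0$ on $B_r(y)$, $\eta\equiv 1$ outside $B_{2r}(y)$, and $|D\eta|\le C/r$; then $\eta^2\bv_\rho\in\bF_0(\Omega)$ by \ref{A4}, and since $\eta\equiv 0$ on $B_\rho(y)$, \eqref{eq3.2} gives $\mathcal{B}[\bv_\rho,\eta^2\bv_\rho]=0$. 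Expanding this identity and invoking the Caccioppoli inequality \ref{A7} (applied to $\mathcal{L}^*$) yields $\int |D\bv_\rho|^2\eta^2\le C r^{-2}\int_{B_{2r}\setminus B_r}|\bv_\rho|^2$; iterating this on a dyadic family of annuli $B_{2^{k+1}r}\setminus B_{2^k r}$ in the spirit of \cite{HK07}, together with the global Sobolev inequality \eqref{eq2.6}, produces \eqref{eq3.8} and \eqref{eq3.9}. The pointwise bound \eqref{eq3.14} then follows by applying the scale-invariant Moser estimate \eqref{eq3.1} to $\mathcal{L}^*\bv_\rho=\bz$ on $B_{R_{x,y}/4}(x)$ and plugging in \eqref{eq3.8}, and the $L^q$ and weak-type estimates \eqref{eq3.10}--\eqref{eq3.13} follow by integrating \eqref{eq3.14} in polar coordinates around $y$ (for $\bv_\rho$) and combining \eqref{eq3.14} with Caccioppoli on small balls $B_{|x-y|/4}(x)$ with $x\ne y$ (for $D\bv_\rho$). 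The main technical delicacy, sharper here than in the homogeneous setting of \cite{HK07}, is that the lower-order coefficients $\bb,\bd,\bV$ enter the constants of \ref{A7} and \eqref{eq3.1} through their $L^s,L^t,L^p$ norms, and tracking these constants without losing the $r$-scaling requires care.

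Finally, the $\rho$-uniform bounds, together with Rellich--Kondrachov compactness and a diagonal extraction, produce a subsequence $\rho_\mu\to 0$ along which \eqref{eq3.3}--\eqref{eq3.5} hold for some $\bv$ in the corresponding spaces. Passing to the limit in \eqref{eq3.2} with a fixed test function $\phi\in C_c^\infty(\Omega)^N$ gives $\mathcal{B}[\bv,\phi]=\lim_\mu \fint_{B_{\rho_\mu}(y)}\phi^k=\phi^k(y)$, which is \eqref{eq3.6}. For the representation \eqref{eq3.7}, given $\bff\in L^\infty_c(\Omega)^N$ let $\bu\in\bF_0(\Omega)$ be the unique solution of $\mathcal{L}^*\bu=\bff$ (furnished by Lax--Milgram applied to $\mathcal{B}^*$, whose boundedness and coercivity on $\bF_0(\Omega)$ follow from \eqref{eq2.20}, \ref{A5}, and \ref{A6}); substituting this $\bu$ into \eqref{eq3.2} and using \eqref{eq2.20} gives $\fint_{B_\rho(y)} u^k=\mathcal{B}[\bv_\rho,\bu]=\mathcal{B}^*[\bu,\bv_\rho]=\int\bff\cdot\bv_\rho$, and letting $\rho_\mu\to 0$ — by weak convergence on $\supp\bff\setminus B_r(y)$ together with the uniform $L^q$ bounds near $y$ to control the contribution from $\supp\bff\cap B_r(y)$ as $r\to 0$ — combined with Lebesgue differentiation on the left at a.e.\ $y$ concludes \eqref{eq3.7}.
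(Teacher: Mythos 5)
There is a genuine gap at the central step: your claim that Caccioppoli plus a dyadic iteration over annuli yields the $\rho$-uniform bounds \eqref{eq3.8}--\eqref{eq3.9}. The Caccioppoli inequality gives $\int |D \bv_\rho|^2\eta^2 \le C r^{-2}\int_{B_{2r}\setminus B_r}|\bv_\rho|^2$, but at this stage you have no $\rho$-uniform control whatsoever of $\bv_\rho$ on that annulus: the only available estimate is the Lax--Milgram bound $\|\bv_\rho\|_{\bF}\lesssim \rho^{1-n/2}$, which diverges as $\rho\to0$. Iterating Caccioppoli--Sobolev inward over dyadic annuli only passes the problem from scale $r$ to scale $r/2$ with a constant $C\ge1$ at each step, and the chain necessarily terminates at scale $\rho$ where you must fall back on the Lax--Milgram bound; there is no absorption or decay mechanism, so the best you can extract is a bound of order $C^{\log(r/\rho)}\rho^{1-n/2}$, not $r^{1-n/2}$. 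This is also not what \cite{HK07} does. The missing idea --- and the place where the hypothesis \eqref{eq3.1} is actually indispensable, whereas in your argument it is never used to produce uniformity --- is a duality step through the adjoint problem: for $\bff\in L^\infty_c$ supported in $\Om_r(y)$, solve $\mathcal{L}^*\bu=\bff$ with $\bu\in\bF_0(\Om)$, apply \eqref{eq3.1} to $\bu$ (combined with the coercivity bound $\|\bu\|_{\bF}\lesssim \|\bff\|_{L^\ell}|\supp\bff|^{\frac{n+2}{2n}-\frac1\ell}$) to get $\|\bu\|_{L^\infty(\Om_{r/2}(y))}\le C r^{2-n/\ell}\|\bff\|_{L^\ell(\Om_r(y))}$, and then use $\int\bff\cdot\bv_\rho=\fint_{B_\rho(y)}u^k$ for $\rho\le r/2$ to conclude, by duality, the first $\rho$-uniform estimate $\|\bv_\rho\|_{L^q(\Om_r(y))}\le Cr^{2-n+n/q}$ for $q\in[1,\tfrac{n}{n-2})$. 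Only with this in hand does one get the pointwise bound $|\bv_\rho(x)|\lesssim|x-y|^{2-n}$ (by \eqref{eq3.1} applied to $\bv_\rho$ with $q=1$ on balls away from $y$), and then a single Caccioppoli application with a cutoff vanishing on $B_{r/2}(y)$ gives \eqref{eq3.8}--\eqref{eq3.9}; your ordering (first \eqref{eq3.8}--\eqref{eq3.9}, then the pointwise bound, then the $L^q$ bounds) inverts the actual logical dependence and leaves the uniform estimates unsupported. Note also that you already write down the identity $\int\bff\cdot\bv_\rho=\fint_{B_\rho}u^k$ when proving \eqref{eq3.7}, so the fix is to exploit it earlier, for the estimates themselves.

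Two smaller points. First, with the paper's convention $(\mathcal{L}\bu,\bv)=\mathcal{B}[\bu,\bv]$, equation \eqref{eq3.2} says $\mathcal{L}\bv_\rho$ (not $\mathcal{L}^*\bv_\rho$) is the averaged delta, so $\bv_\rho$ solves $\mathcal{L}\bv_\rho=\bz$ away from $B_\rho(y)$; this is harmless since {\rm\ref{A1}--\ref{A7}} and \eqref{eq3.1} are assumed for both operators, but it should be stated correctly. Second, $\eta^2\bv_\rho\in\bF_0(\Om)$ does not follow directly from {\rm\ref{A4}}, because $\eta$ is not compactly supported ($\eta\equiv1$ outside $B_{2r}$); one must argue as in the paper by writing $\eta=1-(1-\eta)$ with $1-\eta\in C_c^\infty$ and using \eqref{eq2.9}. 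Also, for \eqref{eq3.3}--\eqref{eq3.5} you need only boundedness plus weak compactness in the reflexive spaces $W^{1,q}$, $L^q$, $Y^{1,2}$ (and a diagonal argument), not Rellich--Kondrachov.
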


\begin{pf}[Proof of Lemma~\ref{l3.2}] 
Let $\bu\in \bF_0\pr{\Om}$. 
Fix $y\in \Om$, $0 < \rho <d_y$, and $k \in \set{1,\dots, N }$, and consider the linear functional
\eqs{
\bu \mapsto \fint_{B_\rho(y)} u^k.
}
By  the H\"older inequality, \eqref{eq2.6}, and \eqref{eq2.5},
\begin{align}
\abs{ \fint_{B_\rho(y)} u^k}
&\le \frac{1}{\abs{B_\rho\pr{y}}} \int_{B_\rho\pr{y}} \abs{\bu}
\le \abs{B_\rho\pr{y}}^{\frac{2-n}{2n}} \pr{\int_{\Om} \abs{\bu}^{\frac{2n}{n-2}} }^{\frac{n-2}{2n}} 
\le c_n \abs{B_\rho\pr{y}}^{\frac{2-n}{2n}} \pr{\int_{\Om} \abs{D\bu}^{2} }^{\frac{1}{2}}  \nonumber \\
&\le c_0 c_n \rho^{\frac{2-n}{2}} \norm{\bu}_{\bF\pr{\Om}}.
\label{eq3.15}
\end{align}
Therefore, the functional is bounded on $\bF_0\pr{\Om}$, and by the Lax-Milgram theorem there exists a unique $\bv_\rho \in \bF_0\pr{\Om}$ satisfying \eqref{eq3.2}.
By coercivity of $\mathcal{B}$ given by \eqref{eq2.22} along with \eqref{eq3.15}, we obtain,
\eqs{
\ga \norm{\bv_\rho}^2_{\bF\pr{\Om}}
\le \mathcal{B}\brac{\bv_\rho,\bv_\rho}
= \abs{\fint_{B_\rho(y)} v_\rho^k}
\le c_0 c_n \rho^{\frac{2-n}{2}} \norm{\bv_\rho}_{\bF\pr{\Om}}
}
so that 
\eq{\label{eq3.16}
\norm{D\bv_\rho}_{L^2\pr{\Om}} 
\le c_0 \norm{\bv_\rho}_{\bF\pr{\Om}}
\le C \rho^{\frac{2-n}{2}},
}
where the first inequality is by \eqref{eq2.5}.

For $\bff \in L^\infty_c\pr{\Om}^N$, consider the linear functional
\eqs{
\bF_0\pr{\Om} \ni \bw \mapsto \iom \bff \cdot \bw.
}
This functional is bounded on $\bF_0\pr{\Om}$ since for every $\bw \in \bF_0\pr{\Om}$, and any $\ell \in \pb{ \frac n 2, \iny}$,
\eq{\label{eq3.17}
\abs{\iom \bff \cdot \bw }
\le \norm{\bff}_{L^{\ell}\pr{\Om}} \norm{\bw}_{L^{\frac{2n}{n-2}}\pr{\Om}} \abs{\supp \bff}^{\frac{n+2}{2n} - \frac 1 \ell}
\le C \norm{\bff}_{L^{\ell}\pr{\Om}} \abs{\supp \bff}^{\frac{n+2}{2n} - \frac 1 \ell} \norm{\bw}_{\bF\pr{\Om}},
%
}
where we have again used \eqref{eq2.6} and \eqref{eq2.5}.
Then, once again by Lax-Milgram, we obtain $\bu \in \bF_0\pr{\Om}$ such that
\eq{\label{eq3.18}
\mathcal{B}^*\brac{\bu, \bw}=\iom \bff \cdot \bw, \quad \forall \, \bw \in \bF_0\pr{\Om}.
}
Set $\bw=\bu$ in \eqref{eq3.18} and use the coercivity assumption, \eqref{eq2.22}, for $\mathcal{B}^*$ and \eqref{eq3.17} to get
\eq{\label{eq3.19}
\norm{\bu}_{\bF\pr{\Om}} \le C \norm{\bff}_{L^{\ell}\pr{\Om}}  \abs{\supp \bff}^{\frac{n+2}{2n} - \frac 1 \ell}.
}
Also, if we take $\bw = \bv_\rho$ in \eqref{eq3.18}, we get
\eq{\label{eq3.20}
\iom \bff \cdot \bv_\rho 
= \mathcal{B}^*[\bu, \bv_\rho]
 = \mathcal{B}[\bv_\rho, \bu] 
 = \fint_{B_\rho(y)} u^k.
}

Let $\bff \in L_c^\infty\pr{\Om}^N$ be supported in $\Om_r(y)$, where $0< 2r<R_y$, and let $\bu$ be as in \eqref{eq3.18}.  
Since $\bu \in \bF_0\pr{\Om}$, then \rm{\ref{A1}} implies that $\bu \in \bF\pr{\Om_{2r}}$ and \rm{\ref{A4}} gives $\bu = \bz$ on $\del \Om \cap \Om_{2r}$ so that \eqref{eq3.1} is applicable. 
Then, by \eqref{eq3.1} with 
$q=\frac{2n}{n-2}$ and $\ell \in \pb{ \frac n 2, \iny}$,
\eqs{
\norm{\bu}^2_{L^\infty\pr{\Om_{r/2}\pr{y}}} 
\le C \bigg( r^{2-n} \norm{\bu}^2_{L^{\frac{2n}{n-2}}\pr{\Om_r\pr{y}}} + r^{4 - \frac{2n}{\ell}} \norm{\bff}^2_{L^{\ell}\pr{\Om_r\pr{y}}}\bigg).
}
By \eqref{eq2.6}, \eqref{eq2.5}, and \eqref{eq3.19} with $\supp \bff \subset \Om_r\pr{y}$,
\eqs{
\norm{\bu}^2_{L^{2^*}\pr{\Om_r\pr{y}}} 
\le \norm{\bu}^2_{L^{2^*}\pr{\Om}} \le C \norm{\bu}^2_{\bF\pr{\Om}}
\le C \abs{\Om_r\pr{y}}^{1 + \frac{2}{n} - \frac {2} \ell} \norm{\bff}^2_{L^{\ell}\pr{\Om}}
\le C \abs{B_r\pr{y}}^{1 + \frac{2}{n} - \frac {2} \ell} \norm{\bff}^2_{L^{\ell}\pr{\Om}},
}
where, as before, $2^*=\frac{2n}{n-2}$.
Combining the previous two inequalities, we get
\eqs{
\norm{\bu}^2_{L^\infty\pr{\Om_{r/2}\pr{y}}} 
\le C r^{4-\frac{2n}{\ell}} \norm{\bff}^2_{L^{\ell}\pr{\Om}}.
}
Therefore,
\eq{
\norm{\bu}_{L^\infty \pr{\Om_{r/2}\pr{y}}} 
\le C r^{2-\frac{n}{\ell}} \norm{\bff}_{L^{\ell}\pr{\Om}} 
= C r^{2-\frac{n}{\ell}} \norm{\bff}_{L^{\ell}\pr{\Om_r\pr{y}}}, \quad \forall \ell \in \pb{\frac{n}{2}, \iny}.\label{eq3.21}
}
By \eqref{eq3.20} and \eqref{eq3.21}, if $\rho \le r/2$, $\rho < d_y$, we have
\eqs{
\abs{\int_{\Om_r\pr{y}} \bff \cdot \bv_\rho}
= \abs{\iom \bff \cdot \bv_\rho}
\le \fint_{B_\rho(y)} \abs{\bu}
\le \norm{\bu}_{L^\infty(B_\rho(y))}
\le \norm{\bu}_{L^\infty(\Om_{r/2}(y))} 
\leq Cr^{2-\frac{n}{\ell}} \norm{\bff}_{L^{\ell}\pr{\Om_r\pr{y}}}, \quad \forall \ell \in \pb{\frac{n}{2}, \iny}. 
}
By duality, this implies that for $r<\frac 1 2 R_y$,
\eq{\label{eq3.22}
\norm{\bv_\rho}_{L^q\pr{\Om_r\pr{y}}}
\le C r^{2-n+\frac{n}{q}}, \quad \mbox{ for all } \rho \le \frac{r}{2}, \rho < d_y, \quad \forall q \in \left[1,\tfrac{n}{n-2}\right).
}

Fix $x\ne y$ such that $r:= \frac{4}{3} |x-y| < \frac{1}{2}R_y$.  
For $\rho \le r/2, \rho < d_y$, $\bv_\rho$ is a weak solution to $\mathcal{L} \bv_\rho=\bz$ in $\Om_{r/4}(x)$.  
Moreover, since $\bv_\rho \in \bF_0\pr{\Om}$, then \rm{\ref{A1}} implies that $\bv_\rho \in \bF\pr{\Om_{r/2}\pr{x}}$ and \rm{\ref{A4}} implies that $\bv_\rho = \bz$ on $\del \Om \cap \Om_{r/2}\pr{x}$ so we may use \eqref{eq3.1}.
Thus, applying \eqref{eq3.1} with $q=1$ and \eqref{eq3.22}, we get for a.e. $x \in \Om$ as above, 
\eq{\label{eq3.23}
\abs{\bv_\rho(x)}
\le C r^{-n} \norm{\bv_\rho}_{L^1\pr{\Om_{r/4}\pr{x}}} 
\le C r^{-n} \norm{\bv_\rho}_{L^1\pr{\Om_{r}\pr{y}}} 
\le Cr^{2-n} 
\approx  |x-y|^{2-n}.
}
 
Now, for any  $r< \frac 1 2 R_y$ and $\rho\le r/{2}$, $\rho<d_y$,   
let $\zeta$ be a cut-off function such that
\eq{\label{eq3.24}
\zeta \in C^{\iny}(\R^n), \quad
0\le \zeta \le 1, \quad 
\zeta \equiv 1 \text{ outside $B_{r}(y)$}, \quad 
\zeta \equiv 0 \text{ in $B_{r/2}(y)$}, \quad 
\text{and} \;  \abs{D \zeta} \le C/r.}
Then $\bv_\rho \zeta, \bv_\rho \partial^i\zeta \in \bF_0\pr{\Om\setminus \overline{\Om_{r/2}(y)}}$, for all $i=1, ..., n$. 
For the functions $\bv_\rho \partial^i\zeta$, this fact follows from \eqref{eq2.9}.
 The function $\bv_\rho \zeta$ is a little more delicate since $\zeta$ is not compactly supported. 
 However, since $\zeta$ equals 1 in the complement of $B_r(y)$, then $1-\zeta$ is compactly supported.
Thus, if $\set{\bv_n} \su C_c^\infty (\Omega)^N$ converges to $\bv_\rho$ in the $\bF(\Omega)$-norm, then, by \eqref{eq2.9}, $\set{\bv_n(1-\zeta)} \su C_c^\infty (\Omega)^N$ converges to $\bv_\rho(1-\zeta)$ in the $\bF(\Omega)$-norm . 
 Adding up these statements, we conclude that $\set{\bv_n \zeta} \su C_c^\infty (\Omega)^N$ approximates $\bv_\rho \zeta$ in $\bF(\Omega)$, as required.
 
 Now, since $\mathcal{L} \bv_\rho = \bz$ in $\Om\setminus \overline{\Om_{r/2}(y)}$, the Caccioppoli inequality, \eqref{eq2.23}, implies that
\begin{equation}\label{eq3.25}
\int_{\Om} \zeta^2 \abs{D \bv_\rho}^2 
\le C \int_{\Om} \abs{D\zeta}^2 \abs{\bv_\rho}^2 
\le C r^{-2} \int_{\Om_{r}(y)\setminus \Om_{r/2}(y)} \abs{\bv_\rho}^2, \quad \forall \rho \le \frac{r}{2}, \,\rho<d_y.
\end{equation}
Combining \eqref{eq3.25} and \eqref{eq3.23}, we have for all $r< \frac 1 2 R_y$ and $\zeta$ as above,
\aln{\label{eq3.26}
\begin{split}
\int_{\Om} \abs{D(\zeta \bv_\rho)}^2 
&\le 2 \int_{\Om} \zeta^2 \abs{D \bv_\rho}^2 + 2 \int_{\Om} \abs{D\zeta}^2 \abs{\bv_\rho}^2 \\
&\le C r^{-2} \int_{\Om_{r}(y)\setminus \Om_{r/2}(y)} \abs{\bv_\rho}^2 
\le C r^{2-n}
, \quad \forall \rho\le \frac{r}{2}, \,\rho<d_y.
\end{split}
}
It follows from \eqref{eq2.6} and \eqref{eq3.26} that for $r< \frac 1 2R_y$,
\begin{equation}
\int_{\Omega \setminus \Om_r(y)} \abs{\bv_\rho}^{\frac{2n}{n-2}} 
\le \int_{\Omega} \abs{\zeta \bv_\rho}^{\frac{2n}{n-2}} 
\le \pr{\int_{\Omega} \abs{D\pr{\zeta \bv_\rho}}^2}^{\frac{n}{n-2}} 
\le C r^{-n}
, \quad \forall \rho \le \frac{r}{2}, \,\rho<d_y.
\end{equation}
On the other hand, if $\frac r 2 < \rho < d_y$, then \eqref{eq2.6} and 
\eqref{eq3.16} imply
\begin{equation}
\int_{\Om \setminus \Om_r(y)} \abs{\bv_\rho}^{\frac{2n}{n-2}} 
\le \int_{\Om} \abs{\bv_\rho}^{\frac{2n}{n-2}} 
\le C \pr{ \int_{\Om} \abs{D \bv_\rho}^2 }^{\frac{n}{n-2}} 
\le C r^{-n}.
\end{equation}
Therefore, combining the previous two results, we have 
\begin{equation}\label{eq3.29}
\int_{\Om\setminus \Om_r(y)} |\bv_\rho|^{\frac{2n}{n-2}} 
\le C r^{-n}, \quad \forall r< \tfrac{1}{2} R_y, \quad \forall \, 0 < \rho < d_y.
\end{equation}

Fix $\tau > \pr{R_y/2}^{2-n}$.  
If $R_y=\infty$, then fix $\tau> 0$.  
Let $A_\tau=\set{x\in \Om: \abs{\bv_\rho}>\tau}$ and set $r=\tau^{\frac{1}{2-n}}$.  
Note that $r< \frac 1 2 R_y$.  
Then, using \eqref{eq3.29}, we see that if $0 < \rho < d_y$,
\begin{equation*}
\abs{A_\tau\setminus \Om_r(y)}
\le \tau^{-\frac{2n}{n-2}}\int_{A_\tau\setminus \Om_r(y)} \abs{\bv_\rho}^{\frac{2n}{n-2}} 
= C\tau^{-\frac{n}{n-2}}.
\end{equation*}
Since $\abs{A_\tau \cap \Om_r(y)} \le \abs{\Om_r(y)} \le Cr^n = C\tau^{-\frac{n}{n-2}}$, we have
\begin{equation}\label{eq3.30}
\abs{\set{ x\in \Om: \abs{\bv_\rho(x)} > \tau }}
\le C \tau^{-\frac{n}{n-2}} \quad \text{if }\tau> \pr{\frac{R_y}{2}}^{2-n}, \quad \forall \, 0 < \rho < d_y.
\end{equation}

Fix $r<\frac 1 2 R_y$ and let $\zeta$ be as in \eqref{eq3.24}.  
Then \eqref{eq3.26} gives
\begin{equation*}
 \int_{\Om \setminus \Om_{r}(y)} \abs{D \bv_\rho}^2 
 \le C r^{2-n}
 , \quad \forall r< \tfrac{1}{2} R_y, \quad \forall \rho\le \frac{r}{2}.
\end{equation*}
Now, if $\frac r 2 < \rho < d_y$, we have from \eqref{eq3.16} that
\begin{equation*}
 \int_{\Om\setminus \Om_r(y)} \abs{D \bv_\rho}^2 
 \le \int_{\Om} \abs{D \bv_\rho}^2 
 \le C \rho^{2-n}\le C r^{2-n}.
\end{equation*}
Combining the previous two results yields
\begin{equation}\label{eq3.31}
 \int_{\Om\setminus \Om_r(y)} \abs{D \bv_\rho}^2 
 \le C r^{2-n}, \quad \forall r< \tfrac{1}{2} R_y, \quad \forall \, 0 < \rho < d_y.
\end{equation}

Fix $\tau > \pr{R_y/2}^{1-n}$.  
If $R_y=\infty$, let $\tau >0$.  
Let $A_\tau = \set{x \in \Om : \abs{D \bv_\rho} > \tau}$ and set $r=\tau^{\frac{1}{1-n}}$.  
Note that $r<\frac 1 2 R_y.$  
Then, using \eqref{eq3.31}, we see that if $0 < \rho < d_y$,
\eqs{
\abs{A_\tau \setminus \Om_r(y)} \le \tau^{-2} \int_{A_\tau\setminus \Om_r(y)} \abs{D \bv_\rho}^2 
\le C \tau^{-\frac{n}{n-1}}.
}
Since $\abs{A_\tau \cap \Om_r(y)} \le C r^n = C\tau^{-\frac{n}{n-1}}$, then
\begin{equation}\label{eq3.32}
 \abs{\set{x\in \Om: \abs{D \bv_\rho(x)} > \tau }} \le C \tau^{-\frac{n}{n-1}} 
 \quad \text{if } \tau > 
\pr{\tfrac{1}{2} R_y }^{1-n}, \quad \forall \, 0 < \rho < d_y.
\end{equation}

For any $\si> \pr{R_y/2}^{1-n}$ and $q > 0$, we have
\begin{equation*}
\int_{\Om_r(y)} \abs{D\bv_\rho}^q 
\le \si^q \abs{\Om_r(y)} + \int_{\set{\abs{D\bv_\rho}>\si}} \abs{D \bv_\rho}^q.
\end{equation*}
By \eqref{eq3.32}, for $q\in \pr{0,\frac{n}{n-1}}$ and $\rho \in \pr{0, d_y}$, 
\begin{align*}
\int_{\set{|D\bv_\rho|>\si}} \abs{D\bv_\rho}^q 
&= \int_0^\infty q \tau^{q-1} \abs{\set{\abs{D\bv_\rho}>\max\set{\tau,\si}}} d\tau \\
&\le C \si^{-\frac{n}{n-1}} \int_0^\si q \tau^{q-1} \, d \tau
+ C\int_\si^\infty q \tau^{q-1 -\frac{n}{n-1}} \, d\tau
= C \pr{1-\frac{q}{q-\frac{n}{n-1}}} \si^{q-\frac{n}{n-1}}.
\end{align*}
Therefore, taking $\si=r^{1-n}$, we conclude that
\begin{equation}\label{eq3.33}
 \int_{\Om_r(y)} \abs{D\bv_\rho}^q 
 \le C_q r^{q(1-n)+n}, \quad \forall r< \tfrac 1 2 R_y, \quad \forall \, 0 < \rho < d_y, \quad \forall q\in \pr{0, \tfrac{n}{n-1}}.
\end{equation}
By the same process with \eqref{eq3.30} in place of \eqref{eq3.32} and $\si= r^{2-n}$, we have
\begin{equation}\label{eq3.34}
\int_{\Om_r(y)} \abs{\bv_\rho}^q 
\le C_q r^{q(2-n)+n}, \quad \forall r< \tfrac 1 2 R_y, \quad \forall \, 0 <  \rho < d_y, \quad \forall q \in \pr{0, \tfrac{n}{n-2}}.
\end{equation}

Fix $q\in \pr{1, \frac{n}{n-1}}$ and $\tilde q\in \pr{1, \frac{n}{n-2}}$.  
From \eqref{eq3.33} and \eqref{eq3.34}, it follows that for any $r < \frac 1 2 R_y$
\begin{equation}\label{eq3.35}
\norm{\bv_\rho}_{W^{1,q}\pr{\Omega_{r}\pr{y}}} \le C\pr{r} \mbox{ and } \norm{\bv_\rho}_{L^{\tilde q}\pr{\Omega_{r}\pr{y}}} \le C\pr{r}\quad \text{uniformly in } 
\rho.
\end{equation}
Therefore, (using diagonalization) we can show that there exists a sequence $\set{\rho_\mu}_{\mu=1}^\infty$ tending to $0$ and a function $\bv=\bv_{y,k}$ such that
\begin{equation}\label{eq3.36}
\bv_{\rho_\mu} \rightharpoonup \bv \quad \text{in } 
W^{1,q}\pr{\Om_{r}\pr{y}}^{N} \mbox{ and in } L^{\tilde q} \pr{\Om_{r}\pr{y}}^{N}, \mbox{ for all } r < \frac 1 2 R_y.
\end{equation}

Furthermore, for fixed $r_0 < r$, \eqref{eq3.29} and \eqref{eq3.31} imply uniform bounds on $\bv_{\rho_\mu}$ in $Y^{1,2}\pr{\Om \setminus \Om_{r_0}\pr{y}}^N$ for small $\rho_\mu$.
Thus, there exists a subsequence of $\set{\rho_\mu}$ (which we will not rename)  and a function $\widetilde{\bv}=\widetilde{\bv}_{y,k}$ such that
\begin{equation}\label{eq3.37}
\bv_{\rho_\mu} \rightharpoonup \widetilde{\bv} \quad \text{in } Y^{1,2}\pr{\Om 
\setminus \Om_{r_0}\pr{y}}^N.
\end{equation}
Since $\bv \equiv \widetilde{\bv}$ on $\Om_{r}\pr{y}\setminus \Om_{r_0}\pr{y}$, we can extend $\bv$ to the entire $\Om$ by setting $\bv=\widetilde{\bv}$ on $\Om\setminus \Om_{r}\pr{y}$.  
For ease of notation, we call the extended function $\bv$.  
Applying the diagonalization process again, we conclude that there exists a sequence $\rho_\mu \to 0$ and a function $\bv$ on $\Omega$ such that 
\begin{equation}\label{eq3.38}
\bv_{\rho_\mu} \rightharpoonup \bv \quad \text{in } 
W^{1,q}\pr{\Om_{r}\pr{y}}^{N} \mbox{ and in }L^{\tilde q}\pr{\Om_{r}\pr{y}}^{N},
\end{equation}
and
\begin{equation}\label{eq3.39}
\bv_{\rho_{\mu}} \rightharpoonup \bv \quad \text{in } Y^{1,2}\pr{\Om \setminus 
\Om_{r_0}\pr{y}}^N,
\end{equation}
for all $r_0 < r< \frac 1 2 R_y$.

Let $\phi \in C_c^\infty\pr{\Om}^N$ and $r<\frac 12 R_y$ such that $r<d_y$.
Choose $\eta \in C^\iny_c\pr{B_r\pr{y}}$ to be a cutoff function so that $\eta \equiv 1$ in $B_{r/2}\pr{y}$.
We write $\phi = \eta \phi + \pr{1 - \eta} \phi$.
By \eqref{eq3.2} and the definition of $\mathcal{B}$,
\begin{align*}
\lim_{\mu \to \iny} \fint_{B_{\rho_\mu}\pr{y}} &\eta \phi^k  
= \lim_{\mu\to \infty} \mathcal{B}[\bv_{\rho_\mu; y,k},\eta \phi] \\
&= \lim_{\mu\to \infty} \int_\Om A_{ij}^{\al \be} D_\be {\bv_{\rho_\mu; y,k}^j} D_\al \pr{\eta \phi^i} + b_{ij}^\al \bv_{\rho_\mu; y,k}^j D_\al \pr{\eta \phi^i} + d_{ij}^\be D_\be {\bv_{\rho_\mu; y,k}^j} \eta \phi^i + V_{ij} \bv_{\rho_\mu; y,k}^j \eta \phi^i .
\end{align*}
Note that $\eta \phi^i$ and $D\pr{\eta \phi^i}$ belong to $C^\iny_c\pr{\Om_r\pr{y}}$.  
From this, the boundedness of $\bA$ given by \eqref{eq2.13}, and the assumptions on $\bV$, $\bb$, and $\bd$ given by \eqref{eq2.14}, it follows that there exists a $q^\prime > n$ such that each of $A_{ij}^{\al \be} D_\al \pr{\eta \phi^i}$, $b_{ij}^\al D_\al \pr{\eta \phi^i}$, $d_{ij}^\be \eta \phi^i$, and $V_{ij}\eta \phi^i$ belong to $L^{q^\prime}\pr{\Om_r\pr{y}}^N$.
Therefore, by \eqref{eq3.38},
\begin{align}
\lim_{\mu \to \iny} \fint_{B_{\rho_\mu}\pr{y}} \eta \phi^k  
&= \int_\Om A_{ij}^{\al \be} D_\be {\bv_{y,k}^j} D_\al \pr{\eta \phi^i} + b_{ij}^\al \bv_{y,k}^j D_\al \pr{\eta \phi^i} + d_{ij}^\be D_\be {\bv_{y,k}^j} \eta \phi^i + V_{ij} \bv_{y,k}^j \eta \phi^i \nonumber \\
&= \mathcal{B}[\bv_{y,k},\eta \phi] .
\label{eq3.40}
\end{align}
Another application of \eqref{eq3.2} shows that
\begin{align*}
\lim_{\mu \to \iny} \fint_{B_{\rho_\mu}\pr{y}} \pr{1 - \eta} \phi^k 
&= \lim_{\mu\to \infty} \int_\Om A_{ij}^{\al \be} D_\be {\bv_{\rho_\mu; y,k}^j} D_\al \brac{\pr{1-\eta} \phi^i} 
+ b_{ij}^\al \bv_{\rho_\mu; y,k}^j D_\al \brac{\pr{1-\eta} \phi^i} \\
&+ \lim_{\mu\to \infty} \int_\Om d_{ij}^\be D_\be {\bv_{\rho_\mu; y,k}^j} \pr{1-\eta} \phi^i 
+ V_{ij} \bv_{\rho_\mu; y,k}^j \pr{1-\eta} \phi^i.
\end{align*}
Since $\phi \in C_c^\infty\pr{\Om}^N$ and $\eta \in C^\iny_c\pr{B_r\pr{y}}$, then $\pr{1 - \eta}\phi$ and $D\brac{\pr{1 - \eta}\phi}$ belong to $C_c^\infty(\Om \setminus B_{r/2}\pr{y})^N$.
In combination with \eqref{eq2.13}, this implies that each $A_{ij}^{\al \be} D_\al \brac{\pr{1 - \eta} \phi^i}$ belongs to $L^{2}\pr{\Om \setminus B_{r/2}\pr{y}}^N$.
The assumption on $\bd$ given in \eqref{eq2.14} implies that each $d_{ij}^\be \pr{1-\eta} \phi^i$ belongs to $L^{2}\pr{\Om \setminus B_{r/2}\pr{y}}^N$ as well.
And the assumption on $\bb$ and $\bV$ given in \eqref{eq2.14} imply that every $b_{ij}^\al D_\al \brac{\pr{1-\eta} \phi^i}$ and $V_{ij} \pr{1-\eta} \phi^i$ belong to $L^{\frac{2n}{n+2}}\pr{\Om \setminus B_{r/2}\pr{y}}^N$.
Therefore, it follows from \eqref{eq3.39} that
\begin{align}
\lim_{\mu \to \iny} \fint_{B_{\rho_\mu}\pr{y}} \pr{1 - \eta} \phi^k 
&= \int_\Om A_{ij}^{\al \be} D_\be {\bv_{y,k}^j} D_\al \brac{\pr{1-\eta} \phi^i} 
+ b_{ij}^\al \bv_{y,k}^j D_\al \brac{\pr{1-\eta} \phi^i} \nonumber \\
&+ \int_\Om d_{ij}^\be D_\be {\bv_{y,k}^j} \pr{1-\eta} \phi^i 
+ V_{ij} \bv_{y,k}^j \pr{1-\eta} \phi^i
= \mathcal{B}[\bv_{y,k},\pr{1-\eta} \phi] .
\label{eq3.41}
\end{align}
It follows from combining \eqref{eq3.40} and \eqref{eq3.41} that for any $\phi \in C^\iny_c\pr{\Om}^N$,
\begin{align}
\phi^k\pr{y}
&= \lim_{\mu \to \iny} \fint_{B_{\rho_\mu}\pr{y}} \phi^k  
= \lim_{\mu \to \iny} \fint_{B_{\rho_\mu}\pr{y}} \eta \phi^k  
+ \lim_{\mu \to \iny} \fint_{B_{\rho_\mu}\pr{y}} \pr{1 - \eta} \phi^k \nonumber \\
&= \mathcal{B}[\bv_{y,k},\eta \phi] 
+  \mathcal{B}[\bv_{y,k},\pr{1-\eta} \phi] 
=  \mathcal{B}[\bv_{y,k}, \phi],
\label{eq3.42}
\end{align}
so that \eqref{eq3.6} holds.

As before, for any $\bff\in L_c^\infty\pr{\Om}^N$, let $\bu\in \bF_0\pr{\Om}$ be the unique weak solution to $\mathcal{L}^* \bu = \bff$, i.e, assume that $\bu\in \bF_0\pr{\Om}$ satisfies \eqref{eq3.18}.
Then for a.e. $y\in \Om$, 
\begin{align*}
u^k(y)
=\lim_{\mu\to \infty} \fint_{B_{\rho_\mu}\pr{y}} u^k 
= \lim_{\mu\to \infty} \mathcal{B}\brac{\bv_{\rho_\mu; y, k}, \bu}
&= \lim_{\mu\to \infty} \mathcal{B}^*\brac{\bu, \bv_{\rho_\mu; y, k}}
= \lim_{\mu\to \infty} \int_{\Om} \bv_{\rho_\mu} \cdot \bff ,
\end{align*}
where we have used \eqref{eq3.2}.
For $\eta \in C^\iny_c\pr{B_r\pr{y}}$ as defined in the previous paragraph, since $\eta \bff \in L^{q^\prime}\pr{B_r\pr{y}}^N$ and $\pr{1 - \eta} \bff \in L^{\frac{2n}{n+2}}\pr{\Om \setminus B_{r/2}\pr{y}}^N$, then it follows from \eqref{eq3.38} and \eqref{eq3.39} that
\begin{align*}
\lim_{\mu\to \infty} \int_{\Om} \bv_{\rho_\mu} \cdot \bff
&= \lim_{\mu\to \infty}  \int_{B_r\pr{y}} \bv_{\rho_\mu} \cdot \eta \bff 
+ \lim_{\mu\to \infty}  \int_{\Om \setminus B_{r/2}\pr{y}} \bv_{\rho_\mu} \cdot \pr{1 - \eta} \bff \\
&= \int_{B_r\pr{y}} \bv \cdot \eta \bff 
+  \int_{\Om \setminus B_{r/2}\pr{y}} \bv \cdot \pr{1 - \eta} \bff
= \int_{\Om} \bv \cdot \bff.
\end{align*}
Combining the last two equations gives \eqref{eq3.7}.

The estimates \eqref{eq3.8}--\eqref{eq3.13} follow almost directly by passage to the limit. 
Indeed, for any $r < \frac 1 2 R_y$ and any $\bg\in L_c^\infty\pr{\Om_r\pr{y}}^N$, \eqref{eq3.34} implies that
\eqs{
\abs{\int_{\Om} \bv \cdot \bg}
= \lim_{\mu\to \infty} \abs{\int_{\Om} \bv_{\rho_\mu} \cdot \bg}
\le C_q r^{2-n+\frac{n}{q}} \norm{\bg}_{L^{q'}\pr{\Om_r\pr{y}}},
}
where $q'$ is the H\"older conjugate exponent of $q \in [1, \frac{n}{n-2}).$  
By duality, we obtain that for every $q \in [1,\frac{n}{n-2})$,
\eq{
\norm{\bv}_{L^q\pr{\Om_r\pr{y}}} 
\le C_q r^{2-n+\frac{n}{q}}, \quad \forall r<  \tfrac 1 2 R_y,
\label{eq3.43}
}
that is, \eqref{eq3.10} holds. 
A similar argument using \eqref{eq3.33}, \eqref{eq3.29} and \eqref{eq3.31},
yields \eqref{eq3.11}, \eqref{eq3.8}, and \eqref{eq3.9}, respectively. Now, as in the proofs of  \eqref{eq3.30} and \eqref{eq3.32}, \eqref{eq3.8} and \eqref{eq3.9} give \eqref{eq3.12} and \eqref{eq3.13}.

Passing to the proof of \eqref{eq3.14}, fix $x \ne y$. For a.e. $x \in \Om$, the Lebesgue differentiation theorem implies that
\begin{align*}
\bv\pr{x} 
&= \lim_{\de \to 0^+} \fint_{\Om_\de\pr{x}} \bv
= \lim_{\de \to 0^+} \frac{1}{\abs{\Om_\de}} \int \bv \, \chi_{\Om_\de\pr{x}},
\end{align*}
where $\chi$ denotes an indicator function.
Assuming as we may that $2\de \le \min\set{d_x, \abs{x - y}}$, it follows that $\chi_{\Om_\de\pr{x}} = \chi_{B_\de\pr{x}} \in L^{\frac{2n}{n+2}}\pr{\Om\setminus \Om_{\de}\pr{y}}$. 
Therefore, \eqref{eq3.39} implies that
\begin{align*}
\frac{1}{\abs{B_\de}}  \int \bv \, \chi_{B_\de\pr{x}}
&= \lim_{\mu \to \iny} \frac{1}{\abs{B_\de}} \int \bv_{\rho_\mu} \, \chi_{B_\de\pr{x}}
= \lim_{\mu \to \iny} \fint_{B_\de\pr{x}} \bv_{\rho_\mu}.
\end{align*}
If $\abs{x - y} \le \frac 1 4 R_y$ and $\rho_\mu \le \frac 1 3 \abs{x - y}$, $\rho_\mu<d_y$,  then \eqref{eq3.23} implies that for a.e. $z \in B_\de\pr{x}$
\begin{align*}
\abs{\bv_{\rho_\mu}\pr{z}}
\le C \abs{z-y}^{2-n},
\end{align*}
where $C$ is independent of $\rho_\mu$.
Since $\abs{z - y} >\frac{1}{2} \abs{x - y}$ for every $z \in B_\de\pr{x} \su B_{\abs{x-y}/2}\pr{x}$, then
\begin{align}
\norm{\bv_{\rho_\mu}}_{L^\iny\pr{B_\de\pr{x}}} \le C \abs{x - y}^{2-n}.
\label{eq3.44}
\end{align}
On the other hand, if $\abs{x - y} > \frac 1 4 R_y$, then for $r := \frac 1 8 \min\set{R_x, R_y}$, the restriction property, \rm{\ref{A1}}, implies that $\bv_{\rho_{\mu}} \in \bF\pr{\Om_{2 r}\pr{x}}$ and it follows from \rm{\ref{A4}} that $\bv_{\rho_{\mu}}$ vanishes along $\Om_{2 r}\pr{x} \cap \del \Om$.
As long as $\rho_\mu \le r$, $\rho_\mu < d_y$, $\mathcal{L} \bv_{\rho_{\mu}} = \bz$ in $\Om_{r}\pr{x}$, so we may apply \eqref{eq3.1} with $q = 2^*$.
We have
\begin{align}
\norm{\bv_{\rho_\mu}}_{L^\iny\pr{\Om_{r/2}\pr{x}}}
\le C r^{- \frac{n-2}{2}} \pr{\int_{\Om_r\pr{x}} \abs{\bv_{\rho_\mu}}^{2^*}}^{\frac 1 {2^*}}
\le C r^{- \frac{n-2}{2}} \pr{\int_{\Om \setminus \Om_r\pr{y}} \abs{\bv_{\rho_\mu}}^{2^*}}^{\frac 1 {2^*}}
\le C r^{2-n},
\label{eq3.45}
\end{align}
where the last inequality follows from \eqref{eq3.29}.
If we define $R_{x,y} = \min\set{R_x, R_y, \abs{x-y}}$, then \eqref{eq3.44} and \eqref{eq3.45} imply that for $\delta$ and $\rho_\mu$ sufficiently small (independently of each other), 
\begin{align}
\norm{\bv_{\rho_\mu}}_{L^\iny\pr{B_{\de}\pr{x}}}
\le C R_{x,y}^{2-n}.
\end{align}
By combining with the observations above, we see that for a.e. $x \in \Om$,
\begin{align*}
\bv\pr{x} 
&= \lim_{\de \to 0^+} \frac{1}{\abs{\Om_\de}} \int \bv \, \chi_{\Om_\de\pr{x}}
= \lim_{\de \to 0^+}  \lim_{\mu \to \iny} \fint_{B_\de\pr{x}} \bv_{\rho_\mu}
\le \lim_{\de \to 0^+}  \lim_{\mu \to \iny} C R_{x,y}^{2-n}
= C R_{x, y}^{2-n}.
\end{align*}
\end{pf}

\subsection{Fundamental matrix}\label{s3.2}
In this section, we construct the fundamental matrix associated to $\mathcal{L}$ on $\Omega=\mathbb{R}^n$ with $n\ge 3$.  
We maintain the assumptions \rm{\ref{A1}--\ref{A7}} with $\Omega=\mathbb{R}^n$ and replace \eqref{eq3.1} with the following global (interior) scale-invariant Moser-type bound. 
For the sake of future reference, within these definitions we maintain a general set $\Omega$ and emphasize their interior nature.
\begin{itemize}
\item[(IB)] Let $\Omega$ be a connected open set in $\RR^n$.   
We say that \rm{(IB)} holds in $\Omega$ if whenever $\bu \in \bF(B_{2R})$ is a weak solution to $\mathcal{L} \bu = \bff$ or $\mathcal{L}^* \bu = \bff$ in $B_R$, for some $B_R\subset \Omega$, $R>0$, where $\bff \in L^\ell\pr{B_R}^N$ for some $\ell \in \pb{ \frac{n}{2}, \iny}$, then for any $q > 0$,
\eq{
\norm{\bu}_{L^\iny\pr{B_{R/2}}} 
\le C \brac{ R^{- \frac n q}\norm{\bu}_{L^q\pr{B_R}} + R^{2 - \frac{n}{\ell}} \norm{\bff}_{L^\ell\pr{B_R}}},
\label{eq3.47} }
where the constant $C>0$ is independent of $R>0$.
\end{itemize}

We also assume a local H\"older continuity condition for solutions:
\begin{itemize}
\item[(H)]  Let $\Omega$ be a connected open set in $\RR^n$. 
 We say that \rm{(H)} holds in $\Om$ if whenever $\bu \in \bF(B_{2R_0})$ is a weak solution to $\mathcal{L} \bu = \bz$ or $\mathcal{L}^* \bu = \bz$ in $B_{R_0}$ for some $B_{2R_0}\subset \Omega$, $R_0>0$, then there exists $\eta \in \pr{0, 1}$, depending on $R_0$, and $C_{R_0}>0$ so that whenever $0 < R \le R_0$,
\begin{align}
\sup_{x, y \in B_{R/2}, x \ne y} \frac{\abs{\bu\pr{x} - \bu\pr{y}}}{\abs{x - y}^\eta}
&\le C_{R_0} R^{-\eta} \pr{\fint_{B_R} \abs{\bu}^{2^*}}^{\frac 1 {2^*}}
\label{eq3.48}
\end{align}
\end{itemize}

Notice that (IB) is \eqref{eq3.1} with $R_y=d_y$.
Note also that the solutions to $\mathcal{L} \bu = \bff$ and $\mathcal{L} \bu = \bz$ above are well-defined in the weak sense for the same reason as those in \eqref{eq3.1}. 

Existence of the fundamental solution may be obtained even when properties (IB) and (H) are replaced by the weaker assumption \eqref{eq3.1} (see Proposition~\ref{p3.5}).  
What is gained by property (IB) over \eqref{eq3.1} is a quantification of the constraint given by $R_y$.  The property (H) assures H\"older continuity and, in addition, helps to 
show that $\bG(x,y)=\bG^*(y,x)^T$, which leads to analogous estimates for $\bG(x, \cdot)$ as for $\bG(\cdot, y)$.  

\begin{defn}
We say that the matrix function $\bG\pr{x,y}= \pr{\Ga_{ij}\pr{x,y}}_{i,j=1}^N$ defined on $\set{\pr{x,y} \in \R^n \times \R^n : x \ne y}$ is the {\bf fundamental matrix} of $\mathcal{L}$ if it satisfies the following properties:
\begin{enumerate}
\item[1)] $\bG\pr{\cdot, y}$ is locally integrable and $\mathcal{L} \bG\pr{\cdot, y} = \de_y I$ for all $y \in \R^n$ in the sense that for every $\phi = \pr{\phi^1, \ldots, \phi^N}^T \in C^\iny_c\pr{\R^n}^{N}$,
\begin{align*}
&\int_{\R^n} A_{ij}^{\al \be} D_\be \Ga_{jk}\pr{\cdot, y} D_\al \phi^i + b_{ij}^\al \Ga_{jk}\pr{\cdot, y} D_\al \phi^i + d_{ij}^\be D_\be \Ga_{jk}\pr{\cdot, y} \phi^i + V_{ij} \Ga_{jk}\pr{\cdot, y} \phi^i 
= \phi^k\pr{y}.
\end{align*}
\item[2)] For all $y \in \R^n$ and $r > 0$, $\bG\pr{\cdot, y} \in Y^{1,2}\pr{\R^n \setminus B_r\pr{y}}^{N \times N}$. \\
\item[3)] For any $\bff = \pr{f^1, \ldots, f^N}^T \in L^\iny_c\pr{\R^n}^N$, the 
function $\bu = \pr{u^1, \ldots, u^N}^T$ given by
$$u^k\pr{y} = \int_{\R^n} \Ga_{jk}\pr{x,y} f^j\pr{x} \,dx$$
belongs to $\bF_0(\R^n)$ and satisfies $\mathcal{L}^* \bu = \bff$ in the sense that for 
every $\phi = \pr{\phi^1, \ldots, \phi^N}^T \in C^\iny_c\pr{\R^n}^{N}$,
\begin{align*}
&\int_{\R^n} A_{ij}^{\al \be} D_\al u^i D_\be \phi^j + b_{ij}^\al D_\al u^i \phi^j + d_{ij}^\be  u^i D_\be \phi^j + V_{ij}  u^i\phi^j 
= \int_{\R^n} f^j \phi^j.
\end{align*}
\end{enumerate}
We say that the matrix function $\bG\pr{x,y}$ is the {\bf continuous fundamental matrix} if it satisfies the conditions above and is also continuous.
\label{d3.3}
\end{defn}

\begin{rem}
As we will see below, we first establish the existence of a fundamental matrix using an application of Lemma~\ref{l3.2}.
With the additional assumption of H\"older continuity of solutions, we then show that our fundamental matrix is in fact a continuous fundamental matrix.
\end{rem}

We show here that there is at most one fundamental matrix.  
In general, we mean uniqueness in the sense of Lebesgue, i.e. almost everywhere uniqueness.
However, when we refer to the continuous fundamental matrix, we mean true pointwise equivalence.

Assume that $\bG$ and $\widetilde \bG$ are fundamental matrices satisfying Definition \ref{d3.3}. 
Then, for all $\bff\in L_c^\infty\pr{\Om}^N$, the functions $\bu$ and $\widetilde{\bu}$ given by
\begin{equation*}
u^k\pr{y}
=\int_{\R^n} \Gamma_{jk}\pr{x,y} f^j\pr{x} dx, \quad 
\widetilde{u}^k\pr{y}
=\int_{\R^n} \widetilde{\Gamma}_{jk}\pr{x,y} f^j\pr{x} dx
\end{equation*}
satisfy
\begin{equation*}
\mathcal{L}^* \pr{\bu - \widetilde{\bu}}=\bz \quad \text{in } \R^n
\end{equation*}
and $\bu-\widetilde{\bu}\in \bF_0(\R^n)$.  
By uniqueness of solutions ensured by the Lax-Milgram lemma, $\bu-\widetilde{\bu}\equiv \bf 0$.  
Thus, for a.e. $x\in \R^n$,
\begin{equation*}
\int_{\R^n} \brac{\Gamma_{jk}(x,y)-\widetilde{\Gamma}_{jk}(x,y)} f^j(x) dx=0, \quad 
\forall f\in L_c^\infty(\R^n)^N.
\end{equation*}
Therefore, $\bG = \widetilde \bG$ a.e. in $\set{x \ne y}$.
If we further assume that $\bG$ and $\widetilde \bG$ are continuous fundamental matrices, then we conclude that $\bG \equiv \widetilde \bG$ in $\set{x \ne y}$.

\begin{prop}\label{p3.5} 
Assume that {\rm\ref{A1}--\ref{A7}} and \eqref{eq3.1} hold.
Then there exists a fundamental matrix, $\bG(x,y)=(\Gamma_{ij}(x,y))_{i,j=1}^N, \, \set{x\ne y}$, unique in the Lebesgue sense, that satisfies Definition \ref{d3.3}.  
Furthermore, $\bG(x,y)$ satisfies the following estimates:
\begin{align}
&\norm{\bG(\cdot, y)}_{Y^{1,2}\pr{\R^n\setminus B_r(y)}} 
\le C r^{1-\frac{n}{2}}, \quad \forall r< \tfrac 1 2 R_y,
\label{eq3.49}\\
& \norm{\bG(\cdot, y)}_{L^q\pr{B_r(y)}} 
\le C_q r^{2-n+\frac{n}{q}}, \quad \forall q \in \left[1, \tfrac{n}{n-2}\right),  \quad \forall  r< \tfrac 1 2 R_y,
\label{eq3.50}\\
& \norm{D \bG\pr{\cdot, y}}_{L^{q}\pr{B_r\pr{y}}} 
\le C_q r^{1-n +\frac{n}{q}}, \qquad \forall q \in \left[ 1, \tfrac{n}{n-1}\right), \quad \forall r < \tfrac 1 2 R_y,
\label{eq3.51}\\
& \abs{\set{x \in \R^n : \abs{\bG\pr{x,y}} > \tau }} 
\le C \tau^{- \frac{n}{n-2}}, \quad \forall \tau >(\tfrac 1 2 R_y)^{2-n},
\label{eq3.52} \\
& \abs{\set{x \in \R^n : \abs{D_x \bG\pr{x,y}} > \tau }} 
\le C \tau^{- \frac{n}{n-1}}, \quad \forall \tau >(\tfrac 1 2 R_y)^{1-n},
\label{eq3.53} \\
& \abs{\bG\pr{x,y}} 
\le C R_{x,y}^{2 - n}, \qquad \text{where } R_{x,y}:=\min(R_x,R_y,|x-y|) ,
\label{eq3.54}
\end{align}
where each constant depends on $n$, $N$, $c_0$, $\Gamma$, $\gamma$, and the 
constants from \eqref{eq2.23} and \eqref{eq3.1}, and each $C_q$ depends additionally on $q$.
\end{prop}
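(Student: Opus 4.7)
The plan is to build the fundamental matrix column-by-column by applying Lemma~\ref{l3.2} with $\Omega = \mathbb{R}^n$, and then to check Definition~\ref{d3.3} together with the listed estimates by transcribing the outputs of that lemma. Specifically, I fix $y \in \mathbb{R}^n$ and $k \in \{1, \ldots, N\}$, invoke Lemma~\ref{l3.2} (whose hypotheses \ref{A1}--\ref{A7} and \eqref{eq3.1} are all in force on $\mathbb{R}^n$, where $d_y = \infty$), and extract the limit function $\bv_{y,k} = (v^1_{y,k}, \ldots, v^N_{y,k})^T$. Setting $\Gamma_{jk}(x,y) := v_{y,k}^j(x)$ for $x \ne y$ defines the candidate fundamental matrix $\bG = (\Gamma_{jk})_{j,k=1}^N$.

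Properties (1) and (2) of Definition~\ref{d3.3} then follow by unpacking Lemma~\ref{l3.2} in components: the weak identity $\mathcal{L}\bG(\cdot, y) = \delta_y I$ applied to a test vector $\phi \in C_c^\infty(\mathbb{R}^n)^N$ is exactly \eqref{eq3.6} read column-wise, local integrability of $\bG(\cdot, y)$ is \eqref{eq3.10} with any fixed $q \in [1, n/(n-2))$, and the $Y^{1,2}$ statement away from $y$ is the combination of \eqref{eq3.8}--\eqref{eq3.9}. In the same manner the six quantitative estimates \eqref{eq3.49}--\eqref{eq3.54} are direct transcriptions of \eqref{eq3.8}--\eqref{eq3.14}, respectively, applied to each column.

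For property (3) I would proceed by a Lax--Milgram comparison. Given $\bff \in L^\infty_c(\mathbb{R}^n)^N$, the Lax--Milgram argument already carried out inside the proof of Lemma~\ref{l3.2} (relying on the boundedness and coercivity assumptions \ref{A5}, \ref{A6}) produces a unique $\tilde{\bu} \in \bF_0(\mathbb{R}^n)$ with $\mathcal{L}^* \tilde{\bu} = \bff$ in the weak sense. On the other hand \eqref{eq3.7} states that, for a.e.\ $y$, $\tilde{u}^k(y) = \int \bv_{y,k} \cdot \bff = \int \Gamma_{jk}(\cdot, y)\, f^j$, which is precisely the $\bu$ prescribed in Definition~\ref{d3.3}(3). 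Hence $\bu$ agrees a.e.\ with $\tilde{\bu} \in \bF_0(\mathbb{R}^n)$, which delivers both the required $\bF_0$-membership of $\bu$ and the equation $\mathcal{L}^* \bu = \bff$ in the weak sense.

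Uniqueness is the argument already sketched immediately before the proposition: if $\bG$ and $\tilde{\bG}$ are two fundamental matrices, then for every $\bff \in L^\infty_c(\mathbb{R}^n)^N$ the two associated $\bu, \tilde{\bu} \in \bF_0(\mathbb{R}^n)$ both solve $\mathcal{L}^* \cdot = \bff$, and the uniqueness half of Lax--Milgram forces $\bu \equiv \tilde{\bu}$ in $\bF_0(\mathbb{R}^n)$; since $\bff$ ranges over a family of test data which is dense in $L^1_{\loc}$-duality, $\bG = \tilde{\bG}$ a.e.\ on $\{x \ne y\}$. The one genuinely delicate point, and what I expect to be the main obstacle, is the $\bF_0$-membership in property (3): from the integral representation $u^k(y) = \int \Gamma_{jk}(\cdot, y)\, f^j$ alone it is not at all obvious that $\bu$ lies in $\bF_0(\mathbb{R}^n)$ — this is rescued precisely by the identification with the Lax--Milgram solution via the reciprocity formula \eqref{eq3.7}. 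Everything else reduces to bookkeeping with the weak limits and pointwise estimates already produced by Lemma~\ref{l3.2}.
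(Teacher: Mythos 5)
Your proposal is correct and takes essentially the same route as the paper: the paper also builds $\bG(\cdot,y)$ column-by-column from the limit functions $\bv_{y,k}$ of Lemma~\ref{l3.2}, reads off Definition~\ref{d3.3} and the estimates \eqref{eq3.49}--\eqref{eq3.54} directly from \eqref{eq3.2}--\eqref{eq3.14}, and handles uniqueness by the Lax--Milgram argument stated just after Definition~\ref{d3.3}. The point you single out as delicate --- securing the $\bF_0$-membership in property (3) by identifying the integral representation with the Lax--Milgram solution through \eqref{eq3.7} --- is precisely how the paper's (much terser) proof is meant to be read.
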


\begin{pf}
By assumption, the hypotheses of Lemma~\ref{l3.2} are satisfied, and for each $y \in \R^n$, $0 < \rho < d_y$, and $k= 1,\dots, N$, we obtain $\{\bv_{\rho; y,k}\} \subset \bF_0\pr{\R^n}$ and $\bv_{y, k}$ satisfying properties \eqref{eq3.2}-\eqref{eq3.7} and the estimates \eqref{eq3.8}-\eqref{eq3.14}.

For each $y\in \R^n$, define $\bG^\rho\pr{\cdot, y}$ and $\bG\pr{\cdot, y}$ to be the $N\times N$ matrix functions whose k$^{th}$ columns are given by $\bv^T_{\rho; y, k}$ and $\bv^T_{y, k}$, respectively.  That $\bG$ is the fundamental matrix of $\mathcal{L}$ follows immediately from the conclusions of Lemma~\ref{l3.2}. 
One can also deduce from Lemma~\ref{l3.2} that $\bG(\cdot, y)$ satisfies \eqref{eq3.49}--\eqref{eq3.54} as a function of $x$.
\end{pf}

\begin{thm}\label{t3.6}
Assume that {\rm\ref{A1}--\ref{A7}} as well as properties {\rm{(IB)}} and {\rm{(H)}} hold.
Then there exists a unique continuous fundamental matrix, $\bG(x,y)=(\Gamma_{ij}(x,y))_{i,j=1}^N, \, 
\set{x\ne y}$, that satisfies Definition \ref{d3.3}.  
We have $\bG(x,y)= \bG^*(y,x)^T$, where $\bG^*$ is the unique continuous fundamental matrix 
associated to $\mathcal{L}^*$.  
Furthermore, $\bG(x,y)$ satisfies the following estimates:
\begin{align}
&\norm{\bG(\cdot, y)}_{Y^{1,2}\pr{\R^n\setminus B_r(y)}} 
+ \norm{\bG(x, \cdot)}_{Y^{1,2}\pr{\R^n\setminus B_r(x)}} 
\le C r^{1-\frac{n}{2}}, \quad \forall r>0,
\label{eq3.55} \\
&\norm{\bG(\cdot, y)}_{L^q\pr{B_r(y)}} 
+ \norm{\bG(x, \cdot)}_{L^q\pr{B_r(x)}} 
\le C_q r^{2-n+\frac{n}{q}}, \quad \forall q\in \left[1, \tfrac{n}{n-2}\right), \quad \forall r>0,
\label{eq3.56} \\
& \norm{D \bG\pr{\cdot, y}}_{L^{q}\pr{B_r\pr{y}}} 
+ \norm{D \bG\pr{x, \cdot}}_{L^{q}\pr{B_r\pr{x}}}
\le C_q r^{1-n +\frac{n}{q}}, \qquad \forall q \in \left[ 1, \tfrac{n}{n-1}\right), \quad \forall r>0,
\label{eq3.57} \\
& \abs{\set{x \in \R^n : \abs{\bG\pr{x,y}} > \tau}} 
+ \abs{\set{y \in \R^n : \abs{\bG\pr{x,y}} > \tau}}
\le C \tau^{- \frac{n}{n-2}}, \quad \forall \tau > 0,
\label{eq3.58} \\
& \abs{\set{x \in \R^n : \abs{D_x \bG\pr{x,y}} > \tau}} 
+ \abs{\set{y \in \R^n : \abs{D_y \bG\pr{x,y}} > \tau}}
\le C \tau^{- \frac{n}{n-1}},  \quad \forall \tau >0,
\label{eq3.59} \\
& \abs{\bG\pr{x,y}} \le C \abs{x - y}^{2 - n}, \qquad \forall x \ne y ,
\label{eq3.60}
\end{align}
where each constant depends on $n$, $N$, $c_0$, $\Gamma$, $\gamma$,  and the 
constants from \eqref{eq2.23} and {\rm{(IB)}}, and each $C_q$ depends additionally on $q$.  
Moreover, for any $0<R\le R_0<|x-y|$,
\aln{
&\abs{\bG\pr{x,y} - \bG\pr{z,y}} 
\le C_{R_0} C \pr{\frac{|x-z|}{R}}^\eta R^{2-n} 
\label{eq3.61}
}
whenever $|x-z|<\frac{R}{2}$ and
\aln{
&\abs{\bG\pr{x,y} - \bG\pr{x,z}} 
\le C_{R_0} C \pr{\frac{|y-z|}{R}}^\eta R^{2-n}
\label{eq3.62}
}
whenever $|y-z|<\frac{R}{2}$, where $C_{R_0}$ and $\eta=\eta(R_0)$ are the same as in assumption {\rm{(H)}}.
\end{thm}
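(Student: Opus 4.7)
The plan is to obtain Theorem~\ref{t3.6} by applying Proposition~\ref{p3.5} to both $\mathcal{L}$ and $\mathcal{L}^*$, then deriving the symmetry $\bG(x,y) = \bG^*(y,x)^T$ by duality, and finally upgrading to continuity and H\"older estimates using (H).

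First, since $\Omega = \R^n$ has no boundary, $d_y = \infty$ for every $y$, and the hypothesis (IB) is precisely \eqref{eq3.1} with $R_y = \infty$. The assumptions \rm{\ref{A1}--\ref{A7}} also hold for the adjoint problem (boundedness and coercivity of $\mathcal{B}^*$ follow from \eqref{eq2.20}; ellipticity, (IB), (H) and the Caccioppoli inequality are symmetric in $\mathcal{L}/\mathcal{L}^*$). Proposition~\ref{p3.5} thus applies to both operators and produces fundamental matrices $\bG$ and $\bG^*$ satisfying \eqref{eq3.49}--\eqref{eq3.54}. With $R_y = \infty$, the range restrictions on $r$ and $\tau$ become vacuous and $R_{x,y}$ reduces to $|x-y|$, so we already obtain the ``first-variable'' halves of \eqref{eq3.55}--\eqref{eq3.60} for $\bG$, together with analogous estimates for $\bG^*$.

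Next I would establish $\bG(x,y) = \bG^*(y,x)^T$ by a standard duality. Given $\bff, \bff' \in L^\infty_c\pr{\R^n}^N$, Lax--Milgram in $\bF_0\pr{\R^n}$ yields unique $\bu, \bu' \in \bF_0\pr{\R^n}$ solving $\mathcal{L}^*\bu = \bff$ and $\mathcal{L}\bu' = \bff'$. Part~(3) of Definition~\ref{d3.3} applied to $\bG$ gives $u^k(y) = \int \Gamma_{jk}(x,y)\,f^j(x)\,dx$, and applied to $\bG^*$ (as fundamental matrix of $\mathcal{L}^*$) gives $(u')^k(y) = \int \Gamma^*_{jk}(x,y)\,(f')^j(x)\,dx$. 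From \eqref{eq2.20},
\begin{equation*}
\int \bff' \cdot \bu \,=\, \mathcal{B}[\bu',\bu] \,=\, \mathcal{B}^*[\bu,\bu'] \,=\, \int \bff \cdot \bu'.
\end{equation*}
Expanding both sides with the two representations and applying Fubini, the arbitrariness of $\bff, \bff'$ forces $\Gamma_{jk}(x,y) = \Gamma^*_{kj}(y,x)$ for a.e.\ $(x,y)$, i.e.\ $\bG(x,y) = \bG^*(y,x)^T$. The first-variable estimates on $\bG^*$ now transfer to second-variable estimates on $\bG$, finishing \eqref{eq3.55}--\eqref{eq3.60}.

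Finally, I would deduce continuity and \eqref{eq3.61}--\eqref{eq3.62} from (H). For fixed $y$, $\bG(\cdot,y)$ solves $\mathcal{L}\bG(\cdot,y) = \bz$ on $\R^n \setminus \set{y}$, so on any ball $B_{2R_0}(x)$ disjoint from $y$ the hypothesis of (H) is met (after a fixed rescaling of $R_0$ absorbed into $C_{R_0}$); applied with $R \le R_0$,
\begin{equation*}
\abs{\bG(x,y) - \bG(z,y)} \le C_{R_0} R^{-\eta} \pr{\fint_{B_R(x)} \abs{\bG(\cdot,y)}^{2^*}}^{1/2^*} \abs{x-z}^\eta
\end{equation*}
for $\abs{x-z} < R/2$. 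The pointwise bound \eqref{eq3.60} controls the $L^{2^*}$ mean on $B_R(x)$ by $C R^{2-n}$, giving \eqref{eq3.61}; the symmetric estimate \eqref{eq3.62} follows by running the same argument on $\bG^*(\cdot, x)$ and using the symmetry established above. Joint continuity on $\set{x \neq y}$ is then immediate from the two one-variable H\"older estimates with constants uniform on compact subsets of $\set{x \neq y}$, and uniqueness is inherited from the argument preceding Proposition~\ref{p3.5}, upgraded to pointwise equality by continuity. The main obstacle is purely bookkeeping: one must ensure at the (H) step that $\bG(\cdot,y)$ genuinely belongs to $\bF(B_{2R_0}(x))$, which forces a fixed-ratio contraction of $R_0$ relative to $|x-y|$; the rest of the proof is formal once Proposition~\ref{p3.5} is in hand.
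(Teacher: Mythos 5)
Your skeleton---Proposition~\ref{p3.5} with $R_y=\infty$ applied to both $\mathcal{L}$ and $\mathcal{L}^*$, then symmetry, then (H) for the H\"older bounds---matches the paper up to the symmetry step, and the first-variable halves of \eqref{eq3.55}--\eqref{eq3.60} together with \eqref{eq3.61} are obtained exactly as in the paper. The genuine gap is in how you prove $\bG(x,y)=\bG^*(y,x)^T$. Your duality computation ($\int \bff'\cdot\bu=\mathcal{B}[\bu',\bu]=\mathcal{B}^*[\bu,\bu']=\int\bff\cdot\bu'$, the two representation formulas, and Fubini) only yields the identity for a.e.\ $(x,y)$. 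But everything you extract from it afterwards is pointwise for every $x\neq y$: the second-variable halves of \eqref{eq3.55}--\eqref{eq3.59} for every $x$, the H\"older estimate \eqref{eq3.62} for every pair of points, joint continuity, and the symmetry statement of the theorem itself. Your proposed upgrade is circular: you invoke joint continuity "from the two one-variable H\"older estimates," but the second-variable H\"older estimate \eqref{eq3.62} is precisely what you are deriving from the symmetry, and the separate continuity of $\bG(\cdot,y)$ (known) and of $\bG^*(\cdot,x)$ (known) does not suffice to pass from the a.e.\ identity to the everywhere identity: in any averaging/limiting scheme, one side requires continuity of $\bG$ in its second argument or of $\bG^*$ in its second argument, neither of which is available at that stage.

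This is exactly where the paper's proof does its real work. It proves the pointwise identity at the level of the averaged matrices: the reciprocity relation \eqref{eq3.64}, $\fint_{B_\rho(y)}\widehat\Gamma^{\sigma}_{kl}(\cdot,x)=\fint_{B_\sigma(x)}\Gamma^{\rho}_{lk}(\cdot,y)$, is combined with (i) continuity of each $\bG^{\rho_\mu}(\cdot,y)$ (the $\bv_{\rho}$ solve $\mathcal{L}\bv_\rho=\bz$ away from $B_\rho(y)$, so (H) applies), (ii) equicontinuity and locally uniform convergence $\bG^{\rho_\mu}(\cdot,y)\to\bG(\cdot,y)$ as in \eqref{eq3.63}, (iii) the weak convergence \eqref{eq3.3} to identify $\lim_{\nu}\fint_{B_{\rho_\mu}(y)}\widehat\Gamma^{\sigma_\nu}_{kl}(\cdot,x)=\fint_{B_{\rho_\mu}(y)}\Gamma^{*}_{kl}(\cdot,x)$, and (iv) continuity of $\bG^*(\cdot,x)$; computing the iterated limit of $g^{kl}_{\mu\nu}$ in two different ways then gives $\Gamma_{lk}(x,y)=\Gamma^{*}_{kl}(y,x)$ for all $x\neq y$, after which the second-variable estimates and \eqref{eq3.62} transfer immediately. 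Your a.e.\ identity can be upgraded, but only by an argument of this type (for instance, first showing that $\fint_{B_\rho(y_0)}\bG(x,\cdot)$ agrees with the continuous averaged matrix $\bG^{\rho}(x,y_0)$ for all $x$ outside $\overline{B_\rho(y_0)}$ and then invoking \eqref{eq3.63}), i.e.\ by reintroducing the two-parameter approximation you tried to bypass. As written, the symmetry claim and the second-variable portions of the theorem are not proved.
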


\begin{pf} 
By our assumptions, Proposition~\ref{p3.5} holds with $R_y=\infty$ for all $y\in \R^n$.  
Let $\bG^\rho\pr{\cdot, y}$ and $\bG\pr{\cdot, y}$ be as in Proposition~\ref{p3.5}.

Fix $x, \, y \in \R^n$ and $0<R\le R_0< |x-y|$.  
Then $\mathcal{L} \bG\pr{\cdot, y}=\bz$ on $B_{R_0}\pr{x}$.  
Therefore, by assumption (H) and the pointwise bound \eqref{eq3.54}, whenever $\abs{x-z} < \frac{R}{2}$ we have
\alns{
\abs{\bG\pr{x,y} - \bG\pr{z,y}} 
&\le C_{R_0} \pr{\frac{|x-z|}{R}}^\eta C \norm{\bG(\cdot, y)}_{L^\infty\pr{B_{R}(x)}}
\le C_{R_0} C \pr{\frac{|x-z|}{R}}^\eta R^{2-n}.
}
This is the H\"older continuity of $\bG(\cdot, y)$ described by \eqref{eq3.61}.

Using the pointwise bound on $\bv_\rho$ in place of those for $\bv$, a similar statement holds for $\bG^\rho$ with $\rho\leq \frac 38 |x-y|$, and it follows that for any compact set $K \Subset \R^n\setminus\set{y}$, the sequence $\set{\bG^{\rho_\mu}\pr{\cdot, y}}_{\mu = 1}^\iny$ is equicontinuous on $K$.
Furthermore, for any such $K \Subset \R^n\setminus\set{y}$, there are constants 
$C_K < \iny$ and $\rho_K > 0$ such that for all $\rho < \rho_K$,
\begin{align*}
\norm{\bG^{\rho}\pr{\cdot, y}}_{L^\infty\pr{K}} \le C_K.
\end{align*}
Passing to a subsequence if necessary, we have that for any such compact $K 
\Subset \R^n\setminus\set{y}$,
\begin{equation}
\bG^{\rho_\mu}(\cdot, y) \to \bG\pr{\cdot,  y}
\label{eq3.63}
\end{equation}
uniformly on $K$.

We now aim to show 
\begin{equation*}
\bG\pr{x,y} = \bG^*\pr{y,x}^T,
\end{equation*}
where $\bG^*$ is the fundamental matrix associated to $\mathcal{L}^*$.
Let $\widehat\bv_{\si}=\widehat\bv_{\si; x,k}$ denote the averaged fundamental vector from Lemma~\ref{l3.2} associated to $\mathcal{L}^*$.  
By the same arguments used for $\bv_\rho$, we obtain a sequence $\set{\si_{\nu}}_{\nu =1}^\iny$, $\si_\nu \to 0$, such that $\widehat\bG^{\si_\nu}\pr{\cdot, x}$, a matrix whose $k$-th column is $\widehat\bv^T_{\si_\nu; x,k}$, converges to $\bG^*\pr{\cdot, x}$ uniformly on compact subsets of $\R^n \setminus \set{x}$, where $\bG^*\pr{\cdot, x}$  is a fundamental matrix for $\mathcal{L}^*$ that satisfies the properties analogous to those for $\bG\pr{\cdot, y}$.  
In particular, $\bG^*\pr{\cdot, x}$ is H\"older continuous.

By \eqref{eq3.2}, for $\rho_\mu$ and $\sigma_\nu$ sufficiently small,
\eq{
 \fint_{B_\rho\pr{y}} \widehat\Gamma^{\sigma}_{kl} \pr{\cdot, x} 
 = \mathcal{B}\brac{\bv_{\rho; y,l}, \widehat\bv_{\si; x,k}}
 = \mathcal{B}^*\brac{\widehat\bv_{\si; x,k}, \bv_{\rho; y,l}}
 = \fint_{B_\si\pr{x}} \Gamma^\rho_{lk}(\cdot, y).
 \label{eq3.64}
}
Define
$$g_{\mu \nu}^{kl} 
:= \fint_{B_{\rho_\mu}\pr{y}} \widehat\Ga_{kl}^{\si_\nu}\pr{\cdot, x} 
=  \fint_{B_{\si_\nu}\pr{x}} \Ga_{lk}^{\rho_\mu}\pr{\cdot, y} 
.$$
By continuity of $\Ga_{lk}^{\rho_\mu}\pr{\cdot, y}$, it follows that for any $x 
\ne y \in \R^n$,
$$\lim_{\nu \to \iny} g_{\mu \nu}^{kl} 
= \lim_{\nu \to \iny}  \fint_{B_{\rho_\mu}\pr{y}} \widehat\Ga_{kl}^{\si_\nu}\pr{\cdot, x} 
=  \Ga_{lk}^{\rho_\mu}\pr{x, y},$$
so that by \eqref{eq3.63},
$$\lim_{\mu \to \iny}\lim_{\nu \to \iny} g_{\mu \nu}^{kl} 
= \lim_{\mu \to \iny} \Ga_{lk}^{\rho_\mu}\pr{x, y} 
= \Ga_{lk}\pr{x, y}.$$
But by weak convergence in $W^{1,q}\pr{B_r\pr{y}}$, i.e., \eqref{eq3.3} with $R_y=\infty$,
$$\lim_{\nu \to \iny} g_{\mu \nu}^{kl} 
= \lim_{\nu \to \iny} \fint_{B_{\rho_\mu}\pr{y}} \widehat\Ga_{kl}^{\si_{\nu}}\pr{\cdot, x} 
= \fint_{B_{\rho_\mu}\pr{y}} \Ga_{kl}^* \pr{\cdot, x},$$
and it follows then by continuity of $\Ga_{kl}^*\pr{\cdot, x}$ that
$$\lim_{\mu \to \iny}\lim_{\nu \to \iny} g_{\mu \nu}^{kl} 
= \lim_{\mu \to \iny} \fint_{B_{\rho_\mu}\pr{y}} \Ga_{kl}^* \pr{\cdot, x} 
= \Ga_{kl}^* \pr{y, x}.$$
Therefore, for all $k , l \in \set{1, \ldots, N}$, ${x \ne y}$,
$$\Ga_{lk }\pr{x,y} = \Ga_{kl}^*\pr{y,x},$$
or equivalently, for all ${x \ne y}$,
\begin{equation}
\bG\pr{x,y} = \bG^*\pr{y,x}^T.
\label{eq3.65}
\end{equation}
Consequently, all the estimates which hold for $\bG\pr{\cdot, y}$ hold analogously for $\bG\pr{x, \cdot}$.
\end{pf}

\begin{rem}
We have seen that there is a subsequence $\set{\rho_{\mu}}_{\mu = 1}^ \iny$, $\rho_\mu \to 0$, such that $\bG^{\rho_\mu}\pr{x, y} \to \bG\pr{x, y}$ for all $x\in \R^n \setminus \set{y}$. 
In fact, a stronger fact can be proved.  By \eqref{eq3.64},
\begin{align*}
\Ga^{\rho}_{lk}\pr{x,y}
= \lim_{\nu \to \iny} \fint_{B_{\si_\nu}\pr{x}} \Ga^\rho_{lk}\pr{\cdot, y} 
= \lim_{\nu \to \iny} \fint_{B_\rho\pr{y}} \widehat\Ga_{kl}^{\si_{\nu}}\pr{\cdot, x} 
= \fint_{B_\rho\pr{y}} \Ga_{kl}^{ \, *}\pr{\cdot, x}. 
\end{align*}
By \eqref{eq3.65}, this gives
$$\Ga^{\rho}_{lk}\pr{x,y}  = \fint_{B_\rho\pr{y}} \Ga_{lk}\pr{x, z}dz.$$
By continuity, for all $x \ne y$,
\begin{equation}\label{eq3.66}
\lim_{\rho \to 0} \bG^\rho\pr{x,y} = \bG\pr{x,y}.
\end{equation}
\end{rem}

\begin{thm}\label{t3.8} 
Assume that {\rm\ref{A1}--\ref{A7}} as well as properties {\rm{(IB)}} and {\rm{(H)}} hold.
If $\bff\in \pr{L^{\frac{2n}{n+2}}\pr{\R^n} \cap L^\ell_{\loc}\pr{\R^n}}^N$ for some $\ell \in \pb{\frac n 2, \iny}$, then there exists a unique $\bu \in \bF_0(\R^n)$ that is a weak solution to $\mathcal{L} \bu = \bff$.  
Furthermore, we have
\eq{\label{eq3.67}
u^k\pr{x} = \int_{\R^n} \Gamma_{ki}\pr{x,y} f^i\pr{y}\, dy, \quad k=1,\dots,N.
}
for a.e. $x\in \R^n$.
\end{thm}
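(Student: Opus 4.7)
The plan is threefold: establish existence and uniqueness of $\bu \in \bF_0(\R^n)$ by Lax--Milgram, read off the representation formula \eqref{eq3.67} for $\bff \in L^\infty_c(\R^n)^N$ from Lemma~\ref{l3.2} applied to $\mathcal{L}^*$ combined with the symmetry $\bG^*(y,x) = \bG(x,y)^T$ from Theorem~\ref{t3.6}, and then extend to the stated class of $\bff$ by density together with the Hardy--Littlewood--Sobolev inequality.

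For existence and uniqueness, I would observe that $F(\bv) := \int_{\R^n} \bff\cdot\bv$ defines a bounded linear functional on $\bF_0(\R^n)$: by H\"older and the Sobolev inequality \eqref{eq2.6},
\[ |F(\bv)| \le \|\bff\|_{L^{2n/(n+2)}(\R^n)}\,\|\bv\|_{L^{2^*}(\R^n)} \le C\,\|\bff\|_{L^{2n/(n+2)}(\R^n)}\,\|\bv\|_{\bF(\R^n)}. \]
Boundedness and coercivity of $\mathcal{B}$ (\ref{A5}, \ref{A6}) together with Lax--Milgram then produce a unique $\bu \in \bF_0(\R^n)$ with $\mathcal{B}[\bu,\bv] = F(\bv)$ for every $\bv \in \bF_0(\R^n)$, and density of $C_c^\infty(\R^n)^N$ in $\bF_0(\R^n)$ upgrades this to $\mathcal{L}\bu = \bff$ weakly.

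To establish \eqref{eq3.67}, I would first treat the case $\bff \in L^\infty_c(\R^n)^N$. Apply Lemma~\ref{l3.2} with the roles of $\mathcal{L}$ and $\mathcal{L}^*$ interchanged to obtain, for each $x \in \R^n$ and each $k \in \{1,\dots,N\}$, a function $\bv^*_{x,k}$ whose transposed columns assemble the adjoint fundamental matrix $\bG^*(\cdot,x)$ supplied by Theorem~\ref{t3.6}. The identity \eqref{eq3.7}, reread with $\mathcal{L}$ and $\mathcal{L}^*$ swapped and applied to the $\bu$ from the previous step (which solves the adjoint equation for the $\mathcal{L}^*$-version of Lemma~\ref{l3.2}), yields
\[ u^k(x) = \int_{\R^n} \bv^*_{x,k}\cdot\bff = \int_{\R^n}\Gamma^*_{ik}(y,x)\, f^i(y)\,dy \quad \text{for a.e.\ } x \in \R^n, \]
and the symmetry $\bG^*(y,x) = \bG(x,y)^T$ converts this into \eqref{eq3.67}.

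For general $\bff$, I would approximate by $\{\bff_n\} \subset L^\infty_c(\R^n)^N$ with $\bff_n \to \bff$ in $L^{2n/(n+2)}(\R^n)^N$ and let $\bu_n$ be the corresponding Lax--Milgram solutions to $\mathcal{L}\bu_n = \bff_n$. Linearity together with the bound on $F$ give $\bu_n \to \bu$ in $\bF_0(\R^n)$, hence in $L^{2^*}(\R^n)^N$ by \eqref{eq2.6}. On the other hand, the pointwise estimate $|\bG(x,y)| \le C|x-y|^{2-n}$ from \eqref{eq3.60} combined with Hardy--Littlewood--Sobolev shows that $\bg \mapsto \int_{\R^n}\bG(\cdot,y)\bg(y)\,dy$ is bounded from $L^{2n/(n+2)}(\R^n)^N$ into $L^{2^*}(\R^n)^N$. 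Since \eqref{eq3.67} holds for each $\bff_n$, passing to the limit in $L^{2^*}$ along an a.e.\ convergent subsequence identifies $\bu$ pointwise a.e.\ with the right-hand side of \eqref{eq3.67}. The main obstacle I anticipate is precisely this last step: matching the abstract Lax--Milgram limit with the singular-integral limit so that both land on the same pointwise object, which is resolved cleanly by the pointwise bound \eqref{eq3.60} and HLS once the formula is established on the dense subclass.
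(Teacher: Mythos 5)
Your argument is correct, but it reaches \eqref{eq3.67} by a different route than the paper. The paper proves the formula for the given $\bff$ directly: it tests the weak formulation against the \emph{averaged} adjoint fundamental vectors $\widehat\bv_\sigma$ of Lemma~\ref{l3.2}, obtaining $\int \bff\cdot\widehat\bv_\sigma=\fint_{\Om_\sigma(x)}u^k$, and then lets $\sigma\to 0$, splitting the left-hand side into $B_1(x)$ and its complement and invoking the weak convergences \eqref{eq3.22}/\eqref{eq3.4} near the pole (this is exactly where the hypothesis $\bff\in L^\ell_{\loc}$, $\ell>\tfrac n2$, is used, since the convergence there is only in $L^q$ for $q<\tfrac{n}{n-2}$) and \eqref{eq3.5} together with $\bff\in L^{2n/(n+2)}$ away from it; \eqref{eq3.65} then converts $\Gamma^*_{ik}(y,x)$ into $\Gamma_{ki}(x,y)$. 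You instead establish the formula on the dense class $L^\infty_c$ via \eqref{eq3.7} (for the adjoint problem) plus \eqref{eq3.65}, and then extend by continuity of both sides: the Lax--Milgram stability estimate coming from coercivity gives $\bu_n\to\bu$ in $\bF_0\hookrightarrow L^{2^*}$, while the global pointwise bound \eqref{eq3.60} plus Hardy--Littlewood--Sobolev gives boundedness of $\bg\mapsto\int\bG(\cdot,y)\bg(y)\,dy$ from $L^{2n/(n+2)}$ to $L^{2^*}$. Both routes need the symmetry \eqref{eq3.65}; yours additionally leans on the full strength of \eqref{eq3.60}, which is available here because (IB) and (H) are assumed, and as a byproduct it never uses the $L^\ell_{\loc}$ hypothesis, so it in fact yields the representation for all $\bff\in L^{2n/(n+2)}(\R^n)^N$. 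The paper's limit argument, by contrast, avoids HLS and works purely from the weak-convergence package of Lemma~\ref{l3.2}, which is the mechanism that would survive in settings with only the local bound \eqref{eq3.1} rather than global pointwise control. One small remark on your first step: Lax--Milgram already yields $\mathcal{B}[\bu,\bv]=\int\bff\cdot\bv$ for all $\bv\in\bF_0(\R^n)\supset C_c^\infty(\R^n)^N$, so no density is needed for existence; density of $C_c^\infty$ in $\bF_0$ (with \ref{A5}) is what you need for the converse direction, i.e.\ uniqueness among weak solutions tested only against smooth compactly supported functions.
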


\begin{pf} 
We see from \eqref{eq3.17} that
\eqs{
\bF_0\pr{\R^n} \ni \bw \mapsto \int_{\R^n} \bff \cdot \bw
}
defines a bounded linear functional on $\bF_0\pr{\R^n}$.  
Therefore, the existence of a unique $\bu \in \bF_0\pr{\R^n}$ that is a weak solution to $\mathcal{L} \bu = \bff$ follows from the Lax-Milgram theorem.

By definition of a weak solution, we have
\eq{\label{eq3.68}
\int_{\R^n} \bff \cdot \widehat\bv_{\sigma} 
= \mathcal{B}\brac{\bu, \widehat\bv_{\sigma}} 
= \mathcal{B}^*\brac{\widehat\bv_{\sigma}, \bu} 
= \fint_{\Om_\sigma\pr{x}} u^k,
}
where $\widehat\bv_{\sigma} =\widehat\bv_{\sigma; \, y, k}$ is the averaged fundamental vector from Lemma~\ref{l3.2} associated to $\mathcal{L}^*$.  
Taking the limit in $\sigma$ of the left-hand side, we get
\eq{\label{eq3.69}
\lim_{\sigma \to 0} \int_{\R^n} \bff \cdot \widehat\bv_{\sigma} 
= \lim_{\si\to 0} \pr{\int_{B_1\pr{x}} \bff \cdot \widehat\bv_{\sigma } 
+ \int_{\R^n \setminus B_1 \pr{x}} \bff \cdot \widehat\bv_{\sigma }}
=\int_{\R^n} \bff \cdot \widehat\bv,
}
where $\widehat\bv $ is the $k$-th column of $\bG^*\pr{\cdot,x}$.  
Here, we have used \eqref{eq3.22} and $\bff \in L^\ell_{\loc}\pr{\R^n}^N$ for $\ell\in \pb{\frac n 2, \iny}$ to establish convergence of the first integral, and we have used \eqref{eq3.5} and $\bff\in L^{\frac{2n}{n+2}}\pr{\R^n}^N$ to establish convergence of the second integral.  
Combining \eqref{eq3.68} and \eqref{eq3.69}, we get
\eqs{
u^k\pr{x} = \int_{\R^n} \Gamma^*_{ik}\pr{y,x} f^i\pr{y} \, dy, \quad \text{for a.e. } x\in \R^n.
}
The conclusion \eqref{eq3.67} now follows from \eqref{eq3.65}.
\end{pf}

\subsection{Green matrix}

Here we show existence of the Green matrix of $\mathcal{L}$ on any connected open set $\Omega \subset \mathbb{R}^n$ with $n\ge 3$.

\begin{defn} 
Let $\Omega$ be an open, connected subset of $\RR^n$.
 We say that the matrix function $\bGr\pr{x,y} = \pr{G_{ij}\pr{x,y}}_{i,j=1}^N$ defined on the set $\set{(x,y)\in \Omega \times \Omega: x\ne y}$ is the \textbf{Green matrix} of $\mathcal{L}$ if it satisfies the following properties:
\begin{itemize}
\item[1)] $\bGr\pr{\cdot, y}$ is locally integrable and $\mathcal{L} \bGr\pr{\cdot, y} = \de_y I$ for all $y \in \Omega$ in the sense that for every $\phi = \pr{\phi^1, \ldots, \phi^N}^T \in C^\iny_c\pr{\Omega}^N$,
\begin{equation*}
\int_{\Om} A_{ij}^{\al \be} D_\be G_{jk}\pr{\cdot, y} D_\al \phi^i + b_{ij}^\al 
G_{jk}\pr{\cdot, y} D_\al \phi^i + d_{ij}^\be D_\be G_{jk}\pr{\cdot, y} \phi^i + 
V_{ij} G_{jk}\pr{\cdot, y} \phi^i 
= \phi^k\pr{y}
\end{equation*}
\item[2)] For all $y\in \Om$ and $r > 0$, $\bGr\pr{\cdot, y}\in Y^{1,2}\pr{\Om \setminus  \Omega_r\pr{y}}^{N \times N}$.  
In addition, $\bGr(\cdot,  y)$ vanishes on $\partial \Omega$ in the sense that for every $\zeta \in C_c^\infty\pr{\Om}$ satisfying $\zeta \equiv 1$ on $B_r(y)$ for some $r>0$, we have
\begin{equation*}
(1-\zeta)\bGr(\cdot, y) \in Y_0^{1,2}\pr{\Om \setminus \Omega_r(y)}^{N \times N}.
\end{equation*}
\item[3)] For any $\bff = \pr{f^1, \ldots, f^N}^T \in L^\iny_c\pr{\Omega}^N$, the function $\bu = \pr{u^1, \ldots, u^N}^T$ given by
$$u^k\pr{y} = \int_{\Om} G_{jk}\pr{x,y} f^j\pr{x} dx$$
belongs to $\bF_0\pr{\Om}$ and satisfies $\mathcal{L}^* \bu = \bff$ in the sense that for every $\phi = \pr{\phi^1, \ldots, \phi^N}^T \in C^\iny_c\pr{\Om}^N$,
\begin{align*}
&\int_{\Om} A_{ij}^{\al \be} D_\al u^i D_\be \phi^j + b_{ij}^\al D_\al u^i 
\phi^j + d_{ij}^\be  u^i D_\be \phi^j + V_{ij}  u^i\phi^j 
= \int_{\Om} f^j \phi^j.
\end{align*}
\end{itemize}
We say that the matrix function $\bGr\pr{x,y}$ is the \textbf{continuous Green matrix} if it satisfies the conditions above and is also continuous.
\label{d3.9}
\end{defn}

As in the case of the (continuous) fundamental matrix, and by the same argument, there exists at most one (continuous) Green matrix, where the sense of uniqueness is also as before.

\begin{thm}\label{t3.10} 
Let $\Om$ be an open, connected, proper subset of $\R^n$.  
Denote $d_x:=\dist(x, \del \Om)$ for $x\in \Om$.  
Assume that {\rm\ref{A1}--\ref{A7}} as well as properties {\rm(IB)} and {\rm (H)} hold.  
Then there exists a unique continuous Green matrix $\bGr(x,y)=(G_{ij}(x,y))_{i,j=1}^N$, defined in $\set{x,y\in \Om, x\neq y}$, that satisfies Definition \ref{d3.9}.  
We have $\bGr(x,y)=\bGr^*(y,x)^T$, where $\bGr^*$ is the unique continuous Green matrix associated to $\mathcal{L}^*$.  
Furthermore, $\bGr(x,y)$ satisfies the following estimates:
\begin{align}
& \norm{\bGr\pr{\cdot, y}}_{Y^{1,2}\pr{\Om \setminus B_r\pr{y}}}
\le C r^{1-n/2}, \qquad \forall r< \tfrac 1 2 d_y,
\label{eq3.70}\\
& \norm{\bGr\pr{\cdot, y}}_{L^{q}\pr{B_r\pr{y}}}
\le C_q r^{2 -n+ \frac{n}{q}}, \qquad \forall r < \tfrac 1 2 d_y, \quad \forall q\in [1,\tfrac{n}{n-2}), \\
& \norm{D \bGr\pr{\cdot, y}}_{L^{q}\pr{B_r\pr{y}}}
\le C_q r^{1-n +\frac{n}{q}}, \qquad \forall r< \tfrac 1 2 d_y, \quad \forall q \in [ 1, \tfrac{n}{n-1}), \\
& \abs{\set{x \in \Om : \abs{\bGr\pr{x,y}} > \tau }} 
\le C \tau^{- \frac{n}{n-2}}, \qquad \forall \tau> \pr{\tfrac 1 2 d_y}^{2-n},\\
& \abs{\set{x \in \Om : \abs{D_x \bGr\pr{x,y}} > \tau}} 
\le C \tau^{- \frac{n}{n-1}}, \qquad \forall \tau > \pr{ \tfrac 1 2 d_y}^{1-n},
\label{eq3.74}\\
&\norm{\bGr\pr{x,\cdot}}_{Y^{1,2}\pr{\Om \setminus B_r\pr{x}}} 
\le C r^{1-n/2}, \qquad \forall r< \tfrac 1 2 d_x,
\label{eq3.75} \\
&\norm{\bGr\pr{x,\cdot}}_{L^{q}\pr{B_r\pr{x}}} 
\le C_q r^{2 -n+ \frac{n}{q}}, \qquad \forall r < \tfrac 1 2 d_x, \quad \forall q \in [1,\tfrac{n}{n-2}), \\
& \norm{D \bGr\pr{x, \cdot}}_{L^{q}\pr{B_r\pr{x}}} 
\le C_q r^{1-n +\frac{n}{q}}, \qquad \forall r< \tfrac 1 2 d_x, \quad \forall q \in [ 1, \tfrac{n}{n-1}), \\
& \abs{\set{y \in \Om : \abs{\bGr\pr{x,y}} > \tau}} 
\le C \tau^{- \frac{n}{n-2}}, \qquad \forall \tau > \pr{\tfrac 1 2 d_x}^{2-n},\\
& \abs{\set{y \in \Om : \abs{D_y \bGr\pr{x,y}} > \tau}} 
\le C \tau^{- \frac{n}{n-1}}, \qquad \forall \tau > \pr{ \tfrac 1 2 d_x}^{1-n},
\label{eq3.79} \\
& \abs{\bGr\pr{x,y}} 
\le C d_{x,y}^{2 - n} \qquad \forall x \ne y, \text{ where } d_{x,y}:=\text{min}(d_x,d_y,|x-y|),
\label{eq3.80}
\end{align}
where the constants depend on $n, \, N, \,c_0, \, \Gamma, \, \gamma$, and the constants from \eqref{eq2.23} and {\rm{(IB)}}, and each $C_q$ depends additionally on $q$.  
Moreover, for any $0<R\le R_0< \frac 1 2 d_{x,y}$,
\aln{
& \abs{\bGr\pr{x,y} - \bGr\pr{z,y}} 
\le C_{R_0} C \pr{\frac{|x-z|}{R}}^\eta R^{2-n},
\label{eq3.81}
}
whenever $|x-z|<\frac{R}{2}$ and
\aln{
& \abs{\bGr\pr{x,y} - \bGr\pr{x,z}} 
\le C_{R_0} C \pr{\frac{|y-z|}{R}}^\eta R^{2-n},
\label{eq3.82}
}
whenever $|y-z|<\frac{R}{2}$, where $C_{R_0}$ and $\eta=\eta(R_0)$ are the same as in assumption {\rm{(H)}}.
\end{thm}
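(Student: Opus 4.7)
The plan is to mimic the proof of Theorem \ref{t3.6}, replacing $\R^n$ by $\Omega$ throughout and using $R_y=d_y:=\dist(y,\partial\Omega)$ as the controlling radius. The key structural point is that properties {\rm(IB)} and {\rm(H)} are purely interior, so when applied in $\Omega$ they automatically force $R_y=d_y$ in the hypothesis \eqref{eq3.1}; consequently Lemma~\ref{l3.2} is applicable, producing, for each $y\in\Omega$, $0<\rho<d_y$, and $k\in\{1,\dots,N\}$, approximants $\bv_{\rho;y,k}\in\bF_0(\Omega)$ and a limit $\bv_{y,k}$ satisfying \eqref{eq3.2}--\eqref{eq3.14} with $R_y=d_y$. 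Define $\bGr^\rho(\cdot,y)$ and $\bGr(\cdot,y)$ column-by-column as in Proposition~\ref{p3.5}. The estimates \eqref{eq3.70}--\eqref{eq3.74} and \eqref{eq3.80} (with $d_y$ in place of $R_y$) then follow directly from the corresponding bounds in Lemma~\ref{l3.2}.

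Next I would verify the three conditions of Definition~\ref{d3.9}. Condition~1) is \eqref{eq3.6}, and condition~3) is \eqref{eq3.7}. The new point, compared with Theorem~\ref{t3.6}, is condition~2): the vanishing of $\bGr(\cdot,y)$ on $\partial\Omega$. For any cutoff $\zeta\in C_c^\infty(\Omega)$ with $\zeta\equiv 1$ on $B_r(y)$, I would observe that each $\bv_{\rho;y,k}\in\bF_0(\Omega)\hookrightarrow Y^{1,2}_0(\Omega)^N$ (property \ref{A3}), and $1-\zeta$ is supported away from $y$. Combining \eqref{eq2.9} with the weak convergence \eqref{eq3.5}, one sees that $(1-\zeta)\bv_{\rho_\mu;y,k}\in Y^{1,2}_0(\Omega\setminus\Omega_r(y))^N$ converges weakly in $Y^{1,2}(\Omega\setminus\Omega_r(y))^N$ to $(1-\zeta)\bv_{y,k}$; since $Y^{1,2}_0$ is a closed (hence weakly closed) subspace of $Y^{1,2}$, the limit lies in $Y^{1,2}_0(\Omega\setminus\Omega_r(y))^N$, which is exactly condition~2).

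For continuity and the interior H\"older estimates \eqref{eq3.81}--\eqref{eq3.82}, I would apply assumption {\rm(H)} to $\bGr(\cdot,y)$ on any ball $B_{R_0}(x)$ with $R_0< \tfrac{1}{2}d_{x,y}$: on such a ball $\mathcal{L}\bGr(\cdot,y)=\bz$, so \eqref{eq3.48} together with the pointwise bound \eqref{eq3.80} on $\|\bGr(\cdot,y)\|_{L^\infty(B_R(x))}$ gives \eqref{eq3.81} (and similarly for $\bGr(x,\cdot)$ once symmetry is established). Equicontinuity of $\{\bGr^{\rho_\mu}(\cdot,y)\}$ on compact subsets of $\Omega\setminus\{y\}$, combined with the pointwise bound, then produces uniform convergence on compact subsets of $\Omega\setminus\{y\}$ exactly as in Theorem~\ref{t3.6}.

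To obtain $\bGr(x,y)=\bGr^*(y,x)^T$, I would repeat verbatim the double-limit computation from Theorem~\ref{t3.6}: construct the analogous $\widehat\bGr^{\sigma_\nu}(\cdot,x)$ for $\mathcal{L}^*$, use \eqref{eq3.2} and \eqref{eq2.20} to identify $\fint_{B_{\rho_\mu}(y)}\widehat\Gamma^{\sigma_\nu}_{kl}(\cdot,x)=\fint_{B_{\sigma_\nu}(x)}\Gamma^{\rho_\mu}_{lk}(\cdot,y)$, then take $\nu\to\infty$ followed by $\mu\to\infty$. Continuity of both $\Gamma_{lk}^{\rho_\mu}(\cdot,y)$ and $\Gamma_{kl}^{\,*}(\cdot,x)$ in their first argument (interior (H) applied to $\mathcal{L}$ and $\mathcal{L}^*$ respectively) makes this identification pointwise for $x\ne y$. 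Symmetry immediately transfers the estimates \eqref{eq3.70}--\eqref{eq3.74}, \eqref{eq3.80}, \eqref{eq3.81} applied to $\bGr^*$ into the estimates \eqref{eq3.75}--\eqref{eq3.79}, \eqref{eq3.82}. Uniqueness is by the argument recorded after Definition~\ref{d3.9}. The main obstacle I anticipate is making the boundary-vanishing condition 2) rigorous — the weak-limit argument is straightforward once one sets it up, but one must be careful that the cutoff strips the approximants of the region near $y$ where only weaker integrability is available, and then uses \eqref{eq3.5} in the exterior, rather than global convergence in $\bF_0(\Omega)$ (which is not available since $\|\bv_\rho\|_{\bF}\to\infty$ as $\rho\to 0$).
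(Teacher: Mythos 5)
Your proposal follows essentially the same route as the paper's proof: invoke Lemma~\ref{l3.2} with $R_y=d_y$, build $\bGr(\cdot,y)$ and $\bGr^\rho(\cdot,y)$ columnwise, inherit \eqref{eq3.70}--\eqref{eq3.74} and \eqref{eq3.80}, prove the boundary-vanishing condition by weak convergence of $(1-\zeta)\bv_{\rho_\mu}$ (via \eqref{eq3.5}) together with weak closedness of $Y^{1,2}_0$ (the paper cites Mazur's lemma for exactly this), and then obtain continuity, the double-limit symmetry identity $\bGr(x,y)=\bGr^*(y,x)^T$, and the second-variable estimates \eqref{eq3.75}--\eqref{eq3.79}, \eqref{eq3.82} exactly as in Theorem~\ref{t3.6}. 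The argument is correct and no genuinely different ideas are introduced.
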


\begin{pf}
The hypotheses of Lemma~\ref{l3.2} are satisfied with $R_y=d_y$, for all $y\in \Omega$.
For each $y\in \Omega$, $0 < \rho < d_y$, and $k= 1,\dots, N$, we obtain $\{\bv_{\rho; y, k}\} \subset \bF_0\pr{\Om}$ and $\bv=\bv_{y,k}$ satisfying 
\eqref{eq3.2}-\eqref{eq3.7} and the estimates \eqref{eq3.8}-\eqref{eq3.14}, where $R_y=d_y$ and $R_{x,y}=\min\{d_x,d_y,|x-y|\}$.

We define $\bGr(\cdot, y)$ to be the matrix whose columns are given by $\bv^T_{y, k}$ for $k=1,\dots,N$, and we define similarly the averaged Green matrix $\bGr^\rho(\cdot, y)$.  
Then estimates \eqref{eq3.70}--\eqref{eq3.74} and \eqref{eq3.80} are inherited directly from Lemma~\ref{l3.2}.

We now prove that $\bGr(x,y)$ satisfies Definition \ref{d3.9}.  
This definition largely resembles that of the fundamental matrix, and the proof can be executed analogously, except for an additional requirement to prove that $\bGr(\cdot, y)= \bz$ on $\del \Om$ in the sense that for all $\zeta \in C_c^\iny\pr{\Om}$ satisfying $\zeta\equiv 1$ on $B_r(y)$ for some $r>0$, we have
\begin{equation}\label{eq3.83}
(1-\zeta)\bGr(\cdot, y) \in Y^{1,2}_0\pr{\Om}^{N\times N}.
\end{equation}
By Mazur's lemma, $Y^{1,2}_0\pr{\Om}^N$ is weakly closed in $Y^{1,2}\pr{\Om}^N$.  
Therefore, since $(1-\zeta)\bv_{\rho_{\mu}}=\bv_{\rho_{\mu}}- \zeta\bv_{\rho_{\mu}}  \in Y^{1,2}_0\pr{\Om}^N$ for all $\rho_\mu<d_y$, it suffices for \eqref{eq3.83} to show that
\begin{equation}\label{eq3.84}
(1-\zeta)\bv_{\rho_\mu} \rightharpoonup (1-\zeta) \bv \quad \text{in } Y^{1,2}\pr{\Om}^N.
\end{equation}
Since $(1-\zeta)\equiv 0$ on $B_r(y)$, the result \eqref{eq3.84} follows from 
\eqref{eq3.5}.  
Indeed,
\begin{align*}
\int_{\Om} (1-\zeta) G_{kl}(\cdot, y)\phi 
&=\int_{\Om} G_{kl} (\cdot, y)(1-\zeta) \phi 
= \lim_{\mu\to \iny} \int_{\Om} G^{\rho_\mu}_{kl}(\cdot, y) (1-\zeta) \phi \\
&=\lim_{\mu\to \iny} \int_{\Om} (1-\zeta) G^{\rho_\mu}_{kl} (\cdot, y) \phi, \quad 
\forall \phi \in L^{\frac{2n}{n+2}}\pr{\Om}, \quad \text{and} \\
\int_{\Om} D\brac{\pr{1-\zeta}G_{kl}(\cdot, y)}\cdot \psi 
&= -\int_{\Om} G_{kl}(\cdot, y) D\zeta \cdot \psi + \int_{\Om} DG_{kl}(\cdot, y) \cdot (1-\zeta)\psi \\
&=-\lim_{\mu\to \iny} \int_{\Om} G^{\rho_\mu}_{kl} (\cdot, y) D\zeta \cdot \psi + \lim_{\mu \to \iny} \int_{\Om} DG^{\rho_\mu}_{kl} (\cdot, y) \cdot (1-\zeta) \psi \\
&=\lim_{\mu \to \iny} \int_{\Om} D\brac{\pr{1-\zeta}G^{\rho_\mu}_{kl} (\cdot, y)}\cdot \psi, \quad 
\forall \psi \in L^2\pr{\Om}^N.
\end{align*}
Therefore, $\bGr(x,y)$ is the unique Green matrix associated to $\mathcal{L}$.

It follows from \eqref{eq3.70} and property (H) that for any $0<R \le R_0 \le \frac 1 2 d_{x,y}$, there exists $\eta=\eta(R_0)$ and $C_{R_0}>0$ such that, whenever $|x-z|\le \frac{R}{2}$,
\aln{
& \abs{\bGr\pr{x,y} - \bGr\pr{z,y}} 
\le C_{R_0} C \pr{\frac{|x-z|}{R}}^\eta R^{2-n}. 
\label{eq3.85}
}
By the same argument that lead to \eqref{eq3.63}, this implies that, passing to a subsequence if necessary, for any compact $K\Subset \Om \setminus \{y\}$,
\begin{equation}\label{eq3.86}
\bGr^{\rho_\mu}(\cdot, y)\to \bGr(\cdot, y)
\end{equation}
uniformly on $K$, and from here the same argument as the one for \eqref{eq3.65} proves 
that
\begin{equation}
\bGr(x,y)=\bGr^*(y,x)^T, \quad \forall x,y\in \Om, \quad x \ne y.
\label{eq3.87}
\end{equation}
The remaining properties, \eqref{eq3.75}--\eqref{eq3.79}, follow from Lemma~\ref{l3.2} applied to $\bGr^*\pr{\cdot, x}$ in combination with \eqref{eq3.87}.
\end{pf}

\begin{rem} As with the fundamental matrix, we obtain
\begin{equation}
\bGr^{\rho}(x,y)=\fint_{\Om_\rho(y)} \bGr(x,z) dz,
\end{equation}
and, by continuity, 
\begin{equation}
\lim_{\rho\to 0} \bGr^{\rho}(x,y)=\bGr(x,y), \quad \forall x,y\in \Om, 
\quad x\ne y.
\end{equation}
\end{rem}

\subsection{Global estimates for the Green matrix}

It was observed in \cite{KK10} that if the interior boundedness assumption (IB) is altered as below (to being valid on balls possibly intersecting the boundary), then the pointwise and local $L^q$ estimates of $\bGr$ can be freed of their dependence on the distances to the boundary for the homogeneous elliptic operators. 
Similarly, assuming local boundedness on boundary balls gives enhanced Green function estimates in our setting. 
\begin{itemize}
\item[(BB)] Let $\Om$ be a connected open set in $\R^n$.  
We say that \rm{(BB)} holds in $\Om$ if whenever $\bu\in \bF\pr{\Om_{2R}}$ is a weak solution to $\mathcal{L} \bu = \bff$ or $\mathcal{L}^*\bu=\bff$ in $\Om_R$, for some $R>0$, where $\bff\in L^\ell\pr{\Om_R}^N$ for some $\ell \in \pb{\tfrac{n}{2}, \iny}$, and $\bu \equiv \bf 0$ on $\partial \Om \cap B_R$, then $\bu$ is a bounded function and for any $q>0$,
\begin{equation}\label{eq3.90}
\norm{\bu}_{L^\iny(\Om_{R/2})}\le C \brac{ R^{-\frac n q}\norm{\bu}_{L^q\pr{\Om_R}}
+ R^{2-\frac{n}{\ell}} \norm{\bff}_{L^\ell(\Om_R)}},
\end{equation}
where the constant $C$ is independent of $R$.
\end{itemize}

We note that condition (BB) holds, for example, whenever (IB) holds for an extended operator $\mathcal{L}^\#$ defined on $\R^n$ with $\mathcal{L}=\mathcal{L}^\#$ on $\Om$.  
This fact can often be established by a reflection argument.

\begin{cor}\label{c3.12} 
Let $\Om$ be an open, connected, proper subset of $\R^n$.  
Assume that {\rm{\ref{A1}--\ref{A7}}} as well as properties {\rm{(BB)}} and {\rm{(H)}} hold.
Then the continuous Green matrix satisfies the following global estimates:
\begin{align}
& \norm{\bGr\pr{\cdot, y}}_{Y^{1,2}\pr{\Om \setminus B_r\pr{y}}}
+\norm{\bGr\pr{x,\cdot}}_{Y^{1,2}\pr{\Om \setminus B_r\pr{x}}} 
\le C r^{1-n/2}, \qquad \forall r>0,\\
& \norm{\bGr\pr{\cdot, y}}_{L^{q}\pr{B_r\pr{y}}}
+\norm{\bGr\pr{x,\cdot}}_{L^{q}\pr{B_r\pr{x}}} 
\le C_q r^{2 -n+ \frac{n}{q}}, \qquad \forall r>0, \quad \forall q \in [1,\tfrac{n}{n-2}), \\
& \norm{D \bGr\pr{\cdot, y}}_{L^{q}\pr{B_r\pr{y}}}
+\norm{D \bGr\pr{x,\cdot}}_{L^{q}\pr{B_r\pr{x}}} 
\le C_q r^{1-n +\frac{n}{q}}, \qquad \forall r>0, \quad \forall q \in [ 1, \tfrac{n}{n-1}), \\
& \abs{\set{x \in \Om : \abs{\bGr\pr{x,y}} > \tau }}
+\abs{\set{y \in \Om : \abs{\bGr\pr{x,y}} > \tau }} 
\le C \tau^{- \frac{n}{n-2}}, \qquad \forall \tau >0,\\
& \abs{\set{x \in \Om : \abs{D_x \bGr\pr{x,y}} > \tau }}
+\abs{\set{y \in \Om : \abs{D_y \bGr\pr{x,y}} > \tau }} 
\le C \tau^{- \frac{n}{n-1}}, \qquad \forall \tau >0,\\
& \abs{\bGr\pr{x,y}} \le C |x-y|^{2 - n} \qquad \forall x \ne y,
\end{align}
where the constants depend on $n, \, N, \, c_0 \, \Gamma, \, \gamma$ and the constants from \eqref{eq2.23} and {\rm{(BB)}}, and each $C_q$ depends additionally on $q$.  
The H\"older continuity estimates of Theorem~\ref{t3.10} remain unchanged.
\end{cor}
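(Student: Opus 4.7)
The plan is to re-examine the construction in Lemma~\ref{l3.2} and observe that the condition \eqref{eq3.1} used there was precisely the interior version of the Moser bound, which forced the introduction of the truncated radius $R_y$ (taken to be $d_y$ in the proof of Theorem~\ref{t3.10}). The key observation is that under hypothesis (BB), since every approximate Green vector $\bv_{\rho;y,k}$ lies in $\bF_0(\Omega)$ by construction, property \ref{A4} guarantees that $\bv_{\rho;y,k}$ vanishes weakly on $\partial\Omega$ on every ball intersecting it. Thus the Moser-type bound \eqref{eq3.1} holds for $\bv_{\rho;y,k}$ on \emph{every} ball $B_r(y)$, not merely for $r<\frac12 d_y$. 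Effectively, we may take $R_y=\infty$ throughout the proof of Lemma~\ref{l3.2}.

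Concretely, I would first revisit the derivation of the pointwise bound \eqref{eq3.23}: for $x\neq y$ and $\rho\le \frac13|x-y|$, $\bv_\rho$ solves $\mathcal{L}\bv_\rho=\bz$ in $\Omega_{r/4}(x)$ with $r=\frac43|x-y|$, and $\bv_\rho\in \bF_0(\Omega)$ vanishes on $\partial\Omega\cap B_{r/2}(x)$; applying \eqref{eq3.90} rather than \eqref{eq3.47} lifts the restriction $r<\frac12 R_y$ and yields $|\bv_\rho(x)|\le C|x-y|^{2-n}$ without any proximity condition to the boundary. The rest of the proof of Lemma~\ref{l3.2} then goes through verbatim with $R_y$ and $R_x$ replaced by $+\infty$: estimates \eqref{eq3.8}--\eqref{eq3.13} hold for all $r>0$ and all $\tau>0$, and \eqref{eq3.14} becomes $|\bv(x)|\le C|x-y|^{2-n}$ since now $R_{x,y}=|x-y|$.

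Next I would carry these improved estimates into the proof of Theorem~\ref{t3.10}. The construction of $\bGr(\cdot,y)$ as the matrix whose columns are $\bv^T_{y,k}$ is unchanged, but the inherited estimates on $\bGr(\cdot,y)$ now have no restriction involving $d_y$. The identification $\bGr(x,y)=\bGr^*(y,x)^T$ from \eqref{eq3.87} remains valid, and applying the same upgraded version of Lemma~\ref{l3.2} to the adjoint operator $\mathcal{L}^*$ (which satisfies (BB) by the symmetry of the hypothesis between $\mathcal{L}$ and $\mathcal{L}^*$) gives the analogous global estimates for $\bGr(x,\cdot)$. The pointwise bound becomes $|\bGr(x,y)|\le C|x-y|^{2-n}$ for all $x\neq y$ in $\Omega$. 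Finally, the H\"older continuity statements \eqref{eq3.81}--\eqref{eq3.82} come directly from (H), which we still assume and which is local in nature, so they are inherited unchanged from Theorem~\ref{t3.10}.

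The only delicate step is the application of (BB) in the place of (IB) at the arguments around \eqref{eq3.23} and \eqref{eq3.44}--\eqref{eq3.45}, where one must verify that the membership $\bv_\rho\in\bF_0(\Omega)$ (together with property \ref{A1} and the weak boundary-vanishing convention \eqref{eq2.11}) legitimately allows (BB) to be invoked on boundary balls. This is the main conceptual point; once it is in place, every other estimate is obtained by the same passage-to-the-limit and duality arguments as in Lemma~\ref{l3.2} and Theorem~\ref{t3.10}, with the truncation $r<\frac12 d_y$ (respectively $\tau>(\frac12 d_y)^{2-n}$) simply removed.
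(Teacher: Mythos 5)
Your proposal is correct and is essentially the paper's own argument: the paper proves the corollary in one line by noting that under (BB) the hypothesis \eqref{eq3.1} holds with $R_x, R_y=\infty$ for all $x,y\in\Om$, so the global estimates are inherited directly from Lemma~\ref{l3.2} exactly as in the proof of Theorem~\ref{t3.10}, with the H\"older bounds coming from (H) unchanged. Your more detailed walk-through of where (BB) replaces (IB) (around \eqref{eq3.23} and \eqref{eq3.44}--\eqref{eq3.45}) just makes explicit what the paper leaves implicit.
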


\begin{pf}
As in the proof of Theorem~\ref{t3.10}, the global estimates are inherited directly from Lemma~\ref{l3.2} with $R_x, R_y=\infty$ for all $x, y\in \Om$.
\end{pf}

\begin{rem}
In conclusion, $\bG\pr{x,y}$ exists and satisfies the estimates of Theorem~\ref{t3.6} whenever (IB) and (H) hold for solutions.
The conclusion of Theorem~\ref{t3.6} also states that
 \eqs{\bG\pr{\cdot,y}\in Y^{1,2}\pr{\R^n\setminus B_r\pr{y}}^{N\times N} \quad \text{for any $r>0$.}}
However, it does not follow from Theorem~\ref{t3.6} that $\bG\pr{\cdot, y} \in \bF\pr{\R^n \setminus B_r\pr{y}}$ for the general space $\bF$.
In Section~\ref{s7}, we examine a number of examples and show that in each case, a version of this statement holds for $\bG\pr{\cdot,y}$ as well as $\bG\pr{x, \cdot}$, $\bGr\pr{\cdot, y}$, and $\bGr\pr{x, \cdot}$.
Details may be found in Section~\ref{s7.4}
\end{rem}

\section{A Caccioppoli inequality}

The remainder of the paper will essentially be a discussion of the major examples that fit our theory. 
In this section we prove a version of the Caccioppoli inequality. In the next two sections we demonstrate local boundedness and H\"older continuity of solutions (for equations, rather than systems, only). 
And finally, in Section~\ref{s7}, we tie it all together by presenting the most common examples. 

\begin{lem} \label{l4.1}
Let $\Om \su \R^n$ be open and connected.
Assume that $\bF\pr{\Om}$, $\bF_0\pr{\Om}$, $\mathcal{L}$, and $\mathcal{B}$ satisfy \rm{\ref{A1} -- \ref{A5}}.
Suppose $\bb \in L^s\pr{\Omega}^{n \times N \times N}$, $\bd \in L^t\pr{\Omega}^{n \times N \times N}$ for some $s, t \in \brac{ n, \iny}$, and (instead of assuming \rm{\ref{A6}}),  assume that either
\begin{itemize}
\item $s, t = n$ and $\mathcal{B}\brac{\bv, \bv} \ge \ga \norm{D \bv}_{L^2\pr{\Om}^N}^2$ for every $\bv \in \bF_0\pr{\Om}$; or
\item $s, t \in (n, \iny]$ and $\mathcal{B}\brac{\bv, \bv} \ge \ga \norm{\bv}_{W^{1,2}\pr{\Om}^N}^2$ for every $\bv \in \bF_0\pr{\Om}$.
\end{itemize}
Let $\bu \in \bF\pr{\Omega}$ and $\zeta \in C^\infty(\rn)$ with $D\zeta \in C_c^\infty(\rn)$
be such that $\bu \zeta  \in \bF_0\pr{\Om}$, $ \partial^i\zeta \,\bu\in L^2(\Omega)^N$, $i=1,...,n$, 
and $\disp \mathcal{B}\brac{\bu, \bu \zeta^2} \le \int \bff \, \cdot \bu\, \zeta^2$ for some $\bff \in L^{\ell}\pr{\Omega}^N$,  $\ell \in \pb{ \frac n 2, \iny}$. 
Then
\begin{equation}\label{eq4.1}\int \abs{D \bu}^2 \zeta^2 \le C \int \abs{\bu}^2 \abs{D \zeta}^2 + c \abs{\int  \bff\cdot \bu \,\zeta^2 },
\end{equation}
where $C = C\pr{n, s, t, \ga, \La, \norm{\bb}_{L^s\pr{\Om}}, \norm{\bd}_{L^t\pr{\Om}}}$, $c = c\pr{\ga}$.
\end{lem}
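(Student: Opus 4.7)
The plan is to apply the coercivity hypothesis to $\bu\zeta\in\bF_0(\Omega)$ and combine the resulting lower bound on $\mathcal{B}[\bu\zeta,\bu\zeta]$ with the given upper bound on $\mathcal{B}[\bu,\bu\zeta^2]$ through a direct algebraic identity. Using the Leibniz formulas $D_\alpha(\bu\zeta^2)=(D_\alpha\bu)\zeta^2+2\bu\zeta D_\alpha\zeta$ and $D_\alpha(\bu\zeta)=(D_\alpha\bu)\zeta+\bu D_\alpha\zeta$ and expanding term by term, one finds that $\mathcal{B}[\bu\zeta,\bu\zeta]-\mathcal{B}[\bu,\bu\zeta^2]$ equals an error term $E$ whose integrand involves only the products $A^{\alpha\beta}D\bu\cdot\bu\,\zeta D\zeta$, $A^{\alpha\beta}\bu\cdot\bu\,D\zeta D\zeta$, $\bb^\alpha\bu\cdot\bu\,\zeta D\zeta$, and $\bd^\beta\bu\cdot\bu\,\zeta D\zeta$. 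The decisive feature is that the $\bV$-contributions cancel identically, since $\bV\bu\zeta\cdot\bu\zeta=\bV\bu\cdot\bu\zeta^2$; this cancellation is what allows the argument to proceed without any integrability hypothesis on $\bV$ and is responsible for the absence of $\bV$-norms from the constants.

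Coercivity together with the hypothesis and the identity above gives
\[
\gamma\,\mathcal{N}(\bu\zeta)^2\le\mathcal{B}[\bu\zeta,\bu\zeta]=\mathcal{B}[\bu,\bu\zeta^2]+E\le\Big|\int\bff\cdot\bu\zeta^2\Big|+|E|,
\]
where $\mathcal{N}(\bu\zeta)$ denotes $\|D(\bu\zeta)\|_{L^2}$ when $s=t=n$ and $\|\bu\zeta\|_{W^{1,2}}$ when $s,t\in(n,\infty]$. The $A^{\alpha\beta}$-pieces of $E$ are estimated by $|\bA|\le\Lambda$ and Young's inequality, producing $\varepsilon\int|D\bu|^2\zeta^2+C_\varepsilon\int|\bu|^2|D\zeta|^2$; the elementary pointwise bound $|D\bu|^2\zeta^2\le 2|D(\bu\zeta)|^2+2|\bu|^2|D\zeta|^2$ then makes the $\varepsilon$-part absorbable into the left-hand side. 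For the $\bb$- and $\bd$-pieces I split $\int|\bb||\bu|^2\zeta|D\zeta|=\int|\bb|(|\bu|\zeta)(|\bu||D\zeta|)$ and apply H\"older with exponents $(s,q,2)$, where $1/q=1/2-1/s$. When $s=t=n$ the middle exponent is exactly $q=2^*$, so the Sobolev embedding \eqref{eq2.6} applied to $\bu\zeta\in\bF_0(\Omega)$ gives $\|\bu\zeta\|_{L^{2^*}}\lesssim\|D(\bu\zeta)\|_{L^2}$, after which a further Young step yields $\varepsilon\|D(\bu\zeta)\|_{L^2}^2+C_\varepsilon\|\bb\|_{L^s}^2\int|\bu|^2|D\zeta|^2$.

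In the case $s,t\in(n,\infty]$ one has $q\in[2,2^*)$, and I interpolate $\|\bu\zeta\|_{L^q}\le\|\bu\zeta\|_{L^2}^\theta\|\bu\zeta\|_{L^{2^*}}^{1-\theta}$; the $L^2$-factor that now appears is precisely why the coercivity hypothesis is strengthened to the full $W^{1,2}$-norm in this range, as it too can be absorbed through a three-variable Young inequality. Choosing $\varepsilon$ sufficiently small relative to $\gamma$, absorbing the $\mathcal{N}(\bu\zeta)^2$-contributions into the left-hand side, and finally replacing $\|D(\bu\zeta)\|_{L^2}^2$ by $\int|D\bu|^2\zeta^2$ up to an extra $\int|\bu|^2|D\zeta|^2$-error, yields \eqref{eq4.1} with exactly the stated dependencies. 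The main technical obstacle lies in the bookkeeping for the second case: the three-term H\"older split, the interpolation, and the concluding Young's inequality must be arranged so that the powers of $\|\bu\zeta\|_{L^2}$ and $\|D(\bu\zeta)\|_{L^2}$ each carry a small multiplier suitable for absorption, while only $\||\bu||D\zeta|\|_{L^2}^2$ ends up paired with the potentially large constants $\|\bb\|_{L^s}^2$ and $\|\bd\|_{L^t}^2$.
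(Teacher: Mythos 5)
Your proposal is correct and follows essentially the same route as the paper: expand $\mathcal{B}[\bu\zeta,\bu\zeta]-\mathcal{B}[\bu,\bu\zeta^2]$ (with the $\bV$-terms cancelling), bound the $\bA$-pieces via $\Lambda$ and Young, handle the $\bb$- and $\bd$-pieces by H\"older plus Sobolev (with the $L^2$--$L^{2^*}$ interpolation in the case $s,t\in(n,\infty]$, whose $L^2$-factor is absorbed by the $W^{1,2}$-coercivity), and absorb after applying coercivity to $\bu\zeta$. The paper's writing of $|\bu\zeta|^{n/s}|\bu\zeta|^{1-n/s}$ is exactly your interpolation step, so the two arguments coincide.
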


\begin{rem}
Let us make a few comments before the proof. 
First, as in the comments to \rm{\ref{A7}}, we remark that the condition $D\zeta \in C_c^\infty(\rn)$ implies that $\zeta$ is a constant outside some large ball (call it $C_\zeta$) and hence, $C_\zeta-\zeta \in C_c^\infty(\rn)$.
Then, by \rm{\ref{A4}}, $\bu\zeta^2=C_\zeta \bu\zeta-(C_\zeta-\zeta) \bu\zeta \in \bF_0\pr{\Omega}$. 
We shall use this in the proof. 
Also, the conditions $\bu \zeta \in \bF_0\pr{\Om}$, $ \partial^i\zeta \,\bu\in L^2(\Omega)^N$, $i=1,...,n$, and $D\zeta \in C_c^\infty(\rn)$, along with \eqref{eq2.5}, ensure that the first and the second integral in \eqref{eq4.1} are finite. The last one is finite for otherwise both the assumptions and the conclusion of the Lemma are meaningless.  

Second, if we do assume \rm{\ref{A6}}, then the condition $\mathcal{B}\brac{\bv, \bv} \ge \ga \norm{D \bv}_{L^2\pr{\Om}^N}^2$ for every $\bv \in \bF_0\pr{\Om}$ follows from \eqref{eq2.5}. 
Moreover, the actual requirements on $\bb$ and $\bd$ that are necessary to carry out the arguments, and appear in the constant $C$, are $\bb \in L^s\pr{\Omega\cap U}^{n \times N \times N}$, $\bd \in L^t\pr{\Omega\cap U}^{n \times N \times N}$ for any $U$ containing the support of $D\zeta$. 
Since the latter is compact, one could always reduce the case $s,t>n$ to the case $s=t=n$ and hence to work in the first regimen. However, such a reduction would bring up the dependence of the constants on the size of the support of $D\zeta$, and this is typically not desirable. 
\end{rem}

\begin{proof}
Let $\bu$, $\zeta$ be as in the statement.
A computation shows that
\begin{align*}
\mathcal{B}\brac{ \bu \zeta, \bu \zeta}
&= \mathcal{B}\brac{\bu, \bu \zeta^2} 
+ \int \bA^{\al \be}\brac{ \pr{- D_\be \bu \cdot \bu \, D_\al \zeta + \bu \, D_\be \zeta \cdot D_\al \bu} \zeta + \bu \cdot \bu \, D_\be \zeta \, D_\al \zeta} \\
&+ \int \pr{- \bb^\al \bu \zeta \cdot \bu \, D_\al \zeta
+ \bd^\be \bu D_\be  \zeta \cdot \bu \zeta}.
\end{align*}
By the assumption, $\disp \mathcal{B}\brac{\bu, \bu \zeta^2} \le \int \abs{\bff} \abs{\bu} \zeta^2$. \\
By \eqref{eq2.13},
\begin{align*}
&\int \bA^{\al \be}\brac{ \pr{- D_\be \bu \cdot \bu \, D_\al \zeta + \bu \, D_\be \zeta \cdot D_\al \bu} \zeta + \bu \cdot \bu \, D_\be \zeta \, D_\al \zeta} \\
&\le 2 \La \int \abs{D \bu} \abs{D \zeta} \abs{\bu} \eta + \La \int \abs{D \zeta}^2 \abs{\bu}^2 
\le \pr{ \frac{8 \La^2}{\ga}  + \La} \int \abs{\bu}^2 \abs{D \zeta}^2 
+ \frac{\ga}{8 } \int \abs{D \bu}^2 \zeta^2. 
\end{align*}
If $s \in \pr{n, \iny}$, then since $\bu \zeta \in \bF_0\pr{\Om}$,
\begin{align*}
\abs{ \int \bb^\al \bu \zeta \cdot \bu \, D_\al \zeta } 
&\le \int \abs{\bb} \abs{\bu \zeta}^{\frac n s} \abs{\bu \zeta}^{1 - \frac n s}\abs{ \bu D \zeta}
\le \norm{\bb}_{L^s\pr{\Om}} \norm{\bu \zeta}_{L^{2^*}\pr{\Om}}^{\frac n s}  \norm{\bu \zeta}_{L^{2}\pr{\Om}}^{1 - \frac n s} \norm{\bu D\zeta}_{L^{2}\pr{\Om}}  \\
&\le c_{n}^{\frac n s} \norm{\bb}_{L^s\pr{\Om}} \norm{D\pr{\bu \zeta}}_{L^{2}\pr{\Om}}^{\frac n s}  \norm{\bu \zeta}_{L^{2}\pr{\Om}}^{1 - \frac n s} \norm{\bu D\zeta}_{L^{2}\pr{\Om}}  \\
&\le \frac{\ga}{4 } \norm{D\pr{\bu \zeta}}_{L^{2}\pr{\Om}}^2
+ \frac{\ga}{2} \norm{\bu \zeta}_{L^{2}\pr{\Om}}^2
+ \frac{C_{n,s}}{ \ga} \norm{\bb}_{L^s\pr{\Om}}^2  \int \abs{\bu}^2 \abs{ D \zeta }^2.
\end{align*}
Similarly, if $s = \iny$, then
\begin{align*}
\abs{ \int \bb^\al \bu \zeta \cdot \bu \, D_\al \zeta } 
&\le \norm{\bb}_{L^\iny\pr{\Om}} \norm{\bu \zeta}_{L^{2}\pr{\Om}} \norm{\bu D\zeta}_{L^{2}\pr{\Om}}
\le \frac{\ga}{2} \norm{\bu \zeta}_{L^{2}\pr{\Om}}^2
+ \frac{1}{2 \ga}  \norm{\bb}_{L^\iny\pr{\Om}}^2  \int \abs{\bu}^2 \abs{ D \zeta }^2.
\end{align*}
Finally, if $s = n$, then
\begin{align*}
\abs{ \int \bb^\al \bu \zeta \cdot \bu \, D_\al \zeta } 
&\le \int \abs{\bb} \abs{\bu \zeta} \abs{ \bu D \zeta}
\le \norm{\bb}_{L^n\pr{\Om}} \norm{\bu \zeta}_{L^{2^*}\pr{\Om}} \norm{\bu D\zeta}_{L^{2}\pr{\Om}}  \\
&\le \frac{\ga}{4 } \int \abs{D\pr{\bu \zeta}}^2
+ \frac{c_n^2}{ \ga} \norm{\bb}_{L^n\pr{\Om}}^2  \int \abs{\bu}^2 \abs{ D \zeta }^2.
\end{align*}
Analogous inequalities hold for $\bd$. 

It follows from the inequalities above and the coercivity assumption on $\mathcal{B}$ that
\begin{align*}
& \frac{\ga}4 \int \abs{D \bu}^2 \zeta^2 
- \frac \ga 2 \int \abs{\bu}^2 \abs{D \zeta}^2
\le \frac{\ga}{2} \int \abs{D\pr{\bu \zeta}}^2 \\
&\le  \pr{ \frac{8 \La^2}{\ga}  + \La + \frac{C_{n,s} \norm{\bb}_{L^s\pr{\Om}}^2}{ \ga} + \frac{C_{n, t} \norm{\bd}_{L^t\pr{\Om}}^2}{ \ga} } \int \abs{\bu}^2 \abs{D \zeta}^2 
+ \frac{\ga}{8 } \int \abs{D \bu}^2 \zeta^2
+ \int \abs{\bff} \abs{\bu} \zeta^2,
\end{align*}
which leads to the claimed inequality after rearrangements.
\end{proof}

\section{Local boundedness in the equation setting}
\label{s5}

For general elliptic systems, homogeneous or not, (IB), (BB), (H), or even the fact of local boundedness of solutions may fail.
For counterexamples, we refer to \cite{MNP82} for dimension $n \ge 5$ and \cite{Fre08} for lower dimensions. In this and the next section we discuss the cases when local boundedness is valid, restricting ourselves to the context of equations rather than systems, i.e., to $N=1$. 
We insist that such a restriction is taken in Sections~\ref{s5} and \ref{s6} only and that this restriction is not necessary in order for (IB), (BB), (H) to hold. 
Nonetheless, it is perhaps the most commonly used application. 
Much of the material in Sections~\ref{s5} and \ref{s6}, or at least analogous arguments, have appeared in classical literature (e.g., \cite{GT01}, \cite{HL11}, \cite{Sta65}). 
However, we have to carefully track the constants, the exact nature of dependence on $\bb$, $\bd$, $\bV$, the impact of coercivity, and the resulting scale-invariance, since this is crucial for building the fundamental solutions.
Therefore, for completeness, we present the full arguments. 

The following lemma gives a scale-invariant (independent of the choice of $R$) version of local boundedness.
To prove the lemma, we will use de Giorgi's approach, as explained in \cite{HL11}, \cite{Sta65}. 
The novelty of our argument is that rather than assuming ellipticity of the homogeneous operator, we assume coercivity of the bilinear form associated to the full operator.
This allows us to prove a scale-invariant version of local boundedness under a certain sign assumption on the lower order terms.
In other words, we avoid picking up dependencies on the size of the domain over which we are working.
Recall that $\Om_R = B_R \cap \Om$.

We continue to work in the abstract framework that was first introduced in Section~\ref{s2}, but we will have to impose some further properties on our function spaces in order to show that local boundedness and interior H\"older continuity are in fact reasonable assumptions.
\begin{enumerate}[label=B\arabic*)]
\item
For any $R > 0$, $k \ge 0$, if $u \in \bF\pr{\Om_R}$ satisfies $u = 0$ along $\del \Om \cap B_R$ (as usual, in the sense of \eqref{eq2.10}--\eqref{eq2.11}), then $\zeta\pr{u - k}_+ \in \bF_0\pr{\Omega_R}$, $\partial^i\zeta\pr{u - k}_+ \in L^2\pr{\Omega_R}$, $i=1,...,n$, for any non-negative $\zeta\in C_c^\infty(B_{R})$, where $\pr{u-k}_+ := \max\set{u-k, 0}$.
\label{B1}
\item 
For any ball $B_R \su \rn$, $R>0$, if $u \in \bF\pr{B_R}$ is non-negative, and $k, \om > 0$, then $\pr{u + k}^{-\om} \in \bF\pr{B_R}$.
\label{B2}
\end{enumerate}

\begin{lem}
Let $\Om \su \R^n$ be open and connected and take $N = 1$.
Assume that $\bF\pr{\Om}$, $\bF_0\pr{\Om}$, $\mathcal{L}$, and $\mathcal{B}$ satisfy \rm{\ref{A1} -- \ref{A5}} and \rm{\ref{B1}}.
Suppose $b \in L^s\pr{\Om_R}^n$, $d \in L^t\pr{\Om_R}^n$ for some $s, t \in \brac{ n, \iny}$, and (instead of assuming \rm{\ref{A6}}) assume that either
\begin{itemize}
\item $s, t = n$ and $\mathcal{B}\brac{v, v} \ge \ga \norm{D v}_{L^2\pr{\Om_R}}^2$ for every $v \in \bF_0\pr{\Om_R}$; or
\item $s, t\in (n, \infty]$ and $\mathcal{B}\brac{v, v} \ge \ga \norm{v}_{W^{1,2}\pr{\Om_R}}^2$ for every $v \in \bF_0\pr{\Om_R}$.
\end{itemize}
Assume also that 
\begin{equation}
V - D_\al b^\al \ge 0 \textrm{ in } \Om_R \textrm{ in the sense of distributions}.
\label{eq5.1}
\end{equation}
Let $u \in \bF\pr{\Om_{2R}}$ satisfy $u = 0$ along $\del \Om \cap B_{2R}$.
Let $f \in L^{\ell}\pr{\Om_{R}}$ for some $\ell \in \pb{ \frac n 2, \iny}$ and assume that $\mathcal{L} u \le f$ in $\Om_{R}$ weakly in the sense that for any $\vp \in \bF_{0}\pr{B_{R}}$ such that $\vp \ge 0$ in $\Om_{R}$, we have 
\begin{align}
\mathcal{B}\brac{u, \vp} \le \int f \vp .
\label{eq5.2}
\end{align}
Then $u^+ \in L_{loc}^\iny\pr{\Om_R}$ and for any $r < R$, $q > 0$,
\begin{align}
\sup_{\Om_{r}} u^+ 
&\le \frac{C}{\pr{R- r}^{\frac n {q}}} \norm{u^+}_{L^{q}\pr{\Om_R}} 
+ c_{q} R^{2 - \frac n {\ell}}\norm{f}_{L^{\ell}\pr{\Om_R}} ,
\label{eq5.3}
\end{align}
where $C = C\pr{n, q, s, t, \ell, \ga,  \La, \norm{b}_{L^s\pr{\Om_R}}, \norm{d}_{L^t\pr{\Om_R}}}$ and $c_{q}$ depends only on $q$.
\label{l5.1}
\end{lem}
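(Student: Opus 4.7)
The approach is De~Giorgi's iteration, with the sign condition \eqref{eq5.1} exploited to convert the subsolution inequality for $u$ into one that controls each truncation $w_k:=(u-k)_+$, $k\ge 0$.  The plan is to first derive a Caccioppoli-type estimate for $w_k$, then combine it with the Sobolev embedding \eqref{eq2.6} to obtain a reverse-H\"older inequality on the super-level sets $A(k,\rho):=\{u>k\}\cap\Om_\rho$, and finally run the classical iteration on levels.

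For the first step, I would fix radii $r\le r''<r'\le R$ and a non-negative cutoff $\zeta\in C_c^\infty(B_{r'})$ with $\zeta\equiv 1$ on $B_{r''}$ and $|D\zeta|\le C/(r'-r'')$; by \ref{B1}, $\zeta^2 w_k\in\bF_0(\Om_R)$ is non-negative and hence admissible in \eqref{eq5.2}.  On $\{u>k\}$ we have $u=w_k+k$ and $Du=Dw_k$, while on its complement $\zeta^2 w_k$ and $D(\zeta^2 w_k)$ vanish almost everywhere, so separating the contributions of the constant level $k$ in the $b^\al$- and $V$-terms and integrating by parts distributionally yields
\[
\mathcal{B}[w_k,\zeta^2 w_k]+k\,(V-D_\al b^\al,\,\zeta^2 w_k)\le \int f\,\zeta^2 w_k.
\]
Because $k\ge 0$, $\zeta^2 w_k\ge 0$, and $V-D_\al b^\al\ge 0$, the pairing in the middle is non-negative and may be dropped.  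Applying Lemma~\ref{l4.1} to $w_k$ then gives
\[
\int |Dw_k|^2\zeta^2\le C\int w_k^2|D\zeta|^2+c\int_{A(k,r')}|f|\,w_k\,\zeta^2,
\]
with $C=C(n,s,t,\ga,\La,\norm{b}_{L^s},\norm{d}_{L^t})$ independent of $R$.

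Next, combining Sobolev \eqref{eq2.6} for $\zeta w_k\in\bF_0$ with H\"older on the $f$-term (using $\ell>n/2$ so that $\theta:=1-\tfrac{1}{2^*}-\tfrac{1}{\ell}>0$ controls the $|A(k,r')|$-factor) produces the reverse-H\"older bound
\[
\norm{w_k}_{L^{2^*}(\Om_{r''})}\le \frac{C}{r'-r''}\norm{w_k}_{L^2(\Om_{r'})}+C R^{2-n/\ell}\norm{f}_{L^\ell(\Om_R)}\,|A(k,r')|^{\theta},
\]
with $R$-independent constants; this is where the scale-invariance of \eqref{eq5.3} originates.  From here the classical De~Giorgi iteration (as in \cite{HL11}, \cite{Sta65}) closes the argument: choosing step levels $k_m:=K(1-2^{-m})$, step radii $r_m:=r+(R-r)2^{-m}$, and
\[
K:=C_0\bigl[(R-r)^{-n/2}\norm{u^+}_{L^2(\Om_R)}+R^{2-n/\ell}\norm{f}_{L^\ell(\Om_R)}\bigr]
\]
for a sufficiently large $C_0$, one verifies $\int_{A(k_m,r_m)}w_{k_m}^2\to 0$, whence $\sup_{\Om_r} u\le K$, which is \eqref{eq5.3} for $q=2$.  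For $q\ge 2$ the result follows by H\"older; for $0<q<2$, I would apply the $q=2$ estimate on nested pairs of radii, interpolate $\norm{u^+}_{L^2(\Om_\rho)}^2\le (\sup_{\Om_\rho}u^+)^{2-q}\norm{u^+}_{L^q(\Om_R)}^q$, and absorb the supremum via Young and the standard iteration lemma for monotone non-negative functions on nested balls.

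The main obstacle is the algebraic reduction in the first step: no direct one-sided hypothesis on $Vu$ is available, and the only way to eliminate the zeroth-order term is to split $u=w_k+k$ so that the constant-level remainder $k\,(V-D_\al b^\al,\zeta^2 w_k)$ appears explicitly and is rendered non-negative by \eqref{eq5.1}.  Once this identity is in place, all constants are inherited from Lemma~\ref{l4.1} and the Sobolev embedding and are automatically $R$-independent, so the iteration delivers the scale-invariant estimate \eqref{eq5.3} essentially for free.
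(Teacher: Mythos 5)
Your proposal is correct and follows essentially the same route as the paper: the key identity $\mathcal{B}[(u-k)_+,\zeta^2(u-k)_+]=\mathcal{B}[u,\zeta^2(u-k)_+]-k\int(V-D_\al b^\al)\,\zeta^2(u-k)_+$ together with \eqref{eq5.1} to drop the $k$-term, then Lemma~\ref{l4.1}, the Sobolev inequality on super-level sets, and the De~Giorgi level/radius iteration, with H\"older for $q\ge 2$ and the interpolation-plus-iteration-lemma argument for $0<q<2$. The only cosmetic difference is that you iterate radii directly between $r$ and $R$, whereas the paper first proves the case $r=R/2$ and then reaches general $r=\theta R$ by applying that estimate on translated balls; both are standard and equivalent.
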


\begin{rem} Let us remark that \eqref{eq5.3} is of course vacuous if $\norm{u^+}_{L^{q}\pr{\Om_R}}$ is not finite. 
In practice, however, this is not a concern because in any ball of radius strictly smaller than $R$, the norm is finite and hence we can apply \eqref{eq5.3} in such a ball. 
Indeed, $\norm{u^+}_{L^{q}\pr{\Om_R}} < \iny$ for any $u \in \bF\pr{\Om_{2R}}$ by \eqref{eq2.8}. 
Therefore, by applying \eqref{eq5.3} with $q=2$, we conclude that $\norm{u^+}_{L^{\infty}\pr{\Om_r}}$ is finite for any $r<R$. 
Hence, $\norm{u^+}_{L^{q}\pr{\Om_r}}$ is finite for any $r<R$. 
Below, we will first prove \eqref{eq5.3} with $q=2$ and then assume that $\norm{u^+}_{L^{q}\pr{\Om_R}}$ is finite.
(Again, one can always take a slightly smaller ball if necessary).
\end{rem}

\begin{rem}
If $\Om_R = B_R$, then $\del \Om \cap \Om_R$ is empty so that the boundary condition on $u$ is vacuously satisfied.
Therefore, this version of local boundedness is applicable for all of our settings, i.e. when we are concerned with the boundary and when we are not.
\end{rem}

\begin{rem}
As previously pointed out, the estimate \eqref{eq5.3} is scale-invariant since it doesn't depend on $R$.
In our applications, we will assume that $b \in L^s\pr{\Om}^n$ and $d \in L^t\pr{\Om}^n$.
Since $\norm{b}_{L^s\pr{\Om_R}} \le \norm{b}_{L^s\pr{\Om}}$ and $\norm{d}_{L^t\pr{\Om_R}} \le \norm{d}_{L^t\pr{\Om}}$ for every $R$, then this lemma shows that we may establish local bounds with constants that are independent of the subdomain, $\Om_R$.
\end{rem}

\begin{proof}
We will first prove the case of $q = 2$ and $r=\frac 12 R$.
Fix $\zeta\in C_c^\infty(B_R)$, a cutoff function for which $0 \le \zeta \le 1$.
For some $k \ge 0$, define $v = \pr{u - k}_+$.
By \rm{\ref{B1}}, $v \zeta, v \zeta^2 \in \bF_0\pr{\Om_R}$.
Lemma~7.6 from \cite{GT01} implies that $Dv=Du$ for $u>k$ and $Dv=0$ for $u\leq k$ (since \eqref{eq2.8} implies that $v$ is weakly differentiable on $\Omega$).

Since $V - D_\al b^\al \ge 0$ in the sense of distributions and $\supp (v\zeta^2)$ is a subset of $\{u\geq k\}$, then 
\begin{align*}
\mathcal{B}\brac{v, v \zeta^2}
&= \int \pr{A^{\al \be} D_\be v + b^\al v} D_\al \pr{v \zeta^2} +  \pr{d^\be D_\be v + V v } v \zeta^2 \\
&=  \mathcal{B}\brac{u, v \zeta^2}  - k \int \pr{V - D_\al b^\al}  v \zeta^2
\le  \int f v \zeta^2,
\end{align*}
where we used \eqref{eq5.2} with $\vp := v \zeta^2 \in \bF_{0 }\pr{\Om_R}$, $\vp \ge 0$ to get the last inequality.

Since $v\zeta \in \bF_0\pr{\Om_R}$, $v\partial_i\zeta \in L^2(\Omega_R)$, and $D\zeta$ is compactly supported, then Lemma~\ref{l4.1} is applicable with $\bu = v$.
It follows that
\begin{align*}
\int \abs{D v}^2 \zeta^2 
&\le  \brac{ \pr{\frac{8 \La}{\ga}}^2  + \frac{8 \La}{\ga} + 4 + 8\frac{C_{n,s} \norm{b}_{L^s\pr{\Om_R}}^2 + C_{n, t} \norm{d}_{L^t\pr{\Om_R}}^2}{ \ga^2} } \int \abs{v}^2 \abs{D \zeta}^2 
+ \frac{8}{\ga} \int \abs{f} \abs{v} \zeta^2.
\end{align*}
By H\"older and Sobolev inequalities with $2^* = \frac{2n}{n-2}$,
\begin{align}
\int \abs{f} v \zeta^2
&\le \pr{\int \abs{f}^{\ell}}^{\frac 1 {\ell}} \pr{\int \abs{v \zeta}^{2^*}}^{\frac 1{ 2^*}} 
\abs{\set{v \zeta \ne 0}}^{1 - \frac{1}{\ell} - \frac{1}{2^*}} \nonumber \\
&\le \frac{\ga}{32} \int \abs{D\pr{v \zeta}}^{2}
+  \frac{8 c_n^2 }{\ga} \norm{f}_{L^\ell\pr{\Om_R}}^2 \abs{\set{v \zeta \ne 0}}^{1 + \frac 2 n - \frac{2}{\ell}}.
\label{eq5.4}
\end{align}
Therefore,
\begin{align*}
\int \abs{D\pr{v \zeta}}^2 
&\le  4\brac{ \pr{\frac{8 \La}{\ga}}^2  + \frac{8 \La}{\ga} + 5 + 8\frac{C_{n,s} \norm{b}_{L^s\pr{\Om_R}}^2 + C_{n, t} \norm{d}_{L^t\pr{\Om_R}}^2}{ \ga^2} } \int \abs{v}^2 \abs{D \zeta}^2 \\
&+ \pr{\frac{16 c_n }{\ga} \norm{f}_{L^\ell\pr{\Om_R}}}^2 \abs{\set{v \zeta \ne 0}}^{1 + \frac 2 n - \frac{2}{\ell}}.
\end{align*}
Since the H\"older and Sobolev inequalities imply that
\begin{align*}
\int \pr{v \zeta}^2 
\le \pr{\int \pr{v \zeta}^{2^*}}^{2/2^*} \abs{\set{ v \zeta \ne 0}}^{1 - \frac{2}{2^*}}
\le c_n^2 \int \abs{D\pr{v \zeta}}^{2} \abs{\set{ v \zeta \ne 0}}^{\frac{2}{n}},
\end{align*}
then
\begin{align}
\int \pr{v \zeta}^2 
&\le \frac{C_1}{4} \int v^2 \abs{D\zeta}^{2} \abs{\set{ v \zeta \ne 0}}^{\eps_1}  
+ C_1 F^2 \abs{\set{v \zeta \ne 0}}^{1 + \eps_2},
\label{eq5.5}
\end{align}
where $\eps_1 = \frac 2 n$, $\eps_2 = \frac 4 n - \frac 2 \ell > 0$, $F = \norm{f}_{L^\ell\pr{\Om_R}}$, and
\begin{align}
C_1 &= 16 c_n^2 \brac{ \pr{\frac{8 \La}{\ga}}^2  + \frac{8 \La}{\ga} + 5 + 8\frac{C_{n,s} \norm{b}_{L^s\pr{\Om_R}}^2 + C_{n, t} \norm{d}_{L^t\pr{\Om_R}}^2}{ \ga^2} }
+ \pr{\frac{16 c_n^2}{\ga}}^2.
\label{eq5.6}
\end{align}

For fixed $0 < r \le \rho \le R$, let $\zeta \in C^\iny_c\pr{B_\rho}$ be such that $\zeta \equiv 1$ in $B_r$ and $\abs{D\zeta} \le \frac{2}{\rho - r}$ in $B_R$.
We let $A\pr{k, r} = \set{ x \in \Om_r : u \ge k} = \supp v \cap \Om_r$.
Then, for any $0 < r < \rho \le R$ and $k \ge 0$, it follows from \eqref{eq5.5} that 
\begin{align}
\int_{A\pr{k,r}} \pr{u - k}^2
&\le C_1 \brac{\frac{\abs{A\pr{k, \rho}}^{\eps_1}}{\pr{\rho -r}^2} \int_{A\pr{k, \rho}}  \pr{u -k}^2 
+ F^2 \abs{A\pr{k, \rho}}^{1+\eps_2} }.
\label{eq5.7}
\end{align}

Considering $r=R/2$, the goal is to show that there exists a $k \ge 0$ such that 
$$\int_{A\pr{k, R/2}} \pr{u- k}^2 = 0.$$
Take $h > k\ge 0$ and $0 < r < R$.
Since $A\pr{k, r} \supset A\pr{h, r}$, then
$$\int_{A\pr{h, r}} \pr{u- h}^2 \le \int_{A\pr{k, r}} \pr{u- k}^2$$
and
$$\abs{A\pr{h, r}} = \abs{B_r \cap \set{u - k > h - k}} \le \frac{1}{\pr{h-k}^2}\int_{A\pr{k,r}} \pr{u -k}^2.$$
Using these inequalities in \eqref{eq5.7} above, we have that for $h > k\ge 0$ and $\frac{1}{2}R \le r < \rho \le R$
\begin{align*}
\int_{A\pr{h,r}} \pr{u - h}^2
&\le C_1 \brac{ \frac{\abs{A\pr{h, \rho}}^{\eps_1}}{\pr{\rho -r}^2} \int_{A\pr{h, \rho}}  \pr{u -h}^2 
+  F^2 \abs{A\pr{h, \rho}}^{1+\eps_2} } \\
&\le C_1 \set{\frac{1}{\pr{\rho -r}^2 \pr{h-k}^{2\eps_1}} \brac{\int_{A\pr{k,\rho}} \pr{u -k}^2}^{1+ \eps_1} 
+ \frac{F^2 }{\pr{h-k}^{2\pr{1 + \eps_2}}} \brac{ \int_{A\pr{k,\rho}} \pr{u -k}^2}^{1+\eps_2} }
\end{align*}
or
\begin{align}
& \norm{\pr{u - h}^+}_{L^2\pr{\Om_r}}
\le C_2 \brac{ \frac{1}{\pr{\rho -r} \pr{h - k}^{\eps_1}}\norm{\pr{u - k}^+}_{L^2\pr{\Om_\rho}}^{1+\eps_1} 
+  \frac{F}{\pr{h-k}^{1+\eps_2}} \norm{\pr{u - k}^+}_{L^2\pr{\Om_\rho}}^{1+\eps_2}},
\label{eq5.8}
\end{align}
where $C_2$ depends on $C_1$.

Set $\vp\pr{k, r} = \norm{\pr{u - k}^+}_{L^2\pr{\Om_r}}$.
For $i = 0, 1, 2, \ldots$, define
\begin{align*}
k_i = K \pr{1 - \frac{1}{2^i}}, \qquad &
r_i = \frac R 2 + \frac{R}{2^{i+1}},
\end{align*}
so that
\begin{align*}
k_i - k_{i -1} = \frac{K}{2^i}, \qquad &
r_{i - 1}  - r_i = \frac{R}{2^{i+1}},
\end{align*}
where $K > 0$ is to be determined.
Then it follows from \eqref{eq5.8} with $\rho = r_{i-1}$, $r = r_i$, $h = k_i$, and $k = k_{i-1}$ that
\begin{equation}
\vp\pr{k_i, r_i}
\le C_2 \brac{ 2 \frac{2^{\pr{1 + \eps_1}i}}{R K^{\eps_1}} \vp\pr{k_{i -1}, r_{i -1}}^{1+\eps_1}
+ F \pr{\frac{2^i}{ K }}^{1+\eps_2 } \vp\pr{k_{i -1}, r_{i -1}}^{1+\eps_2} }, \quad i\geq 1.
\label{eq5.9}
\end{equation}
{\bf Claim:} There exists $\mu > 1$ and $K$ sufficiently large (depending, in particular, on $\mu$) such that for any $i = 0 , 1, \ldots$
\begin{align}
\vp\pr{k_i, r_i } \le \frac{\vp\pr{k_0, r_0}}{\mu^i}.
\label{eq5.10}
\end{align}
It is clear that the claim holds for $i = 0$.
Assume that the claim holds for $i - 1$.
Then
\begin{align*}
\vp\pr{k_{i-1}, r_{i-1} }^{1 + \eps} 
\le \brac{\frac{\vp\pr{k_0, r_0}}{\mu^{i-1}}}^{1 + \eps}
= \brac{\frac{\vp\pr{k_0, r_0}^\eps}{\mu^{i \eps - \pr{1+\eps}}}}  \frac{\vp\pr{k_0, r_0}}{\mu^{i}}.
\end{align*}
Substituting this expression into \eqref{eq5.9}, we have
\begin{align*}
\vp\pr{k_i, r_i} 
&\le C_2 \brac{2 \mu^{\pr{1+\eps_1}} \pr{\frac{2^{\pr{1 + \eps_1}}}{ \mu^{ \eps_1}}}^i \brac{\frac{\vp\pr{k_0, r_0}}{ R^{\frac n {2}}K }}^{\eps_1}  
+  \mu^{\pr{1+\eps_2}} \pr{\frac{2^{\pr{1 + \eps_2}}}{ \mu^{ \eps_2}}}^i \frac{R^{\frac n 2 \eps_2}F}{K} \brac{\frac{\vp\pr{k_0, r_0}}{ R^{\frac n {2}} K}}^{\eps_2}  }\frac{\vp\pr{k_0, r_0}}{\mu^{i}}.
\end{align*}
If we choose $\mu> 1$ so that $\mu^{\eps_i} \ge 2^{2+\eps_i}$ for each $i$, then for the claim to hold we need
\begin{align*}
C_2 \set{2 \mu^{\pr{1+\eps_1}} \brac{\frac{R^{-\frac n {2}} \vp\pr{k_0, r_0}}{ K }}^{\eps_1}  
+  \mu^{\pr{1+\eps_2}} \frac{R^{2 - \frac n \ell} F}{K} \brac{\frac{R^{-\frac n {2}} \vp\pr{k_0, r_0}}{  K}}^{\eps_2} } \le 1.
\end{align*}
Thus, we choose $K = C_0 R^{- n/2} \vp\pr{k_0, r_0} + R^{2 - \frac n \ell}F $ for some $C_0 >> 1$ that depends on $C_2$, $\mu$ and each $\eps_i$.

Taking $i \to \iny$ in \eqref{eq5.10} shows that $\vp\pr{ K, \frac{R}{2}} = 0$.
In other words, since $\vp\pr{k_0, r_0} = \vp\pr{0, R} = \norm{u^+}_{L^{2}\pr{\Om_R}}$,
\begin{align*}
\sup_{\Om_{R/2}}u^+ 
\le K
\le C_0 R^{-\frac n 2}\norm{u^+}_{L^{2}\pr{\Om_R}} + R^{2 - \frac n \ell}\norm{f}_{L^\ell\pr{\Om_R}}.
\end{align*}
For any $q \in \brac{ 2, \iny}$, an application of the H\"older inequality gives
\begin{equation}\label{eq5.11}
\sup_{\Om_{R/2}}u^+ 
\le C_0 R^{-\frac n {q}} \norm{u^+}_{L^{q}\pr{\Om_R}} +  R^{2 - \frac n \ell}\norm{f}_{L^\ell\pr{\Om_R}}.
\end{equation}

To obtain an estimate in $\Om_{\te R}$, we apply \eqref{eq5.11} to $\Om_{\pr{1-\te}R}\pr{y}$, where $y \in \Om_{\te R}$.
That is, for any $y \in \Om_{\te R}$,
\begin{align*}
 u^+\pr{y} 
&\le \sup_{\Om_{\frac{\pr{1-\te}R}{2}}\pr{y}} u^+ 
\le C_0 \brac{\pr{1 - \te}R}^{-\frac n {q}} \norm{u^+}_{L^{q}\pr{\Om_R}} + R^{2 - \frac n \ell}\norm{f}_{L^\ell\pr{\Om_R}} .
\end{align*}
Now for $\te \in \pr{0,1}$, $R > 0$, and $q \in \pr{0,2}$, we have
\begin{align*}
\norm{u^+}_{L^\iny\pr{\Om_{\te R}}} 
&\le C_0 \brac{\pr{1 - \te}R}^{-\frac n 2} \norm{u^+}_{L^{2}\pr{\Om_R}} 
+  R^{2 - \frac n \ell}\norm{f}_{L^\ell\pr{\Om_R}}  \\
&\le C_0 \brac{\pr{1 - \te}R}^{-\frac n 2}  \norm{u^+}_{L^\iny\pr{\Om_{ R}}}^{1 - \frac {q} 2} \brac{ \int_{\Om_R} \pr{u^+}^{q} }^{\frac 1 2} 
+  R^{2 - \frac n \ell}\norm{f}_{L^\ell\pr{\Om_R}}  \\
&\le \frac{1}{2}\norm{u^+}_{L^\iny\pr{\Om_{R}}} 
+ C_{0,q} \brac{\pr{1 - \te}R}^{-\frac n {q}} \brac{ \int_{\Om_R} \pr{u^+}^{q} }^{\frac 1 {q}} 
+  R^{2 - \frac n \ell}\norm{f}_{L^\ell\pr{\Om_R}} ,
\end{align*}
where $C_{0,q}$ depends on $q$ and $C_0$.
Assuming that $\norm{u^+}_{L^\iny\pr{\Om_{R}}}<\infty$ (recall the remark before the proof), set $h\pr{t} = \norm{u^+}_{L^\iny\pr{\Om_{t}}}$ for $t \in (0,R]$.
Then, for $\te \in \pr{0,1}$, $R > 0$, and $q \in \pr{0,2}$, we have
\begin{align*}
h\pr{r}
&\le \frac{1}{2}h\pr{R} 
+ \frac{C_{0,q}}{\pr{R- r}^{\frac n {q}}} \norm{u^+}_{L^{q}\pr{\Om_R}} 
+  R^{2 - \frac n \ell}\norm{f}_{L^\ell\pr{\Om_R}} .
\end{align*}
It follows from Lemma 4.3 in \cite{HL11} that for any $r < R$,
\begin{align*}
 \norm{u^+}_{L^\iny\pr{\Om_{r}}}
&\le c_{q} \brac{\frac{C_{0,q}}{\pr{R- r}^{\frac n {q}}} \norm{u^+}_{L^{q}\pr{\Om_R}} 
+  R^{2 - \frac n \ell}\norm{f}_{L^\ell\pr{\Om_R}}  }.
\end{align*}
\end{proof}

\begin{rem}
If $u$ is a supersolution, then the conclusions of the previous lemmas apply to $u^-$ in place of $u^+$.
\end{rem}

Now we prove a slightly different version of Moser boundedness.
We show that without the assumptions of coercivity and non-degeneracy, solutions are still locally bounded, but there is a dependence on the size of the domain and on the negative part of the zeroth order potential.

\begin{lem}
Let $\Om \su \R^n$ be open and connected and take $N = 1$.
Assume that $\bF\pr{\Om}$, $\bF_0\pr{\Om}$, $\mathcal{L}$, and $\mathcal{B}$ satisfy \rm{\ref{A1} -- \ref{A5}} and \rm{\ref{B1}}.
Suppose $V = V_+ - V_-$ where $V_\pm \ge 0$ a.e. and $V_- \in L^p\pr{\Om_R}$ for some $p \in \pb{ \frac n 2, \iny}$.
Assume that $b \in L^s\pr{\Om_R}^n$, $d \in L^t\pr{\Om_R}^n$ for some $s, t \in \pb{ n, \iny}$.
Let $u \in \bF\pr{\Om_{2R}}$ satisfy $u = 0$ along $\del \Om \cap B_{2R}$.
Let $f \in L^{\ell}\pr{\Om_{R}}$ for some $\ell \in \pb{ \frac n 2, \iny}$ and assume that $\mathcal{L} u \le f$ in $\Om_{R}$ weakly in the sense that for any $\vp \in \bF_{0}\pr{B_{R}}$ such that $\vp \ge 0$ in $\Om_{R}$, we have \eqref{eq5.2}.
Then $u^+ \in L_{loc}^\iny\pr{\Om_R}$ and for any $r < R$, $q > 0$, \eqref{eq5.3} holds with $C = C\pr{n, q, p, s, t, \ell, \la, \La, R^{2 - \frac n p}\norm{V_-}_{L^p\pr{\Om_R}}, R^{1 - \frac n s}\norm{b}_{L^s\pr{\Om_R}}, R^{1 - \frac n t}\norm{d}_{L^t\pr{\Om_R}}}$, where $c_{q}$ depends only on $q$.
\label{l5.6}
\end{lem}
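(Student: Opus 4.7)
My plan is to adapt the de Giorgi iteration used in Lemma~\ref{l5.1}, with two modifications: since coercivity of the bilinear form is not available here, I will control the principal part using only the ellipticity \eqref{eq2.12}; and since the sign condition \eqref{eq5.1} is replaced by merely $V_-\in L^p$, the $k$-dependent lower-order contributions that were absorbed on the LHS in Lemma~\ref{l5.1} must now be estimated on the RHS. The key is that these contributions pick up positive powers of $|A(k,R)|=|\{x\in\Om_R:u(x)>k\}|$ from Sobolev--H\"older interpolation, and appear multiplied by the scale-invariant norms $R^{2-n/p}\|V_-\|_{L^p(\Om_R)}$, $R^{1-n/s}\|b\|_{L^s(\Om_R)}$, $R^{1-n/t}\|d\|_{L^t(\Om_R)}$ thanks to the strict inequalities $p>n/2$ and $s,t>n$.

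For the Caccioppoli step, I fix $k\ge 0$, set $v=(u-k)_+$, and test \eqref{eq5.2} against $v\zeta^2\in\bF_0(\Om_R)$ (valid by~\rm{\ref{B1}}) with $\zeta\in C_c^\infty(B_R)$, $0\le\zeta\le 1$. Using $u=v+k$ and $Du=Dv$ on $\supp v$,
$$\mathcal{B}[u,v\zeta^2]=\mathcal{B}[v,v\zeta^2]+k\int\bigl[b^\al D_\al(v\zeta^2)+Vv\zeta^2\bigr]\le\int f\,v\zeta^2.$$
In $\mathcal{B}[v,v\zeta^2]$, ellipticity yields the dominant $\la\int|Dv|^2\zeta^2$, while the nonnegative contributions $V_+v^2\zeta^2$ and $kV_+v\zeta^2$ are discarded. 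The remaining pieces---the cross term from the principal part and the terms involving $V_-$, $b$, $d$, and $f$---are estimated by H\"older's inequality (with the exponents $p,s,t,\ell$ respectively) followed by the Sobolev embedding \eqref{eq2.6} applied to $v\zeta\in\bF_0$, and then Young's inequality to absorb all $\|D(v\zeta)\|_{L^2}^2$ contributions back into the LHS (using $|D(v\zeta)|^2\le 2|Dv|^2\zeta^2+2v^2|D\zeta|^2$). The outcome is a Caccioppoli-type inequality
$$\int(v\zeta)^2\le C_1|A(k,R)|^{\eps_1}\int v^2|D\zeta|^2+C_1\bigl(F+\kappa k\bigr)^2|A(k,R)|^{1+\eps_2},$$
where $F=R^{2-n/\ell}\|f\|_{L^\ell(\Om_R)}$, $\kappa$ is a scale-invariant combination of $\|V_-\|_{L^p}$ and $\|b\|_{L^s}$, $\eps_1,\eps_2>0$, and $C_1$ depends on the quantities listed in the statement.

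Once this substitute for \eqref{eq5.5} is in hand, the proof proceeds exactly as in Lemma~\ref{l5.1}: taking $\zeta$ adapted to a pair $r<\rho\le R$ with $|D\zeta|\lesssim(\rho-r)^{-1}$, setting $\varphi(k,r)=\|(u-k)_+\|_{L^2(\Om_r)}$, $k_i=K(1-2^{-i})$, $r_i=R/2+R/2^{i+1}$, and using $|A(k_i,r_i)|\le\varphi(k_{i-1},r_{i-1})^2/(k_i-k_{i-1})^2$, one obtains a recursion analogous to \eqref{eq5.9} in which the new $\kappa k_i\le\kappa K$ contribution enters additively to $F$ and is handled on the same footing. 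Choosing $K\gtrsim R^{-n/2}\|u^+\|_{L^2(\Om_R)}+R^{2-n/\ell}\|f\|_{L^\ell(\Om_R)}$ sufficiently large forces $\varphi(k_i,r_i)\le\varphi(k_0,r_0)/\mu^i\to 0$ for some $\mu>1$, giving $\sup_{\Om_{R/2}}u^+\le K$. The extension to $L^q$ for arbitrary $q>0$ and to balls $\Om_r$ with general $r<R$ follows from the same interpolation argument used at the end of the proof of Lemma~\ref{l5.1}. The main obstacle is the bookkeeping of the $k$-dependent terms from $V$ and $b$, which are absent in Lemma~\ref{l5.1} because of \eqref{eq5.1}; their absorbability rests crucially on the strict inequalities $p>n/2$ and $s>n$ ensuring positive powers of $|A(k,R)|$.
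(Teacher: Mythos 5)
Your overall strategy (de Giorgi iteration, ellipticity instead of coercivity, $k$-dependent terms pushed to the right-hand side) is the paper's strategy, but there is a genuine gap at the central step, and your claimed intermediate inequality is in fact false as stated. The terms that are \emph{quadratic} in $v$ with coefficients $V_-$, $b$, $d$ — namely $\int V_-(v\zeta)^2$, $\int b^\al D_\al(v\zeta)\,v\zeta$, $\int d^\be D_\be v\, v\zeta^2$ — do not produce, after H\"older and Sobolev, anything that Young's inequality can absorb: they become $c_n^2\norm{V_-}_{L^p}\abs{\set{v\zeta\ne0}}^{\frac2n-\frac1p}\norm{D(v\zeta)}_{L^2}^2$ (and analogously with $\norm{b}_{L^s}\abs{\set{v\zeta\ne0}}^{\frac1n-\frac1s}$, etc.), i.e.\ they are already proportional to the full Dirichlet energy with a coefficient that is \emph{not} small, since the scale-invariant norms $R^{2-n/p}\norm{V_-}_{L^p}$, $R^{1-n/s}\norm{b}_{L^s}$ are only assumed finite, not small. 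The only way to absorb them is to make $\abs{\set{v\zeta\ne0}}$ small, which is exactly the paper's condition \eqref{eq5.12}; this is enforced by restricting to levels $k\ge k_0=\mathcal{C}\norm{u^+}_{L^2\pr{\Om_R}}$ as in \eqref{eq5.15}, and the iteration must then be run with the shifted levels $k_i=k_0+K(1-2^{-i})$, which is precisely where the constant acquires its dependence on the coefficient norms (through $\mathcal{C}$). Your iteration starts at $k_0=0$, and your claimed Caccioppoli-type inequality is asserted for all $k\ge0$ with $C_1$ depending only on the listed quantities; that inequality fails. Concretely, take $b=d=0$, $f=0$, $k=0$, $R=1$, $u$ the zero extension of the principal Dirichlet eigenfunction of $B_{3/4}$ (a nonnegative subsolution with $V_-=\la_1(B_{3/4})\mathbf{1}_{B_{3/4}}\in L^p(B_1)$ of fixed norm), and $\zeta\equiv1$ on $B_{7/8}$: then your right-hand side vanishes (since $u\equiv0$ on $\supp D\zeta$ and $F=\kappa k=0$) while the left-hand side is positive.

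Two further remarks. First, the genuinely $k$-linear terms ($k\int V_- v\zeta^2$, $k\int b^\al D_\al(v\zeta)\zeta$, $k\int b^\al D_\al\zeta\, v\zeta$) are handled exactly as you describe, and give the $(F+\kappa k)^2\abs{A}^{1+\eps}$ contribution; that part of your bookkeeping matches the paper's \eqref{eq5.13}. Second, your intention to track the scale-invariant norms directly at scale $R$ is fine in principle, but the paper instead proves the estimate at $R=1$ and then rescales ($u_R(x)=u(Rx)$, $V_R=R^2V(Rx)$, etc.), which is cleaner; either route works once the level shift $k_0$ and the smallness condition \eqref{eq5.12} are put in, but without them the recursion \eqref{eq5.9}-style argument you propose does not close.
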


\begin{proof}
We will first prove the case of $q = 2$, $R = 1$, and $r=\frac 12$.
Fix $\zeta\in C_c^\infty(B_1)$, a cutoff function for which $0 \le \zeta \le 1$.
For some $k \ge 0$, define $v = \pr{u - k}_+$.
By \rm{\ref{B1}}, $v \zeta, v \zeta^2 \in \bF_0\pr{\Om_1}$, and $Dv=Du$ for $u>k$, $Dv=0$ for $u\leq k$, by Lemma~7.6 from \cite{GT01} (since \eqref{eq2.8} implies that $v$ is weakly differentiable on $\Omega$).

Since $\supp (v\zeta^2)$ is a subset of $\{u\geq k\}$, then a computation gives
\begin{align*}
\int A^{\al \be} D_\be v D_\al v \zeta^2 
&= \mathcal{B}\brac{u, v \zeta^2} 
- 2 \int A^{\al \be} D_\be v D_\al \zeta v \zeta \\
&- \int \brac{b^\al v D_\al \pr{v \zeta^2} +  \pr{d^\be D_\be v + V v } v \zeta^2}
- k \int \brac{b^\al D_\al \pr{v \zeta^2} +  V  v \zeta^2} \\
&\le \int \pr{f + k V_-} v \zeta^2
- k \int b^\al D_\al \pr{v \zeta} \zeta 
- k \int b^\al D_\al \zeta v \zeta
- 2 \int A^{\al \be} D_\be v D_\al \zeta v \zeta  \\
&- \int \pr{b^\al - d^\al} D_\al \zeta v^2 \zeta
- \int \pr{b^\al + d^\al} D_\al \pr{v \zeta} v \zeta
+ \int V_- v^2 \zeta^2,
\end{align*}
where we used \eqref{eq5.2} with $\vp := v \zeta^2 \in \bF_{0 }\pr{\Om_1}$, $\vp \ge 0$ to get the first term in the last inequality.
An application of the H\"older, Sobolev, and Young inequalities shows that
\begin{align*}
\int b^\al D_\al \zeta v^2 \zeta 
&\le \frac{\la}{16} \int \abs{D\pr{v\zeta}}^2
+ \frac{64 c_n^2 }{\la} \norm{b}_{L^s\pr{\Om_1}}^2 \int v^2 \abs{D\zeta}^2 \abs{\set{v \zeta \ne 0}}^{\frac 2 n - \frac 2 s}.
\end{align*}
Similarly,
\begin{align*}
\int b^\al D_\al\pr{v \zeta} v \zeta 
&\le c_n \norm{b}_{L^s\pr{\Om_1}} \int \abs{D\pr{v \zeta} }^{2} \abs{\set{v \zeta \ne 0}}^{\frac 1 n - \frac 1 s} \\
 \int V_- v^2 \zeta^2
&\le c_n^2  \norm{V_-}_{L^p\pr{\Om_1}}\int \abs{D\pr{v \zeta} }^{2} \abs{\set{v \zeta \ne 0}}^{\frac 2 n - \frac 1 p}.
\end{align*}

The ellipticity condition, \eqref{eq2.12}, in combination with boundedness \eqref{eq2.13} and the computations above, shows that
\begin{align*}
& \int \abs{D\pr{v\zeta}}^2 
\le \frac 8 \la\int \pr{f + k V_-} v \zeta^2
- \frac {8k} \la\int b^\al D_\al \pr{v \zeta} \zeta 
- \frac {8k} \la\int b^\al D_\al \zeta v \zeta \\
&+ \frac {16}{ \la^2} \pr{\La^2 + \frac {\la^2} 4
+ 32 c_n^2 \norm{b}_{L^s\pr{\Om_1}}^2 \abs{\set{v \zeta \ne 0}}^{\frac 2 n - \frac 2 s}
+ 32 c_n^2 \norm{d}_{L^t\pr{\Om_1}}^2 \abs{\set{v \zeta \ne 0}}^{\frac 2 n - \frac 2 t}} \int v^2 \abs{D\zeta}^2 \\
&+\frac {8c_n} \la\pr{  \norm{b}_{L^s\pr{\Om_1}} \abs{\set{v \zeta \ne 0}}^{\frac 1 n - \frac 1 s}
+ \norm{d}_{L^t\pr{\Om_1}} \abs{\set{v \zeta \ne 0}}^{\frac 1 n - \frac 1 t}
+ c_n \norm{V_-}_{L^p\pr{\Om_1}} \abs{\set{v \zeta \ne 0}}^{\frac 2 n - \frac 1 p}}\int \abs{D\pr{v \zeta} }^{2} .
\end{align*}

As in \eqref{eq5.4},
\begin{align*}
\int f v \zeta^2
&\le \frac{\la}{32} \int \abs{D\pr{v\zeta}}^2
+  \frac{8 c_n^2 }{\la} \norm{f}_{L^\ell\pr{\Om_1}}^2 \abs{\set{v \zeta \ne 0}}^{1 + \frac 2 n - \frac{2}{\ell}} \\
\int V_- v \zeta^2 
&\le \frac{\la}{32 k} \int \abs{D\pr{v\zeta}}^2
+  \frac{8 k c_n^2 }{\la} \norm{V_-}_{L^p\pr{\Om_1}}^2 \abs{\set{v \zeta \ne 0}}^{1 + \frac 2 n - \frac{2}{p}}.
\end{align*}
Similarly,
\begin{align*}
\int b^\al D_\al \pr{v \zeta} \zeta 
&\le \frac{\la}{32 k} \int \abs{D\pr{v\zeta}}^2
+  \frac{8 k}{\la} \norm{b}_{L^s\pr{\Om_1}}^2 \abs{\set{v \zeta \ne 0}}^{1 - \frac{2}{s}} \\
\int b^\al D_\al \zeta v \zeta
&\le \frac{\la}{32 k} \int v^2 \abs{D\zeta}^2
+  \frac{8 k}{\la} \norm{b}_{L^s\pr{\Om_1}}^2 \abs{\set{v \zeta \ne 0}}^{1 - \frac{2}{s}}.
\end{align*}
It follows that
\begin{align*}
& \int \abs{D\pr{v\zeta}}^2 
\le \pr{ \frac{16 c_n }{\la} \norm{f}_{L^\ell\pr{\Om_1}}}^2 \abs{\set{v \zeta \ne 0}}^{1 + \frac 2 n - \frac{2}{\ell}} \\
&+  k^2 \brac{\pr{\frac{16 c_n }{\la} \norm{V_-}_{L^p\pr{\Om_1}}}^2 \abs{\set{v \zeta \ne 0}}^{1 + \frac 2 n - \frac{2}{p}} 
+ 2\pr{\frac{16}{\la} \norm{b}_{L^s\pr{\Om_1}}}^2 \abs{\set{v \zeta \ne 0}}^{1 - \frac{2}{s}}} \\
&+ \frac {32}{ \la^2} \pr{\La^2 + \frac {17\la^2}{64}
+ 32 c_n^2 \norm{b}_{L^s\pr{\Om_1}}^2 \abs{\set{v \zeta \ne 0}}^{\frac 2 n - \frac 2 s}
+ 32 c_n^2 \norm{d}_{L^t\pr{\Om_1}}^2 \abs{\set{v \zeta \ne 0}}^{\frac 2 n - \frac 2 t}} \int v^2 \abs{D\zeta}^2 \\
&+\frac {16 c_n} \la\pr{  \norm{b}_{L^s\pr{\Om_1}} \abs{\set{v \zeta \ne 0}}^{\frac 1 n - \frac 1 s}
+ \norm{d}_{L^t\pr{\Om_1}} \abs{\set{v \zeta \ne 0}}^{\frac 1 n - \frac 1 t}
+ c_n \norm{V_-}_{L^p\pr{\Om_1}} \abs{\set{v \zeta \ne 0}}^{\frac 2 n - \frac 1 p}}\int \abs{D\pr{v \zeta} }^{2} .
\end{align*}
If $\abs{\set{v \zeta \ne 0}}$ is chosen so that
\begin{align}\label{eq5.12}
& \abs{\set{v \zeta \ne 0}} \le \min\set{ \pr{\frac \la {64 c_n  \norm{b}_{L^s\pr{\Om_1}} }}^{\frac{ns}{s-n}} , \pr{\frac \la {64 c_n \norm{d}_{L^t\pr{\Om_1}} }}^{\frac{nt}{t-n}}, \pr{\frac \la {64 c_n^2 \norm{V_-}_{L^p\pr{\Om_1}} }}^{\frac{np}{2p-n}}}
\end{align}
then
\begin{align*}
\int \abs{D\pr{v\zeta}}^2 
&\le \pr{\frac {64 \La^2}{ \la^2} +18} \int v^2 \abs{D\zeta}^2 
+ \pr{ \frac{32 c_n }{\la} \norm{f}_{L^\ell\pr{\Om_1}}}^2 \abs{\set{v \zeta \ne 0}}^{1 + \frac 2 n - \frac{2}{\ell}} \\
&+  \pr{\frac{32 k c_n }{\la} \norm{V_-}_{L^p\pr{\Om_1}}}^2 \abs{\set{v \zeta \ne 0}}^{1 + \frac 2 n - \frac{2}{p}} 
+ 2\pr{\frac{32 k}{\la} \norm{b}_{L^s\pr{\Om_1}}}^2 \abs{\set{v \zeta \ne 0}}^{1 - \frac{2}{s}}.
\end{align*}
Since the H\"older and Sobolev inequalities imply that
\begin{align*}
\int \pr{v \zeta}^2 
\le \pr{\int \pr{v \zeta}^{2^*}}^{2/2^*} \abs{\set{ v \zeta \ne 0}}^{1 - \frac{2}{2^*}}
\le c_n^2 \int \abs{D\pr{v \zeta}}^{2} \abs{\set{ v \zeta \ne 0}}^{\frac{2}{n}},
\end{align*}
then
\begin{align}
\int \pr{v \zeta}^2 
&\le \frac{C_1}{4} \int v^2 \abs{D\zeta}^{2} \abs{\set{ v \zeta \ne 0}}^{\eps}  
+ C_1 \pr{F + k}^2 \abs{\set{v \zeta \ne 0}}^{1 + \eps}.
\label{eq5.13}
\end{align}
where $\eps = \min\set{\frac 2 n, \frac 4 n - \frac 2 \ell, \frac 4 n - \frac 2 p, \frac 2 n - \frac 2 s} > 0$, $F = \norm{f}_{L^\ell\pr{\Om_1}}$, and 
\begin{align*}
C_1 &=  \pr{\frac {16 \La}{ \la}}^2 + 72 + \pr{ \frac{32 c_n }{\la} }^2 +  \pr{\frac{32 c_n }{\la} \norm{V_-}_{L^p\pr{\Om_1}}}^2 + 2\pr{\frac{32}{\la} \norm{b}_{L^s\pr{\Om_1}}}^2.
\end{align*}

For fixed $0 < r \le \rho \le 1$, let $\zeta \in C^\iny_c\pr{B_\rho}$ be such that $\zeta \equiv 1$ in $B_r$ and $\abs{D\zeta} \le \frac{2}{\rho - r}$ in $B_1$.
We let $A\pr{k, r} = \set{ x \in \Om_r : u \ge k} = \supp v \cap \Om_r$.
Then, for any $0 < r < \rho \le 1$ and $k \ge 0$, if \eqref{eq5.12} holds, then \eqref{eq5.13} implies that 
\begin{align}
\int_{A\pr{k,r}} \pr{u - k}^2
&\le C_1 \brac{\frac{\abs{A\pr{k, \rho}}^{\eps}}{\pr{\rho -r}^2} \int_{A\pr{k, \rho}}  \pr{u -k}^2 
+ \pr{F + k}^2 \abs{A\pr{k, \rho}}^{1+\eps} }.
\label{eq5.14}
\end{align}

Since the H\"older inequality implies that
\begin{align*}
\abs{A\pr{k,r}} \le \frac 1 k \int_{A\pr{k,r}} u^+ \le \frac 1 k \pr{\int_{\Om_R} \abs{u^+}^2}^{\frac 1 2} \abs{A\pr{k, r}}^{\frac 1 2},
\end{align*}
then $\disp \abs{\set{v \zeta \ne 0}} \le \frac{1}{k^2} \norm{u^+}_{L^2\pr{\Om_R}}^2$.
To ensure that \eqref{eq5.12} holds, we take
\begin{align}\label{eq5.15}
k &\ge k_0 := \mathcal{C} \norm{u^+}_{L^2\pr{\Om_R}}, 
\end{align}
where
\begin{align*}
\mathcal{C} 
&:=\pr{\frac {64 c_n } \la  \norm{b}_{L^s\pr{\Om_1}}}^{\frac{ns}{2s-2n}} + \pr{\frac {64 c_n }\la \norm{d}_{L^t\pr{\Om_1}}}^{\frac{nt}{2t-2n}} + \pr{\frac {64 c_n^2 } \la \norm{V_-}_{L^p\pr{\Om_1}} }^{\frac{np}{4p-2n}}.
\end{align*}

The goal is to show that there exists a $k \ge k_0$ such that 
$$\int_{A\pr{k, 1/2}} \pr{u- k}^2 = 0.$$
With $h > k\ge k_0$ and $0 < r < 1$, it follows from the arguments in the previous proof that
\begin{align}
& \norm{\pr{u - h}^+}_{L^2\pr{\Om_r}}
\le C_2 \brac{ \frac{1}{\pr{\rho -r} \pr{h - k}^{\eps}}
+  \frac{F+h}{\pr{h-k}^{1+\eps}} }\norm{\pr{u - k}^+}_{L^2\pr{\Om_\rho}}^{1+\eps},
\label{eq5.16}
\end{align}
where $C_2$ depends on $C_1$.

Set $\vp\pr{k, r} = \norm{\pr{u - k}^+}_{L^2\pr{\Om_r}}$.
For $i = 0, 1, 2, \ldots$, define
\begin{align*}
k_i = k_0 + K \pr{1 - \frac{1}{2^i}}, \qquad &
r_i = \frac 1 2 + \frac{1}{2^{i+1}},
\end{align*}
where $K > 0$ is to be determined.
Then it follows from \eqref{eq5.16} with $\rho = r_{i-1}$, $r = r_i$, $h = k_i$, and $k = k_{i-1}$ that for $ i\geq 1$
\begin{equation}
\vp\pr{k_i, r_i}
\le C_2 \brac{ 3 \frac{2^{\pr{1 + \eps}i}}{K^{\eps}} 
+ \pr{F + k_0} \pr{\frac{2^i}{ K }}^{1+\eps } }\vp\pr{k_{i -1}, r_{i -1}}^{1+\eps} .
\label{eq5.17}
\end{equation}
{\bf Claim:} There exists $\mu > 1$ and $K$ sufficiently large (depending, in particular, on $\mu$) such that for any $i = 0 , 1, \ldots$ \eqref{eq5.10} holds.\\
Clearly, the claim holds for $i = 0$.
If the claim holds for $i - 1$, then
\begin{align*}
\vp\pr{k_i, r_i}
\le C_2 \mu^{1+\eps} \brac{3 + \frac{F + k_0}{K} }\pr{\frac{2^{1+\eps }}{\mu^\eps}}^i \pr{\frac{\vp\pr{k_0, r_0}}{K}}^\eps \frac{\vp\pr{k_0, r_0}}{\mu^{i}}.
\end{align*}
If we choose $\mu> 1$ so that $\mu^{\eps} \ge 2^{1+\eps}$, then for the claim to hold we need
\begin{align*}
C_2 \mu^{1+\eps} \brac{3+ \frac{F + k_0}{K} } \pr{\frac{\vp\pr{k_0, r_0}}{K}}^\eps \le 1.
\end{align*}
Setting $K = C_0 \vp\pr{k_0, r_0} + F + k_0$ for some $C_0 >> 1$ that depends on $C_2$, $\mu$ and $\eps$, gives the claim.

Taking $i \to \iny$ in \eqref{eq5.10} shows that $\vp\pr{k_0 + K, \frac{1}{2}} = 0$.
Since $\vp\pr{k_0, r_0} = \vp\pr{k_0, 1} \le \norm{u^+}_{L^{2}\pr{\Om_1}}$, then
\begin{align*}
\sup_{\Om_{1/2}}u^+ 
\le K + k_0
\le C_0 \norm{u^+}_{L^{2}\pr{\Om_R}} + F + 2 k_0 
= C_3 \norm{u^+}_{L^{2}\pr{\Om_1}} + \norm{f}_{L^\ell\pr{\Om_1}},
\end{align*}
where $C_3 = C_0 + 2 \mathcal{C}$.

The estimate for $R \ne 1$ follows from a standard scaling argument.
Assume that $\mathcal{L} u = f$ weakly on $\Om_R$.
Let $u_R\pr{x} = u\pr{Rx}$, $V_R\pr{x} = R^2 V\pr{Rx}$, $b_R\pr{x} = R b\pr{Rx}$, $d_R\pr{x} = R d\pr{Rx}$, $f_R\pr{x} = R^2 f\pr{Rx}$, and define $\mathcal{L}_R$ to be the scaled version of $\mathcal{L}$.
Then $\mathcal{L}_R u_R = f_R$ on $B_1$.
Since $\mathcal{L}_R$ has the same ellipticity constant as $\mathcal{L}$, then by the previous estimate,
\begin{align*}
\sup_{\Om_{R/2}}u^+ 
= \sup_{\Om_{1/2}}u_R^+ 
\le C_{3,R} \norm{u_R^+}_{L^{2}\pr{\Om_1}} + \norm{f_R}_{L^\ell\pr{\Om_1}} 
\le C_{3,R} R^{-n/2} \norm{u^+}_{L^{2}\pr{\Om_r}} + R^{2 - \frac n \ell} \norm{f}_{L^\ell\pr{\Om_R}},
\end{align*}
where 
\begin{align*}
C_{3,R} &= c \brac{ \pr{\frac {16 \La}{ \la}}^2  + 72 + \pr{ \frac{32 c_n }{\la} }^2 +  \pr{\frac{32 c_n }{\la} R^{2 - \frac n p} \norm{V_{-}}_{L^p\pr{\Om_R}}}^2 + 2\pr{\frac{32}{\la} R^{1 - \frac n s} \norm{b}_{L^s\pr{\Om_R}}}^2} \\
&+ 2 \brac{ \pr{\frac {64 c_n^2 } \la  R^{2 - \frac n p} \norm{V_{-}}_{L^p\pr{\Om_R}}}^{\frac{np}{4p-2n}} + \pr{\frac {64 c_n } \la R^{1 - \frac n s} \norm{b}_{L^s\pr{\Om_R}}}^{\frac{ns}{2s-2n}} + \pr{\frac {64 c_n }\la R^{1 - \frac n t} \norm{d}_{L^t\pr{\Om_R}}}^{\frac{nt}{2t-2n}}  }
\end{align*}
grows with $R$.

The rest of the proof, which includes $q \ne 2$ and $r = \te R$ for $\te \ne \frac 1 2$, follows that of the previous lemma.
\end{proof}

\section{Interior H\"older continuity in the equation setting}
\label{s6}

Within this section, we prove H\"older continuity of solutions to general second-order elliptic equations with lower order terms.
Towards proving H\"older continuity of solutions, we first show that a lower bound holds for all non-negative supersolutions to our PDE.
The combination of this lower bound with the upper bounds in Section~\ref{s5} and the arguments presented in Corollary 4.18 from \cite{HL11} leads to the proof of H\"older continuity.

To prove the lower bound, we use some of the ideas presented in \cite{HL11}, but since lower order terms were not considered there, we have added the details. 
Again, the general approach that we follow is based on the ideas of de Giorgi.  
Similar estimates are presented in \cite{GT01} using Moser's approach.
We actually avoid the use of Moser's iteration, and as a consequence, we prove a lower bound for $u$ in terms of $\norm{u}_{q_0}$ for only one $q_0$ instead of a full range of values as was done in \cite{HL11} and \cite{GT01}.
For us, the lower bound is a step towards H\"older continuity, so a single $q_0$ is sufficient.

Since our proofs are different from those in \cite{HL11} and \cite{GT01}, we have included the details here.
We also present the structure of the associated constants.

To start, we prove the following result that uses the John-Nirenberg lemma.

\begin{lem}
Take $N = 1$.
Assume that $\bF\pr{B_R}$, $\bF_0\pr{B_R}$, $\mathcal{L}$, and $\mathcal{B}$ satisfy \rm{\ref{A1} -- \ref{A5}} and \rm{\ref{B2}}.
Suppose $V = V_+ - V_-$ where $V_\pm \ge 0$ a.e. and $V_+ \in L^p\pr{B_R}$ for some $p \in \pb{ \frac n 2, \iny}$.
Assume that there exists $s, t \in \pb{n, \iny}$ so that $b \in L^s\pr{B_R}^n$, $d \in L^t\pr{B_R}^n$.
Assume that $f \in L^{\ell}\pr{B_R}$ for some $\ell \in \pb{ \frac n 2, \iny}$, $g^\al \in L^m\pr{B_R}$ for some $m \in\pb{ n, \iny}$.
Let $u \in \bF\pr{B_{R}}$ be a non-negative supersolution in the sense that for any $\vp \in \bF_0\pr{B_{R}}$ such that $\vp \ge 0$ in $B_{R}$, we have
\begin{align}
\mathcal{B}\brac{u, \vp} \ge - \int f \vp  + \int g^\al D_\al \vp .
\label{eq6.1}
\end{align}
Then there exists $q_0\pr{n, p, s, t, \la, \La, R^{2 - \frac n p} \norm{V_+}_{L^p\pr{B_R}}, R^{1 - \frac n s} \norm{b}_{L^s\pr{B_R}}, R^{1 - \frac n t} \norm{d}_{L^t\pr{B_R}}} > 0$ so that for any $k \ge \abs{B_R}^{\frac {2} {n} - \frac 1 \ell}\norm {f}_{L^{\ell}\pr{B_R}} +\abs{B_R}^{\frac {1} {n} - \frac 1 m} \norm {g}_{L^{m}\pr{B_R}}$,  and any $B_r\pr{y} \su B_{3R/4}$,
\begin{align}
\int_{B_r\pr{y}} \pr{u + k}^{- q_0} \int_{B_r\pr{y}} \pr{u + k}^{q_0}
&\le C_n r^{2n}.
\label{eq6.2}
\end{align}
\label{l6.1}
\end{lem}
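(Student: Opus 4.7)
The plan is to show that $w := \log(u+k)$ has bounded mean oscillation on $B_{7R/8}$ with a seminorm controlled only by the scale-invariant quantities in the statement, and then to invoke the John--Nirenberg inequality on each sub-ball $B_r(y) \su B_{3R/4}$.

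First, for any ball $B_{2\rho}(z) \su B_R$, I would fix a standard cutoff $\zeta \in C_c^\infty(B_{2\rho}(z))$ with $\zeta \equiv 1$ on $B_\rho(z)$ and $|D\zeta| \le C/\rho$, and test \eqref{eq6.1} with $\vp := \zeta^2(u+k)^{-1}$. By \ref{B2}, $(u+k)^{-1} \in \bF(B_R)$, and then \ref{A4} gives $\vp \in \bF_0(B_R)$; it is nonnegative, so \eqref{eq6.1} applies. Writing $Dw = (u+k)^{-1}Du$ and expanding $D_\al \vp$, the principal part of $\mathcal{B}[u,\vp]$ produces the crucial ``good'' term $-\int A^{\al\be} D_\be u\,D_\al u\,\zeta^2(u+k)^{-2} \le -\la \int \zeta^2|Dw|^2$.

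After rearranging so that $\la\int\zeta^2|Dw|^2$ sits on the left, I would bound each remaining term: the cross term from the principal part via Cauchy--Schwarz (at the cost of $\tfrac{\la}{8}\int\zeta^2|Dw|^2 + C\int|D\zeta|^2$); the $V_+$ contribution via $u/(u+k)\le 1$ together with H\"older in $L^p$, yielding a constant of the form $(\rho^{2-n/p}\|V_+\|_{L^p})\,\rho^{n-2}$; the $b$ and $d$ contributions by Young's inequality combined with H\"older in $L^s$ and $L^t$ (the $V_-$ piece is nonnegative on the left and can simply be dropped); and finally the source terms via the pointwise bound $(u+k)^{-1}\le k^{-1}$ together with H\"older. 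The hypothesis on $k$ is calibrated so that the $f$ and $g$ contributions produce at most a universal multiple of $\rho^{n-2}$ whenever $\rho\le R$. Absorbing the fractions of $\int \zeta^2|Dw|^2$ that appear on the right yields the logarithmic Caccioppoli estimate
\[ \int_{B_\rho(z)} |Dw|^2 \le C\rho^{n-2}, \]
with $C$ depending only on the data listed in the statement.

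Second, Poincar\'e applied on every ball $B_\rho(z) \su B_{7R/8}$ gives $\fint_{B_\rho(z)}|w-w_{B_\rho(z)}|^2 \le C$, so $w \in \mathrm{BMO}(B_{7R/8})$ with a uniform bound. The John--Nirenberg inequality then provides $q_0>0$ (depending only on this BMO bound and $n$) and a constant $C_n$ such that, for every $B_r(y)\su B_{3R/4}$,
\[ \fint_{B_r(y)} e^{q_0 (w-w_{B_r(y)})} \cdot \fint_{B_r(y)} e^{-q_0(w-w_{B_r(y)})} \le C_n. \]
The exponentials $e^{\pm q_0 w_{B_r(y)}}$ cancel when the two averages are multiplied, and $e^{\pm q_0 w} = (u+k)^{\pm q_0}$, so clearing $|B_r|^2 \approx r^{2n}$ yields \eqref{eq6.2}.

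The main obstacle is the scale-invariant bookkeeping in the absorption step: one must verify that every constant picked up depends on $b,d,V_+$ solely through the H\"older combinations $R^{2-n/p}\|V_+\|_{L^p(B_R)}$, $R^{1-n/s}\|b\|_{L^s(B_R)}$, $R^{1-n/t}\|d\|_{L^t(B_R)}$, so that both the BMO seminorm and the resulting $q_0$ are independent of $R$. The stated lower bound on $k$ is precisely what is required to keep the $f,g$ contributions at the correct $\rho^{n-2}$ scale throughout this argument.
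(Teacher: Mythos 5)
Your proposal is correct and follows essentially the same route as the paper: test the supersolution inequality with $\zeta^2(u+k)^{-1}$ (justified via \ref{B2} and \ref{A4}), absorb the lower-order and source terms using the scale-invariant H\"older combinations and the calibrated choice of $k$ to obtain $\int \zeta^2 |D\log(u+k)|^2 \le C\int |D\zeta|^2$, then pass through Poincar\'e to a BMO bound and conclude by John--Nirenberg, multiplying the two exponential averages so the mean values cancel. The only differences are cosmetic (sign of $w$, and phrasing the oscillation bound as BMO on $B_{7R/8}$ rather than directly on each $B_r(y)\su B_{3R/4}$), so no further comment is needed.
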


\begin{rem}
This lemma is analogous to the first step of the proof of Theorem 4.15 from \cite{HL11}, except that here we have lower order terms.
\end{rem}

\begin{proof} 
Let $\zeta \in C^\iny_c\pr{B_R}$ be a cutoff function, $0 \le \zeta \le 1$.
By \rm{\ref{B2}} with $\om = 1$, for any $k > 0$, $\bar u := \pr{u + k}^{-1} \in \bF\pr{B_R}$.
It follows from \rm{\ref{A4}} that $\vp := \bar u \zeta^2 \in \bF_0\pr{B_R}$. 
Since $u$ is a supersolution, we have
\begin{align*}
0
&\le \int \pr{A^{\al \be} D_\be u + b^\al u} D_\al \vp + d^\be D_\be u \vp  + V u \vp  + \int f \bar u^{-1} \zeta^2 - \int g^\al D_\al \vp \\
&=  - \int A^{\al \be} \, D_\be w \, D_\al w \, \zeta^2
+ 2 \int A^{\al \be} \, D_\be w \, D_\al \zeta \, \zeta
- \int \pr{1 - \frac{k}{\bar u}} b^\al D_\al w \, \zeta^2 
+ 2 \int \pr{1 - \frac{k}{\bar u}} b^\al  D_\al \zeta \, \zeta \\
&+ \int d^\be D_\be w \, \zeta^2  
+ \int V \pr{1 - \frac{k}{\bar u}} \zeta^2 
+ \int \frac{f}{ \bar u} \zeta^2 
+ \int \frac{g^\al}{\bar u} D_\al w \zeta^2
- 2\int \frac{g^\al}{\bar u} \zeta D_\al \zeta,
\end{align*}
where we have set $w = \log \bar u$.
With $\disp \tilde f := \frac{ f}{\bar u}$, $\disp \tilde g := \frac{\abs{g}}{\bar u}$, we rearrange and bound to get
\begin{align*}
\la \int \abs{D w}^2 \zeta^2 
&\le \int A^{\al \be} \, D_\be w \, D_\al w \, \zeta^2 \\
&\le 2 \La \int \abs{ D w} \, \abs{D \zeta} \, \zeta
+ \int \pr{\abs{b} + \abs{d} + \abs{\tilde g}} \abs{ D w} \, \zeta^2
+ 2 \int \pr{\abs{b} + \abs{\tilde g}} \abs{ D \zeta} \, \zeta
+ \int \pr{\abs{V_+} + \tilde f} \zeta^2\\
&\le \frac{\la}{2} \int \abs{Dw}^2 \zeta^2 
+ C_1 \int \abs{D\zeta}^2,
\end{align*}
where
\begin{align*}
C_1 
&= \frac{8 \La^2}{\la} 
+ \frac{2 c_n^2}{\la} \pr{ 
\abs{B_R}^{\frac {1} {n} - \frac 1 s}\norm{ b}_{L^s\pr{B_R}} 
+ \abs{B_R}^{\frac {1} {n} - \frac 1 t}\norm{ d}_{L^t\pr{B_R}} 
+ \abs{B_R}^{\frac {1} {n} - \frac 1 m}\norm{\tilde g}_{L^m\pr{B_R}} }^2 \\
&+ 2 c_n \pr{\abs{B_R}^{\frac {1} {n} - \frac 1 s}\norm{ b}_{L^s\pr{B_R}} 
+ \abs{B_R}^{\frac {1} {n} - \frac 1 m} \norm{\tilde g}_{L^m\pr{B_R}}}
+ c_n^2 \pr{\abs{B_R}^{\frac 2 n - \frac 1 p} \norm{V_+}_{L^p\pr{B_R}}
+ \abs{B_R}^{\frac 2 n - \frac 1 \ell} \norm{ \tilde f}_{L^\ell\pr{B_R}}} .
\end{align*}
If $ \abs{B_R}^{\frac 2 n - \frac 1 \ell}  \norm {f}_{L^{\ell}\pr{B_R}} + \abs{B_R}^{\frac {1} {n} - \frac 1 m}\norm {g}_{L^{m}\pr{B_R}} > 0$, then we choose $k = \abs{B_R}^{\frac {2} {n} - \frac 1 \ell}\norm {f}_{L^\ell\pr{B_R}} +\abs{B_R}^{\frac {1} {n} - \frac 1 m} \norm {g}_{L^{m}\pr{B_R}}$.
Otherwise, we choose $k > 0$ to be arbitrary and eventually take $k \to 0^+$.
Then
\begin{align}
\int \abs{D w}^2 \zeta^2 
&\le C_2 \int \abs{D \zeta}^2,
\label{eq6.3}
\end{align}
where 
\begin{align}
C_2 
&= \pr{\frac{4 \La}{\la} }^2
+ \pr{\frac{2 c_n}{\la}}^2 \pr{ \abs{B_R}^{\frac {1} {n} - \frac 1 s}\norm{ b}_{L^s\pr{B_R}} + \abs{B_R}^{\frac {1} {n} - \frac 1 t}\norm{ d}_{L^t\pr{B_R}} + 1 }^2 \nonumber \\
&+ \frac{4 c_n}{\la} \pr{\abs{B_R}^{\frac {1} {n} - \frac 1 s} \norm{ b}_{L^s\pr{B_R}} + 1}
+ \frac{2c_n^2}{\la} \pr{\abs{B_R}^{\frac 2 n - \frac 1 p} \norm{V_+}_{L^p\pr{B_R}} + 1} .
\label{eq6.4}
\end{align}
Let $B_r\pr{y} \su B_{3R/4}$.
Choose $\zeta$ so that $\zeta \equiv 1$ in $B_r\pr{y}$, $\supp \zeta \Subset B_R$, and $\abs{D \zeta} \le \frac{C}{r}$.
It follows from the H\"older inequality, Poincar\'e inequality, then \eqref{eq6.3}, that for any $B_r\pr{y} \su B_{3R/4}$,
\begin{align*}
\int_{B_r\pr{y}} \abs{w - w_{y,r}}
&\le \abs{B_r}^{\frac 1 2} \pr{\int_{B_r\pr{y}} \abs{w - w_{y,r}}^2}^{\frac 1 2} 
\le c_n r^{\frac{n+2}{2}} \pr{ \int_{B_r} \abs{Dw}^2}^{\frac 1 2} \\
&\le c_n r^{\frac {n+2} 2} \pr{ C_2 \int \abs{D \zeta}^2 }^{\frac 1 2}
\le C_3 r^{n},
\end{align*}
where $\disp w_{y,r} = \fint_{B_r\pr{y}} w$ and $C_3 = c_n \sqrt{C_2}$.
Therefore, $w$ is a BMO function.
By the John-Nirenberg lemma, there exists $q_1, C_4 > 0$, depending only on $n$, so that for any $B_r\pr{y} \su B_{3R/4}$
\begin{align*}
\int_{B_r\pr{y}} e^{\frac{q_1}{C_3} \abs{w - w_{y,r}}} \le C_4 r^n.
\end{align*}
Therefore, with $q_0= \frac{q_1}{C_3} = \frac{q_1}{c_n \sqrt{C_2}}$,
\begin{align*}
\int_{B_r\pr{y}} \bar u^{- q_0} \int_{B_r\pr{y}} \bar u^{q_0}
&= \int_{B_r\pr{y}} e^{- q_0 \, {\log \bar u}} \int_{B_r\pr{y}} e^{ q_0 \, {\log \bar u }} 
= \int_{B_r\pr{y}} e^{- q_0 \pr{w - w_{y,r}}} \int_{B_r\pr{y}} e^{ q_0 \pr{w - w_{y,r}}} \\
&= \int_{B_r\pr{y}} e^{q_0 \abs{w - w_{y,r}}} \int_{B_r\pr{y}} e^{- q_0 \abs{w - w_{y,r}}} 
\le C_4 r^{2n}. 
\end{align*}
\end{proof}

\begin{rem}\label{r6.3}
We sometimes use the notation $q_0\pr{R}$ to refer to the exponent $q_0$ associated to the ball of radius $R$.
\end{rem}

With the previous estimate, we can prove a lower bound for solutions.

\begin{lem}
Take $N = 1$.
Assume that $\bF\pr{B_R}$, $\bF_0\pr{B_R}$, $\mathcal{L}$, and $\mathcal{B}$ satisfy \rm{\ref{A1} -- \ref{A5}} and \rm{\ref{B1} -- \ref{B2}}.
Assume that there exists $p \in \pb{\frac n 2, \iny}$, $s, t \in \pb{n, \iny}$ so that $V_+ \in L^p\pr{B_R}$, $b \in L^s\pr{B_R}$, $d \in L^t\pr{B_R}$.
Assume that $f \in L^{\ell}\pr{B_R}$ for some $\ell \in \pb{\frac n 2, \iny}$, $g^\al \in L^m\pr{B_R}$ for some $m \in \pb{n, \iny}$.
Suppose $u \in \bF\pr{B_R}$ is a nonnegative supersolution in the sense that for any $\vp \in \bF_0\pr{B_R}$ such that $\vp \ge 0$ in $B_R$, \eqref{eq6.1} holds.
Then for $q_0 = q_0\pr{R}$ (see Remark~\ref{r6.3}), we have
\begin{align*}
\pr{\fint_{B_{3R/4}} u^{q_0} }^{\frac{1}{q_0}}
\le C_0 \pr{\inf_{B_{R/2}} u + \abs{B_R}^{\frac {2} {n} - \frac 1 \ell}\norm {f}_{L^{\ell}\pr{B_R}} +\abs{B_R}^{\frac {1} {n} - \frac 1 m} \norm {g}_{L^{m}\pr{B_R}}}, 
\end{align*}
where 
$C_0 = C_0\pr{n, q_0, p, s, t, \ell, m, \la, \La, R^{2-\frac n p} \norm{V_+}_{L^p\pr{B_R}}, R^{1-\frac n s} \norm{b}_{L^s\pr{B_R}}, R^{1-\frac n t} \norm{d}_{L^s\pr{B_R}}}$.
\label{l6.4}
\end{lem}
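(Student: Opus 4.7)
The plan is to combine Lemma~\ref{l6.1} with a Moser-type sup bound for the reciprocal $\bar u^{-1}$, where $\bar u := u + k$ and $k := \abs{B_R}^{\frac{2}{n}-\frac{1}{\ell}}\norm{f}_{L^{\ell}(B_R)} + \abs{B_R}^{\frac{1}{n}-\frac{1}{m}}\norm{g}_{L^{m}(B_R)}$. The shift to $\bar u$ ensures $\bar u\ge k>0$ and absorbs the inhomogeneity into a single scale; the degenerate case $k=0$ will be handled by working with $k=\varepsilon>0$ and passing $\varepsilon\to 0^+$ at the end via monotone convergence. Lemma~\ref{l6.1} already provides the key ``balancing'' identity
\[
\Big(\fint_{B_{3R/4}}\bar u^{-q_0}\Big)\,\Big(\fint_{B_{3R/4}}\bar u^{q_0}\Big)\le C,
\]
so it remains to convert the $L^{-q_0}$ average on the left into a pointwise lower bound $\inf \bar u$.

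For this, I would first establish a subsolution inequality for $v:=\bar u^{-1}$. By property~\rm{\ref{B2}} we have $v\in \bF(B_R)$; the point is to test the supersolution inequality for $\bar u$ against $\psi=v^{2}\phi$ with a non-negative cutoff $\phi\in C_c^\infty(B_R)$, noting $\psi\in \bF_0$ by \rm{\ref{A4}} and $v\le 1/k$. Using $D_\alpha v=-v^{2}D_\alpha\bar u$ and rearranging, $v$ turns out to satisfy, in the weak sense,
\[
\widetilde{\mathcal{B}}[v,\phi]\le \int \widetilde f\,\phi + \int \widetilde g^{\alpha}D_\alpha\phi,\quad \phi\ge 0,
\]
where $\widetilde{\mathcal{L}}$ has the same principal part $A^{\alpha\beta}$ (and hence the same ellipticity constants), first-order coefficients controlled by $b,d$ (belonging to the same $L^s,L^t$), and zeroth-order term equal to $-V$; hence the ``negative part'' for $\widetilde{\mathcal{L}}$ is $V_+$. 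The forcing terms $\widetilde f,\widetilde g^\alpha$ consist of $f/\bar u$, $g^\alpha/\bar u$ and $k$-multiples of $V,b$ divided by $\bar u$, all controlled using $\bar u\ge k$ and the very definition of $k$.

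Next, I would apply Lemma~\ref{l5.6} to the subsolution $v$ on $B_{3R/4}$ with exponent $q=q_0$. After using $\bar u\ge k$ to bound the RHS norms, the resulting inequality reads
\[
\sup_{B_{R/2}}\bar u^{-1}\le C\Big(\fint_{B_{3R/4}}\bar u^{-q_0}\Big)^{1/q_0},
\]
with a constant of the claimed dependence (note that $V_+$ enters exactly because of the sign flip $V\to -V$ in passing from $\bar u$ to $\bar u^{-1}$). Multiplying this with Lemma~\ref{l6.1} yields
\[
\Big(\fint_{B_{3R/4}}\bar u^{q_0}\Big)^{1/q_0}\le C\,\inf_{B_{R/2}}\bar u\le C\Big(\inf_{B_{R/2}}u+k\Big),
\]
and since $u\le \bar u$, this is the desired estimate.

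\textbf{Main obstacle.} The subtle step is the subsolution computation: verifying that $v^{2}\phi$ is an admissible test function, keeping track of the sign flip of $V$ and the reshuffling of the drift terms $b^\alpha,d^\beta$, and ensuring the transformed data $\widetilde f,\widetilde g^\alpha$ lie in the correct $L^\ell,L^m$ with norms bounded in terms of the original data once $\bar u\ge k$ and the definition of $k$ are exploited. Everything else is a direct pairing of Lemmas~\ref{l5.6} and~\ref{l6.1}, but this bookkeeping is what produces the correct dependence of $C_0$ on $\norm{V_+}_{L^p}$ rather than $\norm{V_-}_{L^p}$.
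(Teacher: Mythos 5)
Your overall strategy is the same as the paper's: shift to $\bar u=u+k$ with $k=\abs{B_R}^{\frac2n-\frac1\ell}\norm f_{L^\ell}+\abs{B_R}^{\frac1n-\frac1m}\norm g_{L^m}$, show that a negative power of $\bar u$ is a subsolution of a modified operator, apply the local boundedness Lemma~\ref{l5.6}, and multiply with the crossover estimate of Lemma~\ref{l6.1}. (Your choice of $v=\bar u^{-1}$ with $q=q_0$ in Lemma~\ref{l5.6}, versus the paper's $w=\bar u^{-q_0/2}$ tested against $\bar u^{-(1+q_0/2)}\xi$ with $q=2$, is only a cosmetic difference.) However, the way you set up the transformed inequality contains a genuine gap. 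You keep the data as additive forcing, $\widetilde{\mathcal B}[v,\phi]\le\int\widetilde f\,\phi+\int\widetilde g^\al D_\al\phi$, with $\widetilde f,\widetilde g$ "controlled using $\bar u\ge k$." Three problems: (i) Lemma~\ref{l5.6} admits no divergence-form right-hand side, so the $\int\widetilde g^\al D_\al\phi$ term is outside its scope; (ii) even for the $\widetilde f$ part, what you would get is $\sup_{B_{R/2}}\bar u^{-1}\le C\bigl(\fint_{B_{3R/4}}\bar u^{-q_0}\bigr)^{1/q_0}+C$, where the second term is a dimensionless constant of order one. This additive term cannot be merged multiplicatively with Lemma~\ref{l6.1}: writing $M=\bigl(\fint\bar u^{q_0}\bigr)^{1/q_0}$ and $m_0=\inf_{B_{R/2}}\bar u$, you only obtain $M\le Cm_0(1+M)$, which does not yield $M\le C_0m_0$ (the claimed intermediate inequality $\sup\bar u^{-1}\le C\bigl(\fint\bar u^{-q_0}\bigr)^{1/q_0}$ is homogeneous of degree $-1$ in $\bar u$, an additive constant is not); (iii) the "$k$-multiples of $V,b$ divided by $\bar u$," if treated as data, have norms bounded only by $\norm{V}_{L^p}/k$ and $\norm b_{L^s}/k$, which the definition of $k$ (built from $f,g$ alone) does not control.

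The fix is exactly the paper's bookkeeping: after testing with $v^2\phi$ every data term carries a factor of $v$ or $Dv$ — e.g. $fv^2\phi=(f/\bar u)\,v\,\phi$ and $g^\al D_\al(v^2\phi)=2(g^\al/\bar u)D_\al v\,\phi+(g^\al/\bar u)\,v\,D_\al\phi$, while the $k$-pieces appear as $kVv\cdot v\phi$ and $kb^\al v\cdot(\dots)$ with $kv\le1$ — so all of them must be absorbed into the zeroth- and first-order \emph{coefficients} of $\widetilde{\mathcal L}$, not kept as forcing. This yields the homogeneous inequality $\widetilde{\mathcal B}[v,\phi]\le0$, with drifts built from $b$, $d$, $g/\bar u$, $(1-k/\bar u)b$ and a potential whose negative part is at most $V_+ + \abs f/\bar u$; the choice of $k$ then makes the extra coefficients have scale-invariant norms $O(1)$, so Lemma~\ref{l5.6} with zero right-hand side gives $\sup_{B_{R/2}}v\le CR^{-n/q_0}\norm v_{L^{q_0}(B_{3R/4})}$ with the stated dependence on $\norm{V_+}_{L^p}$, and the combination with Lemma~\ref{l6.1} then goes through as you describe.
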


\begin{proof}
If $\abs{B_R}^{\frac {2} {n} - \frac 1 \ell}\norm {f}_{L^{\ell}\pr{B_R}} +\abs{B_R}^{\frac {1} {n} - \frac 1 m} \norm {g}_{L^{m}\pr{B_R}} > 0$, let $k = \abs{B_R}^{\frac {2} {n} - \frac 1 \ell}\norm {f}_{L^{\ell}\pr{B_R}} +\abs{B_R}^{\frac {1} {n} - \frac 1 m} \norm {g}_{L^{m}\pr{B_R}}$.
Otherwise, if $f, g \equiv 0$, let $k > 0$ and eventually take to $k \to 0^+$.
Set $\bar u = u + k$.
Let $\xi \in C^\iny_c\pr{B_R}$, $ \xi \ge 0$, and set $\vp = \bar u ^{-\pr{1+\frac 1 2 q_0}} \xi \ge 0$, where $q_0 = q_0\pr{R}$ is the constant given to us in Lemma~\ref{l6.1}.
By \rm{\ref{B2}} with $\om = 1 + \frac{q_0}{2}$ and an application of \rm{\ref{A4}}, $\vp \in \bF_0\pr{B_R}$, so we may use it as a test function.

Set $w = \bar u^{- \frac {q_0} 2 }$ so that $\disp Dw = - \tfrac {q_0} 2  \bar u^{-\pr{1+ \frac {q_0} 2 } } D \bar u$.
By \rm{\ref{B2}}, $w \in \bF\pr{B_R}$ as well.
Then
\begin{align*}
&\int A^{\al \be} D_\be u \, D_\al \vp  + b^\al \, u \, D_\al \vp + d^\be \, D_\be u \, \vp + V \, u \, \vp \\
&= - \frac{2}{q_0} \int A^{\al \be} D_\be w \, D_\al \xi
-4\frac{4 + 2 q_0}{q_0^2} \int A^{\al \be} D_\be \pr{\bar u ^{-\frac 1 4 q_0}} \, D_\al \pr{\bar u ^{-\frac 1 4 q_0}} \, \xi 
+ \int \pr{1 -  \frac{k}{\bar u} }b^\al \, w \, D_\al \xi \\
&+ \frac{2 + q_0}{q_0} \int \pr{1 -  \frac{k}{\bar u} } b^\al \, D_\al w \xi 
- \frac{2}{q_0} \int d^\be \, D_\be w \, \xi 
+ \int \pr{1 -  \frac{k}{\bar u} } V \, w \, \xi  .
\end{align*}
It follows from \eqref{eq6.1}, with $\disp \tilde f = \frac{f}{\bar u}$, $\disp \tilde g^\al = \frac{g^\al}{\bar u}$ that
\begin{align*}
\int A^{\al \be} D_\be w \, D_\al \xi  
&\le -4 \frac{2 + q_0}{q_0}  \int A^{\al \be} D_\be\pr{ \bar u^{-\tfrac{q_0}{4}}} \, D_\al\pr{ \bar u^{-\tfrac{q_0}{4}}} \xi
+\frac{q_0}{2} \int \pr{1 -  \frac{k}{\bar u} } b^\al \, w \, D_\al \xi \\
&+ \pr{1 + \frac{q_0}{2}} \int \pr{1 -  \frac{k}{\bar u} } b^\al \, D_\al w \xi 
-  \int d^\be \, D_\be w \, \xi 
+ \frac{q_0}{2} \int \pr{1 -  \frac{k}{\bar u} } V \, w \, \xi  \\
&+ \frac{q_0}{2}  \int \tilde f \, w \, \xi 
- \pr{1 + \frac {q_0} 2}  \int \tilde g^\al D_\al w \, \xi
- \frac{q_0}{2} \int \tilde g^\al w \, D_\al \xi .
\end{align*}
Therefore, with $\tilde b^\al = \frac{q_0}{2} \brac{ \tilde g^\al + \pr{\frac{k}{\bar u}  - 1 } b^\al }$, $\tilde d^\be = d^\be + \pr{1 + \frac{q_0}{2}}\brac{\pr{\frac{k}{\bar u} - 1 } b^\be + \tilde g^\be}$, and $\tilde V = - \frac{q_0}{2} \pr{V_+ + \tilde f}$, we have that
\begin{align*}
\int A^{\al \be} D_\be w \, D_\al \xi  
+ \tilde b^\al w \, D_\al \xi
+ \tilde d^\be D_\be w \, \xi 
-\tilde V \, w \, \xi 
&\le -4 \pr{1 + \frac 2 {q_0}}  \int A^{\al \be} D_\be\pr{ \bar u^{-\tfrac{q_0}{4}}} \, D_\al\pr{ \bar u^{-\tfrac{q_0}{4}}} \xi
\le 0.
\end{align*}
Since $\xi \in C^\iny_c\pr{B_R}$ is arbitrary and nonnegative, then it follows from \rm{\ref{A2}} that $\widetilde{ \mathcal{L}} w \le 0$ in $B_R$ in the weak sense.
We may apply Lemma~\ref{l5.6} to $w$.
Thus,
\begin{align*}
\sup_{B_{R/2}} w \le C R^{-\frac n 2} \norm{w}_{L^2\pr{B_{3R/4}}}, 
\end{align*}
where $C = C\pr{n, q_0, p, s, t, \ell, m, \la, \La, R^{2 - \frac n p}\norm{V_+}_{L^p\pr{\Om_R}}, R^{1 - \frac n s}\norm{b}_{L^s\pr{\Om_R}}, R^{1 - \frac n t}\norm{d}_{L^t\pr{\Om_R}}}$.
Since $w = \bar u^{-\frac 1 2 q_0}$ and $\bar u = u + k$, then
\begin{align*}
\inf_{B_{R/2}} u + k
&= \inf_{B_{R/2}} \bar u
= \pr{\sup_{B_{R/2}} w }^{-\frac{2}{q_0}}
\ge  \pr{ C R^{- \frac n 2} \norm{w}_{L^{2}\pr{B_{3R/4}}}}^{-\frac{2}{q_0}} 
\ge C^{-\frac 2{q_0}} R^{\frac n {q_0}}\pr{\int_{B_{3R/4}} \bar u^{-q_0} }^{-\frac{1}{q_0}}.
\end{align*}
By Lemma~\ref{l6.1},
\begin{align*}
\pr{\int_{B_{3R/4}} \bar u^{-q_0}}^{-\frac{1}{q_0}} \ge \brac{C_n R^{n} \pr{\fint_{B_{3R/4}} \bar u^{q_0} }^{-1}}^{-\frac{1}{q_0}}
\end{align*}
and therefore,
\begin{align*}
\inf_{B_{R/2}} u + k
\ge \pr{C^2 C_n}^{-\frac 1 {q_0}} \pr{\fint_{B_{3R/4}} \bar u^{q_0} }^{\frac{1}{q_0}}
\ge \pr{C^2 C_n}^{-\frac 1 {q_0}} \pr{\fint_{B_{3R/4}} u^{q_0} }^{\frac{1}{q_0}},
\end{align*}
since $\bar u \ge u \ge 0$.
\end{proof}

By combining our upper and lower bounds, we arrive at the following Harnack inequality.

\begin{lem}
Take $N = 1$.
Assume that $\bF\pr{B_{2R}}$, $\bF_0\pr{B_{2R}}$, $\mathcal{L}$, and $\mathcal{B}$ satisfy \rm{\ref{A1} -- \ref{A5}} and \rm{\ref{B1} -- \ref{B2}}.
Assume that there exists $p \in \pb{\frac n 2, \iny}$, $s, t \in \pb{ n, \iny}$ so that $V \in L^p\pr{B_R}$, $b \in L^s\pr{B_R}^n$, and $d \in L^t\pr{B_R}^n$.
Let $f \in L^\ell\pr{B_{R}}$ for some $\ell \in \pb{\frac n 2, \iny}$.
Let $u \in \bF\pr{B_{2R}}$ be a non-negative solution in the sense that $\disp \mathcal{B}\brac{u, \vp} = \int f \vp$ for any $\vp \in \bF_0\pr{B_{R}}$.
Then
\begin{align*}
\sup_{B_{R/4}} u
&\le C\pr{R} \inf_{B_{R/2}} u 
+ c\pr{R} R^{2 - \frac n {\ell}} \norm{f}_{L^{\ell}\pr{B_R}} ,
\end{align*}
where $C\pr{R} = C C_0 \abs{B_{3/4}}^{\frac 1 {q_0}}$ and $c\pr{R} = C C_0 \abs{B_{3/4}}^{\frac 1 {q_0}} \abs{B_1}^{\frac {2} {n} - \frac 1 \ell} + c_{q_0}$, with $q_0 = q_0\pr{R}$,  $C$ and $c_{q_0}$ as in Lemma~\ref{l5.6}, and $C_0$ as in Lemma~\ref{l6.4}.
\label{l6.5}
\end{lem}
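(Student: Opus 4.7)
The plan is to deduce the Harnack inequality by chaining together the upper bound of Lemma \ref{l5.6} with the weak Harnack-type lower bound of Lemma \ref{l6.4}, interpolating through an $L^{q_0}$-norm on the intermediate ball $B_{3R/4}$, where $q_0=q_0(R)$ is the exponent produced by Lemma \ref{l6.1}. Since $u$ is a weak solution, it is simultaneously a subsolution (so Lemma \ref{l5.6} applies) and a non-negative supersolution for the right-hand side $-|f|$ (since $\mathcal{B}[u,\vp]=\int f\vp\ge -\int|f|\vp$ whenever $\vp\ge 0$, and $\||f|\|_{L^\ell}=\|f\|_{L^\ell}$), so Lemma \ref{l6.4} applies as well with $g\equiv 0$.

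First I would apply Lemma \ref{l5.6} with the lemma's outer ball taken to be $B_{3R/2}$ (so its ``$R$'' equals $3R/4$), inner radius $r=R/4$, and exponent $q=q_0$. The hypothesis $u\in\bF(B_{3R/2})$ follows from $u\in\bF(B_{2R})$ via property \ref{A1}, and $\mathcal{L}u\le f$ weakly in $B_{3R/4}$ is immediate. This produces
$$\sup_{B_{R/4}} u \;\le\; \frac{C}{(R/2)^{n/q_0}}\,\|u\|_{L^{q_0}(B_{3R/4})} \;+\; c_{q_0}\,(3R/4)^{2-n/\ell}\,\|f\|_{L^\ell(B_{3R/4})}.$$
Next I would apply Lemma \ref{l6.4} on $B_R$ to obtain
$$\left(\fint_{B_{3R/4}} u^{q_0}\right)^{1/q_0} \;\le\; C_0\left(\inf_{B_{R/2}} u \;+\; |B_R|^{2/n-1/\ell}\,\|f\|_{L^\ell(B_R)}\right),$$
where $C_0$ is the constant from that lemma.

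Finally, converting averages to integrals via $\|u\|_{L^{q_0}(B_{3R/4})} = |B_{3/4}|^{1/q_0} R^{n/q_0}\bigl(\fint_{B_{3R/4}} u^{q_0}\bigr)^{1/q_0}$ and factoring the source term via $|B_R|^{2/n-1/\ell}=|B_1|^{2/n-1/\ell}R^{2-n/\ell}$, substitution of the second estimate into the first yields exactly the claimed inequality. The purely numerical factors $2^{n/q_0}$ (from $R^{n/q_0}/(R/2)^{n/q_0}$) and $(3/4)^{2-n/\ell}$ get absorbed into the constants $C$ and $c_{q_0}$ of Lemma \ref{l5.6}, producing the stated coefficients $C(R)=C\,C_0|B_{3/4}|^{1/q_0}$ in front of $\inf_{B_{R/2}}u$ and $c(R)=C\,C_0|B_{3/4}|^{1/q_0}|B_1|^{2/n-1/\ell}+c_{q_0}$ in front of $R^{2-n/\ell}\|f\|_{L^\ell(B_R)}$.

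The argument is essentially bookkeeping; there is no substantive obstacle once the two preceding lemmas are in hand. The only minor care needed is to verify the hypotheses of Lemma \ref{l6.4} by replacing $f$ with $|f|$ in the supersolution inequality (which preserves the $L^\ell$ norm), and to keep track of how the $R$-dependence of the constants $C$, $c_{q_0}$, $C_0$ (all of which are invariant under the relevant scaling by construction of Lemmas \ref{l5.6} and \ref{l6.4}) propagates into the final statement.
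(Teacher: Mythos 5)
Your proposal is correct and follows essentially the same route as the paper: apply Lemma~\ref{l5.6} to bound $\sup_{B_{R/4}}u$ by the $L^{q_0}$ norm on $B_{3R/4}$ plus the source term, then apply Lemma~\ref{l6.4} to control that average by $\inf_{B_{R/2}}u$ plus the source term, and combine, absorbing harmless numerical factors. Your extra remarks (passing to $|f|$ so the supersolution inequality \eqref{eq6.1} holds, and restricting via \ref{A1} to apply Lemma~\ref{l5.6} on the smaller ball) are sound bookkeeping that the paper leaves implicit.
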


The proof is an application of Lemmas \ref{l5.6} and \ref{l6.4}.

\begin{proof}
By Lemma~\ref{l5.6} with $q_0 =  q_0\pr{R}$, 
\begin{align*}
\sup_{B_{R/4}} u 
&\le C R^{- \frac n {q_0}} \norm{u}_{L^{q_0}\pr{B_{3R/4}}} 
+ c_{q_0} R^{2 - \frac n {\ell}}\norm{f}_{L^{\ell}\pr{B_R}},
\end{align*}
where $C = C\pr{n, q_0, p, s, t, \ell, \la, \La, R^{2 - \frac n p}\norm{V_-}_{L^p\pr{\Om_R}}, R^{1 - \frac n s}\norm{b}_{L^s\pr{\Om_R}}, R^{1 - \frac n t}\norm{d}_{L^t\pr{\Om_R}}}$.
By Lemma~\ref{l6.4},
\begin{align*}
\pr{\fint_{B_{3R/4}} u^{q_0} }^{\frac{1}{q_0}}
\le C_0 \pr{\inf_{B_{R/2}} u + \abs{B_R}^{\frac {2} {n} - \frac 1 \ell}\norm {f}_{L^{\ell}\pr{B_R}} }, 
\end{align*}
where 
$C_0 = C_0\pr{n, q_0, p, s, t, \ell, \la, \La, R^{2-\frac n p} \norm{V_+}_{L^p\pr{B_R}}, R^{1-\frac n s} \norm{b}_{L^s\pr{B_R}}, R^{1-\frac n t} \norm{d}_{L^s\pr{B_R}}}$.
Thus,
\begin{align*}
\sup_{B_{R/4}} u 
&\le C C_0 \abs{B_{3/4}}^{\frac 1 {q_0}} \inf_{B_{R/2}} u
+\pr{C C_0 \abs{B_{3/4}}^{\frac 1 {q_0}} \abs{B_1}^{\frac {2} {n} - \frac 1 \ell} + c_{q_0}} R^{2 - \frac n \ell}\norm{f}_{L^{\ell}\pr{B_R}}
\end{align*}
\end{proof}

Now we have all of the tools we need to prove interior H\"older continuity of solutions.

\begin{lem}
Take $N = 1$.
Assume that $\bF\pr{B_{2R_0}}$, $\bF_0\pr{B_{2R_0}}$, $\mathcal{L}$, and $\mathcal{B}$ satisfy \rm{\ref{A1} -- \ref{A5}} and \rm{\ref{B1} -- \ref{B2}}.
Assume that there exists $p \in \pb{\frac n 2, \iny}$, $s, t \in \pb{ n, \iny}$ so that $V \in L^p\pr{B_{R_0}}$, $b \in L^s\pr{B_{R_0}}^n$, and $d \in L^t\pr{B_{R_0}}^n$.
Let $u \in \bF\pr{B_{2R_0}}$ be a solution in the sense that $\disp \mathcal{B}\brac{u, \vp} = 0$ for any $\vp \in \bF_0\pr{B_{R_0}}$.
Let $C_0 = C_0\pr{R_0}$ be as given in Lemma~\ref{l6.4}.
Then there exists $\eta\pr{n, p, s, C_0} \in \pr{0,1}$, such that for any $R \le R_0$, if $x, y \in B_{R/2}$
\begin{align*}
\abs{u\pr{x} - u\pr{y}}
&\le C \pr{\frac{\abs{x-y}}{R}}^\eta  \pr{\fint_{B_{R}} \abs{u}^{2^*} }^{\frac 1 {2^*}},
\end{align*}
where $C\pr{n, p, s, t, \la, \La, \eta, C_0\pr{R_0}, R_0^{2 - \frac n p} \norm{V}_{L^p\pr{B_{R_0}}}, R_0^{1 - \frac n s} \norm{b}_{L^s\pr{B_{R_0}}}, R_0^{1 - \frac n t} \norm{d}_{L^t\pr{B_{R_0}}}}$.
\label{l6.6}
\end{lem}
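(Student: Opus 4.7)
The plan is to execute the classical De Giorgi oscillation-decay argument by applying the weak Harnack inequality (Lemma~\ref{l6.4}) to both $M-u$ and $u-m$ on concentric balls, iterating to obtain H\"older decay, and converting the $L^\infty$ bound on the right-hand side into an $L^{2^*}$-average via the Moser estimate (Lemma~\ref{l5.6}). The novel feature compared to the homogeneous setting is that $\mathcal{L}$ applied to a constant is not zero; consequently, $M-u$ and $u-m$ are supersolutions only \emph{modulo} explicit source terms proportional to $\norm{u}_{L^\infty}$. These must be tracked carefully and shown to carry a strictly positive power of the scale.

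Fix $B_\rho(x_0)\subset B_{R_0}$ and set $M=\sup_{B_\rho}u$, $m=\inf_{B_\rho}u$. The non-negative functions $w_1:=M-u$ and $w_2:=u-m$ on $B_\rho(x_0)$ satisfy, for any non-negative $\vp\in\bF_0(B_\rho(x_0))$,
\[
\mathcal{B}[w_1,\vp] = M\int\pr{V\vp + b^\al D_\al\vp}, \qquad \mathcal{B}[w_2,\vp] = -m\int\pr{V\vp + b^\al D_\al \vp},
\]
so each $w_i$ is a non-negative supersolution in the sense of \eqref{eq6.1} with forcing $(f, g^\al)$ whose $L^p$ (resp.\ $L^s$) norm is at most $\norm{u}_{L^\infty(B_\rho)}\norm{V}_{L^p(B_\rho)}$ (resp.\ $\norm{u}_{L^\infty(B_\rho)}\norm{b^\al}_{L^s(B_\rho)}$). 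Since the scale-invariant quantities $\rho^{2-n/p}\norm{V}_{L^p(B_\rho)}$, $\rho^{1-n/s}\norm{b}_{L^s(B_\rho)}$, $\rho^{1-n/t}\norm{d}_{L^t(B_\rho)}$ are bounded, for $\rho\le R_0$, by their $R_0$-scale analogues, Lemma~\ref{l6.4} produces a uniform exponent $q_0=q_0(R_0)>0$ and constant $C_0=C_0(R_0)$ such that
\[
\pr{\fint_{B_{3\rho/4}(x_0)} w_i^{q_0}}^{1/q_0} \le C_0\pr{\inf_{B_{\rho/2}(x_0)} w_i + \rho^\eps\norm{u}_{L^\infty(B_\rho(x_0))}}, \quad i=1,2,
\]
with $\eps:=\min\set{2-n/p,\,1-n/s,\,1-n/t}>0$.

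Summing these two bounds and exploiting that $w_1+w_2\equiv \osc_{B_\rho}u$ on $B_\rho$, together with the identity $\inf_{B_{\rho/2}}w_1 + \inf_{B_{\rho/2}}w_2 = \osc_{B_\rho}u - \osc_{B_{\rho/2}}u$, yields the oscillation decay
\[
\osc_{B_{\rho/2}(x_0)}u \le \ga\,\osc_{B_\rho(x_0)}u + C\rho^\eps\norm{u}_{L^\infty(B_\rho(x_0))}
\]
for some $\ga\in(0,1)$ depending only on $C_0$. A standard iteration lemma (e.g., Lemma~4.19 of \cite{HL11}) then produces $\eta=\eta(\ga,\eps)\in(0,\eps]$ satisfying
\[
\osc_{B_r(x_0)}u \le C\pr{r/\rho}^\eta\brac{\osc_{B_\rho(x_0)}u + \rho^\eps\norm{u}_{L^\infty(B_\rho(x_0))}}, \quad r\le\rho\le R_0.
\]

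To close, fix $x,y\in B_{R/2}$ with $R\le R_0$ and set $r=|x-y|$. Apply the iterated oscillation decay on the ball $B_{R/4}(x)\subset B_{3R/4}\subset B_R$, with starting scale $\rho=R/4$, and invoke Lemma~\ref{l5.6} on $B_{R/2}(x)\subset B_R$ to bound $\norm{u}_{L^\infty(B_{R/4}(x))}\le CR^{-n/2^*}\norm{u}_{L^{2^*}(B_R)}$. Absorbing $R^\eps$ into the constant (admissible because $R\le R_0$) gives $\abs{u(x)-u(y)}\le C(r/R)^\eta\pr{\fint_{B_R}\abs{u}^{2^*}}^{1/2^*}$ for $r\le R/8$, with the remaining range $r\in(R/8, R]$ handled trivially from the $L^\infty$ bound alone. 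The main obstacle is arranging the uniformity in $R$ of the constants $q_0$ and $C_0$ of Lemma~\ref{l6.4}: this is precisely what the scale-invariant formulation of that lemma ensures, and the strict inequalities $p>n/2$, $s,t>n$ produce the positive exponent $\eps$ needed to drive the iteration.
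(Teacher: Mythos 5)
Your argument is correct and is essentially the paper's proof: you apply the weak Harnack inequality (Lemma~\ref{l6.4}) to $M-u$ and $u-m$, which are supersolutions with forcing $f=-MV$, $g^\al = M b^\al$ (norms controlled by $\sup|u|$), deduce the oscillation decay $\osc_{B_{\rho/2}}u\le \ga\,\osc_{B_\rho}u + C\rho^{\de}\norm{u}_{L^\iny}$, iterate via Lemma 4.19 of \cite{HL11}, and convert to the $L^{2^*}$ average with Lemma~\ref{l5.6}. The only difference is cosmetic: you run the estimate at a general scale $\rho\le R_0$ using the scale-invariant constants, whereas the paper normalizes to $R=2$ and finishes with the standard scaling argument.
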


\begin{proof}
Assume first that $R = 2$.  
For $r \in \pr{0,1}$, let $\disp m\pr{r} = \inf_{B_r} u$, $\disp M\pr{r} = \sup_{B_r} u$.
By our previous results, $- \iny < m\pr{r} \le M\pr{r} < \iny$.
Set $\disp M_0 = \sup_{B_1} \abs{u}$.
Let $q_0 = q_0\pr{1}$ as given in Lemma~\ref{l6.1}.
The Minkowski inequality shows that
\begin{align}
M\pr{r} - m\pr{r} 
&= \pr{ \fint_{B_{ 3r/ 4}} \abs{M\pr{r} - m\pr{r} }^{q_0} }^{\frac 1 {q_0}} \nonumber \\
&\le \pr{ \fint_{B_{3 r/ 4}} \abs{M\pr{r} - u }^{q_0} }^{\frac 1 {q_0}}
+ \pr{ \fint_{B_{3 r/ 4}} \abs{u - m\pr{r} }^{q_0} }^{\frac 1 {q_0}} .
\label{eq6.5}
\end{align}
Let $\vp \in \bF_0\pr{B_r}$ be such that $\vp \ge 0$ in $B_r$.
Since $M\pr{r} - u \ge 0$ and 
\begin{align*}
\mathcal{B}\brac{M\pr{r} - u, \vp}
&= \int \brac{A^{\al \be} D_\be \pr{M\pr{r} - u} + b^\al \pr{M\pr{r} - u} } D_\al \vp + \brac{d^\be D_\be\pr{M\pr{r} - u} + V \pr{M\pr{r} - u}} \vp \\
&= -\mathcal{B}\brac{u, \vp}
+ M\pr{r} \int \pr{V - D_\al b^\al} \vp
= M\pr{r} \int \pr{V - D_\al b^\al} \vp,
\end{align*}
then by Lemma~\ref{l6.4} with $f :=  -M\pr{r} V \in L^p\pr{B_r}$ and $g^\al := M\pr{r} b^\al \in L^s\pr{B_r}$, 
\begin{align}
\pr{\fint_{B_{3r/4}} \abs{M\pr{r} - u}^{q_0} }^{\frac{1}{q_0}}
\le C_0 \brac{\inf_{B_{r/2}} \brac{M\pr{r} - u} + M_0 \pr{\abs{B_r}^{\frac {2} {n} - \frac 1 p}\norm {V }_{L^{p}\pr{B_r}} +\abs{B_r}^{\frac {1} {n} - \frac 1 s} \norm {b}_{L^{s}\pr{B_r}}}}.
 \label{eq6.6}
 \end{align}
Similarly, since $u - m\pr{r} \ge 0$ and 
\begin{align*}
\mathcal{B}\brac{u - m\pr{r}, \vp}
&= \int \brac{A^{\al \be} D_\be \pr{u - m\pr{r}} + b^\al \pr{u - m\pr{r}} } D_\al \vp + \brac{d^\be D_\be\pr{u - m\pr{r}} + V \pr{u - m\pr{r}}} \vp \\
&= \mathcal{B}\brac{u, \vp}
- m\pr{r} \int \pr{V - D_\al b^\al} \vp  
= - m\pr{r} \int \pr{V - D_\al b^\al} \vp,
\end{align*}
then
\begin{align}
\pr{\fint_{B_{3r/4}} \abs{u - m\pr{r}}^{q_0} }^{\frac{1}{q_0}}
&\le C_0 \brac{ \inf_{B_{r/2}}\brac{u - m\pr{r}} 
+ M_0 \pr{\abs{B_r}^{\frac {2} {n} - \frac 1 p} \norm {V }_{L^{p}\pr{B_r}} +\abs{B_r}^{\frac {1} {n} - \frac 1 s} \norm {b}_{L^{s}\pr{B_r}}}}.
 \label{eq6.7}
\end{align}
Combining \eqref{eq6.5}, \eqref{eq6.6} and \eqref{eq6.7}, we see that
\begin{align*}
\frac{1}{C_0} \brac{ M\pr{r} - m\pr{r} }
&\le M\pr{r} - M\pr{\frac r 2}
+ m\pr{\frac r 2} - m\pr{r} \\
&+ 2 M_0 \pr{ \abs{B_1}^{\frac {2} {n} - \frac 1 p} r^{2 - \frac n p}\norm {V }_{L^{p}\pr{B_r}} 
+\abs{B_1}^{\frac {1} {n} - \frac 1 s}r^{1 - \frac n s} \norm {b}_{L^{s}\pr{B_r}}}.
\end{align*}
Set $\disp \om\pr{r} = \osc_{B_r} u = M\pr{r} - m\pr{r}$,  $\de = \min\set{2 - \frac n p, 1 - \frac n s}$, $c = 2\max\set{\abs{B_1}^{\frac {2} {n} - \frac 1 p}, \abs{B_1}^{\frac {1} {n} - \frac 1 s}}$.
Since $C_0 = C_0\pr{r}$ is monotonically increasing,
\begin{align*}
\om\pr{\frac r 2} 
&\le \pr{1 - \frac 1 {C_0\pr{1}} }  \om\pr{r}
 + c r^{\de} M_0 \pr{\norm{V }_{L^p\pr{B_1}} + \norm{b}_{L^s\pr{B_1}}}.
\end{align*}
Choose $\mu \in \pr{0,1}$, so that $\eta := \pr{1 - \mu}\frac{\log \pr{1 - C_0\pr{1}^{-1}}}{\log \pr{\frac 1 2}} < \mu \de$.
For any such $\eta$, it follows from Lemma 4.19 in \cite{HL11} that for any $\rho \in [0, 1)$, 
\begin{align*}
\om\pr{\rho} 
&\le \frac{2^\eta}{1 - C_0\pr{1}^{-1}} \rho^\eta \om\pr{1}
 + \frac{ c \, C_0\pr{1}  }{2^{\de\pr{1 - \mu}}}  \pr{\norm{V }_{L^p\pr{B_1}} + \norm{b}_{L^s\pr{B_r}}}\rho^{\eta} M_0.
\end{align*}
By Lemma~\ref{l5.6},
\begin{align*}
&\om\pr{1} \le C \pr{\int_{B_2} \abs{u}^{2^*} }^{\frac 1 {2^*}} \\
&M_0 = \sup_{B_1} \abs{u} \le C \pr{\int_{B_2} \abs{u}^{2^*} }^{\frac 1 {2^*}}.
\end{align*}
Thus,
\begin{align*}
\om\pr{\rho} 
&\le C \rho^\eta  \pr{\int_{B_2} \abs{u}^{2^*} }^{\frac 1 {2^*}},
\end{align*}
where $C\pr{n, p, s, t, \la, \La, \eta, C_0\pr{1}, \norm{V}_{L^p\pr{B_2}}, \norm{b}_{L^s\pr{B_2}}, \norm{d}_{L^t\pr{B_2}}}$.
The usual scaling argument gives the general result.
\end{proof}

\section{Examples}
\label{s7}

Within this section, we show that a number of cases satisfy the assumptions from our general set-up:
\begin{enumerate}
\item[Case 1.] {\em Homogeneous operators}: When $\bb, \bd, \bV \equiv \bf 0$, take $\bF\pr{\Om} = Y^{1,2}\pr{\Om}^N$.  This case was studied by Hofmann and Kim in \cite{HK07} and fits into our framework.
\item[Case 2.] {\em Lower order coefficients in $L^p$, Sobolev space}: 
When $\bb, \bd, \bV$ are in some $L^p$ spaces and satisfy a non-degeneracy condition, $\bF\pr{\Om} = W^{1,2}\pr{\Om}^N$.
\item[Case 3:] {\em Reverse H\"older potentials}: When $\bV \in B_p$ for some $p \in \brp{ \frac n 2, \iny}$ (to be defined below), $\bb, \bd \equiv \bf 0$, we define $\bF\pr{\Om} = W^{1,2}_{ V}\pr{\Om}^N$, a weighted Sobolev space, with the weight function depending on the potential function $\bV$.
\end{enumerate}

The goal of this section is to show that each of the three cases listed above fits into the framework described in the Section~\ref{s2}.
More specifically, we first show that $\bF\pr{\Om}$ and $\bF_0\pr{\Om}$ satisfy assumptions {\rm\ref{A1}--\ref{A4}. 
Then we show that \rm{\ref{A5}--\ref{A7}} hold for $\bF\pr{\Om}$, $\bF_0\pr{\Om}$, $\mathcal{L}$ and $\mathcal{B}$; we prove boundedness as in \eqref{eq2.21}, coercivity as in \eqref{eq2.22}, and the Caccioppoli inequality \eqref{eq2.23}.} 
At this point, if we assume that (IB), (BB), and (H) also hold, then we have the full set of results on fundamental and Green matrices. 
Going further, we consider the case of real equations (as opposed to real systems), and we justify the assumptions of (IB), (BB), and (H) in each of the cases described above. 
To this end, due to Sections \ref{s5} and \ref{s6}, we will only have to show that 
\rm{\ref{B1}--\ref{B2}} hold. 
We remind the reader that for systems, the assumptions (IB), (BB), and (H) may actually fail.

\subsection{Homogeneous operators}

We start with the case when $\bV, \bb, \bd \equiv \bf 0$, $\mathcal{L} = L$ and 
$$\mathcal{B}\brac{\bu, \bv} = B\brac{\bu, \bv} := \int \bA^{\al \be} D_\be \bu \cdot D_\al \bv = \int A^{\al \be}_{ij} D_\be u^j \, D_\al v^i.$$
By ellipticity \eqref{eq2.12} and boundedness \eqref{eq2.13} of the matrix $\bA$, $B\brac{\cdot, \cdot}$ is comparable to the inner product given by \eqref{eq2.3}. 
Therefore, it is natural to take the Banach space to be $\bF \pr{\Om}= Y^{1,2}\pr{\Om}^N$, while the associated Hilbert space is $\bF_0 \pr{\Om}= Y^{1,2}_0\pr{\Om}^N$, for all $\Omega$ open and connected.

The restriction property \eqref{eq2.4} is obviously true in this setting.
It is also clear that $C^\iny_c\pr{\Om}^N$ functions belong to  $Y^{1,2}\pr{\Om}^N$, and, by the discussion in the beginning of Section~\ref{s2}, $Y_0^{1,2}\pr{\Om}^N$ is a Hilbert space equipped with the scalar product \eqref{eq2.3}. {\rm\ref{A3}} is trivially satisfied. 

By Lemma~\ref{lD.2}, $C^\iny\pr{U}^N\cap Y^{1,2}\pr{U}^N$ is dense in $Y^{1,2}\pr{U}^N$ for any bounded $U$.
This implies \eqref{eq2.7} since we may assume that $U$ in \eqref{eq2.7} is bounded because the support of $\xi$ is bounded.
With $\xi \in C_c^\infty\pr{U}$, it is immediate that $\bu \xi \in L^{2^*}\pr{\Om \cap U}^N$ and
\eqs{\frac{\partial}{\partial x_i}\pr{\bu \xi} = \xi \frac{\partial \bu}{\partial x_i} + \bu \frac{\partial \xi}{\partial x_i} \in L^2\pr{\Om \cap U}^N,}
where we have used that $\bu\in L^{2^*}\pr{\Om \cap U}^N \hookrightarrow L^2\pr{\Om \cap U}^N$ since $U$ is bounded.
It follows that $\norm{\bu \xi}_{Y^{1,2}\pr{\Om\cap U}} \le C_\xi \norm{\bu}_{Y^{1,2}\pr{\Om}}$.
Now if $\set{\bu_n} \su C^\infty (\Om\cap U)^N$ approximates $\bu$ in the $Y^{1,2}(\Om\cap U)^N$-norm, then for $\xi \in C^\iny_c\pr{U}$, we observe that $\set{\bu_n \xi} \su C^\infty (\Om\cap U)^N$ approximates $\bu \xi$ since 
\begin{align}\label{eq7.1}
\|\bu_n\xi-\bu\xi\|_{Y^{1,2}(\Om\cap U)}
&\le \|D(\bu_n-\bu)\|_{L^2(\Om\cap U)}\|\xi\|_{L^\infty(\Om\cap U)}
+\|\bu_n-\bu\|_{L^2(\Om\cap U)}\|D\xi\|_{L^\infty(\Om\cap U)}\\
&+\|\bu_n-\bu\|_{L^{2^*}(\Om\cap U)}\|\xi\|_{L^\infty(\Om\cap U)}
\nonumber.
\end{align}
Applying the H\"older inequality to the second term, the latter is majorized by $\|\bu_n-\bu\|_{Y^{1,2}(U\cap \Omega)}$, as desired.
A similar argument implies that when $\xi \in C^\iny_c\pr{\Om \cap U}$, $\set{\bu_n \xi} \su C_c^\infty (\Om\cap U)^N$ approximates $\bu \xi$.

Turning to A5)--A7), \eqref{eq2.21} and \eqref{eq2.22} follow directly from \eqref{eq2.13} and \eqref{eq2.12}
with $\Ga = \La$ and $\ga = \la$.
The Caccioppoli inequality is well-known in this context, however one can also refer to Lemma~\ref{l4.1}. 
Indeed, since all of the lower order coefficients vanish, then Lemma~\ref{l4.1} applies to give the Caccioppoli inequality \eqref{eq2.23} with $C = C\pr{n, \la, \La}$.
All in all, \rm{\ref{A1} -- \ref{A7}} are verified in this setting.

Reducing to the case of equations, i.e., $N=1$, conditions (IB) and (BB) hold with $C = C\pr{n, q, \ell, \la, \La}$ due to Lemma 4.1 from \cite{HL11}, or one could also use Lemma~\ref{l5.1} by showing that \rm{\ref{B1}} holds.

If one wants to show \rm{\ref{B1}}, it is enough to observe that its proof can be reduced to the case of $\bF(\Omega)=W^{1,2}(\Omega)$. 
This is because $Y_0^{1,2}(\Omega_R)=W_0^{1,2}(\Omega_R)$ by Lemma~\ref{lA.7}. Indeed, for any $u\in  Y^{1,2}(\Omega_R) \hookrightarrow W^{1,2}(\Omega_R)$ (see Lemma~\ref{lA.7}), if $u\zeta \in Y_0^{1,2}(\Omega_R)$ for all $\zeta\in C_c^\infty (B_R)$, then $u\zeta \in W_0^{1,2}(\Omega_R)$. 
If \rm{\ref{B1}} holds for $\bF(\Omega)=W^{1,2}(\Omega)$, we have for all  $\zeta$ smooth compactly supported non-negative $\zeta (u-k)_+ \in  W_0^{1,2}(\Omega_R)=Y_0^{1,2}(\Omega_R)$ by Lemma~\ref{lA.7}, as desired. Clearly, the property $\partial^i\zeta (u-k)_+ \in L^2(\Omega_R)$ is also inherited. We will postpone the proof of \rm{\ref{B1}} for $\bF(\Omega)=W^{1,2}(\Omega)$ to Case 2. 

In this context, (H) also can be found in the literature, specifically, Corollary 4.18 from \cite{HL11} applies since the spaces $W^{1,2}(B_R)$ and $Y^{1,2}(B_R)$ coincide for any $B_R\subset \Omega$  (see Corollary~\ref{cA.11}). 
The latter fact also allows us to reduce the proof of \rm{\ref{B2}} to the case of $\bF(\Omega)=W^{1,2}(\Omega)$ (discussed below) should we prefer to use Lemma~\ref{l6.6}. 

\subsection{Lower order coefficients in $L^p$, Sobolev space}

Assume that there exist exponents $p \in \pb{ \frac n 2, \iny}$, $s, t \in \pb{n, \iny}$ so that $\bV  \in L^p\pr{\Om}^{N\times N}$, $\bb \in L^s\pr{\Om}^{n \times N \times N}$, and $\bd \in L^t\pr{\Om}^{n \times N \times N}$.
Set $\bF\pr{\Om} = W^{1,2}\pr{\Om}^N$ and $\bF_0\pr{\Om} = W_0^{1,2}\pr{\Om}^N$. 

To establish the assumptions \rm{\ref{A1}} through \rm{\ref{A4}}, we rely on a number of facts regarding Sobolev spaces which are contained in Appendix \ref{AppD}, with further details in \cite{Eva98}, for example.

The property \eqref{eq2.4} is straightforward and therefore \rm{\ref{A1}} holds. Clearly, 
$C^\iny_c\pr{\Om}^N$ is contained in $W^{1,2}\pr{\Om}^N$ and the completion, $W^{1,2}_0\pr{\Om}^N$, is a Hilbert space with respect to $\|\cdot\|_{W^{1,2}_0\pr{\Om}^N}=\|\cdot\|_{W^{1,2}\pr{\Om}^N}$.
{\rm\ref{A3}} follows from Lemma~\ref{lA.1}. 
For $\bu \in W^{1,2}\pr{\Om}^N$ and $\xi \in C^\infty_c\pr{U}$, boundedness of $\xi$ and $D\xi$ implies that $\bu \xi \in W^{1,2}\pr{\Om \cap U}^N$, and, as in the previous case, $\norm{\bu \xi}_{W^{1,2}\pr{\Om\cap U}} \le C_\xi \norm{\bu}_{W^{1,2}\pr{\Om}}$.
By Lemma~\ref{lD.2}, $C^\iny\pr{U}^N\cap W^{1,2}\pr{U}^N$ is dense in $W^{1,2}\pr{U}^N$, so that \eqref{eq2.7}, and hence \rm{\ref{A4}}, holds by the same argument as in Case 1, similar to \eqref{eq7.1}. 

Boundedness of the matrix $\bA$, \eqref{eq2.13}, implies that for any $\bu, \bv \in W_0^{1,2}\pr{\Om}^N$,
\begin{align*}
\mathcal{B}\brac{\bu, \bv} 
&\le \La \int \abs{D \bu} \abs{ D \bv} 
+ \int \abs{\bb} \abs{\bu} \abs{ D \bv} 
+ \int \abs{\bd} \abs{D \bu} \abs{\bv}
+ \int \abs{\bV} \abs{\bu}\abs{\bv}.
\end{align*}
By the H\"older inequality
\begin{align*}
 \int \abs{D \bu} \abs{ D \bv}  
&\le  \pr{\int \abs{D \bu}^2}^{\frac 1 2} \pr{\int \abs{ D \bv} ^2}^{\frac 1 2} 
\end{align*}
By H\"older, homogeneous Sobolev and Young's inequalities, since $s \in \pb{n, \iny}$,
\begin{align*}
\int \abs{\bb} \abs{\bu} \abs{ D \bv} 
&= \int \abs{\bb} \abs{\bu}^{\frac{s-n}{s}} \abs{\bu}^{\frac n s} \abs{ D \bv} 
\le \pr{\int \abs{\bb}^s}^{\frac 1 s} \pr{\int \abs{\bu}^2 }^{\frac{s-n}{2s}} \pr{\int \abs{\bu}^{2^*} }^{\frac{n-2}{2s}}  \pr{\int \abs{ D \bv}^2}^{\frac 1 2}\\
&\le c_{n}^{\frac n {2s}} \norm{\bb}_{L^s\pr{\Om}} \pr{\int \abs{\bu}^2 }^{\frac{s-n}{2s}} \pr{\int \abs{D\bu}^{2}}^{\frac {n}{2s}} \pr{\int \abs{ D \bv}^2}^{\frac 1 2} \\
&\le c_{n}^{\frac n {2s}} \norm{\bb}_{L^s\pr{\Om}} \brac{ \pr{1 - \frac n s} \int \abs{\bu}^2+  \frac n s \int \abs{D\bu}^{2}}^{\frac {1}{2}} \pr{\int \abs{ D \bv}^2}^{\frac 1 2},
\end{align*}
where we as usual interpret $\frac 1 s$ to be $0$ in the case where $s = \iny$.
Similarly,
\begin{align*}
\int \abs{\bd} \abs{D \bu} \abs{\bv}
&\le  c_{n}^{\frac n {2t}} \norm{\bd}_{L^t\pr{\Om}} \pr{\int \abs{ D \bu}^2}^{\frac 1 2} \brac{\pr{1 - \frac{n}{t}} \int \abs{\bv}^2 + \frac n t \int \abs{D\bv}^{2}}^{\frac {1}{2}},
\end{align*}
and
\begin{align*}
\int \abs{\bV} \abs{\bu} \abs{\bv}
&\le c_{n}^{\frac n {2p}} \norm{\bV}_{L^p\pr{\Om}} \brac{\pr{1 - \frac{n}{2p}} \int \abs{\bu}^{2} + \frac n {2p} \int \abs{D\bu}^{2} }^{\frac{1}{2}}\brac{\pr{1 - \frac{n}{2p}} \int \abs{\bv}^{2}+ \frac{n}{2p} \int \abs{D\bv}^{2} }^{\frac{1}{2}} .
\end{align*}
Combining these inequalities, we see that
\begin{align*}
\mathcal{B}\brac{\bu, \bv} &\le \pr{\La 
+ c_{n}^{\frac n {2s}} \norm{\bb}_{L^s\pr{\Om}} 
+ c_{n}^{\frac n {2t}} \norm{\bd}_{L^t\pr{\Om}}
+c_{n}^{\frac n {2p}} \norm{\bV}_{L^p\pr{\Om}}} \norm{\bu}_{W^{1,2}\pr{\Om}^N}  \norm{\bv}_{W^{1,2}\pr{\Om}^N}.
\end{align*}
Therefore, we may take $\disp \Ga = \La + c_{n}^{\frac n {2s}} \norm{\bb}_{L^s\pr{\Om}} + c_{n}^{\frac n {2t}} \norm{\bd}_{L^t\pr{\Om}} + c_{n}^{\frac n {2p}} \norm{\bV}_{L^p\pr{\Om}}$ so that \eqref{eq2.21}, and therefore \rm{\ref{A5}}, holds.
Clearly, the estimate from below on $ \mathcal{B}\brac{\bu, \bu}$ may or may not be satisfied without further assumptions on the lower order terms.
Thus, we have to assume that for some $\ga > 0$, depending on $\la, \bV, \bb, \bd$,
\begin{align*}
& \ga  \pr{\norm{\bu}_{L^2\pr{\R^n}^N}^2 + \norm{D\bu}_{L^2\pr{\R^n}^N}^2 } \le \mathcal{B}\brac{\bu, \bu} .
\end{align*}
In other words, we assume that \eqref{eq2.22} holds.
This is valid, for instance, if $\bV$ is positive definite and the first order terms are small with respect to the zeroth and second order terms. 
To be specific, we say that $\bV$ is positive definite if there exists $\eps > 0$ so that for any $\xi \in \R^N$, $V_{ij}\pr{x} \xi^i \xi^j \ge \eps \abs{\xi}^2$ for every $x \in \Om$.
In this case,
\begin{align*}
\mathcal{B}\brac{\bu, \bu} 
&\ge \la \int \abs{D \bu}^2
+\int \bb^\al \, \bu \cdot D_\al \bu 
+ \int \bd^\be D_\be \bu \cdot \bu
+ \eps \int \abs{\bu}^2.
\end{align*}
If $\bb$ and $\bd$ are small in the sense that for some $\de_1, \de_2 > 0$
\begin{align*}
\abs{\int \bb^\al \, \bu \cdot D_\al \bu 
+ \int \bd^\be D_\be \bu \cdot \bu}
&\le \frac{\la}{1+ \de_1} \int \abs{D \bu}^2
+ \frac{\eps}{1 + \de_2} \int \abs{\bu}^2,
\end{align*}
then it follows that $\mathcal{B}\brac{\bu, \bu} \ge \ga \norm{\bu}_{W^{1,2}\pr{\Om}^N}^2$, where $\disp \ga = \min\set{\frac{\la \de_1}{1 + \de_1}, \frac{\eps \de_2}{1 + \de_2}}$.
There are other conditions that we could impose to ensure that the lower bounds holds for some $\ga > 0$.
When $N=1$, the lower bound holds also in the presence of more involved non-degeneracy assumptions on the zeroth and first order terms that we discuss below.

By Lemma~\ref{l4.1}, the Caccioppoli inequality, \eqref{eq2.23}, holds with $C = C\pr{n, s, t, \ga, \La, \norm{\bb}_{L^s\pr{\Om}}, \norm{\bd}_{L^t\pr{\Om}}}$.

Moving towards (IB), (BB), (H), when $N = 1$, 
\begin{align}
\mathcal{L} u 
&= -D_\al\pr{A^{\al \be} D_\be u + b^\al u} + d^\be D_\be u + V u.
\label{eq7.2}
\end{align}
where $\la \abs{\xi}^2 \le A^{\al \be}\pr{x}\xi_\al \xi_\be \le \La \abs{\xi}^2$ for all $x \in \Om$, $\xi \in \R^n$, $V \in L^p\pr{\Om}$, $b^\al \in L^s\pr{\Om}$, and $d^\be \in L^t\pr{\Om}$.
Moreover,
\begin{align}
\mathcal{B}\brac{u, v} 
&= \int A^{\al \be} D_\be u \; D_\al v + b^\al \, u \; D_\al v + d^\be D_\be u \; v + V \, u \, v.
\label{eq7.3}
\end{align}
Since $u \in L^2\pr{\Om} \cap L^{2^*}\pr{\Om}$ and $Du \in L^2\pr{\Om}$, then by an application of the H\"older inequality $D\abs{u}^2 = 2 u \, Du \in L^p\pr{\Om}$ for any $\disp p \in \brac{1, \frac n {n-1}}$.  
It follows that $D_\al b^\al$ and $D_\be d^\be$ can be paired with $\abs{u}^2$ in the sense of distributions.
That is,
\begin{align}
\mathcal{B}\brac{u, u} 
&= \int A^{\al \be} D_\be u \; D_\al u + \frac 1 2 b^\al \, D_\al \abs{u}^2 + \frac 1 2 d^\be D_\be \abs{u}^2 + V \, \abs{u}^2 \nonumber \\
&= \int A^{\al \be} D_\be u \; D_\al u +  \pr{V - \frac 1 2 D_\al  b^\al - \frac 1 2  D_\be  d^\be} \abs{u}^2,
\label{eq7.4}
\end{align}
where the integrals above are interpreted as pairings in dual spaces.

Note that to ensure coercivity of the bilinear form, it suffices, for example,  to assume that there exists $\de > 0$ so that $V - \frac 1 2 D_\al  b^\al - \frac 1 2  D_\be  d^\be \ge \de$ in the sense of distributions.
That is, for any $\vp \in C^\iny_c\pr{\Om}$ such that $\vp \ge 0$, 
\begin{align*}
\int \pr{V - \frac 1 2 D_\al  b^\al - \frac 1 2  D_\be  d^\be - \de} \vp \ge 0.
\end{align*}
In this case, we see from \eqref{eq7.4} that the bound from below, \eqref{eq2.22}, holds with $\ga = \min\set{\la, \de}$.
If we further assume that $V - D_\al  b^\al \ge 0$ and $V - D_\be  d^\be \ge 0$ in $\Om$ in the sense of distributions, then Lemma~\ref{l5.1} implies that (IB) and (BB) hold for this setting with $C = C\pr{n, q, s, t, \ell, \ga, \La, \norm{b}_{L^s\pr{\Om}}, \norm{d}_{L^t\pr{\Om}}}$ in \eqref{eq3.47} and \eqref{eq3.90} as long as {\rm\ref{B1}} holds.
If \rm{\ref{B2}} also holds, then it follows from Lemma~\ref{l6.6} that assumption (H) is also valid.

Therefore, we need to show that assumptions \rm{\ref{B1}} and \rm{\ref{B2}} are valid for $\bF(\Omega)=W^{1,2}(\Omega)$. 
These facts are commonly used in the classical arguments for De Giorgi-Nash-Moser estimates, but the proofs are often omitted. 
One can find details, e.g., in \cite{HKM93}. 
Since $\Om_R$ is bounded, then Lemma~\ref{lD.2} implies that $W^{1,2}(\Om)$ could also be defined as a completion of $C^\infty(\Om_R)$ in the $W^{1,2}(\Om)$-norm, thereby coinciding with the Sobolev space $H^{1,2}(\Om; dx)$ of \cite{HKM93}. 
Then, given that $u \in W^{1,2}(\Om_R)$, Theorem~1.20 of \cite{HKM93} implies that $(u-k)_+ \in W^{1,2}(\Omega)$, and therefore $(u-k)_+ \zeta \in W^{1,2}(\Omega)$, $(u-k)_+ \partial^i\zeta \in L^2(\Omega)$, $i=1,...,n$ (by a direct computation). 
Also, since we assume that $u\zeta \in W^{1,2}_0(\Om)$, then $(u\zeta)_+\in W^{1,2}_0(\Omega)$ by Lemma~1.23 of \cite{HKM93}. 
Finally, if $\zeta$ and $k$ are non-negative, $0\leq (u-k)_+\zeta \leq (u\zeta)_+$ and hence, $(u-k)_+\zeta \in W^{1,2}_0(\Omega)$ by Lemma~1.25(ii) of \cite{HKM93}, as desired. 

To show that \rm{\ref{B2}} holds, we use a modification of the arguments given in Theorem~1.18 of \cite{HKM93}.
We work with $f(t)=(t+k)^{-\omega}$, $t\geq 0$, which belongs to $C^1(\overline{\RR_+})$ and has a bounded derivative on $\R^+$ (not on the entire $\RR$).
The exact same argument applies upon observing that a non-negative function $u \in W^{1,2}\pr{B_R}$ can be approximated by non-negative $u_i \in C^\iny\pr{B_R}$ due to Corollary~\ref{cD.3}.

\subsection{Reverse H\"older potentials}

Recall that $B_p$, $1 < p < \iny$, denotes the reverse H\"older class of all (real-valued) nonnegative locally $L^p$ integrable functions that satisfy the reverse H\"older inequality.
That is, $w \in B_p$ if there exists a constant $C$ so that for any ball $B \su \R^n$,
\begin{align}
\pr{\fint_B w\pr{x}^p dx}^{1/p} \le C \fint_B w\pr{x} dx.
\label{eq7.5}
\end{align}
If $w \in B_p$, then it follows from an application of the H\"older inequality that $w \in B_q$ for any $q < p$.
Moreover, if $w \in B_p$, then there exists $\eps > 0$, depending on $w$ and $n$, so that $w \in B_{p+\eps}$ as well \cite{Geh73}. 

For an $N \times N$ matrix function ${\bf M}\pr{x}$, define lower and upper bounds on $\bf M$ in the following way
\begin{align*}
M_{\ell}\pr{x} &= \inf\set{ M_{ij}\pr{x} \xi_j \xi_i : \xi \in \R^N, \abs{\xi} = 1  } \\
M_{u}\pr{x} &= \sup \set{ \abs{M_{ij}\pr{x} \xi_j \zeta_i } : \xi, \zeta \in \R^N, \abs{\xi} = 1 = \abs{\zeta} }.
\end{align*}
For the zeroth order term $\bV$, we assume that there exist constants $c_1, c_2 > 0$ and a non-trivial $V \in B_{p}$ for some $p \in \brp{ \frac{n}{2}, \iny}$ (and therefore $p \in \pr{ \frac n 2, \iny}$ without loss of generality) so that 
\begin{equation}
c_1 V \le V_\ell \le V_u \le c_2 V.
\label{eq7.6}
\end{equation}
Even if $\Om$ is a proper subset of $\R^n$, we still assume that $\bV$ is associated to some $V \in B_p$ which is defined on all of $\R^n$.
Since $V \in B_p$, then $V$ is a Muckenhoupt $A_\iny$ weight, and it follows that $V\pr{x} dx$ is a doubling measure.  
As $V$ is assumed to be non-trivial, it follows from the doubling property that $V$ cannot vanish on any open set.
We set $\bb, \bd = \bf 0$.

One might wonder whether an appropriate matrix $B_p$ class could be suitable in this context. 
We did not pursue this topic, in part, because the theory of matrix reverse H\"older classes seems to be largely undeveloped.
For the case of $p=2$, some (very limited) discussion can be found in \cite{Ros14}.
Developing the theory of matrix $B_p$ for $p \ne 2$ was not in the scope of the present work.

Let $m\pr{x,V}$ denote the Fefferman-Phong maximal function associated to $V \in B_p$. 
This function was introduced by Shen in \cite{She94}, motivated by the work of Fefferman and Phong in \cite{Fef83}.
For the definition and additional details we refer the reader to Appendix \ref{AppB}.
For any open set $\Om \su \R^n$, we define the space $W_V^{1,2}\pr{\Om}$ as the family of all weakly differentiable functions $u \in L^{2}\pr{\Om, m\pr{x, V}^2 dx}$, whose weak derivatives are functions in $L^2\pr{\Om, dx}$.
We endow the space $W_V^{1,2}\pr{\Om}$ with the norm
\begin{align}
\norm{u}^2_{W_V^{1,2}\pr{\Om}} :=  \norm{u}^2_{L^{2}\pr{\Om, m\pr{x, V}^2 dx}} + \norm{D u}^2_{L^2\pr{\Om, dx}}
= \norm{u \, m\pr{\cdot, V}}^2_{L^{2}\pr{\Om}} + \norm{D u}^2_{L^2\pr{\Om}}.
\label{eq7.7}
\end{align}
Since $m\pr{\cdot, V}$ is non-degenerate in the sense that it is bounded away from zero on any bounded set (see for example Lemma~\ref{lB.3} and Remark \ref{rB.5}), it follows that $\norm{\cdot}^2_{W_V^{1,2}\pr{\Om}}$ is indeed a norm and this norm makes the space complete (details in Appendix \ref{AppC}).
The space $W^{1,2}_{0, V}\pr{\Om}$ is defined as the closure of $C^\iny_c\pr{\Om}$ in $W_V^{1,2}\pr{\Om}$.
For further properties of $W^{1,2}_V\pr{\Om}$, we refer the reader to Appendix \ref{AppC}. 
Perhaps the most important fact that we want to highlight here is that on bounded sets $W^{1,2}_V\pr{\Om}$ and $W^{1,2}\pr{\Om}$ coincide (with the norm comparison depending on the set though) -- see Remark~\ref{rC.1}. 

\begin{rem}\label{r7.1}
Using $V\pr{x}$ in place of $m\pr{x, V}^2$, we define the space $\hat{W}_V^{1,2}\pr{\Om}$ as the family of all weakly differentiable functions $u \in L^{2}\pr{\Om, V\pr{x} dx}$ whose weak derivatives are functions in $L^2\pr{\Om, dx}$.
The space $\hat{W}_V^{1,2}\pr{\Om}$ is endowed with the norm
\begin{align}
\norm{u}^2_{\hat{W}_V^{1,2}\pr{\Om}} :=  \norm{u}^2_{L^{2}\pr{\Om, V\pr{x} dx}} + \norm{D u}^2_{L^2\pr{\Om, dx}}
= \norm{u \, V^{1/2}}^2_{L^{2}\pr{\Om}} + \norm{D u}^2_{L^2\pr{\Om}}.
\label{eq7.8}
\end{align}
Since $V$ is also non-degenerate, this is indeed a norm and not a semi-norm.
We define $\hat W^{1,2}_{0, V}\pr{\Om}$ as the closure of $C^\iny_c\pr{\Om}$ in $\hat{W}_V^{1,2}\pr{\Om}$.

The space $\hat{W}_V^{1,2}\pr{\Om}$ serves as an alternative (but not equivalent) Hilbert space to $W_V^{1,2}\pr{\Om}$ for the case of reverse H\"older zeroth order terms. 
The spaces $\hat W^{1,2}_{0, V}\pr{\Om}$ and $W^{1,2}_{0, V}\pr{\Om}$ are  the same -- see Appendix~\ref{AppC}. 
In practice, we find it easier to work with  $W_V^{1,2}\pr{\Om}$ compared to  $\hat{W}_V^{1,2}\pr{\Om}$ due to the aforementioned fact that $W_V^{1,2}\pr{\Om}$ coincides with the usual Sobolev spaces $W^{1,2}\pr{\Om}$ whenever $\Omega$ is bounded.

\end{rem}


For $\bV$ specified above, we set $\bF\pr{\Om} = W^{1,2}_{V}\pr{\Om}^N$ and $\bF_0\pr{\Om} = W^{1,2}_{0,V}\pr{\Om}^N$. 
Define the inner product on $W^{1,2}_{V}\pr{\Om}^N$ by
\begin{align*}
\innp{\bu, \bv}_{W^{1,2}_{V}\pr{\Om}^N} := \int_{\Om} D_\al u^i D_\al v^i + u^i v^i m\pr{\cdot, V}^2.
\end{align*}

As above, \rm{\ref{A1}} and \rm{\ref{A2}} follow directly from the definition. 
{\rm\ref{A3}} is shown using the exact same argument as that for Lemma~\ref{lA.1}.
For $\bu \in W^{1,2}_V\pr{\Om}^N$ and $\xi \in C^\infty_c\pr{U}$, it follows from the boundedness of $\xi$ and $D\xi$, along with Remark \ref{rB.5}, that $\bu \xi \in W^{1,2}_V\pr{\Om \cap U}^N$ with $\norm{ \bu \xi}_{W_V^{1,2}\pr{\Om \cap U}} \le C_\xi \norm{\bu}_{W^{1,2}_V\pr{\Om}}$.  
Using the density of smooth functions in $W^{1,2}_V\pr{U}^N$ for any bounded domain $U$, i.e., Lemma~\ref{lD.2}, the remainder of \rm{\ref{A4}} follows from the arguments in Case 1 and Case 2, with appropriate modifications to \eqref{eq7.1}.

The next goal is to show that boundedness and coercivity given by \eqref{eq2.21} and \eqref{eq2.22} hold. 
At this point we recall Lemma~\ref{lC.6}. 
Having that at hand, 
for any $\bu, \bv \in W^{1,2}_{0,V}\pr{\Om}$ we have
\begin{align*}
\mathcal{B}\brac{\bu, \bv} 
&= \int \bA^{\al \be} D_\be \bu \cdot D_\al \bv + \bV \, \bu \cdot \bv 
\le \La \int \abs{D \bu} \abs{ D \bv} 
+ c_2 \int V \abs{\bu} \abs{\bv} \\
&\le \La \pr{\int \abs{D \bu}^2}^{\frac 1 2} \pr{\int \abs{ D \bv}^2}^{\frac 1 2}
+ c_2 \pr{\int V \abs{\bu}^2}^{\frac 1 2} \pr{ \int V \abs{\bv}^2}^{\frac 1 2}  \\
&\le \pr{\La + c_2\, C_{V,n} } \norm{\bu}_{W^{1,2}_V\pr{\Om}^N} \norm{\bv}_{W^{1,2}_V\pr{\Om}^N},
\end{align*}
where the last line follows from Lemma~\ref{lC.6}.
Therefore, boundedness holds with $\Ga = \La + C\,C_{V,n}$.
Since
\begin{align*}
\mathcal{B}\brac{\bu, \bu} 
&= \int \bA^{\al \be} D_\be \bu \cdot D_\al \bu + \bV \, \bu \cdot \bu 
\ge \la \int \abs{D \bu}^2
+ c_1 \int V \abs{\bu}^2 \\
&\ge \frac{\la}{2} \int \abs{D \bu}^2
+ \min\set{\frac \la 2, c_1} \brac{ \int \abs{D \bu}^2 + \int V \abs{\bu}^2 },
\end{align*}
then by another application of Lemma~\ref{lC.6}, we see that coercivity holds with $\ga = \min\set{\frac \la 2, \frac \la {2C}, \frac{c_1}{C}}$.

By Lemma~\ref{l4.1}, \eqref{eq2.23} holds with $C = C\pr{n, \ga, \La}$.

When $N = 1$, $\mathcal{L}$ and $\mathcal{B}$ are given by \eqref{eq7.2} and \eqref{eq7.3}, respectively, with $b, d =0$ and $V \in B_p$ for some $p \in \pr{\frac n 2, \iny}$, without loss of generality.
By the non-negativity of $V$, Lemma~\ref{l5.1} implies that (IB) and (BB) hold for this setting with $C = C\pr{n, q,  \ell, \ga, \La}$ in \eqref{eq3.47} and \eqref{eq3.90} whenever \rm{\ref{B1}} holds.
Since $V \in L^p_{loc}$, Lemma~\ref{l6.6} shows that assumption (H) holds under the additional assumption of \rm{\ref{B2}}. 
In turn, {\rm\ref{B1}} and {\rm\ref{B2}} in the setting of $\bF(\Om) =W^{1,2}_V(\Om)^N$ follow directly from the same statements for $\bF(\Om) =W^{1,2}(\Om)^N$, i.e., Case 2, and Remark~\ref{rB.5} since $\Om_R$ and $B_R$ are bounded and the statements {\rm\ref{B1}} and {\rm\ref{B2}} are qualitative (they assure membership in the corresponding function spaces, without particular norm control).

\begin{rem}\label{r7.2} We point out that for the case of equations ($N=1$) with the potentials in $B_p$ class for some $p \in \brp{ \frac n 2, \iny}$, a stronger version of the Harnack inequality than Lemma~\ref{l6.5} is possible, without the dependence of constants on the size of the ball \cite{CFG86}. 
In the present paper, we do not need this stronger estimate, and we aim to keep the discussion uniform across several cases.  
\end{rem}

\subsection{Conclusions} \label{s7.4} 
From the above arguments, we conclude that $\bG\pr{x,y}$ exists and satisfies the estimates of Theorem~\ref{t3.6}, where in the vector case ($N>1$) we must assume that (IB) and (H) hold for solutions.
 The estimates of Theorem~\ref{t3.6} imply immediately that
 \eqs{\bG\pr{\cdot,y}\in Y^{1,2}\pr{\R^n\setminus B_r\pr{y}}^{N\times N} \quad \text{for any $r>0$.}}
 With these estimates, however, it does not follow that $\bG\pr{\cdot, y} \in \bF\pr{\R^n \setminus B_r\pr{y}}$ for the general space $\bF$.
 
 Nevertheless, in many reasonable cases it is true.
 In Case 1, it follows clearly since $\bF\pr{\R^n \setminus B_r\pr{y}} = Y^{1,2}\pr{\R^n \setminus B_r\pr{y}}^N$.
 In Case 2, it is true locally -- i.e., we have that $\bG\pr{\cdot, y}\in W^{1,2}_{\loc}\pr{\R^n\setminus \{y\}}^{N\times N}$ because of the relationship between the spaces (see Lemma~\ref{lA.7}).
 Furthermore, for $|U|<\infty$, the space $Y^{1,2}(U)$ embeds continuously into $W^{1,2}\pr{U}$, so we have
 \eq{\label{eq7.9}\norm{\bG\pr{\cdot, y}}_{W^{1,2}\pr{U\setminus B_r\pr{y}}} \le C_U \norm{\bG\pr{\cdot, y}}_{Y^{1,2}\pr{U\setminus B_r\pr{y}}} \le C_U C r^{1-\frac{n}{2}}, \quad \forall r>0,}
 where $C$ is the constant of Theorem~\ref{t3.6}.
 In Case 3, observe that for $|U|<\infty$,
 \eqs{W^{1,2}_V\pr{U}^{N\times N} \hookrightarrow W^{1,2}\pr{U}^{N\times N} \hookrightarrow Y^{1,2}\pr{U}^{N\times N}.}
 (see Remark~\ref{rC.1}).
 Thus a similar estimate to \eqref{eq7.9} holds in Case 3.
 
By the same reasoning, similar conclusions hold for $\bG\pr{x, \cdot}$, $\bGr\pr{\cdot, y}$, and $\bGr\pr{x, \cdot}$.


\begin{appendix}

\section{Function spaces $Y^{1,2}$ and $W^{1,2}$}
\label{AppA}

Let $\Om$ be an open, connected subset of $\R^n$, $n\ge 3$.  
Let us recall the definitions. 
Define the space $Y^{1,2}\pr{\Om}$ as the family of all weakly differentiable functions $u \in L^{2^*}\pr{\Om}$, with $2^* = \frac{2n}{n-2}$, whose weak derivatives are functions in $L^2\pr{\Om}$, endowed with the norm
\eqs{
\norm{u}_{Y^{1,2}\pr{\Om}} = \norm{u}_{L^{2^*}\pr{\Om}} + \norm{D u}_{L^2\pr{\Om}}.
}
Define $Y_0^{1,2}$ to be the closure of $C_c^\infty\pr{\Om}$ in the $Y^{1,2}\pr{\Om}$-norm.  
Define $W^{1,2}\pr{\Om}$ to be the space of all weakly differentiable functions $u \in L^{2}\pr{\Om}$, whose weak derivatives are functions in $L^2\pr{\Om}$, endowed with the norm
\eqs{
\norm{u}_{W^{1,2}\pr{\Om}} = \norm{u}_{L^{2}\pr{\Om}} + \norm{D u}_{L^2\pr{\Om}}.
}
Let $W_0^{1,2}\pr{\Om}$ be the closure of $C_c^\infty\pr{\Om}$ in the $W^{1,2}$-norm. 

This section will explore various connections between $W$ and $Y$-spaces. 
We remark that for any open connected set $\Omega$ in $\rn$, by completeness of $W^{1,2}\pr{\Om}$ and $Y^{1,2}\pr{\Om}$,
\begin{equation}\label{eqA.1} 
W_0^{1,2}\pr{\Om} \hookrightarrow W^{1,2}\pr{\Om} \mbox{ and } Y_0^{1,2}\pr{\Om} \hookrightarrow Y^{1,2}\pr{\Om}.
\end{equation}

\begin{lem} \label{lA.1}
For any open set $\Omega\subset \rn$
$$ W_0^{1,2}(\Omega)\hookrightarrow Y_0^{1,2}(\Omega). $$
\end{lem}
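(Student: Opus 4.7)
The plan is to use the Sobolev inequality as the bridge between the two norms on the dense subspace $C_c^\infty(\Omega)$, and then extend by density.

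First I would record the key pointwise estimate: for any $\varphi \in C_c^\infty(\Omega)$, the Sobolev inequality gives $\|\varphi\|_{L^{2^*}(\Omega)} \le c_n \|D\varphi\|_{L^2(\Omega)}$, so
\[
\|\varphi\|_{Y^{1,2}(\Omega)} = \|\varphi\|_{L^{2^*}(\Omega)} + \|D\varphi\|_{L^2(\Omega)} \le (c_n + 1)\|D\varphi\|_{L^2(\Omega)} \le (c_n+1)\|\varphi\|_{W^{1,2}(\Omega)}.
\]
Thus the identity map $(C_c^\infty(\Omega), \|\cdot\|_{W^{1,2}}) \to (C_c^\infty(\Omega), \|\cdot\|_{Y^{1,2}})$ is continuous.

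Next I would upgrade this to the closures. Given $u \in W_0^{1,2}(\Omega)$, pick $\{\varphi_k\} \subset C_c^\infty(\Omega)$ with $\varphi_k \to u$ in $W^{1,2}(\Omega)$. Applying the inequality above to differences $\varphi_k - \varphi_j \in C_c^\infty(\Omega)$ shows $\{\varphi_k\}$ is Cauchy in $Y^{1,2}(\Omega)$. Since $Y^{1,2}(\Omega)$ is complete, there exists $v \in Y_0^{1,2}(\Omega)$ with $\varphi_k \to v$ in $Y^{1,2}(\Omega)$, and moreover $\|v\|_{Y^{1,2}(\Omega)} \le (c_n+1)\|u\|_{W^{1,2}(\Omega)}$.

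The final step is to identify $u = v$ as elements of $L^1_{\loc}(\Omega)$. This is the only slightly subtle point, but it is standard: $\varphi_k \to u$ in $L^2(\Omega)$ (from $W^{1,2}$ convergence) and $\varphi_k \to v$ in $L^{2^*}(\Omega)$ (from $Y^{1,2}$ convergence), so passing to a common subsequence gives a.e.\ pointwise convergence to both $u$ and $v$, forcing $u = v$ a.e. Hence $u \in Y_0^{1,2}(\Omega)$ with $\|u\|_{Y^{1,2}(\Omega)} \le (c_n+1)\|u\|_{W^{1,2}(\Omega)}$, which is the desired continuous embedding. There is no serious obstacle here; the argument is essentially Sobolev inequality plus density, and the main thing to be careful about is that the target space $Y_0^{1,2}(\Omega)$ (not just $Y^{1,2}(\Omega)$) receives the limit, which is automatic because $\{\varphi_k\} \subset C_c^\infty(\Omega)$.
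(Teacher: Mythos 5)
Your proof is correct and follows essentially the same route as the paper: apply the Sobolev inequality to differences of an approximating sequence from $C_c^\infty(\Omega)$, conclude the sequence is Cauchy in $Y^{1,2}(\Omega)$, and identify the $Y_0^{1,2}$-limit with $u$ almost everywhere. Your identification step via a common a.e.\ convergent subsequence is just a more explicit version of the paper's brief remark that the limit lies in $L^{2^*}(\Omega)$ and hence coincides with $u$.
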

\begin{pf}
Let $u \in W^{1,2}_0\pr{\Om}$.
Then there exists $u_i \in C^\iny_c\pr{\Om}$ such that $\disp \lim_{i \to \iny} \norm{u_i - u}_{W^{1,2}\pr{\Om}} = 0$.
By the Sobolev inequality applied to $u_i-u_k$ we have 
$$\norm{u_i-u_k}_{L^{2^*}\pr{\Om}}\leq c_n  \norm{Du_i-Du_k}_{L^{2}\pr{\Om}} \leq c_n \norm{u_i-u_k}_{W^{1,2}\pr{\Om}},  $$ 
and therefore, $\{u_i\}_{i=1}^\infty$ is Cauchy in $Y^{1,2}(\Omega)$. Hence, there is a limit in  $Y_0^{1,2}(\Omega)$ and since this limit is, in particular, in $L^{2^*}\pr{\Om}$, it must coincide with $u$ a.e.
\end{pf}

Before stating the next result, we recall a standard smoothing procedure. 
\begin{defn}
For any $U \su \R^n$ open, and any $\eps > 0$, define $U_\eps = \set{ x \in U : \dist\pr{x, \del U} > \eps}$.
\end{defn}

\begin{defn}
Define the function $\phi \in C^\iny_c\pr{\R^n}$ by
$$\phi\pr{x} = \left\{\begin{array}{ll} C\exp\pr{\frac 1 {\abs{x}^2 - 1}} & \text{ if } \abs{x} < 1 \\
0 & \text{otherwise,}  \end{array}\right.$$
where the constant $C > 0$ is chosen so that $\disp \int_{\R^n} \phi\pr{x} dx = 1$. 
We refer to $\phi$ as the standard mollifier.

For every $\eps > 0$, set $$\phi_\eps\pr{x} = \frac{C}{\eps^n} \phi\pr{\frac{x}{\eps}}.$$
We remark that for every $\eps > 0$, $\phi_\eps \in C^\iny_c\pr{\R^n}$, $\supp \phi_\eps \su B_\eps\pr{0}$ and $\disp \int_{\R^n} \phi_\eps\pr{x} dx = 1$.
\end{defn}

\begin{defn}
For any function $f$ that is locally integrable in $U$, we may define
$$f^\eps := \phi_\eps * f \; \text{ in } U_\eps.$$
That is, for every $x \in U_\eps$,
$$f^\eps\pr{x} = \int_{B_\eps\pr{0}} \phi_\eps\pr{y} f\pr{x - y} dy = \int_U \phi_\eps\pr{x-y} f\pr{y} dy.$$
\end{defn}

The proofs of the first four statements below may be found in the appendix of \cite{Eva98}, and the last one is a part of the proof of Theorem 1 in \cite{Eva98}, \S~5.3.1.

\begin{lem}[Properties of mollifiers] \label{lA.5}
Let $U$ be an arbitrary open set in $\rn$ and let $f\in L^1_{loc}(U)$. Then 
\begin{enumerate}
\item $f^\eps \in C^\iny\pr{U_\eps}$.
\item $f^\eps \to f$ a.e. as $\eps \to 0$.
\item If $f \in C\pr{U}$, then $f^\eps \to f$ uniformly on compact subsets of $U$.
\item\label{i4} If $1 \le q < \iny$ and $f \in L^q_{\loc}\pr{U}$, then $f^\eps \to f$ in $L^q_{\loc}\pr{U}$.
\item\label{i5} If, in addition, $f$ is weakly differentiable on $U$ and $Df\in L^1_{loc}(U),$ then 
$$ Df^\eps=\phi_\eps * Df \mbox{ in } U_\eps.$$
\end{enumerate}
\end{lem}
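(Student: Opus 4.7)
The plan is to address the five items in the order listed, since each can build on the earlier ones.

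For (1), I would differentiate under the integral sign. Because $\phi_\eps \in C_c^\infty(\R^n)$, for any $x \in U_\eps$ the $y$-support of $\phi_\eps(x-\cdot)$ is $\overline{B_\eps(x)} \subset U$, and an iterated difference-quotient plus dominated-convergence argument passes any partial derivative onto $\phi_\eps$, giving smoothness. Item (2) follows from the Lebesgue differentiation theorem: the estimate
$$|f^\eps(x) - f(x)| \le \|\phi_\eps\|_{L^\infty}\,|B_\eps|\,\fint_{B_\eps(x)} |f(y) - f(x)|\,dy$$
shows the left-hand side tends to zero at every Lebesgue point, hence a.e. in $U$. For (3), given a compact $K \Subset U$, pick $\eps_0$ so that $K + \overline{B_{\eps_0}(0)} \Subset U$; on this enlarged compact set $f$ is uniformly continuous, and $|f^\eps(x) - f(x)| \le \sup_{|z|\le\eps}|f(x-z)-f(x)|$ is bounded by the modulus of continuity uniformly in $x \in K$.

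Item (4) is mostly bookkeeping. Given compact $V \Subset U$, fix an intermediate compact $W$ with $V \Subset W \Subset U$ and restrict attention to $\eps < \dist(V, U\setminus W)$. For $\delta > 0$, choose $g \in C_c(U)$ with $\supp g \subset W$ and $\|f - g\|_{L^q(W)} < \delta$; then on $V$ we split
$$f^\eps - f = (f^\eps - g^\eps) + (g^\eps - g) + (g - f).$$
Young's inequality controls the first term by $\|\phi_\eps\|_{L^1}\|f - g\|_{L^q(W)} = \|f-g\|_{L^q(W)} < \delta$, and item (3) kills the middle one as $\eps \to 0$.

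The genuinely distinctive content is item (5). For fixed $x \in U_\eps$, the map $y \mapsto \phi_\eps(x-y)$ belongs to $C_c^\infty(U)$ since its support is $\overline{B_\eps(x)} \subset U$, so it is a legitimate test function for the weak derivative of $f$. Justifying differentiation under the integral as in (1) and using $D_{x_i}\phi_\eps(x-y) = -D_{y_i}[\phi_\eps(x-\cdot)](y)$,
$$D_i f^\eps(x) = \int_U D_{x_i}\phi_\eps(x-y)\, f(y)\,dy = -\int_U D_{y_i}[\phi_\eps(x-\cdot)](y)\, f(y)\,dy = \int_U \phi_\eps(x-y)\, D_i f(y)\,dy,$$
where the final equality is exactly the definition of the weak derivative of $f$ tested against $\phi_\eps(x-\cdot)$. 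This equals $(\phi_\eps * D_i f)(x)$, as claimed. The main obstacle is not technical depth but clean bookkeeping of supports: one needs the shifted bump $\phi_\eps(x-\cdot)$ compactly supported inside $U$, which is precisely why the conclusions are stated on $U_\eps$ rather than $U$, and in (4) one needs an intermediate compact $W$ so that $g^\eps$ is defined on $V$ for small $\eps$ and Young's inequality applies on a fixed $L^q$ space containing both $f$ and $g$.
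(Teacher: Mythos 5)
Your proof is correct, and it is essentially the standard argument: the paper itself does not prove the lemma but refers to the appendix of \cite{Eva98} for items (1)--(4) and to the proof of Theorem 1 in \S 5.3.1 of \cite{Eva98} for item (5), and your write-up reproduces exactly those arguments (difference quotients plus dominated convergence, Lebesgue points, uniform continuity, the three-term splitting with Young's inequality, and testing the weak derivative against the shifted bump $\phi_\eps(x-\cdot)\in C_c^\infty(U)$ for $x\in U_\eps$). The only cosmetic point is in item (4), where the intermediate set $W$ should be taken as an open set with compact closure (or $g$ supported in its interior) so that density of continuous compactly supported functions in $L^q(W)$ applies; this does not affect the argument.
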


\begin{lem}\label{lA.6} If $\Om=\R^n$, then we have the following relations:
\eq{\label{eqA.2}
W_0^{1,2}(\R^n)=W^{1,2}(\R^n) \hookrightarrow Y_0^{1,2}(\R^n)=Y^{1,2}(\R^n).
}
where the inclusion is strict.
\end{lem}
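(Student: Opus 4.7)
\begin{pf}[Plan of proof of Lemma~\ref{lA.6}]
The plan is to prove the two equalities by a standard cutoff-and-mollify argument, to deduce the continuous inclusion from the Sobolev inequality, and finally to exhibit an explicit counterexample to show that the inclusion is strict.

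First, for the equality $W^{1,2}(\R^n)=W_0^{1,2}(\R^n)$, given $u\in W^{1,2}(\R^n)$ I would pick a smooth cutoff $\eta_R\in C_c^\infty(\R^n)$ with $\eta_R\equiv 1$ on $B_R$, $\supp\eta_R\subset B_{2R}$, and $|D\eta_R|\le C/R$, and verify that $\eta_R u\to u$ in $W^{1,2}(\R^n)$. The convergence $\eta_R u\to u$ in $L^2$ is immediate from dominated convergence; for the derivatives, $D(\eta_R u)-Du=(\eta_R-1)Du+u\,D\eta_R$, the first term vanishes in $L^2$ by dominated convergence, and for the second, $\|u\,D\eta_R\|_{L^2}^2\le C R^{-2}\int_{R<|x|<2R}|u|^2\,dx\to 0$ trivially as $u\in L^2$. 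Then mollifying the compactly supported $\eta_R u$ via Lemma~\ref{lA.5}(\ref{i4})--(\ref{i5}) produces an approximating sequence in $C_c^\infty(\R^n)$.

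For $Y^{1,2}(\R^n)=Y_0^{1,2}(\R^n)$ I would run the same cutoff argument, but the control of $u\,D\eta_R$ in $L^2$ now requires an interpolation: by H\"older, $\int_{R<|x|<2R}|u|^2\,dx\le\Big(\int_{R<|x|<2R}|u|^{2^*}\Big)^{2/2^*}|B_{2R}\setminus B_R|^{1-2/2^*}$, and a direct computation gives $|B_{2R}\setminus B_R|^{1-2/2^*}\lesssim R^2$, which exactly compensates the $R^{-2}$ from $|D\eta_R|^2$, so $\|u\,D\eta_R\|_{L^2}\to 0$ because $u\in L^{2^*}(\R^n)$ makes the tail integral vanish. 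The convergence $\eta_R u\to u$ in $L^{2^*}$ is again dominated convergence. Then mollification of $\eta_R u$ concludes this density step.

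For the inclusion $W^{1,2}(\R^n)\hookrightarrow Y^{1,2}(\R^n)$, by the equality just established every $u\in W^{1,2}(\R^n)$ is a $W^{1,2}$-limit of $u_k\in C_c^\infty(\R^n)$. The Sobolev inequality \eqref{eq2.2} applied on $\R^n$ gives $\|u_k-u_j\|_{L^{2^*}}\le c_n\|D(u_k-u_j)\|_{L^2}$, so $\{u_k\}$ is also Cauchy in $L^{2^*}(\R^n)$, its limit must agree with $u$ a.e., and passing to the limit yields $\|u\|_{L^{2^*}}\le c_n\|Du\|_{L^2}$, hence $u\in Y^{1,2}(\R^n)$ with a continuous embedding.

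Finally, for strictness, I would exhibit a function in $Y^{1,2}(\R^n)\setminus W^{1,2}(\R^n)$. The natural candidate is $u(x)=(1+|x|^2)^{-\alpha/2}$ with $\alpha$ chosen so that $\frac{n-2}{2}<\alpha\le \frac{n}{2}$ (possible for $n\ge 3$). Then $u\in L^p(\R^n)$ iff $\alpha p>n$, which gives $u\in L^{2^*}$ but $u\notin L^2$; and $|Du|\lesssim(1+|x|)^{-\alpha-1}$ is in $L^2$ exactly when $2(\alpha+1)>n$, which holds. The main (very minor) obstacle, and the only delicate computation, is the tail estimate of the cutoff term in the $Y^{1,2}$ density argument, which as indicated above matches borderline scales precisely; the remaining steps are routine verifications of integrability thresholds and standard applications of mollification.
\end{pf}
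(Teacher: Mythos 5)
Your proposal is correct and follows essentially the same route as the paper: cutoff by $\zeta_R$ followed by mollification to get both density statements, the Sobolev inequality to pass Cauchy sequences from $W^{1,2}$ to $L^{2^*}$ for the embedding, and a radially decaying power function lying in $L^{2^*}\setminus L^2$ with gradient in $L^2$ for strictness (the paper's choice $(1+|x|)^{-(n/m+1/2)}$ with $\tfrac{2n}{n-1}<m<2^*$ is the same family as your $(1+|x|^2)^{-\alpha/2}$ with $\tfrac{n-2}{2}<\alpha\le\tfrac n2$). The exponent checks and the borderline tail estimate for $u\,D\eta_R$ in the $Y^{1,2}$ case are all verified correctly.
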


\begin{pf} 
To show that $W_0^{1,2}(\R^n)=W^{1,2}(\R^n)$, we take any $u\in W^{1,2}(\rn)$, multiply it by a smooth cut-off function $\zeta_R$, for $R>0$, that is supported in $B_{2R}$ and equal to 1 on $B_R$, and convolve the product with a standard mollifier $\phi_\eps$, $\eps>0$. 
One can show that $u_{R,\eps} := \phi_\eps *(u\zeta_R) \in C_c^\infty(\RR^n)$ converges to $u$ in $L^2(\RR^n),$  and that the derivatives converge to $Du$ in $L^2(\RR^n),$ as $\eps\to 0$, $R\to \infty$. Indeed, one can see directly from the properties of the Lebesgue integral that $u_R$ belongs to $W^{1,2}(\rn)$ and converges to $u$ in the $W^{1,2}(\rn)$-norm since $u\in W^{1,2}(\rn)$. 
Now, since each $u_R$ is compactly supported, the fact that $u_{R,\eps}$ converge to $u_R$ as $\eps\to 0$ in $L^2$ is due to \eqref{i4} in Lemma~\ref{lA.5}.
The fact that each $Du_{R,\eps}$ exists and converges to $Du_R$ in $L^2(\rn)$ follows from a combination of \eqref{i5} and \eqref{i4} in Lemma~\ref{lA.5}. 
The same argument shows that $Y_0^{1,2}(\R^n)=Y^{1,2}(\R^n)$.

We only have to show that the inclusion is strict. To this end, consider
\eq{\label{eqA.3}
f(x):=\frac{1}{(1+|x|)^{n/m+1/2}}
}
with $\frac{2n}{n-1}<m<2^*$. 
A direct computation shows that 
\eq{\label{eqA.4} \norm{Df}_{L^{2}(\rn)} <\infty, \quad 
\norm{f}_{L^{2^*}(\rn)} <\infty, \quad \text{and} \quad 
\norm{f}_{L^2(\rn)}=\infty, 
} so that 
\eq{
f\in Y^{1,2}(\rn) \setminus W^{1,2}(\rn).
}
Therefore, $W^{1,2}(\rn) \subsetneq Y^{1,2}(\rn)$. 
\end{pf}

\begin{lem}\label{lA.7}
If $|\Om|<\infty$, then we have the relations
\begin{equation}\label{eqA.6}
W_0^{1,2}\pr{\Om}=Y_0^{1,2}\pr{\Om} \hookrightarrow Y^{1,2}\pr{\Om} \hookrightarrow 
W^{1,2}\pr{\Om},
\end{equation}
where the last inclusion may be an equality for certain domains (see, e.g., the next Lemma), and the norm of the embeddings $Y^{1,2}\pr{\Om} \hookrightarrow W^{1,2}\pr{\Om}$ and $Y^{1,2}_0\pr{\Om} \hookrightarrow W^{1,2}_0\pr{\Om}$ depends on $|\Omega|$.
\end{lem}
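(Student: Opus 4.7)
The proof amounts to three observations, all rooted in the elementary fact that when $|\Om|<\infty$, Hölder's inequality gives the inclusion of Lebesgue spaces $L^{2^*}(\Om)\hookrightarrow L^2(\Om)$.

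First I would establish the embedding $Y^{1,2}(\Om)\hookrightarrow W^{1,2}(\Om)$. For any $u\in Y^{1,2}(\Om)$, by Hölder's inequality with conjugate exponents $\tfrac{2^*}{2}$ and $\tfrac{n}{2}$,
\[
\norm{u}_{L^2(\Om)} \le |\Om|^{\frac{1}{n}} \norm{u}_{L^{2^*}(\Om)},
\]
since $\tfrac{1}{2}-\tfrac{1}{2^*}=\tfrac{1}{n}$. Combining with the identity of the $L^2$-norms of $Du$, this yields
\[
\norm{u}_{W^{1,2}(\Om)} \le \bigl(1+|\Om|^{\frac{1}{n}}\bigr)\, \norm{u}_{Y^{1,2}(\Om)},
\]
so the embedding $Y^{1,2}(\Om)\hookrightarrow W^{1,2}(\Om)$ is continuous with norm depending on $|\Om|$, and the inclusion $Y^{1,2}_0(\Om)\hookrightarrow Y^{1,2}(\Om)$ is trivial from \eqref{eqA.1}.

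Next I would prove $W^{1,2}_0(\Om)=Y^{1,2}_0(\Om)$ by verifying both inclusions. The direction $W^{1,2}_0(\Om)\hookrightarrow Y^{1,2}_0(\Om)$ is exactly Lemma~\ref{lA.1} and requires no hypothesis on $|\Om|$. For the reverse direction, take $u\in Y^{1,2}_0(\Om)$ and fix an approximating sequence $\{u_i\}\subset C_c^\infty(\Om)$ with $u_i\to u$ in $Y^{1,2}(\Om)$. Applying the embedding just established to the differences $u_i-u_j\in Y^{1,2}(\Om)$,
\[
\norm{u_i-u_j}_{W^{1,2}(\Om)} \le \bigl(1+|\Om|^{\frac{1}{n}}\bigr)\norm{u_i-u_j}_{Y^{1,2}(\Om)} \to 0,
\]
so $\{u_i\}$ is Cauchy in $W^{1,2}(\Om)$ and converges to some $\tilde u\in W^{1,2}_0(\Om)$. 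Since $u_i\to u$ in $L^{2^*}(\Om)$ and $u_i\to\tilde u$ in $L^2(\Om)$, passing to a subsequence gives $u_i\to u$ and $u_i\to\tilde u$ a.e., whence $u=\tilde u$ a.e. and $u\in W^{1,2}_0(\Om)$. The norm equivalence on $Y^{1,2}_0(\Om)=W^{1,2}_0(\Om)$ comes directly from the chain $\norm{\cdot}_{W^{1,2}}\le (1+|\Om|^{1/n})\norm{\cdot}_{Y^{1,2}}$ and, in the opposite direction, from the Sobolev inequality \eqref{eq2.2} applied to the smooth approximants, yielding $\norm{\cdot}_{Y^{1,2}}\lesssim \norm{\cdot}_{W^{1,2}}$ on $W^{1,2}_0(\Om)$.

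There is no real obstacle here: the entire argument is a bookkeeping exercise with Hölder's inequality and the uniqueness of limits in Lebesgue spaces. The only subtlety worth flagging is that the norm equivalence on the closed subspaces $W^{1,2}_0(\Om)=Y^{1,2}_0(\Om)$ depends on $|\Om|$ in one direction only (through Hölder), whereas the converse bound is $|\Om|$-independent and follows from the pure Sobolev inequality \eqref{eq2.2}, which does require the trace-zero condition—this is why the equality $W^{1,2}=Y^{1,2}$ need not hold on the full spaces and is claimed only for ``certain domains'' (e.g., those with the extension property, for which the Sobolev inequality extends to all of $W^{1,2}(\Om)$).
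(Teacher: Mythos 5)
Your proof is correct and follows essentially the same route as the paper: the inclusion $W_0^{1,2}(\Om)\hookrightarrow Y_0^{1,2}(\Om)$ from Lemma~\ref{lA.1}, the H\"older bound $\norm{u}_{L^2(\Om)}\le C_\Om\norm{u}_{L^{2^*}(\Om)}$ to get $Y^{1,2}(\Om)\hookrightarrow W^{1,2}(\Om)$, and the Cauchy-sequence argument transferring smooth approximants from $Y^{1,2}_0(\Om)$ to $W^{1,2}_0(\Om)$. The extra details you supply (the explicit exponent $|\Om|^{1/n}$, the a.e.\ identification of limits, and the Sobolev-inequality direction of the norm equivalence) are consistent with, and slightly more explicit than, the paper's sketch.
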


\begin{pf}
One side of the first equality in \eqref{eqA.6} is due to Lemma~\ref{lA.1}. 
On the other hand, for $u\in Y_0^{1,2}\pr{\Om}$ (or more generally, $u\in Y^{1,2}\pr{\Om}$), since $|\Om|<\infty$, we have by H\"older inequality
\eq{\label{eqA.7}
\norm{u}_{L^2\pr{\Om}} \le C_\Omega \norm{u}_{L^{2^*}\pr{\Om}}.
}
Therefore, $Y^{1,2}\pr{\Om} \hookrightarrow W^{1,2}\pr{\Om}$ and we can prove that $Y^{1,2}_0\pr{\Om} \hookrightarrow W^{1,2}_0\pr{\Om}$ roughly the same way as Lemma~\ref{lA.1}, using \eqref{eqA.7} to make sure that the sequence which is Cauchy in $Y^{1,2}_0\pr{\Om}$ is also Cauchy in $W^{1,2}_0\pr{\Om}$. Together with \eqref{eqA.1}, this finishes the proof of the lemma.
\end{pf}

Now, the opposite inclusion, $W^{1,2}\pr{\Om} \hookrightarrow Y^{1,2}\pr{\Om}$, is a question of validity of the Sobolev embedding $W^{1,2}\pr{\Om} \hookrightarrow L^{2^*}(\Om)$.  
It may fail, but it holds, e.g., for  Lipschitz domains. Following  \cite{Ste70}, we adopt the following definitions. 

We say that $\Om$ is a Lipschitz graph domain (or special Lipschitz domain) if there exists a Lipschitz function $\phi:\R^{n-1}\to \RR$ such that
\eqs{
\Om=\{(x',x_n): x_n>\phi(x')\}.
}
We say that $\Om$ is a Lipschitz domain (or a minimally smooth domain, following Stein's terminology) if there exists an $\eps>0$, $N \in \NN$, $M > 0$, and a sequence of open sets $U_1,\dots,U_m, \dots$ along with the corresponding Lipschitz functions $\phi_1,\dots,\phi_m, \dots$ defined on $\R^{n-1}$ and having a Lipschitz constant bounded by $M$, such that 
\begin{enumerate}
\item If $x\in \po$ then $B(x, \eps)\subset U_i$ for some $i$.
\item No point of $\rn$ is contained in more than $N$ of the $U_i$'s.
\item For each $i$ we have, up to rotation, that
\eqs{
U_i \cap \Om = U_i \cap \{(x',x_n): x_n>\phi_i(x')\}.
}
\end{enumerate}
If $\Omega$ satisfies the definition above and is bounded, we refer to it as a bounded Lipschitz domain.

\begin{defn} 
We say that $\Om$ is a Sobolev extension domain if there exists a linear mapping $\mathcal{E}: W^{1,2}\pr{\Om}\to W^{1,2}(\R^n)$ and a constant $C_\mathcal{E}>0$ such that for all $u\in W^{1,2}\pr{\Om}$,
\aln{
\mathcal{E} u|_{\Om} &= u \label{eqA.8}\\
\norm{\mathcal{E} u}_{W^{1,2}(\R^n)} &\le C_\mathcal{E} \norm{u}_{W^{1,2}\pr{\Om}}. \label{eqA.9}
}
\end{defn}

\begin{thm}[\cite{Ste70},VI, \S 3.3] Lipschitz domains are Sobolev extension domains.  
The constant of the corresponding extension operator, $C_\mathcal{E}$, depends on the number of graphs and their Lipschitz constants.
\end{thm}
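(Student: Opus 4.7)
The plan is to follow Stein's original approach, which is classical at this point, so I will describe the structure of the argument rather than the detailed estimates. The proof proceeds in two stages: first construct an extension for a single Lipschitz graph domain, then patch together local extensions using a partition of unity.

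\textbf{Stage 1 (Special Lipschitz domain).} Suppose $\Omega = \{(x',x_n) : x_n > \phi(x')\}$ for a Lipschitz function $\phi:\R^{n-1}\to\R$ with Lipschitz constant $M$. The naive reflection $u(x',2\phi(x')-x_n)$ fails to preserve $W^{1,2}$ regularity because $\phi$ is only Lipschitz. Instead, one builds a \emph{regularized distance} $\Delta \in C^\infty(\R^n\setminus\overline\Omega)$ satisfying $\Delta(x) \approx \dist(x,\partial\Omega)$ and $|\partial^\alpha\Delta(x)| \leq C_\alpha\,\dist(x,\partial\Omega)^{1-|\alpha|}$; this is done using a Whitney decomposition of $\R^n\setminus\overline\Omega$. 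Next, one fixes a function $\psi \in L^1(1,\infty)$ satisfying $\int_1^\infty \psi(\lambda)\,d\lambda = 1$ together with one vanishing moment $\int_1^\infty \lambda\,\psi(\lambda)\,d\lambda = 0$, and defines, for $x\notin\overline\Omega$,
\begin{equation*}
(\mathcal{E}u)(x) = \int_1^\infty u\bigl(x', x_n + 2\lambda\,\Delta(x)\bigr)\,\psi(\lambda)\,d\lambda,
\end{equation*}
with $\mathcal{E}u = u$ on $\Omega$. The point is that for $x$ near $\partial\Omega$ from the outside, the integrand samples $u$ at heights $x_n + 2\lambda\Delta(x)$ which lie strictly above the graph of $\phi$, uniformly in $\lambda\geq 1$ (by the comparability $\Delta(x)\approx \dist(x,\partial\Omega)$ and Lipschitz control on $\phi$), so the average is well-defined.

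\textbf{Stage 2 ($L^2$ and $W^{1,2}$ bounds).} Using the chain rule and the smoothness of $\Delta$, each partial derivative $\partial_i(\mathcal{E}u)(x)$ can be written as an integral against $u$ and against $Du$ composed with the reflection map, with kernels $\psi$ and $\lambda\psi$ (here the first moment condition kills a boundary term that would otherwise blow up). A change of variables $y = (x', x_n + 2\lambda\Delta(x))$, combined with the Jacobian estimates coming from the bounds on $D\Delta$, yields
\begin{equation*}
\|\mathcal{E}u\|_{L^2(\R^n)} + \|D(\mathcal{E}u)\|_{L^2(\R^n)} \leq C(n,M)\,\|u\|_{W^{1,2}(\Omega)}.
\end{equation*}
That the trace of $\mathcal{E}u$ from $\overline{\Omega^c}$ agrees with $u$ on $\partial\Omega$, so that no singular part of $D(\mathcal{E}u)$ is produced across $\partial\Omega$, follows from $\int\psi = 1$ together with an approximation of $u$ by smooth functions (Lemma~\ref{lD.2}).

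\textbf{Stage 3 (Patching).} For a general Lipschitz domain covered by $U_1,U_2,\dots$ as in the definition, choose a $C^\infty$ partition of unity $\{\eta_i\}$ subordinate to $\{U_i\}$, together with one extra cutoff $\eta_0$ supported in a compactly contained interior set so that $\sum_i \eta_i = 1$ on a neighbourhood of $\overline\Omega$. On each $U_i$, after an affine change of coordinates, $U_i\cap\Omega$ is a subset of a special Lipschitz domain, so Stage~1 yields a local extension $\mathcal{E}_i(\eta_i u) \in W^{1,2}(\R^n)$ with norm controlled by $\|\eta_i u\|_{W^{1,2}(\Omega)}\leq C\|u\|_{W^{1,2}(\Omega)}$. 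Setting $\mathcal{E}u := \eta_0 u + \sum_{i\geq 1} \mathcal{E}_i(\eta_i u)$ produces the global extension; the bounded overlap property (no point of $\R^n$ lies in more than $N$ of the $U_i$) keeps the sum of $W^{1,2}$ norms under control, and the constant $C_\mathcal{E}$ depends only on $n$, $N$, the Lipschitz constant $M$ of the graphs, and the partition of unity (which in turn depends only on $\eps$, $N$, $M$).

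\textbf{Main obstacle.} The delicate point is Stage~1: both the construction of the regularized distance $\Delta$ and the careful tracking of cancellations induced by the moment condition on $\psi$ when differentiating $\mathcal{E}u$. Without the moment condition, the contribution from differentiating $\Delta$ produces a term of order $\Delta(x)^{-1}$ times $u$ evaluated on the reflection, which is not in $L^2$ near $\partial\Omega$; the cancellation provided by $\int \lambda\psi\,d\lambda = 0$ is exactly what converts this divergent term into one involving $Du$ and yields the clean $W^{1,2}$ bound.
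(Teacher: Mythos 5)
The paper does not actually prove this statement: it is quoted as a theorem of Stein (\cite{Ste70}, VI, \S 3), and your sketch reproduces precisely Stein's construction (regularized distance via a Whitney decomposition, averaged reflection against a kernel $\psi$, then patching with a partition of unity with bounded overlap), so your route coincides with the cited source and the overall structure is correct. One correction, though, to your ``Main obstacle'' paragraph: for the first-order space $W^{1,2}$, which is all this paper needs, no vanishing-moment condition on $\psi$ is required. The regularized distance has \emph{bounded} first derivatives, $|D\Delta|\le C$, so the chain rule writes each $\partial_i(\mathcal{E}u)$ directly as integrals of $Du$ composed with the reflection map against the kernels $\psi(\lambda)$ and $\lambda\psi(\lambda)$; no term of size $\Delta(x)^{-1}$ times $u$ appears at this order, and Minkowski's integral inequality plus the change of variables give the $W^{1,2}$ bound using only $\int_1^\infty\psi(\lambda)\,d\lambda=1$ and rapid decay of $\psi$. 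The blow-up $|\partial^\alpha\Delta|\lesssim \Delta^{1-|\alpha|}$, and hence the need for vanishing moments of $\psi$, enters only when one extends $W^{k,p}$ for $k\ge 2$, where second and higher derivatives of $\Delta$ are produced. Imposing $\int_1^\infty\lambda\,\psi(\lambda)\,d\lambda=0$ is harmless, so your argument still goes through as written, but the cancellation you invoke is not what makes the first-order estimate work.
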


\begin{lem}\label{lA.10}
If $\Om$ is a Sobolev extension domain, then we have the inclusion (which may be equality)
\eq{
W^{1,2}\pr{\Om} \hookrightarrow Y^{1,2}\pr{\Om},
}
with the constant in the accompanying estimate for norms depending on $C_\mathcal{E}$.
\end{lem}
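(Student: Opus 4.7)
The plan is to pull the desired embedding back from $\mathbb{R}^n$ via the extension operator $\mathcal{E}$. Specifically, given $u \in W^{1,2}(\Omega)$, we first form $\mathcal{E} u \in W^{1,2}(\mathbb{R}^n)$ with the estimate $\norm{\mathcal{E} u}_{W^{1,2}(\mathbb{R}^n)} \le C_\mathcal{E} \norm{u}_{W^{1,2}(\Omega)}$. By Lemma~\ref{lA.6}, we have $W^{1,2}(\mathbb{R}^n) = W^{1,2}_0(\mathbb{R}^n) \hookrightarrow Y^{1,2}_0(\mathbb{R}^n) = Y^{1,2}(\mathbb{R}^n)$. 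Combined with the Sobolev inequality \eqref{eq2.2} applied in $\mathbb{R}^n$, this gives
\[
\norm{\mathcal{E} u}_{L^{2^*}(\mathbb{R}^n)} \le c_n \norm{D(\mathcal{E} u)}_{L^2(\mathbb{R}^n)} \le c_n \norm{\mathcal{E} u}_{W^{1,2}(\mathbb{R}^n)} \le c_n\, C_\mathcal{E} \norm{u}_{W^{1,2}(\Omega)}.
\]

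Since $\mathcal{E} u = u$ on $\Omega$ by \eqref{eqA.8}, restricting back yields $u \in L^{2^*}(\Omega)$ together with the estimate
\[
\norm{u}_{L^{2^*}(\Omega)} \le \norm{\mathcal{E} u}_{L^{2^*}(\mathbb{R}^n)} \le c_n\, C_\mathcal{E} \norm{u}_{W^{1,2}(\Omega)}.
\]
The weak derivatives $Du$ are already in $L^2(\Omega)$ by hypothesis, with $\norm{Du}_{L^2(\Omega)} \le \norm{u}_{W^{1,2}(\Omega)}$. Adding the two estimates gives
\[
\norm{u}_{Y^{1,2}(\Omega)} = \norm{u}_{L^{2^*}(\Omega)} + \norm{Du}_{L^2(\Omega)} \le (c_n\, C_\mathcal{E} + 1)\norm{u}_{W^{1,2}(\Omega)},
\]
which is exactly the claimed continuous embedding, with embedding constant controlled by $C_\mathcal{E}$ (and the dimensional Sobolev constant $c_n$).

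There is no real obstacle here: the argument is a one-line application of the extension property followed by the global Sobolev inequality on $\mathbb{R}^n$. The only subtle point worth noting is that one must invoke Lemma~\ref{lA.6} to know that $\mathcal{E} u$, which a priori lies only in $W^{1,2}(\mathbb{R}^n)$, can actually be controlled in $L^{2^*}(\mathbb{R}^n)$; this is precisely the content of the equality $W^{1,2}(\mathbb{R}^n) = W^{1,2}_0(\mathbb{R}^n)$ combined with the Sobolev embedding, both already established earlier in the appendix.
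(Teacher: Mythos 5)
Your proposal is correct and follows essentially the same route as the paper: extend $u$ via $\mathcal{E}$, use Lemma~\ref{lA.6} to place $\mathcal{E}u$ in $Y^{1,2}(\R^n)$ with the Sobolev bound on $\norm{\mathcal{E}u}_{L^{2^*}(\R^n)}$, and restrict back to $\Om$, which is exactly the chain of inequalities in \eqref{eqA.11}. The only cosmetic difference is that you invoke \eqref{eq2.2} to bound $\norm{\mathcal{E}u}_{L^{2^*}(\R^n)}$ by the gradient alone, while the paper keeps the full $W^{1,2}(\R^n)$-norm; both give the same conclusion with constant depending on $C_\mathcal{E}$ and $n$.
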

\begin{pf} If $\Om$ is a Sobolev extension domain, then it follows from \eqref{eqA.8}, \eqref{eqA.9}, and Lemma~\ref{lA.6} that for all $u\in W^{1,2}\pr{\Om}$ we have ${\mathcal E} u\in W^{1,2}\pr{\rn} \hookrightarrow Y^{1,2}(\rn)$ and 
\eq{\label{eqA.11}
\norm{u}_{L^{2^*}\pr{\Om}} 
\le \norm{\mathcal{E} u}_{L^{2^*}(\R^n)} 
\le C_n (\norm{\mathcal{E} u}_{L^2(\R^n)} + \norm{D \pr{\mathcal{E}u}}_{L^2(\R^n)}) 
\le C_n C_\mathcal{E} (\norm{u}_{L^2\pr{\Om}} + \norm{D u}_{L^2\pr{\Om}}).
}
\end{pf}

\begin{cor}\label{cA.11} 
If $\Om$ is a bounded Lipschitz domain, then we have the following 
relations:
\eq{
W_0^{1,2}\pr{\Om}=Y_0^{1,2}\pr{\Om} \hookrightarrow Y^{1,2}\pr{\Om} =W^{1,2}\pr{\Om}.
}
\end{cor}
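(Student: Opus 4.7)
The plan is to assemble Corollary~\ref{cA.11} directly from the three preceding results in the appendix, since everything we need is already at hand.

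First, I will invoke Lemma~\ref{lA.7}. Since $\Om$ is bounded Lipschitz, in particular $|\Om|<\iny$, so the lemma delivers the chain
\[
W_0^{1,2}\pr{\Om}=Y_0^{1,2}\pr{\Om} \hookrightarrow Y^{1,2}\pr{\Om} \hookrightarrow W^{1,2}\pr{\Om},
\]
together with the fact that the last embedding (which is just the H\"older inequality on a finite-measure set) has norm controlled by $|\Om|$. This already gives the first equality and the middle embedding of the corollary.

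Next, I will upgrade the last embedding to an equality. By Theorem~A.8 (Stein's extension theorem, cited from \cite{Ste70}), a bounded Lipschitz domain $\Om$ is a Sobolev extension domain, so there exists a bounded linear extension $\mathcal{E}:W^{1,2}\pr{\Om}\to W^{1,2}(\R^n)$ satisfying \eqref{eqA.8}--\eqref{eqA.9}. Lemma~\ref{lA.10} then applies and yields the continuous inclusion $W^{1,2}\pr{\Om}\hookrightarrow Y^{1,2}\pr{\Om}$, with embedding constant controlled by $C_\mathcal{E}$ (and $n$). Combining this with the reverse inclusion $Y^{1,2}\pr{\Om}\hookrightarrow W^{1,2}\pr{\Om}$ from Lemma~\ref{lA.7} gives that $W^{1,2}\pr{\Om}$ and $Y^{1,2}\pr{\Om}$ coincide as sets, and the two norms are equivalent by the open mapping theorem (or simply by concatenating the two continuous embeddings in both directions).

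There is no real obstacle here; the result is essentially a bookkeeping exercise, and both required ingredients (Lemma~\ref{lA.7} for the finite-measure side and Lemma~\ref{lA.10} plus Stein's extension theorem for the opposite inclusion) have already been established. The only small point to be careful about is that the norms on $Y^{1,2}\pr{\Om}$ and $W^{1,2}\pr{\Om}$ are not literally equal but only equivalent, with equivalence constants depending on $|\Om|$, $n$, and the Lipschitz character of $\Om$ via $C_\mathcal{E}$; this should be stated explicitly so that the symbol $=$ in the corollary is understood as equality of underlying vector spaces with equivalent norms.
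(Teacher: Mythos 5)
Your argument is correct and is exactly the route the paper intends: the corollary is stated without a separate proof precisely because it follows by combining Lemma~\ref{lA.7} (finite measure gives $W_0^{1,2}=Y_0^{1,2}\hookrightarrow Y^{1,2}\hookrightarrow W^{1,2}$) with Stein's extension theorem and Lemma~\ref{lA.10} (bounded Lipschitz implies the reverse embedding $W^{1,2}\hookrightarrow Y^{1,2}$). Your closing remark that the equality is one of vector spaces with equivalent (not identical) norms, with constants depending on $|\Om|$ and $C_\mathcal{E}$, is a sensible clarification consistent with the paper's usage.
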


\begin{lem}
If $\Om$ is a Lipschitz graph domain, then we have the following 
relations:
\eqs{
W_0^{1,2}\pr{\Om} \hookrightarrow Y_0^{1,2}\pr{\Om} \stackrel{not\, 
comparable}{\longleftrightarrow} W^{1,2}\pr{\Om} \hookrightarrow Y^{1,2}\pr{\Om},
}
where the inclusions cannot be made equalities.
\end{lem}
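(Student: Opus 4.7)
The plan is to derive the two inclusions from earlier results in the appendix and then to establish their strict nature, together with the incomparability of $Y_0^{1,2}(\Omega)$ with $W^{1,2}(\Omega)$, by exhibiting two explicit witnesses.

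For the embeddings: $W_0^{1,2}(\Omega) \hookrightarrow Y_0^{1,2}(\Omega)$ is Lemma~\ref{lA.1}, which applies to arbitrary open $\Omega$. For $W^{1,2}(\Omega) \hookrightarrow Y^{1,2}(\Omega)$, I would note that a Lipschitz graph domain is a special case of a (minimally smooth) Lipschitz domain, via the single chart $U_1 = \R^n$ with defining function $\phi_1 = \phi$. Stein's extension theorem then makes $\Omega$ a Sobolev extension domain, and Lemma~\ref{lA.10} yields the embedding.

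To rule out equalities I would exhibit two witnesses. The first, $w_1 \in Y_0^{1,2}(\Omega) \setminus W^{1,2}(\Omega)$, simultaneously addresses $W_0^{1,2} \subsetneq Y_0^{1,2}$, $W^{1,2} \subsetneq Y^{1,2}$, and one half of the incomparability. Take $f$ from \eqref{eqA.3} with $\frac{2n}{n-1} < m < 2^*$, so that $f \in Y^{1,2}(\R^n) \setminus L^2(\R^n)$ by \eqref{eqA.4}. Since $\phi$ is Lipschitz with some constant $M$, $\Omega$ contains a cone $\{x_n > M|x'| + C\}$ on which the $L^2$-tail of $f$ is still divergent, so $f|_\Omega \notin L^2(\Omega)$. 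To land in $Y_0^{1,2}(\Omega)$, I would multiply by the cutoff $\eta(x', x_n) := \chi(x_n - \phi(x'))$ with $\chi \in C^\infty(\R)$, $\chi \equiv 0$ on $(-\infty, 0]$ and $\chi \equiv 1$ on $[1, \infty)$. Then $\eta$ is Lipschitz with weak gradient supported in the slab $S := \{0 \le x_n - \phi(x') \le 1\}$, and a Fubini-type computation in $x'$ shows $\int_S |f|^2 < \infty$ precisely because $m < 2^*$, so $w_1 := \eta f \in Y^{1,2}(\Omega)$. A standard approximation --- outer truncation at scale $R$ (with the $L^2$ gradient error from $f \nabla \rho_R$ controlled on annuli by H\"older using $w_1 \in L^{2^*}$), followed by a small vertical push $(x',x_n) \mapsto (x', x_n + \delta)$ into $\Omega$ and mollification --- places $w_1$ in $Y_0^{1,2}(\Omega)$, while the cone tail preserves $w_1 \notin L^2(\Omega)$.

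The second witness, $w_2 \in W^{1,2}(\Omega) \setminus Y_0^{1,2}(\Omega)$, closes the non-comparability. Take $\psi \in C_c^\infty(\R^n)$ supported near a boundary point $x_0 \in \partial\Omega$ with $\psi(x_0) \neq 0$, and set $w_2 := \psi|_\Omega \in W^{1,2}(\Omega)$. The main obstacle is verifying that $w_2 \notin Y_0^{1,2}(\Omega)$; equivalently, that every $w \in Y_0^{1,2}(\Omega)$ has vanishing trace on $\partial\Omega$. I would argue this by localization: choose a small ball $U = B_r(x_0)$ for which $U \cap \Omega$ is a bounded Lipschitz domain adapted to the local graph representation of $\partial\Omega$. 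Restricting an approximating sequence $\{u_n\} \subset C_c^\infty(\Omega)$ for $w$ gives $u_n|_{U\cap\Omega} \to w|_{U\cap\Omega}$ in $Y^{1,2}(U\cap\Omega) = W^{1,2}(U\cap\Omega)$ by Corollary~\ref{cA.11}; since each $u_n$ has trace zero on $U \cap \partial\Omega$, continuity of the $W^{1,2}$-trace operator into $L^2(U \cap \partial\Omega)$ forces $w|_{U\cap\partial\Omega} = 0$, contradicting $w_2(x_0) = \psi(x_0) \neq 0$.
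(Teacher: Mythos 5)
Your proposal is correct and follows essentially the same route as the paper: the embeddings come from Lemma~\ref{lA.1} and (via Stein extension) Lemma~\ref{lA.10}, and the strictness plus one half of the incomparability is witnessed by the same function $f$ from \eqref{eqA.3} multiplied by a cutoff vanishing at $\partial\Om$ whose gradient is supported where $f$ is square integrable --- your boundary collar of unit width plays the role of the paper's cone shell $\Gamma\setminus\gamma$, with the same convergence condition $m<2^*$ --- yielding an element of $Y_0^{1,2}\pr{\Om}\setminus W^{1,2}\pr{\Om}$. Your localization-and-trace argument for $W^{1,2}\pr{\Om}\not\subseteq Y_0^{1,2}\pr{\Om}$ is a correct elaboration of the step the paper simply declares obvious.
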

\begin{pf}
The inclusions are given by Lemmas~\ref{lA.1} and \ref{lA.10}.

Without loss of generality, assume $0\in \partial \Om$.  
Let $\Gamma\subset \Om$ be a cone with its vertex at $0$ and its axis in the $x_n$-direction.
Define
\eqs{
\gamma:=\{x\in \Gamma: \dist(x,\partial \Gamma)>1\}.
}
Let $\zeta \in C^\iny_c\pr{\Ga}$ be a smooth cutoff function such that $\zeta \equiv 1$ in $\gamma$, 
$\zeta \equiv 0$ in $\Om \setminus \Gamma$, and $|D \zeta| \le C$.  Note that 
$\zeta \equiv 0$ on $\partial \Om$.
Let $f(x)$ be as in the counterexample given by \eqref{eqA.3} with $\frac{2n}{n-1}<m<2^*$.  
Consider
\eqs{
g(x):=\zeta(x) f(x).
}
Then, a computation similar to that which gives \eqref{eqA.4} also gives
\eqs{
g\in L^{2^*}\pr{\Om} \setminus L^2\pr{\Om}.
}
It remains only to show that $Dg\in L^2\pr{\Gamma \setminus \gamma}$.  
Since the cones $\gamma$ and $\Gamma$ have equal aperture, we have for sufficiently large $s$,
\eqs{|\pr{\Gamma\setminus \gamma} \cap \partial B_s\pr{0}| \le C s^{n-2}.}

Consequently, a direct computation shows
\eqs{\norm{f}_{L^2\pr{\Gamma \setminus \gamma}} <\infty.
}

Notice that for $t>1$, $\pr{\Gamma \setminus \gamma} \cap \{x_n=t\}$ forms a $\pr{n-1}$-dimensional annulus of width 1.  Thus, we have
$$|\pr{\Gamma \setminus \gamma} \cap \partial B_s\pr{0}| \le Cs^{n-2}, \quad \forall 
s> 1,$$
and
$$\norm{f}_{L^2\pr{\Gamma \setminus \gamma}}  \le \int_{B_1} |f|^2 + C \int_{1}^\infty |f\pr{s}|^2 s^{n-2} \, ds \le C + C \int_1^\infty s^{\pr{1-2/m}n-3} \, ds < \infty,$$
where in the last step we have used that $\pr{1-2/m}n<2$.

Therefore,
\eqs{
\int_{\Om} |D g|^2 \le 2 \int_{\Gamma \setminus \gamma} |D \zeta|^2 |f|^2 + 2 
\int_{\Gamma} |\zeta^2| |D f|^2 \le C \left[\norm{f}_{L^2\pr{\Gamma \setminus \gamma}} + 
\int_{\Om} |D f|^2 \right]<\infty
} 
so that $g\in Y^{1,2}(\Omega)$. 
As in the proof of Lemma~\ref{lA.6}, multiplying $g$ by smooth cut-offs $\zeta_R$, we obtain a sequence of $C_c^\infty(\Omega)$ functions that approximate $g$ in the $Y^{1,2}(\Omega)$-norm. 
Thus, $g\in Y_0^{1,2}\pr{\Om}$ or more precisely, 
\eqs{
g\in Y_0^{1,2}\pr{\Om} \setminus W^{1,2}\pr{\Om}.
}
Therefore, $Y_0^{1,2}\pr{\Om} \not\subseteq W^{1,2}\pr{\Om}$. The fact that the opposite inclusion fails is obvious as elements of $W^{1,2}(\Omega)$ do not need to have trace zero on $\po$ (in the sense of approximation by smooth compactly supported functions). 
\end{pf}

\section{The auxiliary function $m\pr{x, V}$}
\label{AppB}

Within this section, we will quote a number of results from \cite{She95}.
Other versions of these lemmas and definitions appeared in \cite{She94} and \cite{She96}, and are related to the ideas of Fefferman and Phong \cite{Fef83}.
We omit the proofs in our exposition.

Recall that $V \in B_p$, $1 < p < \iny$, if there exists a constant $C$ so that for any ball $B \su \R^n$,
\begin{align}
\pr{\fint_B V\pr{x}^p dx}^{1/p} \le C \fint_B V\pr{x} dx.
\label{eqB.1}
\end{align}
If $V \in B_p$, then $V$ is a Muckenhoupt $A_\iny$ weight function \cite{Ste93}.
Therefore, $V\pr{x} dx$ is a doubling measure.
That is, there exists a constant $C_0$ such that
\begin{align*}
\int_{B\pr{x, 2r}} V\pr{y} dy \le C_0 \int_{B\pr{x, r}} V\pr{y} dy.
\end{align*}
This fact is very useful in establishing the following results.
We now define 
\begin{align}
\psi\pr{x, r; V} = \frac{1}{r^{n-2}} \int_{B\pr{x,r}} V\pr{y} dy.
\label{eqB.2}
\end{align}
We will at times use the shorter notation $\psi\pr{x,r}$ when it is understood that this function is associated to $V$.

We assume that $V \in B_p$ for some $p \in\brp{ \frac{n}{2}, \iny}$.
In fact, it follows from the self-improvement result for reverse H\"older classes that $V \in B_p$ for some $p \in\pr{\frac{n}{2}, \iny}$ \cite{Geh73}.
Therefore, we will assume throughout that the inequality is strict.

\begin{lem}[Lemma 1.2, \cite{She95}]
If $V \in B_p$, then there exists a constant $C > 0$ so that for any $0 < r < R  < \iny$,
\begin{align*}
\psi\pr{x, r; V} \le C \pr{\frac{r}{R}}^{2 - \frac{n}{p}} \psi\pr{x, R; V}.
\end{align*}
\label{lB.1}
\end{lem}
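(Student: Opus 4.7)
The plan is to combine H\"older's inequality applied on the smaller ball with the reverse H\"older inequality applied on the larger ball, and then simply keep track of the powers of $r$ and $R$ that come out. Concretely, first I would apply H\"older's inequality on $B(x,r)$ to write
\begin{equation*}
\int_{B(x,r)} V \le |B(x,r)|^{1-\frac{1}{p}} \pr{\int_{B(x,r)} V^p}^{\frac{1}{p}},
\end{equation*}
and then enlarge the domain of integration in the $L^p$-term from $B(x,r)$ to $B(x,R)$, which is permitted since $r<R$ and $V \ge 0$.

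Next, I would apply the reverse H\"older inequality \eqref{eqB.1} on $B(x,R)$ to control
\begin{equation*}
\pr{\int_{B(x,R)} V^p}^{\frac{1}{p}} = |B(x,R)|^{\frac{1}{p}} \pr{\fint_{B(x,R)} V^p}^{\frac{1}{p}} \le C \, |B(x,R)|^{\frac{1}{p}-1} \int_{B(x,R)} V.
\end{equation*}
Chaining the two displays gives
\begin{equation*}
\int_{B(x,r)} V \le C \pr{\frac{|B(x,r)|}{|B(x,R)|}}^{1-\frac{1}{p}} \int_{B(x,R)} V = C \pr{\frac{r}{R}}^{n - \frac{n}{p}} \int_{B(x,R)} V.
\end{equation*}

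Finally, dividing both sides by $r^{n-2}$ and pulling the factor $(R/r)^{n-2}$ into the existing power of $r/R$ produces
\begin{equation*}
\psi(x,r;V) = \frac{1}{r^{n-2}}\int_{B(x,r)} V \le C \pr{\frac{r}{R}}^{n-\frac{n}{p}} \cdot \pr{\frac{R}{r}}^{n-2} \cdot \frac{1}{R^{n-2}} \int_{B(x,R)} V = C \pr{\frac{r}{R}}^{2-\frac{n}{p}} \psi(x,R;V),
\end{equation*}
which is the desired inequality.

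There is no real obstacle here once the H\"older/reverse-H\"older pairing is set up; the exponent $2-n/p$ appears automatically from the normalization $r^{n-2}$ in the definition of $\psi$. The only point worth flagging is that the estimate is only a genuine decay statement (rather than a weak monotonicity) when $p>n/2$, and I would note at the start that the self-improvement property of $B_p$ classes invoked just before the lemma lets us assume $p>n/2$ strictly, so $2-n/p>0$.
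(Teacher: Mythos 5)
Your argument is correct, and it follows exactly the route the paper indicates for this quoted result (Lemma 1.2 of \cite{She95}): a H\"older inequality on the small ball combined with the reverse H\"older inequality \eqref{eqB.1} on the large ball, with the exponent $2-\frac{n}{p}$ falling out of the normalization $r^{n-2}$ in $\psi$. Nothing further is needed.
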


The proof of Lemma~\ref{lB.1} uses the reverse H\"older inequality \eqref{eqB.1} as well the H\"older inequality.

As $V \ge 0$, then for every $x \in \R^n$, either there exists $r > 0$ so that $\psi\pr{x, r; V} > 0$ or $V \equiv 0$ a.e. in $\R^n$.
For now, we assume that $V \not\equiv 0$.
Since $p > \frac{n}{2}$, the power $2 - \frac n p > 0$ and 
\begin{align}
& \lim_{r \to 0^+} \psi\pr{x, r; V} = 0, \label{eqB.3} \\
& \lim_{r \to \iny} \psi\pr{x, r; V} = \iny \label{eqB.4}.
\end{align}
This leads to the following definition.

\begin{defn}
For $x \in \R^n$, the function $m\pr{x, V}$ is defined by
\begin{align}
\frac{1}{m\pr{x, V}} = \sup_{r > 0} \set{ r : \psi\pr{x,r; V} \le 1}.
\label{eqB.5}
\end{align}
\end{defn}

It follows from \eqref{eqB.3} and \eqref{eqB.4} that $0 < m\pr{x,V} < \iny$ and for every $x \in \R^n$
\begin{equation}
\psi\pr{x, \frac{1}{m\pr{x, V}}; V} = 1.
\label{eqB.6}
\end{equation}
Furthermore, from Lemma~\ref{lB.1}, if $\psi\pr{x, r; V} \sim 1$, then $r \sim \frac{1}{m\pr{x, V}}$.
If $\disp r = \frac{1}{m\pr{x, V}}$ then $\disp \fint_{B\pr{x,r}} V\pr{y} dy = \frac{1}{\om_n r^2}$, where $\om_n$ is the measure of the unit ball in $\R^n$.

\begin{lem}[Lemma 1.4, \cite{She95}]\label{lB.3}
There exist constants $C, c, k_0 > 0$ so that for any $x, y \in \R^n$,
\begin{enumerate}
\item[(a)] $\disp m\pr{x, V} \sim m\pr{y, V}$ if $\disp \abs{x - y} \le \frac{C}{m\pr{x, V}}$,
\item[(b)] $\disp m\pr{y, V} \le C \brac{1 + \abs{x - y}m\pr{x, V}}^{k_0} m\pr{x, V}$,
\item[(c)] $\disp m\pr{y, V} \ge \frac{c \, m\pr{x, V}}{\brac{1 + \abs{x -y}m\pr{x, V}}^{k_0/\pr{k_0+1}}}.$
\end{enumerate}
\end{lem}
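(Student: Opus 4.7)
The plan is to build all three estimates on the interplay between two facts about $\psi(x,r;V)$: the polynomial upper control on balls provided by iterating the doubling property of the measure $V(x)\,dx$ (available since $V\in B_p\subset A_\infty$), and the polynomial scale comparison at a fixed point provided by Lemma~\ref{lB.1}.

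For part (a), set $r_0 = 1/m(x,V)$, so $\psi(x,r_0;V)=1$. When $|x-y| \le C r_0$ with $C$ of order one, the balls $B(x,r_0)$ and $B(y,r_0)$ sit inside each other's $O(1)$-dilates, so doubling gives $\int_{B(y,r_0)} V \sim \int_{B(x,r_0)} V = r_0^{n-2}$, hence $\psi(y,r_0;V) \sim 1$. Applying Lemma~\ref{lB.1} at the point $y$ in both directions between $r_0$ and $1/m(y,V)$, the only scale where $\psi(y,\cdot;V)\sim 1$ must be comparable to $r_0$, yielding $m(y,V)\sim m(x,V)$.

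For part (b), set $R = |x-y|$. Thanks to (a), I may assume $R\ge r_0$. First I iterate the doubling property of $V\,dx$ from scale $r_0$ up to scale $R$ to obtain $\psi(x,R;V) \lesssim (R/r_0)^{\beta}$ with an exponent $\beta>0$ depending only on the doubling constant and the dimension. Since $B(y,R)\subset B(x,2R)$, one further application of doubling transfers this to $\psi(y,R;V) \lesssim (R/r_0)^\beta$. Now Lemma~\ref{lB.1} at $y$ gives $\psi(y,s;V) \le C(s/R)^{2-n/p}\,\psi(y,R;V)$ for $s\le R$; choosing $s$ so that the right-hand side is at most $1$ produces $s \gtrsim r_0 (R/r_0)^{1-\beta/(2-n/p)}$, and because $\psi(y,s;V)\le 1$ forces $s \le 1/m(y,V)$, this yields $m(y,V) \lesssim (1+Rm(x,V))^{k_0} m(x,V)$ with $k_0 = \max\{\beta/(2-n/p)-1,\,0\}$.

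For part (c), I apply (b) with $x$ and $y$ interchanged, obtaining $m(x,V) \le C(1+|x-y|m(y,V))^{k_0} m(y,V)$, and then invert this inequality for $m(y,V)$. Writing $a=m(x,V)$, $b=m(y,V)$, $R=|x-y|$, the case $Rb \le 1$ yields $b \gtrsim a$ and forces $Ra=O(1)$ (since $a\lesssim b$), so the desired bound holds up to a uniform constant; the case $Rb>1$ reduces the inequality to $a \lesssim (Rb)^{k_0} b$, which when solved for $b$ gives $b \gtrsim a^{1/(k_0+1)} R^{-k_0/(k_0+1)}$, and this is bounded below by $c\,a/(1+Ra)^{k_0/(k_0+1)}$ using $(1+Ra)^{k_0/(k_0+1)} \ge (Ra)^{k_0/(k_0+1)}$. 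The main obstacle is verifying that iterating doubling really produces polynomial growth $\psi(x,R;V)\lesssim (R/r_0)^\beta$ (rather than exponential in the number of dyadic levels) and pinning down the correct exponent so that the resulting $k_0$ is a finite constant; once this is in hand, the rest of the argument is careful bookkeeping.
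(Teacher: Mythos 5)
Your argument is correct, and there is nothing in the paper to compare it against in detail: Lemma~\ref{lB.3} is quoted from \cite{She95} and the paper explicitly omits the proofs in this appendix. Your route is in substance Shen's original one — (a) from doubling-comparability of the averages at the scale $r_0=1/m(x,V)$ together with Lemma~\ref{lB.1} (used case-by-case according to whether $1/m(y,V)$ is smaller or larger than $r_0$, since Lemma~\ref{lB.1} only compares a smaller scale to a larger one), (b) from a polynomial doubling bound on $\psi$ at scale $R=|x-y|$ followed by Lemma~\ref{lB.1} to descend to a scale $s$ with $\psi(y,s;V)\le 1$, hence $s\le 1/m(y,V)$, and (c) by applying (b) with $x$ and $y$ interchanged and solving for $m(y,V)$; your case analysis for (c) is fine (in the case $Rb\le 1$ the bound $b\gtrsim a$ already suffices, since the denominator is $\ge 1$). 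The one step you flagged as a potential obstacle is not one: iterating the doubling inequality over about $\log_2(R/r_0)$ dyadic steps gives $\int_{B(x,R)}V\le C_0^{\,\log_2(R/r_0)+1}\int_{B(x,r_0)}V=C_0\,(R/r_0)^{\log_2 C_0}\int_{B(x,r_0)}V$, hence $\psi(x,R;V)\le C\,(R/r_0)^{\beta}$ with $\beta=\log_2 C_0-(n-2)$ (replace $\beta$ by $\max\{\beta,0\}$ if needed), which is polynomial rather than exponential in $R/r_0$; consequently $k_0=\max\{\beta/(2-n/p)-1,\,0\}$ is a finite constant depending only on $n$, $p$, and the $B_p$ (doubling) constant of $V$, as required.
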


\begin{cor}[Corollary 1.5, \cite{She95}]
There exist constants $C, c, k_0 > 0$ so that for any $x, y \in \R^n$,
\begin{align*}
c \brac{1 + \abs{x-y} m\pr{y, V}}^{1/\pr{k_0 +1}} 
&\le 1 + \abs{x - y} m\pr{x, V} 
\le C \brac{1 + \abs{x - y} m\pr{y, V} }^{k_0 + 1}.
\end{align*}
\end{cor}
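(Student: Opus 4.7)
The plan is to derive this corollary directly from parts (b) and (c) of Lemma~\ref{lB.3}, applied symmetrically in $x$ and $y$. Since Lemma~\ref{lB.3} already compares $m(x,V)$ and $m(y,V)$ with power-type error factors in $1+|x-y|m(x,V)$, the corollary really just amounts to bookkeeping: it rewrites those two comparisons as a two-sided inequality where both sides are expressed in terms of the single quantity $1+|x-y|m(y,V)$.

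For the upper bound, I would start from the symmetric version of Lemma~\ref{lB.3}(b) (with the roles of $x$ and $y$ swapped), namely
\[
m(x,V)\le C\bigl[1+|x-y|m(y,V)\bigr]^{k_0} m(y,V),
\]
multiply through by $|x-y|$ and add $1$ to both sides. Using the trivial estimate $1\le\bigl[1+|x-y|m(y,V)\bigr]^{k_0}$ and absorbing the factor $|x-y|m(y,V)$ into the bracketed term, one obtains
\[
1+|x-y|m(x,V)\le C\bigl[1+|x-y|m(y,V)\bigr]^{k_0+1},
\]
which is the claimed right-hand inequality.

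For the lower bound, I would begin from the symmetric version of Lemma~\ref{lB.3}(c),
\[
m(x,V)\ge \frac{c\,m(y,V)}{\bigl[1+|x-y|m(y,V)\bigr]^{k_0/(k_0+1)}}.
\]
Setting $t:=|x-y|m(y,V)$, it suffices to show that
\[
1+\frac{c\,t}{(1+t)^{k_0/(k_0+1)}}\gtrsim (1+t)^{1/(k_0+1)}.
\]
I would split into the two cases $t\le 1$ and $t>1$. When $t\le 1$, the right-hand side is bounded above by $2^{1/(k_0+1)}$ and the inequality follows from the trivial lower bound $1$. When $t>1$, the estimates $1+t\le 2t$ on both the numerator/denominator give
\[
\frac{c\,t}{(1+t)^{k_0/(k_0+1)}}\gtrsim t^{1/(k_0+1)}\gtrsim (1+t)^{1/(k_0+1)},
\]
completing the argument.

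There is no real obstacle here; the only mild subtlety is tracking that the lower-bound manipulation really does require a case split (rather than a single chain of inequalities), because the $1$ on the left-hand side of $1+|x-y|m(x,V)$ is what saves us for small $t$ while the $m(x,V)$ term is what works for large $t$. Once that is observed, the entire proof is a few lines of algebra on top of Lemma~\ref{lB.3}.
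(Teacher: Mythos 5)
Your proof is correct. Note that the paper itself does not prove this statement: it is quoted from Shen (Corollary 1.5 of \cite{She95}) with the proofs explicitly omitted, so there is no in-paper argument to compare against; your derivation from Lemma~\ref{lB.3} is exactly the kind of bookkeeping the citation suppresses. Both halves of your argument check out: the upper bound from the $x\leftrightarrow y$ version of Lemma~\ref{lB.3}(b) (the constant becomes $1+C$ after adding the $1$, which is harmless), and the lower bound via part (c) with the case split in $t=|x-y|\,m(y,V)$. One simplification worth noting: the lower bound is a formal consequence of the upper bound by symmetry. Swapping $x$ and $y$ in the inequality $1+|x-y|\,m(x,V)\le C\,[1+|x-y|\,m(y,V)]^{k_0+1}$ gives $1+|x-y|\,m(y,V)\le C\,[1+|x-y|\,m(x,V)]^{k_0+1}$, and raising this to the power $1/(k_0+1)$ yields the left-hand inequality with $c=C^{-1/(k_0+1)}$; so Lemma~\ref{lB.3}(c) and the case analysis, while correct, are not needed.
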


\begin{rem}\label{rB.5} 
Another important consequence of Lemma~\ref{lB.3} is that $m(x, V)$ is locally bounded from above and below. 
More specifically, for any bounded open set $U \su \rn$, there exists a constant $C=C_U>0$, depending on $U$ and on the constants in  Lemma~\ref{lB.3}, such that 
$$ \frac 1C \leq m(x, V) \leq C, \quad\mbox{ for any } x\in U.$$
Indeed, the collection $\set{B_{1/m\pr{x,V}}\pr{x}}_{x \in U}$ is an open covering of $\overline U$.
Since $\overline{U}$ is compact, then there exists a finite collection of points, $x_1, \ldots, x_M$, such that $\disp \overline{U} \su \bigcup_{i = 1}^M B_{1/m\pr{x_i,V}}\pr{x_i}$.
It follows from Lemma~\ref{lB.3} that there exists $C > 0$, depending on $V$, $n$, so that for any $x \in \overline{U}$, $C^{-1} \min\set{ m\pr{x_i, V}}_{i=1}^M \le m\pr{x, V} \le C \max\set{m\pr{x_i, V}}_{i = 1}^M$.
In other words, $m\pr{x, V}$ is bounded above and below on $\overline{U}$, and consequently on $U$.
\end{rem}

\begin{lem}[Lemma 1.8, \cite{She95}]
There exist constants $C, k_0 > 0$ so that if $\disp R \ge \frac{1}{m\pr{x, V}}$
\begin{align*}
\frac{1}{R^{n-2}}\int_{B\pr{x, R}} V\pr{y} dy \le C \brac{R m\pr{x, V}}^{k_0}.
\end{align*}
\end{lem}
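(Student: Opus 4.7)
The plan is to iterate the doubling property of the measure $V(y)\,dy$, which the excerpt notes follows from $V \in B_p \subset A_\infty$. Fix $x \in \R^n$ and set $r_0 := 1/m(x, V)$; by \eqref{eqB.6}, $\psi(x, r_0; V) = 1$, i.e., $\int_{B(x, r_0)} V(y)\,dy = r_0^{n-2}$. Let $C_0$ denote the doubling constant for $V\,dy$, so that $\int_{B(x, 2\rho)} V\,dy \leq C_0 \int_{B(x, \rho)} V\,dy$ for every ball centered at $x$.

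Given $R \geq r_0$, I would pick $j \in \mathbb{Z}_{\geq 0}$ with $2^j r_0 \leq R < 2^{j+1} r_0$, and use the inclusion $B(x, R) \subset B(x, 2^{j+1} r_0)$ together with $j+1$ iterations of doubling to get
\[
\int_{B(x, R)} V\,dy \;\leq\; C_0^{j+1} \int_{B(x, r_0)} V\,dy \;=\; C_0^{j+1}\, r_0^{n-2}.
\]
Since $2^j \leq R/r_0$, we have $C_0^j \leq (R/r_0)^{\log_2 C_0}$, so
\[
\frac{1}{R^{n-2}}\int_{B(x, R)} V\,dy \;\leq\; C_0 \left(\frac{r_0}{R}\right)^{\!n-2} \left(\frac{R}{r_0}\right)^{\!\log_2 C_0} \;=\; C_0 \,(R\,m(x, V))^{\log_2 C_0 - (n-2)}.
\]
The result then follows by taking $k_0 := \max\{\log_2 C_0 - (n-2),\, 1\}$ and $C := C_0$; since $R\,m(x,V) \geq 1$ by hypothesis, the right-hand side is monotone increasing in the exponent and any slack in the estimate can be absorbed.

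I do not expect any serious obstacle. The two key ingredients, namely the identity $\psi(x, 1/m(x, V); V) = 1$ and the doubling property of $V\,dy$, are both explicitly recorded in the excerpt, and the rest is elementary bookkeeping with geometric series of radii. An attempt to invoke Lemma B.1 instead would go in the wrong direction (it controls $\psi$ at a smaller radius by $\psi$ at a larger one), so iterated doubling is the natural tool here.
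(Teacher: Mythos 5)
Your proof is correct. The paper itself offers no argument for this statement---it is quoted from \cite{She95} with the proofs explicitly omitted---so the only question is whether your derivation is sound, and it is: the normalization $\psi(x,1/m(x,V);V)=1$ from \eqref{eqB.6} plus iterated doubling of the measure $V\,dy$ (with doubling constant $C_0\ge 1$, so $C_0^j\le (R/r_0)^{\log_2 C_0}$) gives exactly the stated bound, and the final absorption into the exponent $k_0=\max\{\log_2 C_0-(n-2),1\}$ is legitimate because $Rm(x,V)\ge 1$. This is in fact the standard way the estimate is obtained in Shen's work, and your observation that Lemma~\ref{lB.1} runs in the wrong direction (it bounds $\psi$ at small radii by $\psi$ at large radii, not conversely) is also accurate.
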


The last lemma that we will quote from \cite{She95} is the Fefferman-Phong inequality.

\begin{lem}[Lemma 1.9 \cite{She95}, see also \cite{Fef83}]
If $u \in C^1_c\pr{\R^n}$, then 
\begin{align*}
\int_{\R^n} \abs{u\pr{x}}^2 m\pr{x, V}^2 dx \le C \brac{ \int_{\R^n} \abs{D u \pr{x}}^2 dx + \int_{\R^n} \abs{u\pr{x}}^2 V\pr{x} dx }.
\end{align*}
\label{lB.7}
\end{lem}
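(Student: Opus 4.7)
The plan is to establish a local version of the inequality on each ball of a Whitney-type covering adapted to $m(\cdot,V)$, and then sum using bounded overlap. First I would select a countable covering $\{B_j = B(x_j, r_j)\}$ of $\R^n$ with $r_j = 1/m(x_j,V)$ having bounded overlap (even after dilation by a fixed factor); this is standard once one knows from Lemma~\ref{lB.3}(a) that $m$ varies slowly on each such ball and one applies a Vitali-type argument. On each $B_j$, Lemma~\ref{lB.3}(a) gives $m(x,V) \sim m(x_j,V) = 1/r_j$ for $x \in B_j$, so it suffices to prove the local estimate
$$\frac{1}{r_j^{\,2}} \int_{B_j} \abs{u}^2 \,dx \;\lesssim\; \int_{2B_j} \abs{Du}^2 \,dx + \int_{2B_j} \abs{u}^2 V \,dx,$$
uniformly in $j$, and then sum.

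The heart of the argument is this local estimate. Write $u = (u - u_{B_j}) + u_{B_j}$ where $u_{B_j} = \fint_{B_j} u$. For the oscillation piece, the classical Poincar\'e inequality gives $r_j^{-2} \int_{B_j} \abs{u - u_{B_j}}^2 \lesssim \int_{B_j} \abs{Du}^2$. For the constant piece, the defining identity \eqref{eqB.6} of $m(x_j,V)$ reads $\int_{B_j} V = r_j^{\,n-2}$, and hence $\abs{B_j}/r_j^{\,2} \sim \int_{B_j} V$, so
$$\frac{\abs{u_{B_j}}^2 \abs{B_j}}{r_j^{\,2}} \;\sim\; \abs{u_{B_j}}^2 \int_{B_j} V \;\le\; 2 \int_{B_j} \abs{u}^2 V + 2 \int_{B_j} \abs{u - u_{B_j}}^2 V.$$
The first term is already of the right form; the second is what I will have to absorb into $\int \abs{Du}^2$.

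To handle $\int_{B_j} \abs{u - u_{B_j}}^2 V$, I apply H\"older with exponents $p$ and $p'$, then the reverse H\"older hypothesis \eqref{eqB.1}:
$$\int_{B_j} \abs{u - u_{B_j}}^2 V \;\le\; \left(\int_{B_j} V^p\right)^{1/p}\!\left(\int_{B_j} \abs{u - u_{B_j}}^{2p'}\right)^{1/p'} \;\lesssim\; \abs{B_j}^{1/p - 1}\!\int_{B_j} V \cdot \left(\int_{B_j} \abs{u - u_{B_j}}^{2p'}\right)^{1/p'}\!.$$
Since $p > n/2$ (using Gehring's self-improvement \cite{Geh73}), one has $2p' < 2^* = 2n/(n-2)$, so the Sobolev--Poincar\'e inequality on $B_j$ applies and gives $\bigl(\fint_{B_j} \abs{u - u_{B_j}}^{2p'}\bigr)^{1/(2p')} \lesssim r_j \bigl(\fint_{B_j} \abs{Du}^2\bigr)^{1/2}$. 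Combined with $\int_{B_j} V = r_j^{\,n-2}$ and $\abs{B_j} \sim r_j^{\,n}$, the resulting factors of $r_j$ collapse to yield $\int_{B_j}\abs{u - u_{B_j}}^2 V \lesssim r_j^{\,2} \int_{B_j} \abs{Du}^2 \cdot r_j^{-2} = \int_{B_j}\abs{Du}^2$, which after dividing by nothing (it already has the right size) closes the local estimate.

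The main technical obstacle is bookkeeping the exponents in the Sobolev--Poincar\'e application; this is where strict inequality $p > n/2$ is essential, and one must verify that the scaling in $r_j$ matches exactly so that the constants in the local estimates are uniform in $j$. Once the local estimate is established with a constant depending only on $n$ and the $B_p$-constant of $V$, summation over $j$ using the bounded overlap of $\{2B_j\}$ produces the claimed global inequality, with $C = C(n, p, \|V\|_{B_p})$.
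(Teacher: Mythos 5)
Your proposal is correct. Note, however, that the paper does not prove this statement at all: it is quoted verbatim from Shen (Lemma 1.9 of \cite{She95}, going back to \cite{Fef83}), and Appendix \ref{AppB} explicitly omits all proofs. So there is no in-paper argument to compare against; what you have written is essentially the standard Fefferman--Phong/Shen argument, and your exponent bookkeeping checks out: with $r_j = 1/m(x_j,V)$ the identity \eqref{eqB.6} gives $\int_{B_j} V = r_j^{\,n-2}$, the splitting $u = (u-u_{B_j}) + u_{B_j}$ reduces matters to $\int_{B_j}|u-u_{B_j}|^2 V$, and H\"older with exponents $p,p'$, the reverse H\"older inequality \eqref{eqB.1}, and Sobolev--Poincar\'e with $2p' \le 2^*$ (valid since $p > n/2$, hence $p' < \tfrac{n}{n-2}$) yield $\int_{B_j}|u-u_{B_j}|^2 V \lesssim |B_j|^{\frac1p-1}\,r_j^{\,n-2}\cdot |B_j|^{\frac1{p'}-1} r_j^{\,2}\int_{B_j}|Du|^2 \lesssim \int_{B_j}|Du|^2$, closing the local estimate with constants depending only on $n$ and the $B_p$ constant. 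Two minor remarks: (i) the comparability $m(x,V)\sim 1/r_j$ on $B_j$ from Lemma~\ref{lB.3}(a) holds only for $|x-x_j|\le C/m(x_j,V)$, so if that $C<1$ you should run the covering with radii $c\,r_j$ for a small fixed $c$; this is harmless because $\psi(x_j, c r_j; V)\sim 1$ by Lemma~\ref{lB.1} and doubling, so the identity $\int V \sim r^{\,n-2}$ survives up to constants. (ii) Your local estimate actually closes on $B_j$ itself, so the dilated balls $2B_j$ are not needed except for whatever covering lemma you invoke; the existence of a bounded-overlap covering by critical-radius balls is standard given the slow variation of $m(\cdot,V)$ (Lemma~\ref{lB.3}), as you say. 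With these small adjustments the summation over $j$ gives the stated inequality with $C = C(n,p,\|V\|_{B_p})$.
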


If $V \equiv 0$, then $m\pr{x, V} \equiv 0$ and the previous four results are automatically satisfied.

\section{The weighted Sobolev space $W^{1,2}_{V}$}
\label{AppC}

Recall that we define $W_V^{1,2}\pr{\Om}$ as the family of all weakly differentiable functions $u \in L^{2}\pr{\Om, m\pr{x, V}^2 dx}$ whose weak derivatives are functions in $L^2\pr{\Om, dx}$.
The norm and inner product on $W_V^{1,2}\pr{\Om}$ are given by
\begin{align*}
\norm{u}^2_{W_V^{1,2}\pr{\Om}} &:= \norm{u \, m\pr{\cdot, V}}^2_{L^{2}\pr{\Om}} + \norm{D u}^2_{L^2\pr{\Om}} \\
\innp{u, v}_{W_V^{1,2}\pr{\Om}} &:= \innp{u m\pr{\cdot, V}, v m\pr{\cdot, V}}_{L^{2}\pr{\Om}} + \innp{D u, Dv}_{L^2\pr{\Om}} .
\end{align*}
$W^{1,2}_{0, V}\pr{\Om}$ is defined as the closure of $C^\iny_c\pr{\Om}$ in $W_V^{1,2}\pr{\Om}$.
Recall also the analogously defined spaces $\hat W_V^{1,2}\pr{\Om}$ and $\hat W_{0,V}^{1,2}\pr{\Om}$ with $V(x)$ in place of $m(x, V)^2$ in the norms, see Remark~\ref{r7.1}. 
Here we prove the claim stated in Remark~\ref{r7.1} to the effect that $\hat W_{0,V}^{1,2}\pr{\Om} = W_{0,V}^{1,2}\pr{\Om}$ for any open set $\Omega\subset \rn$.  

\begin{rem}\label{rC.1} 
Assume that $V \in B_p$ for some $p \in \pr{\frac{n}{2}, \iny}$.
First of all, we observe that by Remark~\ref{rB.5}, for any bounded open set $U\subset \rn$, the spaces $W^{1,2}_V(U)$ and $W^{1,2}(U)$ coincide, albeit the norms are only comparable modulo multiplicative constants that depend on $U$.
\end{rem}

First we prove that the weighted Sobolev spaces are indeed Hilbert spaces as defined.

\begin{lem}\label{lC.2} 
Let $\Om\subset \R^n$ be open and let $\eta\in L^1_{\loc}\pr{\Om}$ be real-valued with $\eta>0$ a.e.
The inner product
\eqs{\langle u, v \rangle_{L^2\pr{\Om, \eta\pr{x} dx}} = \int_{\Om} u\pr{x} \overline{v\pr{x}} \eta\pr{x} \, dx,}
makes $L^2\pr{\Om, \eta\pr{x} dx}$ a Hilbert space.
\end{lem}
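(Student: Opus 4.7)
The plan is to reduce this to the classical Riesz--Fischer theorem for general measure spaces. First I would note that since $\eta\in L^1_{\loc}(\Omega)$ and $\eta>0$ a.e., the set function $\mu(E):=\int_E \eta(x)\,dx$ defines a locally finite Borel measure on $\Omega$ which is mutually absolutely continuous with Lebesgue measure (they share the same null sets because $\eta>0$ a.e.). Consequently, the space $L^2(\Omega,\eta(x)\,dx)$ as defined coincides, both as a set of equivalence classes and via the stated norm, with the classical $L^2$-space $L^2(\Omega,\mu)$ associated to the measure $\mu$.

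Next I would verify that $\langle\cdot,\cdot\rangle_{L^2(\Omega,\eta(x)\,dx)}$ really is an inner product. Sesquilinearity is immediate from the linearity of the Lebesgue integral. For the Cauchy--Schwarz-type estimate ensuring the integrand $u\bar v\,\eta$ is integrable whenever $u,v\in L^2(\Omega,\eta\,dx)$, I would write $|u\bar v|\eta=(|u|\eta^{1/2})(|v|\eta^{1/2})$ and apply the ordinary Cauchy--Schwarz inequality in $L^2(\Omega,dx)$. Positive-definiteness uses the hypothesis $\eta>0$ a.e.: if $\int |u|^2 \eta\,dx=0$, then $|u|^2\eta=0$ a.e., and since $\eta>0$ a.e., $u=0$ a.e.\ on $\Omega$, i.e.\ $u$ is the zero element of the quotient space.

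Finally, completeness follows from the standard Riesz--Fischer theorem applied to $L^2(\Omega,\mu)$, but I would also sketch a self-contained proof for transparency. Given a Cauchy sequence $\{u_k\}\subset L^2(\Omega,\eta\,dx)$, extract a subsequence $\{u_{k_j}\}$ with $\|u_{k_{j+1}}-u_{k_j}\|^2_{L^2(\Omega,\eta\,dx)}<2^{-j}$. Setting $S_N:=\sum_{j=1}^{N}|u_{k_{j+1}}-u_{k_j}|$, the monotone convergence theorem combined with the triangle inequality gives $\int S^2\,\eta\,dx<\infty$ for $S:=\lim S_N$, so $S<\infty$ $\mu$-a.e., hence Lebesgue-a.e. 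Therefore $u_{k_j}$ converges pointwise a.e.\ to some measurable function $u$. Fatou's lemma applied to $|u-u_{k_j}|^2\eta$ yields $u\in L^2(\Omega,\eta\,dx)$ and $u_{k_j}\to u$ in the weighted norm, and the standard fact that a Cauchy sequence with a convergent subsequence is itself convergent concludes the argument.

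The only delicate point is the identification of null sets in the two measures, which is exactly where the hypothesis $\eta>0$ a.e.\ enters; without it, elements of $L^2(\Omega,\eta\,dx)$ would only be determined $\mu$-a.e., not Lebesgue-a.e., but one would still obtain a Hilbert space (just not one whose elements are Lebesgue equivalence classes).
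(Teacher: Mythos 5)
Your proof is correct, but it takes a different route from the paper. You identify $L^2\pr{\Om, \eta\pr{x}dx}$ with the classical $L^2$-space of the measure $\mu = \eta\, dx$ and then prove completeness directly by the Riesz--Fischer argument (fast Cauchy subsequence, monotone convergence, Fatou), noting that mutual absolute continuity of $\mu$ and Lebesgue measure -- which is exactly where $\eta>0$ a.e.\ enters -- lets you speak of Lebesgue equivalence classes. The paper instead proves completeness by exhibiting an explicit unitary equivalence with the unweighted space: the multiplication map $\phi: L^2\pr{\Om} \to L^2\pr{\Om, \eta\pr{x}dx}$, $f \mapsto f\eta^{-1/2}$, is an isometric bijection preserving the inner product, so completeness of $L^2\pr{\Om}$ transfers immediately. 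The paper's argument is shorter and avoids reproving any limit theorems, but it leans on $\eta>0$ a.e.\ twice (to define $\eta^{\pm 1/2}$ a.e.\ and to make $\phi$ well defined and surjective) and on already knowing $L^2\pr{\Om}$ is complete; your argument is self-contained from basic measure theory and, as you observe, extends verbatim to weights that may vanish on a set of positive measure, at the cost of then only obtaining $\mu$-equivalence classes. Both establish the lemma; your positive-definiteness and Cauchy--Schwarz checks match the routine verification the paper leaves implicit.
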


\begin{pf}
It is easy to check that $L^2\pr{\Om, \eta\pr{x} dx}$ is a vector space and $\langle \cdot, \cdot, \rangle_{L^2\pr{\Om, \eta\pr{x} dx}}$ defines an inner product that generates a norm on the space.

To prove completeness, it suffices to show that $L^2\pr{\Om, \eta\pr{x} dx}$ is unitarily equivalent to $L^2\pr{\Om}$.  
Consider the map
\eqs{\phi:L^2\pr{\Om} \rightarrow L^2\pr{\Om, \eta\pr{x} dx}: f \mapsto f \eta^{-1/2}.}
For $f\in L^2\pr{\Om}$, we have
\eqs{\norm{\phi\pr{f}}_{L^2\pr{\Om, \eta\pr{x} dx}} = \int_\Om \pr{f \eta^{-1/2}} \overline{\pr{f \eta^{-1/2}}} \eta = \norm{f}_{L^2\pr{\Om}}.}
Thus, $\phi$ is injective.
For $g\in L^2\pr{\Om, \eta\pr{x} dx}$, take $f=g\eta^{1/2}$.  
Then $f\in L^2\pr{\Om}$ and $\phi\pr{f}=g$.
Thus, $\phi$ is surjective.
Finally, we check
\eqs{\langle \phi\pr{f}, \phi\pr{g} \rangle_{L^2\pr{\Om, \eta\pr{x} dx}} 
= \int_{\Om} \pr{f \eta^{-1/2}} \overline{\pr{g \eta^{-1/2}}} \eta 
= \int_\Om f \overline{g} = \langle f, g \rangle_{L^2\pr{\Om}}.}
\end{pf}

\begin{lem}Let $\Om\subset \R^n$ be open and let $\eta\in L^{1}_{\loc}\pr{\Om}$ be real-valued with $\eta>0$ a.e.
Define the space $W^{1,2}_\eta\pr{\Om}$ as a collection of functions in $L^2(\Omega, \eta(x)dx)$ that are weakly differentiable in $\Omega$ with the weak gradient in $L^2(\Omega)$.
The inner product
\eqs{\langle u, v \rangle_{W^{1,2}_\eta\pr{\Om}} = \int_\Om D u \cdot D v + \int_\Om u \, v \, \eta^{1/2}}
makes $W^{1,2}_\eta(\Om)$ a Hilbert space.
\end{lem}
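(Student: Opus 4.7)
The statement requires two things: that the given sesquilinear form is an inner product on $W^{1,2}_\eta\pr{\Om}$, and that the resulting normed space is complete. The first part is routine; the second is the substance of the argument.

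For the inner product claim, bilinearity and symmetry follow at once from linearity of integration. For positive definiteness, suppose $\langle u,u\rangle_{W^{1,2}_\eta\pr{\Om}}=0$. The Dirichlet piece forces $Du=0$ a.e.\ in $\Om$, while the weighted piece forces $u=0$ a.e.\ on $\set{\eta>0}$; since $\eta>0$ a.e.\ by hypothesis, $u=0$ a.e.\ in $\Om$, so the form defines a genuine inner product and hence a norm.

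For completeness, let $\set{u_n}$ be Cauchy in $W^{1,2}_\eta\pr{\Om}$. Then $\set{u_n}$ is Cauchy in the weighted $L^2$ space (which is Hilbert by Lemma \ref{lC.2}) and $\set{Du_n}$ is Cauchy in $L^2\pr{\Om}^n$; let $u$ and $v$ denote the respective limits. The task reduces to showing that the weak gradient of $u$ equals $v$, i.e.\ that $\int_\Om u\,D_i\phi=-\int_\Om v_i\,\phi$ for every $\phi\in C^\infty_c\pr{\Om}$. The right-hand side is handled by $L^2\pr{\Om}$-convergence of $Du_n$ paired against $\phi\in L^2$. The left-hand side is the main obstacle, since convergence of $u_n$ in a weighted $L^2$ space does not immediately give unweighted local $L^1$ convergence when $\eta^{-1}$ fails to be locally integrable.

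To circumvent this, I will prove $u_n\to u$ in $L^1_{\loc}\pr{\Om}$ via a Poincar\'e-plus-averaging argument. Fix a ball $B\Subset\Om$ and let $\pr{u_n}_B := \frac{1}{\abs{B}}\int_B u_n$. Applying the Poincar\'e inequality to $u_n-u_m$ gives that $\set{u_n-\pr{u_n}_B}$ is Cauchy in $L^2\pr{B}$, hence converges in $L^2\pr{B}$ and, up to a subsequence, pointwise a.e.\ on $B$. On the level set $A_\eps := B\cap\set{\eta\ge\eps}$, the weighted convergence gives $\int_{A_\eps}\abs{u_n-u_m}^2 \le \eps^{-1}\int_\Om \abs{u_n-u_m}^2\eta\to 0$, so $\set{u_n}$ is Cauchy in $L^2\pr{A_\eps}$; since $\eta>0$ a.e.\ and $\eta\in L^1_{\loc}$, $\abs{A_\eps}>0$ for some $\eps>0$, and along a further subsequence $u_n$ converges pointwise a.e.\ on $A_\eps$. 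Subtracting these two a.e.\ convergences on $A_\eps$, the numerical sequence $\set{\pr{u_n}_B}$ must converge. Combining, $\set{u_n}$ converges in $L^2\pr{B}$, and the limit agrees with $u$ a.e.\ (since the limit also agrees with $u$ in measure on $A_\eps$ via the weighted convergence and $\eta>0$ a.e.). Therefore $u_n\to u$ in $L^1_{\loc}\pr{\Om}$, which suffices to pass to the limit in $\int_\Om u_n\,D_i\phi\to\int_\Om u\,D_i\phi$; combined with $\int_\Om D_iu_n\,\phi\to\int_\Om v_i\,\phi$, this yields $Du=v$ weakly. Hence $u\in W^{1,2}_\eta\pr{\Om}$ and $u_n\to u$ in the $W^{1,2}_\eta$ norm, so the space is complete. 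The main obstacle, as anticipated, is extracting ordinary local integrability from weighted $L^2$ convergence; Poincar\'e supplies the derivative-to-function bridge on balls, and the strict positivity of $\eta$ prevents the averages $\pr{u_n}_B$ from drifting.
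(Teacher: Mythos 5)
Your proof is correct and follows essentially the same route as the paper's: obtain the weighted-$L^2$ limit via Lemma \ref{lC.2} and the $L^2$ limit of the gradients, use the Poincar\'e inequality on balls $B\Subset\Om$ to control $u_n$ minus its average, exploit the a.e.\ positivity of $\eta$ to force the averages to converge and to identify the local limit with $u$, and then pass to the limit in the weak-derivative identity (with a covering argument for general test functions). The only difference is in one technical step: you pin down the averages by truncating the weight on the level sets $B\cap\set{\eta\ge\eps}$ and working in unweighted $L^2$ there (letting $\eps\to 0$ to identify the limit on all of $B$), whereas the paper multiplies by $\eta^{1/2}$ and works in $L^1\pr{B}$ using $\eta\in L^1_{\loc}$; both hinge on exactly the same ingredients.
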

\begin{pf}
 A quick computation verifies that $\langle \cdot, \cdot \rangle_{W^{1,2}_\eta(\Om)}$ is an inner product generating the norm on the space.
 It remains only to show completeness.
 
 Let $\{u_k\}$ be a Cauchy sequence in $W^{1,2}_\eta\pr{\Om}$.
 Then $\{u_k\}$ is Cauchy in $L^2\pr{\Om, \eta\pr{x} dx}$, so by Lemma~\ref{lC.2} there exists $u \in L^2\pr{\Om, \eta\pr{x} dx}$ such that
 \eq{\label{eqC.1} u_k \rightarrow u \quad \text{ in } L^2\pr{\Om, \eta\pr{x} dx}.}
 Furthermore, for $\alpha=1,\dots,n$, $\{D_\alpha u_k\}$ is Cauchy in $L^2\pr{\Om}$, so there exists $v_\alpha \in L^2\pr{\Om}$ such that
 \eq{\label{eqC.2} D_\alpha u_k \rightarrow v_\alpha \quad \text{ in } L^2\pr{\Om}.}
 It remains to show that $v_\alpha = D_\alpha u$.
 Let $\zeta \in C_c^\infty\pr{\Om}$.
 We need to show that
 \eqs{\int_\Om v_\alpha \zeta = -\int_\Om u D \zeta.}
 First, suppose $\supp(\zeta) \subset B$, where $B$ is an open ball that is compactly contained in $\Om$.
 The Poincar\'e inequality yields
 \eqs{\norm{\pr{u_k - \fint_B u_k} - \pr{u_j - \fint_{B} u_j}}_{L^2\pr{B}}=\norm{u_k - u_j - \fint_{B} (u_k-u_j)}_{L^2\pr{B}} \le C \norm{D (u_k- u_j)}_{L^2\pr{B}}.}
Therefore, $\{u_k - c_k\}$ is Cauchy in $L^2\pr{B}$, with $c_k = \fint_B u_k$.
Thus, there exists $\tilde{u}\in L^2\pr{B}$ such that
\eq{\label{eqC.3}u_k - c_k \rightarrow \tilde{u} \quad \text{in } L^2\pr{B}.}
By H\"older's inequality, 
\begin{align}\label{eqC.4}
\norm{(u_k - c_k - \tilde{u}) \eta^{1/2}}_{L^{1}\pr{B}} 
&\le \norm{u_k-c_k-\tilde{u}}_{L^2\pr{B}} \norm{\eta^{1/2}}_{L^{2}\pr{B}} \\
&= \norm{u_k-c_k-\tilde{u}}_{L^2\pr{B}} \norm{\eta}_{L^{1}\pr{B}}^{1/2}\rightarrow 0.
\nonumber
\end{align}
Therefore, by \eqref{eqC.4},
\eq{\label{eqC.5}(u_k - c_k)\eta^{1/2} \rightarrow \tilde{u} \eta^{1/2} \quad \text{in } L^{1}\pr{B}.}
By \eqref{eqC.1}, $u_k \eta^{1/2} \rightarrow u \eta^{1/2}$ in $L^2\pr{B}$, so it follows that
\eq{\label{eqC.6}u_k \eta^{1/2} \rightarrow u \eta^{1/2} \quad \text{in } L^{1}\pr{B}.}
Combining the previous two results shows that
\eqs{c_k \eta^{1/2} \rightarrow \pr{u - \tilde{u}} \eta^{1/2} \quad \text{in } L^{1}\pr{B}.}
Since each $c_k$ is a constant, it follows that $\disp \lim_{k \to \iny}c_k = c$, where $c$ is some fixed constant.
This fact, in combination with \eqref{eqC.5}, implies that
\eqs{u_k \eta^{1/2} \rightarrow \pr{\tilde{u} + c} \eta^{1/2} \quad \text{in } L^{1}\pr{B}.}
With \eqref{eqC.6}, using that $\eta$ is almost everywhere non-vanishing, we conclude that $\tilde{u}+c = u$ a.e. in $B$.
From \eqref{eqC.3} and the fact that $\set{c_k}$ is a convergent sequence of real numbers, we have
\eq{\label{eqC.7} u_k \rightarrow \tilde{u} + c = u \quad \text{in } L^2\pr{B}.}
Therefore, by \eqref{eqC.2} and \eqref{eqC.7},
\eq{\label{eqC.8}\int_B v_\al \zeta = \lim_{k\rightarrow \infty} \int_B D_\al u_k \zeta = -\lim_{k\rightarrow \infty} \int_B u_k D_{\al} \zeta =- \int_B u D_{\al} \zeta.}
Now, for any $\zeta \in C_c^\infty\pr{\Om}$, we can cover $\supp(\zeta)$ with finitely many balls, $\{B_i\}$, with each $B_i$ compactly contained in $\Om$.
Using a partition of unity argument and the result \eqref{eqC.8}, we obtain the desired equality.
\end{pf}

\begin{cor}
 Let $\Om\subset \R^n$ be an open set.  The spaces $W^{1,2}_V\pr{\Om}$, $\hat{W}^{1,2}_V\pr{\Om}$, $W^{1,2}_{0,V}\pr{\Om}$, and $\hat{W}^{1,2}_{0,V}\pr{\Om}$ are Hilbert spaces.
\end{cor}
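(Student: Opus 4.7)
The plan is to deduce the corollary essentially as an immediate application of the preceding Lemma, which established that $W^{1,2}_\eta\pr{\Om}$ is a Hilbert space whenever $\eta \in L^1_{\loc}\pr{\Om}$ is real-valued with $\eta > 0$ a.e. Thus the main task is to verify that the two weights of interest, namely $\eta_1(x) := m\pr{x,V}^2$ and $\eta_2(x) := V(x)$, satisfy these hypotheses on any open $\Om \subset \R^n$.

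For $\eta_1$, Remark~\ref{rB.5} tells us that $m\pr{\cdot, V}$ is locally bounded above and below by positive constants on any bounded open subset of $\R^n$. In particular, $m\pr{\cdot,V}^2$ is strictly positive everywhere and locally integrable on $\Omega$. For $\eta_2$, the fact that $V \in B_p$ for some $p \in \pr{\frac n 2, \iny}$ gives $V \in L^p_{\loc}\pr{\R^n} \subset L^1_{\loc}\pr{\R^n}$, and the non-triviality of $V$ combined with the doubling property of $V(x)\,dx$ (observed in Appendix~\ref{AppB}) rules out $V$ vanishing on any set of positive measure inside a ball where $V$ is non-trivial; more precisely, if $V$ vanished on a set of positive measure in some ball, the doubling/$A_\infty$ property would force $V \equiv 0$ on $\R^n$, contradicting our standing assumption. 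Hence $V > 0$ a.e. The previous Lemma then immediately delivers that $W^{1,2}_V\pr{\Om} = W^{1,2}_{\eta_1}\pr{\Om}$ and $\hat W^{1,2}_V\pr{\Om} = W^{1,2}_{\eta_2}\pr{\Om}$ are Hilbert spaces.

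For the spaces with subscript zero, the plan is to invoke the general fact that a closed subspace of a Hilbert space is itself a Hilbert space under the restricted inner product. By definition, $W^{1,2}_{0,V}\pr{\Om}$ is the closure of $C^\iny_c\pr{\Om}$ inside $W^{1,2}_V\pr{\Om}$, so it is a closed subspace of the Hilbert space established above; the analogous argument applies to $\hat W^{1,2}_{0,V}\pr{\Om}$.

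There is no real obstacle here beyond bookkeeping; the only point requiring slight care is the a.e.\ positivity of $V$, which relies on the $A_\infty$/doubling properties of $B_p$ weights rather than on any direct pointwise information. Everything else is a mechanical invocation of the preceding Lemma together with the elementary closed-subspace principle for Hilbert spaces.
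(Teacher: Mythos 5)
Your proposal is correct and takes essentially the same route as the paper: invoke the preceding lemma on $W^{1,2}_\eta\pr{\Om}$ with the weights $\eta_1 = m\pr{\cdot,V}^2$ and $\eta_2 = V$, and observe that the zero-subscript spaces are closed subspaces (being defined as closures of $C^\iny_c\pr{\Om}$), hence Hilbert. Your added verification that both weights are locally integrable and positive a.e.\ (via Remark~\ref{rB.5} and the $A_\iny$ property of $B_p$ weights) is a detail the paper leaves implicit, and your argument for it is sound.
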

\begin{proof}
 This follows directly from the previous lemma and the fact that $W^{1,2}_{0,V}\pr{\Om}$ and $\hat{W}^{1,2}_{0,V}\pr{\Om}$ are defined as the closure of $C_c^\infty\pr{\Om}$ in their respective spaces.
\end{proof}

The following lemma shows an important relationship between $W^{1,2}_V\pr{\R^n}$ and $\hat{W}^{1,2}_V\pr{\R^n}$.

\begin{lem}\label{lC.5}
Assume that $V \in B_p$ for some $p > \frac{n}{2}$.
Then for any $u \in W_V^{1,2}\pr{\R^n}$,
\begin{equation}\label{eqC.9}
\int_{\R^n} V\pr{x} \, \abs{u\pr{x}}^2 dx
\le C_{V, n} \pr{\int_{\R^n} \abs{Du\pr{x}}^2 dx + \int_{\R^n} \abs{u\pr{x}}^2 m\pr{x, V}^2 dx }
= C_{V, n} \norm{u}^2_{{W}_V^{1,2}\pr{\R^n}}.
\end{equation}
Conversely, for any $u \in \hat W_V^{1,2}\pr{\R^n}$
\begin{equation}\label{eqC.10}
\int_{\R^n} \abs{u\pr{x}}^2 m\pr{x, V}^2 dx
\le C_{V, n} \pr{\int_{\R^n} \abs{Du\pr{x}}^2 dx + \int_{\R^n} \abs{u\pr{x}}^2 V(x)\, dx }
= C_{V, n} \norm{u}^2_{\hat{W}_V^{1,2}\pr{\R^n}}.
\end{equation}
In other words, $W_V^{1,2}\pr{\R^n}=\hat{W}_V^{1,2}\pr{\R^n}$. 
\end{lem}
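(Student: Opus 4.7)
The plan is to establish the two inequalities \eqref{eqC.9} and \eqref{eqC.10} separately; combining them with the definitions of the norms on $W_V^{1,2}(\R^n)$ and $\hat W_V^{1,2}(\R^n)$ then immediately yields the equality of the two spaces.

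Inequality \eqref{eqC.10} is essentially the Fefferman--Phong inequality (Lemma~\ref{lB.7}), which already gives it for $u \in C_c^1(\R^n)$. To extend it to general $u \in \hat{W}_V^{1,2}(\R^n)$, I would use the cutoff-and-mollification procedure from Lemma~\ref{lA.6}: for $\zeta_R \in C_c^\infty(B_{2R})$ equal to $1$ on $B_R$ and $\phi_\eps$ the standard mollifier, the functions $u_{R,\eps} := \phi_\eps \ast (u\zeta_R)$ lie in $C_c^\infty(\R^n)$. I would apply Lemma~\ref{lB.7} to $u_{R,\eps}$ and pass to the limit as $\eps \to 0^+$ and $R \to \infty$. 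Convergence $D u_{R,\eps} \to Du$ in $L^2(dx)$ follows from standard mollifier theory, while $u_{R,\eps} \to u$ in $L^2(V\,dx)$ relies on $V \in L^p_{\loc}(\R^n)$ (via Cauchy--Schwarz and local $L^{2p'}$-convergence of the mollified $u\zeta_R$); Fatou's lemma controls the left-hand side.

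For \eqref{eqC.9} the Fefferman--Phong inequality does not apply directly, since the roles of $V$ and $m(\cdot, V)^2$ are reversed. Instead, I would use a Whitney-type covering $\{B_k = B(x_k, r_k)\}_{k \in \N}$ of $\R^n$ with $r_k = c/m(x_k, V)$ for a small absolute constant $c > 0$, chosen so that the $B_k$ cover $\R^n$ with bounded overlap and $m(y, V) \sim 1/r_k$ on $B_k$; existence follows from Lemma~\ref{lB.3} via a standard Vitali argument. The crux is the local bound
\begin{equation*}
\int_{B_k} V|u|^2 \le C \Big(\norm{Du}_{L^2(B_k)}^2 + \int_{B_k} m(\cdot, V)^2|u|^2\Big),
\end{equation*}
after which summation with bounded overlap yields \eqref{eqC.9}. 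To prove this local bound, I would apply H\"older with exponents $p$ and $p' = p/(p-1)$: the reverse H\"older condition on $V$ together with $\psi(x_k, r_k; V) \sim 1$ (via Lemma~\ref{lB.1}) gives control of $(\int_{B_k} V^p)^{1/p}$ with the correct $r_k^{n/p-2}$-scaling, while $p > n/2$ implies $2p' < 2^*$, so H\"older interpolation into $L^{2^*}$ combined with the scale-invariant Sobolev--Poincar\'e inequality on $B_k$ bounds $(\int_{B_k} |u|^{2p'})^{1/p'}$ by $\norm{Du}_{L^2(B_k)}^2 + r_k^{-2}\norm{u}_{L^2(B_k)}^2$ with the reciprocal $r_k^{2 - n/p}$-scaling. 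The $r_k$-powers cancel, and $r_k^{-2} \sim m(y, V)^2$ on $B_k$ converts the zeroth-order term into the weighted $L^2$ norm. The main obstacle is this careful book-keeping of the scaling exponents in the local bound; once it is in place, both the summation step for \eqref{eqC.9} and the density argument for \eqref{eqC.10} are routine.
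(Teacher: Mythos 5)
Your argument is a genuinely different route from the paper's: the paper does not prove Lemma~\ref{lC.5} directly but cites Theorem~1.13 of \cite{She99}, only checking that $V\,dx$ satisfies the measure hypotheses there and that $Du\in L^2$ forces $u\in L^2_{\loc}$. Your self-contained treatment of \eqref{eqC.9} is sound: the covering of $\R^n$ by critical balls $B\pr{x_k, c/m\pr{x_k,V}}$ with bounded overlap (Lemma~\ref{lB.3} plus a Vitali-type selection), the bound $\pr{\int_{B_k}V^p}^{1/p}\le C r_k^{n/p-2}$ from the reverse H\"older inequality and $\psi\pr{x_k,r_k;V}\sim 1$, and the scale-invariant Sobolev embedding with $2p'<2^*$ do combine, with the $r_k$-powers cancelling, into the local bound; since every step is local and one only sums, no density argument is needed and the inequality holds for every $u\in W^{1,2}_V\pr{\R^n}$.

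The gap is in \eqref{eqC.10}. Lemma~\ref{lB.7} is stated for $C^1_c$ functions, and your extension hinges on the claim that $Du_{R,\eps}\to Du$ in $L^2\pr{dx}$ ``by standard mollifier theory''. Mollifier theory handles only the $\eps\to 0$ limit at fixed $R$; the $R\to\infty$ limit produces the cutoff error $\norm{u\,D\zeta_R}_{L^2}^2\approx R^{-2}\int_{B_{2R}\setminus B_R}\abs{u}^2$, and nothing in your hypotheses makes this tend to zero, because an element of $\hat W^{1,2}_V\pr{\R^n}$ need not lie in $L^2\pr{\R^n,dx}$: e.g.\ $V\pr{x}=\pr{1+\abs{x}}^{-1}\in B_q$ for some $q>n/2$ and $u\pr{x}=\pr{1+\abs{x}}^{-\pr{n-2}/2-\de}$ with $\tfrac12<\de<1$ has $Du\in L^2$, $u\in L^2\pr{V\,dx}$, but $u\notin L^2\pr{dx}$. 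To close the argument you must either (i) first prove that every $u\in\hat W^{1,2}_V\pr{\R^n}$ belongs to $L^{2^*}\pr{\R^n}$, i.e.\ rule out a nonzero ``constant at infinity''; this is where $V\in B_p$ must be used (for instance $\int_{B_R}V\gtrsim R^{n-n/p}\to\infty$ together with the $A_\iny$ property of $V$), after which H\"older on the annulus gives $R^{-2}\int_{B_{2R}\setminus B_R}\abs{u}^2\le C\norm{u}^2_{L^{2^*}\pr{B_{2R}\setminus B_R}}\to 0$; or (ii) avoid global density altogether and prove \eqref{eqC.10} exactly as you proved \eqref{eqC.9}, by establishing the local Fefferman--Phong bound $m\pr{x_k,V}^2\int_{B_k}\abs{u}^2\le C\pr{\int_{B_k}\abs{Du}^2+\int_{B_k}V\abs{u}^2}$ on each critical ball and summing with bounded overlap (this localization is essentially how the cited proof of Shen runs). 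As written, the passage to the limit in $R$ is a genuine missing step.
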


\begin{proof} 
This is essentially Theorem~1.13 in \cite{She99}. 
We only remark that our $V\, dx$ satisfies the conditions of $d\mu$ in the aforementioned Theorem by Remark~0.10 in  \cite{She99}, and that the functions with $D u\in L^2(\rn)$ are $L^2_{loc}(\rn)$ -- this is a standard part of the proof of the Poincar\'e inequality (see, e.g., \cite{Maz11}, 1.1.2). 
\end{proof}

If $\Om \subset \R^n$ is open and connected, then a similar relationship holds for $W^{1,2}_{0, V}\pr{\Om}$ and $\hat{W}^{1,2}_{0,V}\pr{\Om}$ and we have the following result.

\begin{lem} \label{lC.6}
Assume that $V \in B_p$ for some $p > \frac{n}{2}$. Then for any open set $\Omega \subset \rn $ we have $W_{0,V}^{1,2}\pr{\Om}=\hat W_{0,V}^{1,2}\pr{\Om}$, and $\|\cdot\|_{W_{0,V}^{1,2}\pr{\Om}}\approx\|\cdot\|_{\hat W_{0,V}^{1,2}\pr{\Om}}$ with implicit constants depending on dimension and the $B_p$ constant of $V$ only. 
\end{lem}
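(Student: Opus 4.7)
The plan is to reduce the claim on $\Omega$ to the global statement on $\mathbb{R}^n$ already established in Lemma~\ref{lC.5}, by exploiting the fact that both $W^{1,2}_{0,V}(\Omega)$ and $\hat{W}^{1,2}_{0,V}(\Omega)$ are defined as closures of the same test function class $C_c^\infty(\Omega)$. First, I would take any $u \in C_c^\infty(\Omega)$ and extend it by zero to a function $\tilde u \in C_c^\infty(\mathbb{R}^n)$. Since all the integrals in the definitions of $\|\cdot\|_{W^{1,2}_V}$ and $\|\cdot\|_{\hat W^{1,2}_V}$ vanish outside $\Omega$, we have
\begin{equation*}
\|u\|_{W^{1,2}_V(\Omega)} = \|\tilde u\|_{W^{1,2}_V(\mathbb{R}^n)}, \qquad \|u\|_{\hat W^{1,2}_V(\Omega)} = \|\tilde u\|_{\hat W^{1,2}_V(\mathbb{R}^n)}.
\end{equation*}

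Next, I would apply Lemma~\ref{lC.5} to $\tilde u \in W^{1,2}_V(\mathbb{R}^n) = \hat W^{1,2}_V(\mathbb{R}^n)$. Inequality \eqref{eqC.9} gives
\begin{equation*}
\int_{\mathbb{R}^n} V |\tilde u|^2 \,dx \le C_{V,n}\bigl(\|D\tilde u\|_{L^2(\mathbb{R}^n)}^2 + \|\tilde u\, m(\cdot,V)\|_{L^2(\mathbb{R}^n)}^2\bigr),
\end{equation*}
which, combined with the trivial identity $\|D\tilde u\|_{L^2(\mathbb{R}^n)} = \|Du\|_{L^2(\Omega)}$, yields $\|u\|_{\hat W^{1,2}_V(\Omega)}^2 \le (1+C_{V,n}) \|u\|_{W^{1,2}_V(\Omega)}^2$. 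Inequality \eqref{eqC.10} gives the reverse comparison in the same way. Thus the two norms are equivalent on $C_c^\infty(\Omega)$, with constants depending only on $n$ and the $B_p$-constant of $V$.

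Finally, since the two norms are equivalent on the common dense subspace $C_c^\infty(\Omega)$, the abstract completions coincide as sets, and the induced norms on the completion are equivalent. Concretely, if $\{u_k\} \subset C_c^\infty(\Omega)$ is Cauchy in $\|\cdot\|_{W^{1,2}_V(\Omega)}$, then by the norm equivalence it is also Cauchy in $\|\cdot\|_{\hat W^{1,2}_V(\Omega)}$, and the two limits must agree as elements of $L^1_{\mathrm{loc}}(\Omega)$ (since each norm controls the $L^2_{\mathrm{loc}}$-norm and the $L^2$-norm of the gradient by Remark~\ref{rB.5} and the positivity of $V$ on the support of any test function). Hence $W^{1,2}_{0,V}(\Omega) = \hat W^{1,2}_{0,V}(\Omega)$ with comparable norms.

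The only subtlety — not really an obstacle but worth noting — is that Lemma~\ref{lC.5} is a global statement requiring $u \in W^{1,2}_V(\mathbb{R}^n)$ (or $\hat W^{1,2}_V(\mathbb{R}^n)$), which would fail in general for an element of $W^{1,2}_V(\Omega)$ extended by zero if that element did not vanish on $\partial\Omega$; that is precisely why the argument is restricted to smooth compactly supported functions in $\Omega$ and why the result is stated only for the ``zero trace'' subspaces $W^{1,2}_{0,V}(\Omega)$ and $\hat W^{1,2}_{0,V}(\Omega)$, not the ambient spaces $W^{1,2}_V(\Omega)$ and $\hat W^{1,2}_V(\Omega)$.
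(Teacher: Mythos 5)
Your proposal is correct and follows essentially the same route as the paper: both exploit that the two spaces are closures of $C_c^\infty(\Omega)$, apply the global Lemma~\ref{lC.5} to compactly supported smooth functions (the paper to the differences $u_i-u_k$ of an approximating sequence, you to each test function after extension by zero), and then identify the limits of the resulting Cauchy sequences. One small caveat: the $\hat W^{1,2}_V$-norm does not by itself control the $L^2_{\loc}$-norm, since $V$ is only positive a.e.\ and need not be locally bounded below; the identification of the two limits is cleaner via a.e.-convergent subsequences, using $V>0$ a.e.\ and $m(\cdot,V)>0$ (which is exactly how the paper phrases it), or by invoking the norm equivalence you already proved on $C_c^\infty(\Omega)$.
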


\begin{proof}
Let $u \in W_{0,V}^{1,2}\pr{\Om}$.
By definition, there exists $u_i \in C^\iny_c\pr{\Om}$ so that $\disp \lim_{i \to \iny} \norm{u_i - u}_{W^{1,2}_V\pr{\Om}} = 0$.
Applying Lemma~\ref{lC.5} to $u_i-u_k$, we deduce that the sequence $\{u_i\}_{i=1}^\infty$ is Cauchy in $\hat W_{0,V}^{1,2}\pr{\Om}$.
Hence, it has a limit in $\hat W_{0,V}^{1,2}\pr{\Om}$ and this limit must coincide with $u$ a.e. since $V>0$ a.e. and $m(x, V)>0$ for all $x\in \rn$. 
Applying again Lemma~\ref{lC.5}, we deduce the desired control of norms. 
The same argument works in the converse direction.
\end{proof}

\section{Smoothing and Approximations}
\label{AppD}

Here we build on Lemma~\ref{lA.5} and collect some results that are related to approximation by smooth functions. 

\begin{lem}[Local approximation by smooth functions]
Let $U \su \R^n$ be open.
Let $\bF\pr{U}$ be either $Y^{1,2}\pr{U}$, $W^{1,2}\pr{U}$ or $W^{1,2}_V\pr{U}$.
Assume that $u \in \bF\pr{U}$, and set $u^\eps = \phi_\eps * u$ in $U_\eps$.
Then $u^\eps \in C^\iny\pr{U_\eps}$ for each $\eps > 0$ and $u^\eps \to u$ in $\bF_{\loc}\pr{U}$ as $\eps \to 0$.
\label{lD.1}
\end{lem}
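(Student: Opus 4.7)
The plan is to derive all three cases from the standard mollifier facts collected in Lemma~\ref{lA.5}. The $C^\infty$ regularity of $u^\eps$ on $U_\eps$ is immediate from Lemma~\ref{lA.5}(1), so the content is the convergence. I would fix an open set $V \Subset U$ and take $\eps < \tfrac{1}{2}\dist(V,\partial U)$ so that $V \subset U_\eps$. The key identity is $D u^\eps = \phi_\eps \ast Du$ on $U_\eps$ from Lemma~\ref{lA.5}(5); combined with Lemma~\ref{lA.5}(4), this will yield $D u^\eps \to Du$ in $L^2(V)$ whenever $Du \in L^2_{\loc}(U)$, which is common to all three cases.

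First I would handle $W^{1,2}(U)$: since $u, Du \in L^2(U) \subset L^2_{\loc}(U)$, Lemma~\ref{lA.5}(4) applied to $u$ and to each $D_\alpha u$ gives $u^\eps \to u$ and $D u^\eps \to Du$ in $L^2(V)$, and hence $u^\eps \to u$ in $W^{1,2}(V)$. For $Y^{1,2}(U)$, the argument is identical after replacing $L^2$ by $L^{2^*}$ for the function itself: $u \in L^{2^*}(U) \subset L^{2^*}_{\loc}(U)$ and $2^* < \infty$, so Lemma~\ref{lA.5}(4) gives $u^\eps \to u$ in $L^{2^*}(V)$, while the gradient convergence in $L^2(V)$ is as above.

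For $W_V^{1,2}(U)$, the only new ingredient is handling the weight $m(\cdot,V)^2$ in the $L^2$-norm. Here I invoke Remark~\ref{rB.5}: on the compact set $\overline{V}$ there is a constant $C_V > 0$ with $C_V^{-1} \le m(x,V) \le C_V$. This immediately gives the two-sided comparison
\[
C_V^{-1}\|w\|_{L^2(V)} \le \|w \, m(\cdot,V)\|_{L^2(V)} \le C_V \|w\|_{L^2(V)}
\]
for any $w \in L^2(V)$. In particular, $u \in L^2(U, m(\cdot,V)^2 dx)$ forces $u \in L^2_{\loc}(U)$, so Lemma~\ref{lA.5}(4) applies to produce $u^\eps \to u$ in $L^2(V)$, and the comparison above upgrades this to convergence in $L^2(V, m(\cdot,V)^2 dx)$. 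Combined with the gradient convergence, this yields $u^\eps \to u$ in $W^{1,2}_V(V)$.

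The mildly delicate step is the weighted case, but Remark~\ref{rB.5} reduces it cleanly to the unweighted one; everything else is a direct invocation of Lemma~\ref{lA.5}. No substantial obstacle is anticipated.
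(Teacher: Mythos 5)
Your proposal is correct and takes essentially the same approach the paper indicates: the paper defers the $W^{1,2}$ case to the Evans mollification argument (whose ingredients are exactly Lemma~\ref{lA.5}), calls the $Y^{1,2}$ case a slight modification (replace $L^2$ by $L^{2^*}$ for the function, keep $L^2$ for the gradient), and reduces the weighted case $W^{1,2}_V$ to the unweighted one via the local two-sided bounds on $m(\cdot,V)$ (Remark~\ref{rB.5}, equivalently Remark~\ref{rC.1}) — precisely the steps you write out explicitly. No gaps.
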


The case of $\bF\pr{U} = W^{1,2}\pr{U}$ appears in \cite{Eva98}, the case  $\bF\pr{U} = W^{1,2}_V\pr{U}$ is the exact same statement due to the local nature of the result and Remark~\ref{rC.1}. 
The case of $\bF\pr{U} = Y^{1,2}\pr{U}$ is a slight modification of the aforementioned proof in \cite{Eva98}, and we omit it.

\begin{lem}[Global approximation by smooth functions]
Assume that $U$ is bounded.
Let $\bF\pr{U}$ be either $Y^{1,2}\pr{U}$, $W^{1,2}\pr{U}$ or $W^{1,2}_V\pr{U}$.
 If $u \in \bF\pr{U}$, then there exists a sequence $\set{u_k}_{k=1}^\iny \su C^\iny\pr{U} \cap \bF\pr{U}$ such that $\disp \lim_{k \to \iny} u_k = u$ in $\bF\pr{U}$.
\label{lD.2}
\end{lem}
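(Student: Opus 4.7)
The plan is to run the classical Meyers--Serrin argument, mollifying $u$ piece by piece with respect to a partition of unity subordinate to a shrinking exhaustion of $U$ by relatively compact sets, and then to observe that in each of the three cases the argument requires only (i) the product/chain rule with smooth compactly supported cutoffs, and (ii) local convergence of the mollifications, which is exactly Lemma~\ref{lD.1}.

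For $k\in\N$ set $U_k=\set{x\in U:\dist(x,\partial U)>1/k}\cap B(0,k)$, so that $U_k\Subset U_{k+1}\Subset U$ and $U=\bigcup_{k\ge 1}U_k$. Let $V_0=U_3$ and $V_k=U_{k+3}\setminus\overline{U_{k+1}}$ for $k\ge1$; these are open and form a locally finite cover of $U$, in the sense that every point of $U$ belongs to at most three of the $V_k$. Let $\set{\zeta_k}_{k\ge 0}$ be a smooth partition of unity subordinate to $\set{V_k}$, so that $\zeta_k\in C_c^\infty(V_k)$, $0\le\zeta_k\le 1$, and $\sum_k\zeta_k\equiv 1$ on $U$. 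Because each $\zeta_k$ is smooth and compactly supported in $U$, the product rule shows that $\zeta_k u\in\bF(U)$ with support in $V_k$ for all three choices of $\bF$: for $Y^{1,2}$ and $W^{1,2}$ this is direct, and for $W^{1,2}_V$ it uses in addition Remark~\ref{rB.5} (local boundedness of $m(\cdot,V)$ on the compact set $\overline{V_k}$).

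Fix $\delta>0$. For each $k$, Lemma~\ref{lD.1} (applied to $\zeta_k u$, whose support is compactly contained in $U$) allows us to choose $\eps_k>0$ small enough that
\begin{equation*}
\supp\bigl(\phi_{\eps_k}*(\zeta_k u)\bigr)\subset W_k,\qquad
\norm{\phi_{\eps_k}*(\zeta_k u)-\zeta_k u}_{\bF(U)}<\delta\,2^{-(k+1)},
\end{equation*}
where $W_0=U_4$ and $W_k=U_{k+4}\setminus\overline{U_k}$ for $k\ge 1$. Crucially, the family $\set{W_k}$ remains locally finite in $U$. Define
\begin{equation*}
u_\delta=\sum_{k=0}^\infty\phi_{\eps_k}*(\zeta_k u).
\end{equation*}
On any fixed $U_m$ only finitely many terms of this sum are nonzero, so $u_\delta\in C^\infty(U)$. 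Since $u=\sum_k\zeta_k u$ pointwise in $U$ and the analogous series for $u$ converges in $\bF$ on each $U_m$, we obtain
\begin{equation*}
\norm{u_\delta-u}_{\bF(U)}\le\sum_{k=0}^\infty\norm{\phi_{\eps_k}*(\zeta_k u)-\zeta_k u}_{\bF(U)}<\delta.
\end{equation*}
Taking $\delta=1/j$ and setting $u_j:=u_{1/j}$ produces the desired sequence in $C^\infty(U)\cap\bF(U)$ converging to $u$ in $\bF(U)$.

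The only point requiring care beyond the usual Meyers--Serrin template is the verification that the norm contributions of each mollified piece can indeed be made as small as $\delta\,2^{-(k+1)}$. For $\bF=W^{1,2}$ this is standard. For $\bF=Y^{1,2}$ it follows from Lemma~\ref{lD.1} (which gives convergence of $\phi_\eps*v$ to $v$ in $\bF_{\loc}$, hence on any neighborhood containing $\supp(\zeta_k u)$) together with the fact that boundedness of $U$ and the compactness of $\supp(\zeta_k u)$ let us interpret the $L^{2^*}$ component of the norm on a fixed relatively compact subset. For $\bF=W^{1,2}_V$ we use Remark~\ref{rC.1}: on the bounded (relatively compact) neighborhood containing the support, $W^{1,2}_V$ and $W^{1,2}$ are equivalent, so the $W^{1,2}$ convergence transfers to $W^{1,2}_V$ with a constant depending on that neighborhood, which is harmless after absorbing into $\eps_k$. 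Thus the construction goes through uniformly in the three cases, and the step to watch is bookkeeping of supports to ensure local finiteness of $\set{W_k}$ so that $u_\delta$ is genuinely smooth on $U$.
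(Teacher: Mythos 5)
Your proposal is correct and follows essentially the same route as the paper's own proof: the Meyers--Serrin construction with a partition of unity subordinate to the annular exhaustion $U_{k+3}\setminus\overline{U}_{k+1}$, mollification of each piece with errors $\de\,2^{-k-1}$ controlled via Lemma~\ref{lD.1}, and the $W^{1,2}_V$ and $Y^{1,2}$ cases handled through the local equivalences in Remarks~\ref{rB.5} and \ref{rC.1}. The only differences are cosmetic bookkeeping (e.g., intersecting $U_k$ with balls, which is redundant since $U$ is bounded), so nothing further is needed.
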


When $\bF\pr{U} = W^{1,2}\pr{U}$, this is Theorem 2 from \S 5.3.2 of \cite{Eva98}, the case $\bF\pr{U} = W_V^{1,2}\pr{U}$ is the same due to boundedness of $U$ and Remark~\ref{rC.1}, and 
the case $\bF\pr{U} = Y^{1,2}\pr{U}$ is proved in an analogous way. However, we outline the proof here as some elements of it will be useful down the road.

\begin{proof}
We have that $\disp U = \bigcup_{i=1}^\iny U_i$ where $U_i = \set{x \in U : \dist\pr{x, \del U} > \frac 1 i}$.
Set $W_i = U_{i+3} - \overline{U}_{i+1}$.
Choose $W_0 \Subset U$ so that $\disp U = \bigcup_{i=0}^\iny W_i$.
Let $ \set{\zeta_i}_{0 =1}^\iny$ be a smooth partition of unity subordinate to $\set{W_i}_{i =1}^\iny$.
In other words, for each $i$, $0 \le \zeta_i \le 1$, $\zeta_i \in C^\iny_c\pr{W_i}$, and $\disp \sum_{i=1}^\iny \zeta_i = 1$ on $U$.
Let $u \in \bF\pr{U}$.
Since each $\zeta_i \in C^\iny_c\pr{U}$, then $\supp \pr{u \zeta_i} \subset W_i$ and by a straightforward argument similar to the proof of Lemma 1(iv) from \S 5.2 of \cite{Eva98}, $u \zeta_i \in \bF\pr{U}$.

For each $i = 0, 1 , \ldots$, define $X_i = U_{i+4} - \overline{U}_i \supset W_i$.
Fix $\de > 0$.
Then, for each $i$, choose $\eps_i> 0$ so small that $u^i := \phi_{\eps_i} * \pr{u \zeta_i}$ is such that $\supp{u^i} \su X_i$ and $\norm{u^i - u \zeta_i}_{\bF\pr{U}} \le \de 2^{-i-1}$.
The second property is guaranteed by the Lemma~\ref{lD.1}.

Define $\disp v := \sum_{i=1}^\iny u^i$.
For any open set $W \Subset U$, there are at most finitely many terms in the sum for $v$, so it follows that $v \in C^\iny\pr{W}$.
As $\disp u = \sum_{i=1}^\iny u \zeta_i$ then for each $W \Subset U$, we have that
\begin{align*}
\norm{v - u}_{\bF\pr{W}}
&\le  \sum_{i=0}^\iny \norm{u^i - u \zeta_i}_{\bF\pr{W}}
\le \de \sum_{i=0}^\iny 2^{-i-1} = \de.
\end{align*}
By taking the supremum over all sets $W \Subset U$, we conclude that $\norm{v - u}_{\bF\pr{U}} \le \de$, and the conclusion of the lemma follows.
\end{proof}

Since the mollification of an a.e. non-negative function is also non-negative, the following corollary is true.

\begin{cor}[Global approximation by smooth non-negative functions]
Assume that $U$ is bounded.
Let $\bF\pr{U}$ be either $Y^{1,2}\pr{U}$, $W^{1,2}\pr{U}$ or $W^{1,2}_V\pr{U}$.
If $u \in \bF\pr{U}$ is non-negative a.e., then there exists a sequence $\set{u_k}_{k=1}^\iny \su C^\iny\pr{U} \cap \bF\pr{U}$ of non-negative functions such that $\disp \lim_{k \to \iny} u_k = u$ in $\bF\pr{U}$.
\label{cD.3}
\end{cor}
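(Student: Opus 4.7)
\smallskip

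\noindent\textbf{Proof plan.} My plan is to inspect the construction in the proof of Lemma~\ref{lD.2} and verify that, when the input $u$ is non-negative a.e., every function produced along the way is also non-negative a.e. This reduces the corollary to a direct reading of the existing construction rather than a fresh argument.

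First I would recall that the standard mollifier $\phi$ defined earlier in Appendix~\ref{AppA} satisfies $\phi \ge 0$ on $\R^n$ (indeed $\phi(x) = C \exp\bigl(\tfrac{1}{|x|^2-1}\bigr)$ for $|x|<1$ with $C>0$, and is zero elsewhere), so $\phi_\eps \ge 0$ for each $\eps>0$. Consequently, for any locally integrable $g \ge 0$, the convolution $\phi_\eps * g \ge 0$ wherever it is defined. Next, I would point out that the partition of unity $\{\zeta_i\}_{i=0}^\infty$ subordinate to $\{W_i\}$ used in the proof of Lemma~\ref{lD.2} satisfies $0 \le \zeta_i \le 1$. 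Hence if $u \ge 0$ a.e.\ on $U$, each product $u\,\zeta_i$ is non-negative a.e., and so each mollified piece $u^i := \phi_{\eps_i} * (u\,\zeta_i)$ is non-negative pointwise on its domain of definition.

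Then I would run the rest of the proof of Lemma~\ref{lD.2} verbatim: choose $\eps_i$ small enough so that $\supp u^i \subset X_i$ and $\|u^i - u\zeta_i\|_{\bF(U)} \le \delta\,2^{-i-1}$, and set $u_\delta := \sum_{i=0}^\infty u^i$. On any $W \Subset U$ only finitely many summands contribute, so $u_\delta \in C^\infty(W)$, and the triangle inequality combined with $u = \sum_{i=0}^\infty u\zeta_i$ yields $\|u_\delta - u\|_{\bF(U)} \le \delta$. Since each summand is non-negative, so is $u_\delta$. Taking $\delta = 1/k$ produces the desired sequence $\{u_k\} \subset C^\infty(U) \cap \bF(U)$ of non-negative functions converging to $u$ in $\bF(U)$.

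There is no real obstacle here; the only points requiring attention are the sign of $\phi_\eps$ (guaranteed by its explicit form) and the non-negativity of the $\zeta_i$ (guaranteed by the standard construction of a partition of unity). Everything else is inherited directly from Lemma~\ref{lD.2}, and the argument works uniformly across the three choices $\bF(U) \in \{Y^{1,2}(U), W^{1,2}(U), W^{1,2}_V(U)\}$.
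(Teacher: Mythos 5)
Your proposal is correct and follows exactly the route the paper takes: the paper's justification is the single observation, preceding the corollary, that mollification of an a.e.\ non-negative function is non-negative, so the construction of Lemma~\ref{lD.2} (with $\phi_\eps \ge 0$ and $0 \le \zeta_i \le 1$) automatically produces non-negative approximants. You have simply written out that same check in more detail, and it is accurate for all three choices of $\bF\pr{U}$.
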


Finally, if $u$ is compactly supported in $U$, then it follows from the previous lemma that $u$ may be approximated by smooth compactly supported functions.

\begin{lem}[Global approximation by smooth compactly supported functions]
Assume that $U$ is bounded.
Let $\bF\pr{U}$ be either $Y^{1,2}\pr{U}$, $W^{1,2}\pr{U}$ or $W^{1,2}_V\pr{U}$.
If $u \in \bF\pr{U}$ and $\supp u \Subset U$, then there exists a sequence $\set{u_k}_{k=1}^\iny \su C_c^\iny\pr{U} \cap \bF\pr{U}$ such that $\disp \lim_{k \to \iny} u_k = u$ in $\bF\pr{U}$.
\label{lD.4}
\end{lem}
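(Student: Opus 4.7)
The plan is to build the approximating sequence directly by mollification, exploiting the hypothesis $\supp u \Subset U$ to ensure that the mollifications remain compactly supported inside $U$. The only possibly delicate point is passing from the local convergence provided by Lemma~\ref{lD.1} to genuine convergence in the $\bF\pr{U}$-norm, but the compact support assumption collapses this distinction.

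Concretely, I would first set $d := \dist\pr{\supp u, \partial U} > 0$, which is strictly positive because $\supp u$ is compact and contained in the open set $U$. Choose an open set $W$ with $\supp u \Subset W \Subset U$; for instance, let $W$ be the $d/4$-neighborhood of $\supp u$, so that $\overline W \Subset U$. For every $\eps < d/4$, the mollification $u^\eps := \phi_\eps * u$ (interpreting $u$ as zero outside $U$) is well-defined on all of $\R^n$, belongs to $C^\infty\pr{\R^n}$, and satisfies
\begin{equation*}
\supp u^\eps \su \supp u + \overline{B_\eps\pr{0}} \su \overline W \Subset U,
\end{equation*}
so that $u^\eps \in C_c^\infty\pr{U}$. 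In particular $u^\eps \in \bF\pr{U}$ since smooth compactly supported functions belong to each of the spaces $Y^{1,2}\pr{U}$, $W^{1,2}\pr{U}$, $W^{1,2}_V\pr{U}$ (recall that on bounded sets these spaces contain $C_c^\infty$, and for $W^{1,2}_V$ this uses Remark~\ref{rB.5}).

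To show $u^\eps \to u$ in $\bF\pr{U}$, I would observe that $u^\eps - u$ is supported in $\overline W$ for all small $\eps$, since $u$ itself is supported in $\supp u \subset \overline W$. Therefore the $\bF\pr{U}$-norm of $u^\eps - u$ reduces to the norm over $W$, i.e.
\begin{equation*}
\norm{u^\eps - u}_{\bF\pr{U}} = \norm{u^\eps - u}_{\bF\pr{W}},
\end{equation*}
because the three candidate norms (involving $L^{2^*}$, $L^2$, or $L^2\pr{m\pr{\cdot,V}^2 dx}$ on the function together with $L^2$ on the gradient) are all integrals of quantities supported in $\overline W$. By Lemma~\ref{lD.1}, $u^\eps \to u$ in $\bF_{\loc}\pr{U}$, and since $\overline W \Subset U$ this gives $\norm{u^\eps - u}_{\bF\pr{W}} \to 0$. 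Setting $u_k := u^{\eps_k}$ for any sequence $\eps_k \downarrow 0$ with $\eps_k < d/4$ yields the required sequence.

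The argument is essentially routine once the support containment is checked carefully; the only real subtlety is verifying that one can absorb the convergence on $W$ into convergence on all of $U$, and this is handled entirely by the support hypothesis on $u$. No global approximation (as in Lemma~\ref{lD.2}) is needed, which is why the partition-of-unity machinery does not appear here.
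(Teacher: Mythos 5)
Your proof is correct, but it takes a genuinely different and more direct route than the paper. The paper recycles the partition-of-unity construction from the proof of Lemma~\ref{lD.2}: since $\supp u \Subset U$, only finitely many of the cutoffs $\zeta_i$ meet $\supp u$, so $u=\sum_{i=0}^M u\zeta_i$, each piece is mollified at its own scale $\eps_i$ to stay inside $X_i$ with error $\de 2^{-i-1}$, and the finite sum of the mollified pieces is the compactly supported smooth approximant. You instead exploit the compact support globally: a single mollification of the zero-extension works, because for $\eps<d/4$ it lands in $C^\iny_c(U)$, and the convergence question collapses to convergence on a fixed $W\Subset U$, which is exactly what Lemma~\ref{lD.1} provides. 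Your approach buys simplicity (no partition of unity, no diagonal choice of scales), while the paper's has the virtue of being a one-line modification of the proof of Lemma~\ref{lD.2}, and of generalizing to situations where a single mollification scale would not do. One small point worth a sentence in your write-up: the identity $\norm{u^\eps-u}_{\bF\pr{U}}=\norm{u^\eps-u}_{\bF\pr{W}}$ deserves the remark that both $u^\eps-u$ and its weak gradient vanish a.e.\ on the open set $U\setminus\overline W$ (and that $\partial W$ is negligible, or simply pass to a slightly larger $W'$ with $\overline W\su W'\Subset U$ and use $\norm{u^\eps-u}_{\bF\pr{U}}=\norm{u^\eps-u}_{\bF\pr{W'}}$); also note that your $u^\eps$ agrees on $W$ with the mollification in Lemma~\ref{lD.1}, since for small $\eps$ one has $W\su U_\eps$ and $B_\eps(x)\su U$ for $x\in W$. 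These are routine checks and do not affect the validity of the argument.
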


We sketch the proof of the lemma.

\begin{proof}
Define $U_i, W_i, \zeta_i$ as in the proof of Lemma~\ref{lD.2} and conclude as before that each $u \zeta_i \in \bF\pr{U}$.
Since $\supp u \Subset U$, and $\disp U = \bigcup_{i=0}^\iny W_i$, then there exists $M \in \N$ so that $\disp \supp u \su \bigcup_{i=0}^M W_i$.
Therefore, $\disp u = \sum_{i=0}^M u \zeta_i$.
Then (for $i = 0, \ldots, M$) define $X_i$, $u^i$ as before so that $\supp{u^i} \su X_i$ and $\norm{u^i - u \zeta_i}_{\bF\pr{U}} \le \de 2^{-i-1}$ and set $\disp v := \sum_{i=1}^M u^i$.
Since each $u_i \in C^\iny_c\pr{U}$, then $v \in C^\iny_c\pr{U}$ as well.
Moreover,
\begin{align*}
\norm{v - u}_{\bF\pr{U}}
&\le  \sum_{i=0}^M \norm{u^i - u \zeta_i}_{\bF\pr{U}}
\le \de \sum_{i=0}^M 2^{-i-1} = \de
\end{align*}
and the conclusion follows.
\end{proof}

\end{appendix}

\bibliography{refs}
\bibliographystyle{alpha}

\end{document}